\def\@tocline#1#2#3#4#5#6#7{\relax
  \ifnum #1>\c@tocdepth 
  \else
    \par \addpenalty\@secpenalty\addvspace{#2}%
    \begingroup \hyphenpenalty\@M
    \@ifempty{#4}{%
      \@tempdima\csname r@tocindent\number#1\endcsname\relax
    }{%
      \@tempdima#4\relax
    }%
    \parindent\z@ \leftskip#3\relax \advance\leftskip\@tempdima\relax
    \rightskip\@pnumwidth plus4em \parfillskip-\@pnumwidth
    #5\leavevmode\hskip-\@tempdima
      \ifcase #1
      \or\or \hskip 2em \or \hskip 2em \else \hskip 3em \fi%
      #6\nobreak\relax
    \dotfill\hbox to\@pnumwidth{\@tocpagenum{#7}}\par
    \nobreak
    \endgroup
  \fi}
\theoremstyle{plain}
\newtheorem{theoremintro}{Theorem}[]
\newtheorem{corollaryintro}[theoremintro]{Corollary}
\newtheorem{theorem}{Theorem}[section]
\newtheorem{lemma}[theorem]{Lemma}
\newtheorem{corollary}[theorem]{Corollary}
\newtheorem{proposition}[theorem]{Proposition}
\theoremstyle{definition}
\newtheorem{convention}[theorem]{Convention}
\newtheorem{notation}[theorem]{Notation}
\newtheorem{remark}[theorem]{Remark}
\newtheorem{definition}[theorem]{Definition}
\newtheorem{example}[theorem]{Example}
\newtheorem{remarkintro}[theoremintro]{Remark}
\newtheorem{conjintro}[theoremintro]{Conjecture}
\numberwithin{equation}{section}
\newcommand{\length}{{\rm length}}
\newcommand{\Ker}{{\rm Ker \ }}
\newcommand{\Coker}{{\rm Coker \ }}
\newcommand{\Pic}{{\rm Pic}}
\newcommand{\Hom}{{\rm Hom}}
\newcommand{\Sym}{{\rm Sym}}
\newcommand{\Ext}{{\rm Ext}}
\newcommand{\Spec}{{\rm Spec \,}}
\newcommand{\Id}{{\rm Id}}
\newcommand{\GW}{{\rm GW}}
\newcommand{\pr}{{\rm pr}}
\renewcommand{\tilde}{\widetilde}
\newcommand{\sA}{{\mathcal A}}
\newcommand{\sD}{{\mathcal D}}
\newcommand{\sE}{{\mathcal E}}
\newcommand{\sF}{{\mathcal F}}
\newcommand{\sG}{{\mathcal G}}
\newcommand{\sH}{{\mathcal H}}
\newcommand{\sM}{{\mathcal M}}
\newcommand{\sO}{{\mathcal O}}
\newcommand{\sV}{{\mathcal V}}
\newcommand{\sX}{{\mathcal X}}
\newcommand{\A}{{\mathbb A}}
\newcommand{\Ctcell}{\tilde{C}^{\rm cell}}    
\newcommand{\CScell}{{C}^{\rm Sus,cell}} 
\newcommand{\HScell}{{\mathbf H}^{\rm Sus,cell}} 
\newcommand{\C}{{\mathbb C}}
\newcommand{\G}{{\mathbb G}}
\newcommand{\N}{{\mathbb N}}
\renewcommand{\P}{{\mathbb P}}
\newcommand{\R}{{\mathbb R}}
\newcommand{\V}{{\mathbb V}}
\newcommand{\Z}{{\mathbb Z}}
\newcommand{\bM}{\mathbf{M}}
\newcommand{\bN}{\mathbf{N}}
\newcommand{\bH}{\mathbf{H}}
\newcommand{\Ga}{{\mathbb G}_a}
\newcommand{\GL}{\mathbf{GL}}        
\newcommand{\PGL}{\mathbf{PGL}}        
\newcommand{\SL}{\mathbf{SL}}        
\newcommand{\Sp}{\mathbf{Sp}}
\newcommand{\KM}{{\mathbf K}^{\rm M}} 		
\newcommand{\KMW}{{\mathbf K}^{\rm MW}}		
\newcommand{\Sing}{{\rm Sing_*^{\A^1}}} 	
\newcommand{\piA}{{\bm \pi}^{\A^1}}   		
\newcommand{\HA}{{\mathbf H}^{\A^1}}  		
\newcommand{\HAred}{\~{\mathbf H}^{\A^1}}  	
\newcommand{\Hcell}{{\mathbf H}^{\rm cell}} 
\newcommand{\HS}{{\mathbf H}^{\rm S}} 		
\newcommand{\ZA}{{\mathbf Z}_{\A^1}}
\newcommand{\Ccell}{{C}^{\rm cell}}			
\renewcommand{\Ctcell}{\tilde{C}^{\rm cell}}    
\renewcommand{\CScell}{{C}^{\rm Sus,cell}} 
\renewcommand{\HScell}{{\mathbf H}^{\rm Sus,cell}} 
\newcommand{\CExt}{{\rm CExt}}
\newcommand{\holim}{{\rm holim}}
\def\<{\langle}
\def\>{\rangle} 
\def\-{\overline} 
\def\~{\widetilde}
\begin{document}

\title{Cellular $\A^1$-homology and the motivic version of Matsumoto's theorem}

\author{Fabien Morel}
\address{Mathematisches Institut, Ludwig-Maximilians-Universit\"at M\"unchen, Theresienstr. 39, 
D-80333 M\"unchen, Germany.}
\email{morel@math.lmu.de}

\author{Anand Sawant}
\address{School of Mathematics, Tata Institute of Fundamental Research, Homi Bhabha Road, Colaba, Mumbai 400005, India.}
\email{asawant@math.tifr.res.in}
\date{}

\thanks{Anand Sawant acknowledges support of the Department of Atomic Energy, Government of India, under project no. 12-R\&D-TFR-5.01-0500.  He was supported by DFG Individual Research Grant SA3350/1-1 while he was affiliated to Ludwig-Maximilians-Universit\"at, M\"unchen, where a part of this work was carried out.\\
Anand Sawant and Fabien Morel are currently supported by the India DST-DFG Program on Motivic Algebraic Topology.}

\subjclass[2010]{14F42, 20G15 (Primary)}

\begin{abstract}
We define a new version of $\A^1$-homology, called cellular $\A^1$-homology, for smooth schemes over a field that admit an increasing filtration by open subschemes with cohomologically trivial closed strata. We provide several explicit computations of cellular $\A^1$-homology and use them to determine the $\A^1$-fundamental group of a split reductive group over an arbitrary field, thereby obtaining the motivic version of Matsumoto's theorem on universal central extensions of split, semisimple, simply connected algebraic groups. As applications, we uniformly explain and generalize results due to Brylinski-Deligne and Esnault-Kahn-Levine-Viehweg, determine the isomorphism classes of central extensions of such an algebraic group by an arbitrary strictly $\A^1$-invariant sheaf and also reprove classical results of E. Cartan on homotopy groups of complex Lie groups.
\end{abstract}

\maketitle

\tableofcontents

\setlength{\parskip}{4pt plus 2pt minus 2pt} 
\raggedbottom

\section{Introduction}
\label{section introduction}

The aim of this work is to determine the $\A^1$-fundamental sheaf of groups $\piA_1(G)$ of a split reductive group $G$ over a field $k$. Our computation of $\piA_1(G)$ relies on three facts: the combinatorics of the root datum of a split reductive group, the $\A^1$-homotopy purity theorem of \cite{Morel-Voevodsky} and the structure of $\A^1$-homotopy sheaves of connected spaces over a perfect field $k$ \cite{Morel-book}; the latter two being foundational results from $\A^1$-algebraic topology.  The main novelty of our approach is defining a new version of $\A^1$-homology, called \emph{cellular $\A^1$-homology}, for smooth schemes admitting a \emph{nice} stratification (more precisely, a \emph{cellular structure}).  The cellular $\A^1$-homology sheaves are obtained as the homology sheaves of the \emph{cellular $\A^1$-chain complex}, which is associated to the cellular structure.  The cellular $\A^1$-homology theory, inspired by cellular homology in classical topology, is independent of the cellular structure, and is very often entirely computable.  
Some desired but elusive properties of $\A^1$-homology can be seen to hold for cellular $\A^1$-homology; for instance, it follows immediately from definitions that cellular $\A^1$-homology of a scheme vanishes in degrees beyond the dimension of the scheme.

In this work, we will freely use the terminology introduced and used in \cite{Morel-Voevodsky} and \cite{Morel-book} (we refer the reader to the section on notation and conventions at the end of the introduction).  Throughout this paper, $k$ will denote a fixed base field (which will soon be assumed to be perfect) and $Sm_k$ will denote the category of smooth $k$-schemes, endowed with the Nisnevich topology.  Unless otherwise stated, all sheaves are meant to be sheaves for the Nisnevich topology on $Sm_k$.  Let $G$ be a split reductive group $G$ over $k$.  Let $G_{\rm der}$ denote the derived subgroup of $G$ with universal simply connected cover $G_{\rm sc} \to G_{\rm der}$ (in the sense of the theory of algebraic groups) with kernel $\mu$.  The algebraic group $\mu$ is known to be a finite group scheme of multiplicative type (it is thus a product of group schemes of the form $\mu_n$).  Let $\KM_n$ (respectively, $\KMW_n$) denote the $n$th unramified Milnor $K$-theory (respectively, Milnor-Witt $K$-theory) sheaf.  

\begin{theoremintro}
\label{theorem intro main}
For a split reductive group $G$ over a field $k$, we have an exact sequence 
\[
1 \to \piA_1(G_{\rm sc}) \to \piA_1(G) \to \mu \to 1
\]
of Nisnevich sheaves of abelian groups.  If $G$ is a split, semisimple, almost simple, simply connected algebraic group over a field $k$, then there exists an isomorphism of sheaves
\[
\piA_1(G) \simeq \begin{cases} \KM_2, & \text{ if $G$ is not of symplectic type;} \\ \KMW_2,&  \text{ if $G$ is of symplectic type.} \end{cases}
\]
\end{theoremintro}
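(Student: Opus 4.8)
The plan is to reduce the whole statement to a computation of $\piA_1(G_{\rm sc})$ and to carry out that computation via cellular $\A^1$-homology of the Bruhat stratification.

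\emph{Exact sequence.} First I would derive the exact sequence from two fibration sequences. The universal cover $G_{\rm sc}\to G_{\rm der}$ is a central isogeny with kernel $\mu$ finite of multiplicative type, hence $\A^1$-rigid; thus $\mu\to G_{\rm sc}\to G_{\rm der}$ is a fibration sequence up to $\A^1$-homotopy, and since $\piA_i(\mu)=0$ for $i\ge 1$, $\piA_0(\mu)=\mu$, and $G_{\rm sc}$ is $\A^1$-connected (being generated by its root subgroups, each $\cong\Ga$), the long exact sequence of $\A^1$-homotopy sheaves gives $1\to\piA_1(G_{\rm sc})\to\piA_1(G_{\rm der})\to\mu\to 1$. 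Next, $G/G_{\rm der}$ is a split torus $S$, hence a special group, so $1\to G_{\rm der}\to G\to S\to 1$ exhibits $G\to S$ as a Zariski-locally trivial, hence $\A^1$-, fibration with fibre $G_{\rm der}$; as $\piA_i(S)=0$ for $i\ge1$ the long exact sequence yields $\piA_1(G_{\rm der})\xrightarrow{\ \sim\ }\piA_1(G)$, and splicing gives the claimed sequence.

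\emph{Reduction to homology and to rank $\le 2$.} Now let $G$ be split, semisimple, almost simple and simply connected. Being an algebraic group it is an $H$-space, so $\piA_1(G)$ is abelian, hence a strictly $\A^1$-invariant sheaf, and --- $G$ being $\A^1$-connected --- the $\A^1$-Hurewicz theorem of \cite{Morel-book} identifies $\piA_1(G)\cong\HA_1(G)$. I would then compute $\HA_1(G)$ from the cellular $\A^1$-chain complex $\Ccell_*(G)$ attached to the Bruhat stratification: $G$ is filtered by the open subschemes $\bigcup_{\codim BwB\le d}BwB$, whose successive closed strata are disjoint unions of Bruhat cells $BwB\cong\A^{\dim U}\times T\times\A^{\ell(w)}$. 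Checking this is a cellular structure in the sense of the paper (the one delicate point being the torus factor $T=(\mathbb{G}_m)^{\dim T}$, which one must allow and whose twists one must track), the $\A^1$-purity theorem gives the differentials, $\Ccell_*(G)$ is concentrated in degrees $[0,\dim G]$ with one twisted summand per $w\in W$, and in low degrees it computes $\HA_*(G)$; in particular $\piA_1(G)=H_1\bigl(\Ccell_*(G)\bigr)$. Since this depends only on $\Ccell_0,\Ccell_1,\Ccell_2$, which are built from the cells $BwB$ with $\ell(w_0)-\ell(w)\le 2$ --- corresponding, after $w\mapsto w_0w$, to the empty word, the simple reflections $s_\alpha$, and the length-two reduced words $s_\alpha s_\beta$ --- the answer is local to the rank-$\le 2$ sub-root-systems: it yields a ``motivic Matsumoto presentation'' of $\piA_1(G)$ with generators built from the simple root subgroups $\SL_2^{(\alpha)}$ (using $\SL_2\simeq_{\A^1}\A^2\setminus\{0\}$ and $\piA_1(\A^2\setminus\{0\})=\KMW_2$) and relations coming, pair by pair, from the rank-two subgroups $G_{\{\alpha,\beta\}}$.

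\emph{Rank $\le 2$ computations and assembly.} One is thus reduced to the rank-$\le 2$ cases $A_1$, $A_1\times A_1$, $A_2$, $B_2=C_2$ and $G_2$. Here $\piA_1(\SL_2)=\piA_1(\Sp_2)=\KMW_2$, and a direct computation of the (small) cellular complexes should give $\piA_1(\SL_3)=\KM_2$, $\piA_1(\Sp_4)=\KMW_2$ and $\piA_1(G_2)=\KM_2$: the braid relation $(s_\alpha s_\beta)^{m_{\alpha\beta}}=1$ lifts to a relation in $G$ relating the symbols attached to $\alpha$ and $\beta$, and in types $A_2$ and $G_2$ it forces the unit $\eta$ to act by zero (collapsing $\KMW_2$ onto $\KM_2$), whereas in type $C_2$ a symbol retaining $\eta$ survives. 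For general $G$: if $G$ is not of symplectic type it is not of type $C_n$, so its connected Dynkin diagram contains a pair of simple roots spanning a subsystem of type $A_2$ of roots of the longer length (any two adjacent simple roots for $A_n,D_n,E_{6,7,8}$; the first two for $B_n$, $n\ge3$, and for $F_4$; the cases $A_2,G_2$ being done); this relation kills $\eta$ and, along the connected diagram, identifies all the simple-root symbols, so $\piA_1(G)=\KM_2$. If $G=\Sp_{2n}$, no long-root $A_2$ exists, $\eta$ is never forced to vanish, the $C_2$-subsystem at the symplectic end pins down one copy of $\KMW_2$, and the short simple roots (spanning $A_{n-1}$) contribute symbols landing in $\KMW_2$ through the natural map $\KM_2\to\KMW_2$ of multiplication by the hyperbolic form $\langle1\rangle+\langle-1\rangle$; hence $\piA_1(G)=\KMW_2$. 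The hard part will be this last, rank-two step: reading off the differentials of $\Ccell_*(G)$ from the root datum and proving that the Weyl-group braid relations reproduce exactly the Steinberg--Matsumoto relations among symbols --- in particular controlling the $\eta$-twists and singling out type $C$ as the unique case that preserves the Milnor--Witt refinement --- which is the technical core of the motivic Matsumoto theorem and where the full strength of the $\A^1$-purity theorem and of the structure theory of strictly $\A^1$-invariant sheaves is needed.
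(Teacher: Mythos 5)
Your proposal follows essentially the same route as the paper: reduce to the almost simple, simply connected case via the coradical torus and the fibration over $B_{gm}\mu$, identify $\piA_1(G)$ with $\HA_1(G)=\Hcell_1(G)$ via Hurewicz and the Bruhat cellular structure, and compute $H_1$ of the oriented cellular complex from the codimension-$\leq 2$ cells, where the Cartan integers of adjacent simple roots decide whether $\eta$ survives (type $C$) or is killed (all other types, with $G_2$ done directly). The technical core you correctly defer --- orienting the cells via a weak pinning and reduced expressions of $w_0$, and tracking the resulting unit twists in the degree-$2$ differential --- is exactly where the paper spends Sections 3 and 4.
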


The first assertion of Theorem \ref{theorem intro main} follows from standard arguments.  Indeed, one has a short exact sequence
\[
1 \to G_{\rm der} \to G \to {\rm corad}(G) \to 1,
\]
where ${\rm corad}(G)$ denotes the coradical torus of $G$.  Since ${\rm corad}(G)$ is $\A^1$-rigid, it follows that $\piA_1(G) \simeq \piA_1(G_{\rm der})$, reducing the computation of the $\A^1$-fundamental group of a split reductive group to its semisimple part.  The $\A^1$-fiber sequence
\[
G_{\rm sc} \to G_{\rm der} \to B_{gm} \mu,
\]
where $B_{gm} \mu$ is the geometric classifying space considered by Totaro \cite{Totaro} and also studied in \cite{Morel-Voevodsky}, gives rise to a short exact sequence of sheaves of groups of the form
\[
1 \to \piA_1(G_{\rm sc}) \to \piA_1(G_{\rm der}) \to \mu \to 1.
\]
Furthermore, observe that $G_{\rm sc}$ is a product of its almost simple factors.  As a consequence, the computation of the $\A^1$-fundamental group of a split reductive group reduces to the computation of the $\A^1$-fundamental group of a split, semisimple, almost simple, simply connected algebraic group.  

\begin{remarkintro}
\label{rem historical Matsumoto}
A classical theorem of Matsumoto \cite{Matsumoto} determines a presentation of the center of the universal central extension of the group of rational points of a split, semisimple, simply connected algebraic group over an infinite field $k$.  Let $G$ be a such a group over $k$.  Matsumoto's theorem can be rephrased as saying that the center of the universal central extension 
\begin{equation} \label{eqn UCE}
1 \to A \to E \to G(k) \to 1
\end{equation}
of $G(k)$ is given by 
\[
A = \begin{cases} \KM_2(k), &\text{ if $G$ is not of symplectic type;} \\ \KMW_2(k), &\text{ if $G$ is of symplectic type.} \end{cases}
\]
Note that Matsumoto only considers infinite base fields; indeed, $\KM_2$ and $\KMW_2$ of a finite field are trivial.  Moreover, note that for some small finite fields $F$, the group $G(F)$ is at times not perfect and hence, does not admit a universal central extension.  

It is natural to look for central extensions \eqref{eqn UCE} that are functorial in $k$ and in $G$.  Brylinski and Deligne \cite{Brylinski-Deligne} have shown the existence of a central extension of any reductive algebraic group $G$ by $\KM_2$ as Zariski sheaves and determined the category of all such extensions in terms of Weyl-invariant integer-valued quadratic forms on the coroot lattice associated with the semisimple part of $G$.  A standard fact from topology asserts that the fundamental group of a path-connected topological group is abelian, and its universal covering is in fact a topological central extension of it by its fundamental group. If the topological group is moreover perfect, this implies the existence of a canonical morphism from the universal central extension to the universal covering. 

In view of this heuristic, it is natural to search for analogues of these facts in $\A^1$-homotopy theory of smooth algebraic varieties.  More specifically, one should expect that the central extensions of a reductive group determined by Matsumoto and Brylinski-Deligne are closely related to its universal cover in the sense of $\A^1$-homotopy theory and its $\A^1$-fundamental group in the sense of \cite{Morel-Voevodsky}.  This is precisely what Theorem \ref{theorem intro main} achieves. We stress that we do not use the theorem of Matsumoto in any form; our result is entirely independent of it.
\end{remarkintro}
 
\begin{remarkintro}
\label{remark intro perfect field}
We use some of the results of \cite{Morel-book} in our arguments, which only hold when the base field is perfect. However, by Chevalley's existence theorem \cite[XXV, 1.2]{SGAIII}, any split reductive $k$-group is obtained (up to isomorphism) by extension of a split reductive group defined over $\Z$ and consequently, a split reductive group defined over the prime field $k_0$ of $k$, which is perfect.  Using standard facts on essentially smooth base change (see, for example, \cite[Appendix A]{Hoyois}), we are reduced to proving Theorem 1 in case the base field is perfect.  Therefore, we will always work over a perfect field throughout the article.
\end{remarkintro}
 
\begin{remarkintro}
If $G$ is a split, semisimple, simply connected group such that every component of the relative root system of $G$ has isotropic rank at least $2$, then the group $\piA_1(G)(k)$ can be directly computed (see \cite{Voelkel-Wendt}).  The proof in \cite{Voelkel-Wendt} uses the $\A^1$-locality of the singular construction $\Sing G$ \cite{AHW2},  Matsumoto's results \cite{Matsumoto} and some work by Petrov and Stavrova \cite{Petrov-Stavrova} (which requires the hypothesis on the isotropic rank of the root system).  However, these computations are unsatisfactory because they do not determine $\piA_1(G)$ as a sheaf and do not work if a component of the relative root system of $G$ has rank $1$.  Moreover, the proof of $\A^1$-locality of $\Sing G$ depends on several nontrivial results in algebraic geometry.  Our proof of Theorem \ref{theorem intro main} does not make any use of the results of Matsumoto and Brylinski-Deligne, as mentioned above.  It is worthwhile to mention that we never use any model of $G$ in $\A^1$-homotopy theory (such as $\Sing G$) in our proofs.  Our proof of Theorem \ref{theorem intro main} is much more elementary and direct; see Remark \ref{remark intro K2} below.
\end{remarkintro}

\begin{remarkintro}
\label{rem pi0}
Observe that the $\A^1$-fundamental group of an algebraic group $G$ only depends on the $\A^1$-connected component of the neutral element of $G$.  We recall the description of the sheaf $\piA_0(G)$ obtained in \cite[Appendix A]{Morel-FM}.  Let $G$ be a split, semisimple algebraic group over $k$ with universal simply connected cover $G_{\rm sc} \to G$.  Let $T$ denote a maximal $k$-split torus of $G$ and let $T_{\rm sc}$ be its inverse image in $G_{\rm sc}$.  It has been shown in \cite[Theorem A.2]{Morel-FM} that there is a canonical isomorphism of the form
\[
T/^{\rm Nis}T_{\rm sc} \cong \piA_0(G), 
\]
where the left hand-side is the quotient of $T$ by the image of the induced morphism $T_{\rm sc} \to T$ as a Nisnevich sheaf of abelian groups.  In particular, $G$ is $\A^1$-connected if and only if it is simply connected in the sense of algebraic groups theory. However, Theorem \ref{theorem intro main} shows that $G$ is never $\A^1$-simply connected! 
\end{remarkintro}


We now briefly outline the main ideas in our proof of the main theorem.  It is an elementary observation (analogous to the one in classical topology) that for any simplicial sheaf of groups $\sG$ on $Sm_k$, the sheaf of groups $\piA_1(\sG)$ is abelian. If $\sG$ is $\A^1$-connected, then the Hurewicz morphism $\piA_1(\sG) \to  \HA_1(\sG)$ is an isomorphism \cite[Theorem 6.35]{Morel-book}.  Consequently, determination of the $\A^1$-homology sheaf $\HA_1(G)$ is tantamount to the proof of Theorem \ref{theorem intro main}.  However, it turns out that the $\A^1$-homology sheaves are very hard to compute and even the basic computations of them are quite involved. 

We define \emph{cellular $\A^1$-homology} (denoted by $\Hcell_*$) in Section \ref{subsection cellular chain complex} for smooth schemes admitting a \emph{cellular structure}; that is, an increasing filtration by open subschemes with a condition on the closed complements that their cohomology with values in any strictly $\A^1$-invariant sheaf is trivial (see Sections \ref{subsection cellular structures and orientations} and \ref{subsection cellular chain complex} for precise definitions and more details).  The normal bundles of these closed strata are then automatically trivial. The first nontrivial cellular $\A^1$-homology sheaf of a \emph{cellular scheme} clearly agrees with its corresponding $\A^1$-homology sheaf.  Cellular $\A^1$-homology is then the homology of an explicit chain complex of \emph{strictly $\A^1$-invariant sheaves} on $Sm_k$ directly defined using the cellular structure.  However, these computations are delicate, since one has to keep track of the chosen orientations (or trivializations) of the normal bundles of the closed strata appearing in the cellular structure.  In the setting of our main theorem, the cellular structures are naturally provided by the Bruhat decomposition of a split semisimple group, which yields an explicit chain complex $\Ccell_*(G)$ whose homology sheaves are  $\Hcell_*(G)$ (see Section \ref{section Bruhat decomposition}).  Any split, semisimple, almost simple, simply connected algebraic group $G$ over $k$ is $\A^1$-connected; hence, we have isomorphisms
\[
\piA_1(G) \cong \HA_1(G) \cong \Hcell_1(G).
\]
The differentials in $\Ccell_*(G)$ are determined by the orientations of the appropriate Bruhat cells and certain entries of the associated Cartan matrix.  We describe in Section \ref{subsection complex in rank r} how a slightly weaker version of a \emph{pinning} of an algebraic group determines these orientations.  After orienting $\Ccell_*(G)$ in low degrees using a \emph{weak-pinning} of $G$ and a choice of the reduced expression of the longest word in the Weyl group of $G$, the differentials in degrees $\leq 2$ can be explicitly computed.  This is carried out in Section \ref{section differential}, which is the technical heart of our proof of Theorem \ref{theorem intro main}.  These delicate computations culminate in the proof of Theorem \ref{theorem intro main} in Section \ref{section applications} (see Theorem \ref{thm: main}) in the following precise form.

\begin{theoremintro}
\label{theorem intro precise}
Let $G$ be a split, semisimple, almost simple, simply connected $k$-group and fix a maximal torus $T\subset G$ and a Borel subgroup $B\subset G$ containing $T$.  Let $\alpha$ be a long root in the corresponding system of simple roots of $G$ and let $S_\alpha\subset G$ be the split, semisimple, almost simple, simply connected $k$-subgroup of rank $1$ generated by $U_\alpha$ and $U_{-\alpha}$.  The inclusion $S_\alpha\subset G$ induces an epimorphism $\piA_1(S_\alpha) \twoheadrightarrow \piA_1(G)$. Moreover:
\begin{enumerate}[label=$(\alph*)$]
\item This morphism induces a canonical isomorphism
\[
\KM_2 \xrightarrow{\simeq} \piA_1(G),
\]
if $G$ is not of symplectic type.

\item This morphism induces a non-canonical isomorphism
\[
\KMW_2 \xrightarrow{\simeq} \piA_1(G),
\]
if $G$ is of symplectic type.
\end{enumerate}
\end{theoremintro}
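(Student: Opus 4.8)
The strategy is to reduce everything to the rank-$1$ subgroup $S_\alpha \cong \SL_2$ or $\Sp_2$ and to compute $\piA_1(G)$ via the cellular chain complex $\Ccell_*(G)$ coming from the Bruhat decomposition, using the Hurewicz identification $\piA_1(G) \cong \Hcell_1(G)$ valid because $G$ is $\A^1$-connected. The first task is the surjectivity claim: the inclusion $S_\alpha \hookrightarrow G$ sends the unique $1$-cell of $S_\alpha$ (indexed by the reflection $s_\alpha$) to the $1$-cell of $G$ indexed by $s_\alpha$ in the Bruhat stratification, and in the cellular chain complex the group of $1$-chains is freely generated (as a strictly $\A^1$-invariant sheaf, after the appropriate Tate twists) by the cells $BsB/B$ for $s$ ranging over simple reflections. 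Since $G$ is simply connected in the algebraic sense, $\piA_0(G)$ is trivial and one checks the cellular $0$-chains contribute nothing; hence $\Hcell_1(G)$ is the cokernel of the differential $d_2 \colon \Ccell_2(G) \to \Ccell_1(G)$, and one must show that the class of the cell $s_\alpha$ generates this cokernel. That the $\alpha$-cell alone suffices (for $\alpha$ a long root) is a consequence of the connectedness of the Dynkin diagram together with the explicit form of $d_2$: each $2$-cell $s_i s_j$ with $i,j$ adjacent in the diagram imposes a relation forcing the classes of $[s_i]$ and $[s_j]$ to coincide up to the Cartan integer $\langle \alpha_i, \alpha_j^\vee\rangle$, and adjacency to a long root lets one rescale all short-root classes onto the long-root class.

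Next comes the identification of the target. Having reduced to $S_\alpha$, I would invoke the explicit computation of $\piA_1(\SL_2) \cong \KMW_2$ (this is the base case, where $S_\alpha$ is of symplectic type: $\SL_2 = \Sp_2$) and then analyze the image of the composite $\piA_1(S_\alpha) = \KMW_2 \twoheadrightarrow \piA_1(G)$. The differential $d_2$ in $\Ccell_*(G)$, computed in the section on differentials using a weak-pinning and a reduced expression for the longest Weyl word, controls precisely which quotient of $\KMW_2$ survives. The key mechanism is the fundamental short exact sequence $0 \to \mathbf{I}^3 \to \KMW_2 \to \KM_2 \to 0$ (equivalently $\eta \colon \KMW_2 \to \KMW_1$ with image killing the difference between Milnor–Witt and Milnor $K$-theory): when $G$ contains a short root adjacent to $\alpha$, the $2$-cell relating the long and short cells introduces the Cartan integer $-2$ (or $-3$ for $G_2$), and because $\langle -1 \rangle = 1 - \eta[-1]$ acts, multiplication by such an even integer on $\KMW_2$ factors through $\KM_2$; this forces $\mathbf{I}^3 = \ker(\KMW_2 \to \KM_2)$ into the kernel of the surjection, yielding $\piA_1(G) \cong \KM_2$. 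When $G$ is of symplectic type — i.e.\ type $C_n$, where the long roots are the ones at the end and all simple roots adjacent to a long root are themselves long except at the branch — the relevant Cartan integers among long roots are $0$ or $-1$, no even-integer multiplication is forced, and the surjection $\KMW_2 \twoheadrightarrow \piA_1(G)$ is an isomorphism (non-canonically, since it depends on the choice of $\alpha$ and the weak-pinning).

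**Order of steps.** (1) Set up $\Ccell_*(G)$ from the Bruhat decomposition and record that $\Ccell_0(G), \Ccell_1(G)$ are explicit; identify $\piA_1(G) = \Hcell_1(G) = \Coker(d_2)$. (2) Show $S_\alpha \hookrightarrow G$ realizes the $s_\alpha$-cell and deduce the map on $\piA_1$ is the natural map $\KMW_2 \to \Coker(d_2)$; prove surjectivity using connectedness of the Dynkin diagram and the shape of $d_2$. (3) Recall/cite $\piA_1(\SL_2) \cong \KMW_2$. (4) Compute the relevant components of $d_2$ — the Cartan integers $\langle \alpha_j, \alpha_i^\vee\rangle$ weighted by the orientation signs from the weak-pinning — using the results of the differentials section. (5) Case split: if a short root neighbors $\alpha$ (all non-$C_n$ non-simply-laced cases, plus simply-laced cases handled by a slightly different but easier relation that already kills $\mathbf{I}^3$ — actually in the simply-laced case the Cartan integer is $-1$ and one needs the relation from a length-$3$ braid, so this needs care), conclude $\ker = \mathbf{I}^3$ and $\piA_1(G) \cong \KM_2$; if $G$ is symplectic, conclude the map is an isomorphism onto $\KMW_2$. (6) Assemble into the stated theorem.

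**Main obstacle.** The hard part is unquestionably Step (4)–(5): pinning down the exact differential $d_2$ with its orientation signs and then extracting the precise quotient of $\KMW_2$. In the simply-laced case the naive length-$2$ relation gives only a $\pm 1$ Cartan coefficient and does not obviously kill $\mathbf{I}^3$; one must instead use a longer piece of the cellular complex — a $2$-cell associated to a non-commuting pair $s_i s_j$ with $m_{ij} = 3$, whose boundary, once oriented, produces the element $\langle -1 \rangle - 1 = -\eta[-1]$ acting on the generator, and this is exactly what sends $\KMW_2$ onto $\KM_2$. Getting these signs right — equivalently, getting the $\A^1$-degree of the relevant attaching maps right — is the delicate computation the paper defers to its dedicated section, and the whole dichotomy "symplectic vs.\ not" hinges on it. A secondary subtlety is verifying that no higher differential $d_3$ contributes, i.e.\ that $\Coker(d_2)$ is genuinely $\Hcell_1$ and not merely a quotient of it; this follows from the chain complex being concentrated appropriately but should be stated carefully.
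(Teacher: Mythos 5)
Your overall architecture matches the paper's: the Bruhat-cell chain complex, the Hurewicz identification $\piA_1(G)\cong\HA_1(G)\cong\Hcell_1(G)$, reduction to $S_\alpha$, and a case analysis driven by the Cartan integers appearing in $\partial_2$. But there is a genuine gap at the foundation of your computation, namely the sentence ``one checks the cellular $0$-chains contribute nothing; hence $\Hcell_1(G)$ is the cokernel of $d_2$.'' For the complex $\Ccell_*(G)$ the cells are the Bruhat cells $B\dot{w}B\cong U_{w^{-1}}\times T\times U$, so $\Ccell_i(G)=\oplus\,\KMW_i\otimes\ZA[\dot{w}T]$ and $\partial_1$ is emphatically nonzero (it is induced by $(u)\otimes[\tilde{w}_i\cdot 1]\mapsto(\tilde{w}_0\cdot\alpha_i^\vee(u))$). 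Hence $\Hcell_1(G)=Z_1(G)/\mathrm{im}\,\partial_2$ with $Z_1(G)=\Ker\partial_1$, and this is a proper subquotient of $\Coker(\partial_2)$. If instead you meant the complex of $G/B$, where $\bar\partial_1=0$ and the $1$-cells are indexed by simple reflections as you describe, then $\Coker(\bar\partial_2)=\Hcell_1(G/B)$, which is $T$ (or $\left(\prod_{i=1}^{r-1}\G_m\right)\times\KMW_1$ in the symplectic case) and computes $\piA_1(G/T)$, not $\piA_1(G)$. The paper's route is to use triviality of the $\ZA[T]$-action on $\Hcell_*(G)$ to reduce to the cokernel of $\Ccell_2(G)\otimes_{\ZA[T]}\Z\to Z_1(G)\otimes_{\ZA[T]}\Z$, and the computation of $Z_1(G)\otimes_{\ZA[T]}\Z$ --- via a Koszul-type resolution built from the Hopf-algebra structure of $\ZA[T]$ --- is a substantial step you have omitted entirely. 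Its answer is $\bigl(\oplus_i\KMW_2\bigr)\oplus\bigl(\oplus_{i<j}\KM_2\bigr)$: the diagonal summands are Milnor--Witt, the cross-terms are Milnor.

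That omission is what derails your step (5) in the simply-laced case. The paper needs no braid relation or length-three attaching-map degree producing $-\eta[-1]$: for adjacent roots with $n_{ji}=1$, the identity $\partial_1\circ\partial_2=0$ pins down the orientation unit, and the resulting relation in the cokernel reads $x_i=y_{ij}=x_j$, where $y_{ij}$ lives in a cross-term $\KM_2$ summand and is therefore automatically $\eta$-torsion --- ultimately because $\ZA[T]$ is commutative while $\KMW_*$ is only $\epsilon$-graded commutative. That is the entire mechanism killing $\eta$ in types $A$, $D$, $E$, and it also explains why type $C_r$ escapes: its unique long simple root is adjacent only to a short root via an $n_{ji}=2$ edge, which yields $x_{r-1}=-h\cdot x_r$ and leaves $x_r$ unconstrained. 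Your description of the symplectic case (``all simple roots adjacent to a long root are themselves long except at the branch'') has this backwards. So while your plan is in the right spirit and the obstacle you flag (orientation signs in $\partial_2$) is real, the proposal cannot be completed as written: the $\Ker\partial_1$ computation and the $\ZA[T]$-module analysis are missing, and your proposed fix for the ADE case points in the wrong direction.
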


\begin{remarkintro}
\label{remark intro SL2}
Let $G$ be as in Theorem \ref{theorem intro precise} and of symplectic type.  The Dynkin diagram of $G$ contains a unique long root, so the $\alpha$ in the statement of Theorem \ref{theorem intro precise} is canonical. However, the isomorphism $\piA_1(G) \cong \KMW_2$ is not canonical.  For instance, in the case $G = \SL_2$, it can be verified that for any $a \in k^{\times}$, the automorphism (as schemes and not as groups) of $\SL_2$ corresponding to 
\[
\begin{pmatrix}
x & y \\ z & w
\end{pmatrix} \mapsto 
\begin{pmatrix}
x & ay \\ a^{-1}z & w
\end{pmatrix}
\]
induces the morphism $\langle a \rangle \in \mathbf{GW}(k) = \Hom_{Ab_k}(\KMW_2, \KMW_2)$ on $\piA_1(\SL_2)$.  This morphism is the identity map precisely when $a$ is a square in $k$. 
\end{remarkintro}

\begin{remarkintro} 
\label{remark intro K2}
In order to prove Theorem \ref{theorem intro precise}, we need very few properties of the sheaves $\KMW_2$ and $\KM_2$ (described for instance in \cite{Morel-book}) in the computations. The reader who is not familiar with these descriptions could consider $\KMW_2$ to be defined as 
\[
\KMW_2 := \piA_1(\SL_2).
\]
Note that $\piA_1(\SL_2) = \piA_1(\Sigma(\G_m^{\wedge 2}))$.  Thus, for any pair of units $(u,v)$ in any smooth $k$-algebra $A$, we have a corresponding symbol $(u)(v)\in \KMW_2$; see conventions and notation at the end of the introduction.  We then need the action of the units modulo squares on $\KMW_2 = \piA_1(\SL_2)$ (described in Remark \ref{remark intro SL2}), that is, the morphism $\mathbf{GW}(k) \to \Hom_{Ab_k}(\KMW_2, \KMW_2)$ (we do not need the fact that it is an isomorphism).  In other words, if $\alpha$, $u$ and $v$ are units defined in $A$, we may define the multiplication of $\alpha$ modulo squares, denoted by $\<\alpha\>$, with the symbol $(u)(v)$, which will be denoted by $\<\alpha\> \cdot (u)(v)$.  We finally need to use the morphism $\eta: \KMW_1\otimes_{\A^1}\KMW_2 \to \KMW_2$, which appears in the differentials of the cellular $\A^1$-chain complexes.  However, this morphism precisely appears as one of the differentials in the cellular $\A^1$-chain complex of $\SL_3$, as we will see later. The only additional property that we use in the computations, which also follows from a careful but simple analysis of the cellular chain complex of $\SL_3$, is that for any units $\alpha$, $u$ and $v$ as above, the action of $\<\alpha\>$ on the symbol $(u)(v)$ is given by
\[
\<\alpha\> \cdot (u)(v) = (u)(v) + \eta  (\alpha) (u)(v).
\]
Once this is observed, one may define $\KM_2$ to be the quotient  
\[
\KM_2 = \KMW_2/\eta = \piA_1(\SL_2)/\eta.
\]
These formal properties are sufficient for our proof of Theorem \ref{theorem intro precise}. The reader should notice that we do not explicitly use the Steinberg relation, though it is encoded in the sheaf $\KMW_2$.
\end{remarkintro}

\begin{remarkintro} 
Let $G$ be of rank $2$ and type $A_2$, that is, $G \cong \SL_3$.  Let $\alpha$ be one of the two simple roots in the root system of $G$. The inclusion $S_\alpha\subset G$ induces an epimorphism $\piA_1(S_\alpha) \twoheadrightarrow \piA_1(G)$, which in turn induces a canonical isomorphism 
\[
\KM_2 = \piA_1(S_\alpha)/\eta \xrightarrow{\simeq} \piA_1(G).
\]
The properties noted in Remark \ref{remark intro K2} imply that the above isomorphism $\KM_2\xrightarrow{\simeq} \piA_1(G)$ is canonical and does not depend on any choices.  In the case $G$ is of type $G_2$, Theorem \ref{theorem intro precise} follows from a direct computation (see the proof of Theorem \ref{thm: main}).  If $G$ is of rank $>2$ and is not of symplectic type, one may explicitly describe part $(a)$ of Theorem \ref{theorem intro precise} as follows.  Note that we can always find a pair of long roots $\alpha$ and $\beta$ that are connected by an edge in the Dynkin diagram of $G$.  Then the semisimple subgroup-scheme $S_{\alpha,\beta}\subset G$ is simply connected and of type $A_2$ and the inclusion $S_{\alpha,\beta}\subset G$ induces a canonical isomorphism $\KM_2 = \piA_1(S_{\alpha,\beta})\xrightarrow{\simeq} \piA_1(G)$.  Surjectivity of this morphism follows from by observing that the composite $\piA_1(S_\alpha) \to \piA_1(S_{\alpha,\beta}) \to \piA_1(G)$ induced by the natural inclusions is an epimorphism.  Injectivity of this epimorphism follows from the observation that adding an edge connecting $\alpha$ or $\beta$ to another root (long or short) does not change the morphism (see the proof of Theorem \ref{thm: main}; see also Remark \ref{rem main precise}).
\end{remarkintro}

We now briefly discuss the case of non-split, semisimple  algebraic groups.  As mentioned in Remark \ref{rem pi0}, the sheaf $\pi_1^{\A^1}(G)$ depends upon the $\A^1$-connected component of the neutral element of $G$.  Note that at least over a field of characteristic $0$, a reductive algebraic group $G$ is $\A^1$-connected if and only if $G$ is semisimple, simply connected and every almost $k$-simple factor of $G$ is $R$-trivial \cite[Theorem 5.2]{Balwe-Sawant-reductive}.  Using this criterion, one can see that over special classes of fields (such as local fields and global fields) isotropic, semisimple, simply connected groups are $\A^1$-connected (see \cite{Gille} for a survey of known positive results and a counterexample in general due to Platonov).  It seems reasonable to conjecture the following regarding the $\A^1$-fundamental group of an $\A^1$-connected isotropic group.

\begin{conjintro}
\label{conj isotropic}
Let $G$ be an $\A^1$-connected, isotropic, semisimple, almost simple, simply connected algebraic group over $k$, split by a finite separable field extension $F/k$.  
\begin{enumerate}
\item If $G$ is not of symplectic type, then the universal $\KM_2$-torsor on $G$ constructed by Brylinski-Deligne \cite{Brylinski-Deligne} gives the universal covering of $G$ in the sense of $\A^1$-homotopy theory.

\item If $G$ is of symplectic type, then there exists a $\KMW_2$-torsor on $G$ giving the $\A^1$-universal covering of $G$.

\item More precisely, suppose that the relative root system of $G$ admits a coroot $\alpha$, which becomes a coroot of small length in the root system of $G_F$.  Then the inclusion $S_\alpha\subset G$ induces an isomorphism of $\KMW_2$ (respectively, $\KM_2$) with $\piA_1(G)$, if $G$ is of symplectic type (respectively, not of symplectic type).
\end{enumerate}
\end{conjintro}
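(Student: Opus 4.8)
The plan is to transport the cellular $\A^1$-homology machinery of Sections \ref{subsection cellular structures and orientations}--\ref{subsection cellular chain complex} from the split Bruhat decomposition to the \emph{relative} Bruhat decomposition of an isotropic group, reducing, exactly as in Theorem \ref{theorem intro precise}, to the rank-one subgroup $S_\alpha$, and to read off the dichotomy from the behaviour of $\alpha$ over the splitting field $F$. Since $G$ is assumed $\A^1$-connected, the Hurewicz isomorphism $\piA_1(G) \cong \HA_1(G)$ applies, so it is enough to identify the first non-vanishing cellular $\A^1$-homology sheaf of $G$ (or of a suitable auxiliary variety).

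Fix a maximal $k$-split torus $S \subset G$, let $Z := Z_G(S)$ with derived subgroup $M := Z_{\rm der}$ (the anisotropic kernel), let $P_\emptyset = Z \ltimes U_\emptyset$ be a minimal $k$-parabolic, and let ${}_k W$ be the relative Weyl group, so that $G = \bigsqcup_{w \in {}_k W} P_\emptyset\, \dot w\, P_\emptyset$. One route is to assemble a relative cellular $\A^1$-chain complex $\Ccell_*(G)$ directly from this stratification, whose strata are iterated affine-space bundles over $P_\emptyset$ (hence $\A^1$-equivalent to $Z$). An alternative, which partly sidesteps the difficulty described below, is to use the $\A^1$-fibre sequence $P_\emptyset \to G \to G/P_\emptyset$: since $M$ is anisotropic and hence $\A^1$-rigid, the reductive group $Z$ (an extension of a torus by $M$) is itself $\A^1$-rigid, so $\piA_1(P_\emptyset) = \piA_1(Z) = 0$ and therefore $\piA_1(G) = \ker\big(\piA_1(G/P_\emptyset) \to \piA_0(P_\emptyset)\big)$; here the full relative flag variety $G/P_\emptyset$ \emph{is} cellular in the strict sense of the paper, as its relative Bruhat cells are split unipotent $k$-groups and hence affine spaces over $k$ (in particular $G/P_\emptyset$ is $\A^1$-connected, having a unique zero-dimensional cell), so $\piA_1(G/P_\emptyset) = \Hcell_1(G/P_\emptyset)$ is honestly computable and $\piA_0(P_\emptyset) = \piA_0(Z)$ is understood via the $\A^1$-rigidity of $Z$ and the description in Remark \ref{rem pi0}.

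In either route one orients the relevant complex using a weak-pinning of $G$ and a reduced expression of the longest element of ${}_k W$ beginning with the reflection $s_\alpha$, and evaluates the differentials in degrees $\le 2$ in terms of the relative Cartan matrix, along the lines of Sections \ref{subsection complex in rank r}--\ref{section differential}. This should produce the epimorphism $\piA_1(S_\alpha) \twoheadrightarrow \piA_1(G)$, with $S_\alpha = \langle U_\alpha, U_{-\alpha}\rangle \cong \SL_2$ (the unique isotropic rank-one simply connected $k$-group), so $\piA_1(S_\alpha) = \KMW_2$, together with the relation identifying its kernel: trivial in the symplectic case --- which, by hypothesis, is signalled by $\alpha$ becoming a coroot of small length over $F$ --- giving $\KMW_2 \xrightarrow{\simeq} \piA_1(G)$, and equal to $\eta\,\KMW_2$ otherwise, giving $\KM_2 \xrightarrow{\simeq} \piA_1(G)$. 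Over the finite separable splitting field $F$, Theorem \ref{theorem intro precise} applied to the split groups $S_{\alpha,F}\subset G_F$ settles the dichotomy, and since $\A^1$-localisation commutes with finite separable base change (cf.\ \cite{Hoyois}), this both fixes which case occurs and, together with a transfer argument on $\KMW_2$ and the vanishing $\bigcap_j \mathbf{I}^j = 0$, can be used to identify the kernel of $\piA_1(S_\alpha)\to\piA_1(G)$ without re-running the full differential computation. Once $\piA_1(G)$ is so determined, parts (1) and (2) amount to recognising the class in $H^1_{\rm Nis}(G;\KM_2)$ (resp.\ $H^1_{\rm Nis}(G;\KMW_2)$) that induces the identity on $\piA_1(G)$ --- i.e.\ the $\A^1$-universal covering --- as the Brylinski--Deligne class \cite{Brylinski-Deligne} (resp.\ its Milnor--Witt refinement), which is known after base change to $F$; functoriality in the base then completes the argument.

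\textbf{The main obstacle.} The direct route is blocked precisely by the anisotropic kernel $M$: unlike the split torus appearing in the split case, $M$ is genuinely $\A^1$-rigid, but its Nisnevich cohomology with coefficients in a strictly $\A^1$-invariant sheaf need not vanish, so the relative Bruhat strata of $G$ need not be cohomologically trivial and a relative cellular structure on $G$ itself need not exist; one would have to exploit the $R$-triviality hypothesis to show that the contributions of $M$ cancel in degrees $\le 2$ of the relevant localisation sequences. The fibration route removes $M$ from the cellular computation but transfers the difficulty to the boundary map $\piA_1(G/P_\emptyset) \to \piA_0(Z)$, which packages the arithmetic of the centraliser $Z$; and in both routes one must keep track of the orientations of the normal bundles of the relative Schubert subvarieties, which are no longer automatically trivial and carry genuine twisting. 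Making either of these steps precise is what currently prevents the statement from being a theorem.
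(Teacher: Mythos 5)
The statement you are addressing is Conjecture \ref{conj isotropic}; the paper does not prove it, so there is no argument of the authors to compare yours against, and what you have written is in any case a programme rather than a proof --- as you acknowledge in your closing paragraph. The broad strategy (relative Bruhat decomposition, reduction to $S_\alpha$, control by base change to the splitting field $F$) is the natural one, and you have correctly isolated the first genuine obstruction: the centraliser $Z=Z_G(S)$ of a maximal split torus is $\A^1$-rigid but in general not cohomologically trivial, so the relative Bruhat strata of $G$ violate the hypotheses of Definition \ref{definition cellular} and no cellular $\A^1$-chain complex of $G$ itself is available.

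Your ``alternative route'', however, has a gap you do not flag, and it is fatal as written. You propose to compute $\piA_1(G)$ as $\ker\big(\piA_1(G/P_\emptyset)\to\piA_0(P_\emptyset)\big)$ and claim that $\piA_1(G/P_\emptyset)=\Hcell_1(G/P_\emptyset)$ is ``honestly computable''. But cellular $\A^1$-homology computes $\HA_1$, which for an $\A^1$-connected space is only the abelianization of $\piA_1$ (Corollary \ref{corollary cellular Hurewicz} combined with the Hurewicz theorem), and $\piA_1(G/P_\emptyset)$ is genuinely non-abelian: already in the split case the corollary following Theorem \ref{thm:G/B} gives $\Hcell_1(G/B)=T$ when $G$ is not of symplectic type, i.e.\ the copy of $\KM_2$ you are after dies in the abelianization and is invisible to $\Hcell_1(G/B)$. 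To extract $\piA_1(G)$ from the fibration one must pass to $BZ$ and the degree-$2$ part of the five-term exact sequence, as in Remarks \ref{rmk alternate proof 1}--\ref{rmk alternate proof 4}, and the required term $\Hcell_2(BZ)$ reintroduces precisely the anisotropic kernel you were trying to avoid. Beyond this: (i) identifying the Brylinski--Deligne class as the class inducing the identity on $\piA_1(G)$ cannot be done ``after base change to $F$'' unless the restriction $H^1_{\rm Nis}(G,\KM_2)\to H^1_{\rm Nis}(G_F,\KM_2)$ is injective, which you do not address; (ii) in the transfer argument on $\KMW_2$, the composite of restriction and transfer is multiplication by the trace form of $F/k$ in $\GW(k)$, not by $[F:k]$, so the intended divisibility conclusion does not follow as stated; and (iii) Remark \ref{rem pi0} concerns split semisimple groups and does not describe $\piA_0(Z)$ --- for $\A^1$-rigid $Z$ one has simply $\piA_0(Z)=Z$. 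None of these is obviously unfixable, but each is a real gap, consistent with the statement remaining a conjecture.
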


\begin{remarkintro}
The study of central extensions of a (locally compact) topological group and its relationship with some version of fundamental group has a long history, the works of Steinberg \cite{Steinberg}, Moore \cite{Moore} and Matsumoto \cite{Matsumoto} being some of the major milestones.  One says that the \emph{topological fundamental group} $\pi_1(G)$ of a locally compact topological group $G$ exists if the endofunctor $\CExt(G, -)$ on the category of abelian groups associating with every abelian group $A$ the abelian group of topological central extensions of $G$ by $A$ under the operation of Baer sum is representable.  The group representing $\CExt(G, -)$ is then defined to be $\pi_1(G)$; see \cite{Moore} for a systematic study and more results.  Let $G$ be the group of $k$-rational points of a split, semisimple, almost simple, simply connected algebraic group over a locally compact nondiscrete field $k$ (for example, a local field).  Then $\pi_1(G)$ exists and the explicit description of the universal central extension of $G$ by generators and relations was given by Steinberg in \cite{Steinberg}.  In \cite[p. 194]{Moore}, Moore constructed the $2$-cocycle representing Steinberg's universal central extension.  Furthermore, with the notation of Theorem \ref{theorem intro precise}, it was shown in \cite[8.1-8.4]{Moore} that for a long root $\alpha$ in the Dynkin diagram of $G$, the inclusion $S_\alpha \subset G$ induces a surjective homomorphism 
\[
\pi_1(S_\alpha) \twoheadrightarrow \pi_1(G).
\]
Moore also showed that $\pi_1(G)$ is a quotient of $\KMW_2(k)$ or $\KM_2(k)$, depending upon whether $G$ is of symplectic type or otherwise.  These results were shortly improved by Matsumoto (see Remark \ref{rem historical Matsumoto}) by constructing certain explicit central extensions over an infinite base field.  Notice the striking analogy with Theorem \ref{theorem intro precise}, which shows that the $\A^1$-fundamental group $\piA_1(G)$ is indeed the correct analogue of $\pi_1(G)$ in the algebro-geometric/sheaf-theoretic realm.
\end{remarkintro}

We end the introduction by mentioning some direct applications of our main results.  An immediate consequence of Theorem \ref{theorem intro main} is the computation of the first cohomology group of $G$ with coefficients in a strictly $\A^1$-invariant sheaf (see Section \ref{subsection preliminaries} and conventions for precise definitions).  

\begin{corollaryintro}
\label{corintro:1}
For any split, semisimple, almost simple, simply connected algebraic group $G$ over $k$ and  a strictly $\A^1$-invariant sheaf of abelian groups $\mathbf{M}$ on $Sm_k$, we have:
\[
H^1_{\rm Zar}(G, \mathbf{M}) \simeq
\begin{cases}
~ \mathbf{M}_{-2}(k),  & \text{if $G$ is of symplectic type};\\
{}_\eta\mathbf{M}_{-2}(k), & \text{if $G$ is not of symplectic type},
\end{cases}
\]
where ${}_\eta\mathbf{M}_{-2}(k)$ denotes the $\eta$-torsion in the group $\mathbf{M}_{-2}(k)$.
\end{corollaryintro}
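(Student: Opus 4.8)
The plan is to identify $H^1_{\rm Zar}(G,\mathbf{M})$ with a $\Hom$-group out of $\piA_1(G)$, and then to feed in the computation of $\piA_1(G)$ from Theorem~\ref{theorem intro main}. First, since $\mathbf{M}$ is strictly $\A^1$-invariant, its Zariski and Nisnevich cohomology agree on smooth $k$-schemes, so it is enough to compute $H^1_{\rm Nis}(G,\mathbf{M})$; moreover $K(\mathbf{M},1)$ is an $\A^1$-local, $1$-truncated pointed space, so $H^1_{\rm Nis}(G,\mathbf{M})$ coincides with the group of pointed $\A^1$-homotopy classes of maps $G\to K(\mathbf{M},1)$. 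Pointing $G$ at its neutral element and using that $G$ is $\A^1$-connected (Remark~\ref{rem pi0}), every such map factors uniquely through the $\A^1$-$1$-truncation $G\to K(\piA_1(G),1)$; hence
\[
H^1_{\rm Nis}(G,\mathbf{M}) \cong \Hom_{\mathbf{Ab}_k}(\piA_1(G),\mathbf{M}),
\]
the last step being the standard identification of pointed maps between Eilenberg--MacLane objects, using that $\piA_1(G)$ is abelian. (Alternatively, one may combine the Hurewicz isomorphism $\piA_1(G)\cong\HA_1(G)$ with the universal coefficient exact sequence for $\A^1$-homology, whose $\Ext^1_{\mathbf{Ab}_k}(\HA_0(G),\mathbf{M})$-term vanishes because $\HA_0(G)=\Z$.)

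It then remains to evaluate $\Hom_{\mathbf{Ab}_k}(\piA_1(G),\mathbf{M})$ by means of Theorem~\ref{theorem intro main}. If $G$ is of symplectic type, $\piA_1(G)\cong\KMW_2$, and I would use the identification $\Hom_{\mathbf{Ab}_k}(\KMW_2,\mathbf{M})\cong\mathbf{M}_{-2}(k)$. One way to see this: applying the previous paragraph to $\SL_2$, and using $\KMW_2=\piA_1(\SL_2)$ (Remark~\ref{remark intro K2}) together with the $\A^1$-weak equivalences $\SL_2\simeq\A^2\setminus\{0\}\simeq S^1\wedge\G_m^{\wedge 2}$ and the $S^1$-suspension isomorphism, one gets $\Hom_{\mathbf{Ab}_k}(\KMW_2,\mathbf{M})=\tilde H^1_{\rm Nis}(\SL_2,\mathbf{M})=\tilde H^0_{\rm Nis}(\G_m^{\wedge 2},\mathbf{M})=\mathbf{M}_{-2}(k)$, the last equality being the definition of the double contraction (equivalently, $\KMW_2$ is the free strictly $\A^1$-invariant sheaf on $\G_m^{\wedge 2}$). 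This settles the symplectic case. If $G$ is not of symplectic type, $\piA_1(G)\cong\KM_2$, and I would use the presentation $\KMW_3\xrightarrow{\ \eta\ }\KMW_2\to\KM_2\to 0$ of $\KM_2=\KMW_2/\eta$. Applying the left-exact functor $\Hom_{\mathbf{Ab}_k}(-,\mathbf{M})$ and the identification $\Hom_{\mathbf{Ab}_k}(\KMW_n,\mathbf{M})\cong\mathbf{M}_{-n}(k)$ from the symplectic case, one obtains
\[
\Hom_{\mathbf{Ab}_k}(\KM_2,\mathbf{M}) \cong \ker\!\big(\mathbf{M}_{-2}(k)\xrightarrow{\ \eta\ }\mathbf{M}_{-3}(k)\big) = {}_\eta\mathbf{M}_{-2}(k),
\]
which gives the remaining case.

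All the substantial inputs --- strict $\A^1$-invariance of $\mathbf{M}$, $\A^1$-connectedness of $G$, and the computation of $\piA_1(G)$ in Theorem~\ref{theorem intro main} --- are already available, so the only points requiring real care are the two identifications of $\Hom$-groups in the second paragraph. The main (and rather mild) obstacle is bookkeeping with the indexing of contractions and the degree of the Hopf map $\eta$: one must check that the map $\Hom_{\mathbf{Ab}_k}(\KMW_2,\mathbf{M})\to\Hom_{\mathbf{Ab}_k}(\KMW_3,\mathbf{M})$ induced by $\eta\colon\KMW_3\to\KMW_2$ corresponds, under the identifications above, precisely to the $\eta$-multiplication $\mathbf{M}_{-2}(k)\to\mathbf{M}_{-3}(k)$ whose kernel is ${}_\eta\mathbf{M}_{-2}(k)$.
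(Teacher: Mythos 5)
Your proposal is correct and follows essentially the same route as the paper: reduce Zariski to Nisnevich cohomology, identify $H^1_{\rm Nis}(G,\mathbf{M})$ with $\Hom_{Ab_{\A^1}(k)}(\piA_1(G),\mathbf{M})$ via $\A^1$-connectedness and the universal coefficient formula, and then use $\Hom_{Ab_{\A^1}(k)}(\KMW_2,\mathbf{M})=\mathbf{M}_{-2}(k)$ together with Theorem~\ref{theorem intro main}. The only part the paper leaves implicit and you spell out is the non-symplectic case via the presentation $\KMW_3\xrightarrow{\eta}\KMW_2\to\KM_2\to 0$ and left-exactness of $\Hom$, which is exactly the intended argument.
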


Indeed, since any $G$ as in Theorem \ref{theorem intro main} is $\A^1$-connected, for any strictly $\A^1$-invariant sheaf $\mathbf{M}$ on $Sm_k$, the universal coefficient formula gives an identification
\[
H^1_{\rm Nis}(G, \mathbf{M}) = \Hom_{Ab_{\A^1}(k)}(\piA_1(G), \mathbf{M})
\] 
and Corollary \ref{corintro:1} follows from the fact that $\Hom_{Ab_{\A^1}(k)}(\KMW_2, \mathbf{M}) = \mathbf{M}_{-2}(k)$ and that the change of topology morphism induces an isomorphism $H^1_{\rm Zar}(G, \mathbf{M}) \simeq H^1_{\rm Nis}(G, \mathbf{M})$ \cite[Corollary 5.43]{Morel-book}.  

\begin{remarkintro}
It has been shown in the work of Esnault-Kahn-Levine-Viehweg \cite[Proposition 3.20]{EKLV} and Brylinski-Deligne \cite[Proposition 4.6]{Brylinski-Deligne} that for a cycle module $\mathbf{M}_{\ast}$ in the sense of Rost \cite{Rost}, one has 
\[
H^1_{\rm Zar}(G, \mathbf{M}_{\ast}) = \mathbf{M}_{\ast - 2}(k).
\]
On the other hand, S. Gille has shown in \cite{Gille-Witt} if $k$ has characteristic different from $2$ and if $\mathbf{W}$ determines the unramified Witt sheaf of quadratic forms on $Sm_k$, then there is a dichotomy:
\[
H^1_{\rm Zar}(G, \mathbf{W}) =
\begin{cases}
\mathbf{W}(k),  & \text{if $G$ is of symplectic type};\\
0, & \text{if $G$ is not of symplectic type}.
\end{cases}
\]
Corollary \ref{corintro:1} uniformly explains and generalizes these computations of Esnault-Kahn-Levine-Viehweg, Brylinski-Deligne and S. Gille.
\end{remarkintro}

\begin{remarkintro}
Let $G$ be split, semisimple, almost simple, simply connected over $k$ and let $G_{\rm ad}$ denote the adjoint group of $G$, that is, the quotient of $G$ by its center.  The group $G_{\rm ad}$ acts on $G$ by conjugation, thereby giving an action of $G_{\rm ad}$ on $\piA_1(G)$.  If $G$ is not of symplectic type, this action is trivial.  In case $G$ is of symplectic type, this action is nontrivial and it can be shown that one has a canonical isomorphism
\[
\piA_1(G)/G_{\rm ad} \cong \KM_2
\]
of Nisnevich sheaves of abelian groups. In fact, analyzing the method of our proof applied through appropriate realization functors, we obtain a simple proof of the following result by S. Gille \cite{Gille-Suslin-homology}.

\begin{corollaryintro}
\label{corintro:2}
For any split, semisimple, almost simple, simply connected algebraic group $G$ over $k$, there is a canonical isomorphism
\[
\KM_2 \cong \HS_1(G) 
\]
of Nisnevich sheaves of abelian groups with transfers in the sense of Voevodsky, where $\HS_1$ denotes the first Suslin homology sheaf of quasi-projective schemes over $k$.
\end{corollaryintro}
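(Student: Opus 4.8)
The idea is to run the computation behind Theorem \ref{theorem intro precise} essentially unchanged, but inside Voevodsky's category $DM^{\rm eff}(k)$ of effective motives rather than in the $\A^1$-homotopy category, by pushing everything through the canonical monoidal realization functor $R\colon \HA(k) \to DM^{\rm eff}(k)$, $X \mapsto M(X)$, which sends a Thom space to the corresponding shifted and twisted motive and $\A^1$-cofiber sequences to distinguished triangles. Applying $R$ to the cellular structure on $G$ provided by the Bruhat decomposition (Section \ref{section Bruhat decomposition}) and using the motivic Thom isomorphism for the (trivial) normal bundles of the Bruhat strata, one obtains a complex of strictly $\A^1$-invariant Nisnevich sheaves \emph{with transfers} whose homology sheaves are the Suslin homology sheaves $\HS_\ast(G)$; this complex is obtained from the cellular $\A^1$-chain complex $\Ccell_\ast(G)$ by applying $R$ termwise, so in degrees $\le 2$ it is literally the explicit complex of Section \ref{section differential} with every occurring strictly $\A^1$-invariant sheaf $\mathbf{M}$ replaced by $R(\mathbf{M})$ and every differential by $R$ of the corresponding differential.

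The only new input needed is the behaviour of the building blocks and of $\eta$ under $R$: one has $R(\KM_j) = \KM_j$, while $R(\KMW_j) = \KM_j = \KMW_j/\eta$ and, crucially, $R$ sends the multiplication $\eta\colon \KMW_1 \otimes_{\A^1} \KMW_2 \to \KMW_2$ to the zero map --- equivalently $\KM_2$ is the universal strictly $\A^1$-invariant sheaf with transfers receiving a map from $\KMW_2 = \piA_1(\SL_2)$, so that $R(\piA_1(\SL_2)) = \HS_1(\SL_2) = \KM_2$. By Remark \ref{remark intro K2}, $\eta$ enters the low-degree differentials of $\Ccell_\ast(G)$ only through the units-modulo-squares action via $\<\alpha\> \cdot (u)(v) = (u)(v) + \eta(\alpha)(u)(v)$; hence after applying $R$ all the $\<\alpha\>$-actions become the identity and the $\eta$-twisted correction terms vanish. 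Consequently, in the non-symplectic case $R(d_2)$ has the same cokernel $\KM_2$ as before, and the resulting isomorphism $\KM_2 \xrightarrow{\simeq} \HS_1(G)$ is the image under $R$ of the canonical isomorphism of Theorem \ref{theorem intro precise}, induced by $S_\alpha \subset G$ for a long root $\alpha$. In the symplectic case the extra $\eta$-twisted terms in $d_2$ --- precisely those forcing the answer $\KMW_2$ at the level of $\piA_1$ --- now die, so $R(d_2)$ again has cokernel $\KM_2$; moreover the ambiguity of the non-canonical isomorphism $\KMW_2 \cong \piA_1(G)$, namely the $\<a\>$-twist of Remark \ref{remark intro SL2}, is killed by $R$, so the induced isomorphism $\KM_2 = R(\KMW_2) \xrightarrow{\simeq} \HS_1(G)$ becomes canonical, again realized by $S_\alpha \subset G$. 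Being a morphism between the first homology sheaves of complexes of sheaves with transfers which is an isomorphism of underlying Nisnevich sheaves, it is an isomorphism of sheaves with transfers; this is Corollary \ref{corintro:2}, reproving \cite{Gille-Suslin-homology}. The same realization argument simultaneously yields $\HS_0(G) = \Z$ and, combined with the $G_{\rm ad}$-action computation of Section \ref{sec:method+Cartan}, the quotient statement $\piA_1(G)/G_{\rm ad} \cong \KM_2$.

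The main obstacle is the bookkeeping in the first paragraph: one must verify that $R$ transports the cellular $\A^1$-chain complex to the cellular Suslin complex \emph{with the correct differentials}, i.e. that the chosen orientations/trivializations of the normal bundles of the Bruhat cells, the weak-pinning data, and the appearance of the units-modulo-squares action and of $\eta$ in the degree-$\le 2$ differentials are all carried over by $R$ exactly as asserted --- in particular establishing $R(\eta) = 0$ and compatibility of $R$ with the motivic Thom isomorphisms. Granting this, Corollary \ref{corintro:2} is a formal consequence of the computation already carried out for Theorem \ref{theorem intro precise}.
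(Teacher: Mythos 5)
Your proposal is correct and is essentially the paper's own argument: Section \ref{sec:method+Cartan} axiomatizes exactly this transport of the cellular computation through a symmetric monoidal triangulated functor killing $\A^1$ and satisfying Mayer--Vietoris (a ``reasonable homological chain functor''), proves Theorem \ref{thm: formalization}, and then specializes to Voevodsky's motive functor $\sH(k)\to DM(k)$, where $\Z^t(2)=\KM_2$ and $\eta^t=0$, so both the symplectic and non-symplectic cases collapse to a canonical $\KM_2$. Your bookkeeping points (Thom isomorphisms needing no orientation with transfers, the $\<a\>$-twists becoming trivial) are likewise already recorded in Remark \ref{rem cellular suslin} and the proof of Theorem \ref{thm: formalization}.
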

\end{remarkintro}

\begin{remarkintro}
Using our method relying on the cellular structure given by the Bruhat decomposition, it is also possible to deduce (or reprove) the following two classical results from topology (see Section \ref{sec:method+Cartan}):
\begin{enumerate}
\item For any split, semisimple, simply connected algebraic group $G$ over $\R$, the topological space $G(\R)$ is path-connected.  Moreover, if $G$ is almost simple and not of symplectic type, there is a canonical isomorphism
\[
\pi_1(G) = \Z/2\Z.
\]
If $G$ is of symplectic type, there is a non-canonical isomorphism
\[
\pi_1(G) = \Z.
\]
The dichotomy in our main theorem is somehow explained by this fact.  Over the reals, $\KMW_2$ ``becomes'' a free abelian group of rank one (think about $\pi_1(\SL_2(\R))$), and $\eta$ becomes the multiplication by $2$. Now, the quotient of a free abelian group of rank one by $2$ is canonically isomorphic to $\Z/2\Z$.

\item (A theorem of E. Cartan \cite{Cartan}) For any split, semisimple, almost simple, algebraic group $G$ over $\C$, the topological space $G(\C)$ is $1$-connected if and only if $G$ is simply connected in the sense of algebraic groups.  Moreover, if $G$ is simply connected in the sense of algebraic groups, then $\pi_2(G(\C))=0$ and $\pi_3(G(\C))$ is a free abelian group of rank one.\footnote{The statement on $\pi_3(G(\C))$ was only observed without proof by Cartan, see \cite[page 496]{Dieudonne}; see \cite{Bott} for a proof using Morse theory.} 
\end{enumerate}
\end{remarkintro}

Another proof of Theorem \ref{theorem intro main} (and Theorem \ref{theorem intro precise}) can be obtained by using the five-term exact sequence of low-degree terms associated with the Serre spectral sequence in cellular $\A^1$-homology for the $\A^1$-fibration $G \to G/T \to BT$, where $T$ is a maximal $k$-split torus of $G$ and by explicitly computing the cellular $\A^1$-homology of $G/T$ (which is $\A^1$-weak equivalent to $G/B$, where $B$ is a Borel subgroup of $G$).  The details regarding this approach and the determination of cellular $\A^1$-homology of the generalized flag variety $G/B$ will be taken up in the sequel.  See Remarks \ref{rmk alternate proof 1}--\ref{rmk alternate proof 4} for more information.


\subsubsection*{\bf Conventions and notation} \hfill

We will freely use the standard notation in $\A^1$-homotopy theory developed in \cite{Morel-Voevodsky} and \cite{Morel-book}.  For the sake of convenience, we fix a perfect base field $k$.  The results about $\A^1$-homotopy and $\A^1$-homology sheaves we use all extend to the case of arbitrary base fields by standard results on essentially smooth base change (see \cite[Appendix A]{Hoyois}).  

We will denote by $Sm_k$ the big Nisnevich site of smooth, finite-type schemes over $k$.  We will denote the category of simplicial Nisnevich sheaves of sets over $Sm_k$ (called \emph{spaces}) by $\Delta^{\rm op}Shv_{\rm Nis}(Sm_k)$.  A morphism $\mathcal X \to \mathcal Y$ of simplicial sheaves of sets on $Sm/k$ is a \emph{Nisnevich local weak equivalence} if it induces an isomorphism on every stalk for the Nisnevich topology.  The category $\Delta^{\rm op}Shv_{\rm Nis}(Sm_k)$ admits a proper closed model structure, called the \emph{Nisnevich local injective model structure}, in which cofibrations are monomorphisms and weak equivalences are local weak equivalences.  The associated homotopy category is called the \emph{simplicial homotopy category} and is denoted by $\mathcal H_s(k)$.  The left Bousfield localization of the Nisnevich local injective model structure with respect to the collection of all projection morphisms $\mathcal X \times \mathbb A^1 \to \mathcal X$, as $\mathcal X$ runs over all simplicial sheaves, is called the \emph{$\mathbb A^1$-model structure}. The associated homotopy category is called the \emph{$\mathbb A^1$-homotopy category} and is denoted by $\mathcal H(k)$.  There is an obvious pointed analogue of this construction starting with the category whose objects are $(\sX, x)$ where $x: \Spec k \to \sX$ is a base-point, which gives rise to the \emph{pointed $\A^1$-homotopy category} $\mathcal H_{\bullet}(k)$.

For a field $F$, we will denote its Milnor-Witt $K$-theory by $\KMW_*(F) = \underset{n \in \Z}{\oplus} \KMW_n(F)$.  It is the associative graded ring generated by a symbol $\eta$ of degree $-1$ and symbols $(u)$ of degree $1$ for each $u \in F^{\times}$ with the relations $(u)(1-u)=0$; $(uv) = (u) + (v) + \eta(u)(v)$; $\eta(u) = (u)\eta$ and $\eta h=0$, where $h = 2+ \eta(-1)$, for all $u, v \in F^{\times}$.  The symbol $\<u\>$ will denote the element $1 + \eta(u) \in \KMW_0(F) = \GW(F)$.  We will denote by $\KM_n$ the $n$th unramified Milnor $K$-theory sheaf and by $\KMW_n$ the $n$th unramified Milnor-Witt $K$-theory sheaf; see \cite[Chapter 3]{Morel-book} for a detailed exposition.  Beware that we use a slightly different notation for Milnor-Witt $K$-theory than \cite{Morel-book}, the reasons for which will become clear later (see Convention \ref{convention MW} for the choice of this notation for mostly computational reasons).  The sheaves $\KM_n$ and $\KMW_n$ are strictly $\A^1$-invariant sheaves in sense of \cite[Chapter 2]{Morel-book} over any field. Recall that for $n\geq 1$, $\KMW_n$ is the free strictly $\A^1$-invariant sheaf on $\G_m^{\wedge n}$ in the sense of \cite[Theorem 3.37]{Morel-book}, and $\KM_n$ the quotient of $\KMW_n$ by $\eta$. This can in fact be considered here as the definition of these sheaves.  We will freely use the results from \cite{Morel-book} about the category $Ab_{\A^1}(k)$ of strictly $\A^1$-invariant sheaves; particularly, the fact that it is an abelian category.  We will also repeatedly use the fact that to verify that a morphism is an isomorphism, or that two morphisms are equal in $Ab_{\A^1}(k)$, it suffices to verify the same on sections over finitely generated separable field extensions of the base field $k$. 

\section{\texorpdfstring{$\A^1$}{A1}-homology theories}
\label{section cellular}

We begin with a brief recollection of the construction and basic properties of the $\A^1$-derived category and $\A^1$-homology sheaves developed in \cite{Morel-connectivity} and \cite{Morel-book}.  These are essential for the formulation of our main results and the techniques used in the proofs.  
We will then describe the construction of the cellular complexes and cellular $\A^1$-homology sheaves.
The results of this section will be applied to the case of cellular complexes associated with the Bruhat decomposition of a split, semisimple, simply connected group and its flag variety in Sections \ref{section Bruhat decomposition}, \ref{section differential} and \ref{section applications}.

\subsection{Preliminaries on \texorpdfstring{$\A^1$}{A1}-homotopy and \texorpdfstring{$\A^1$}{A1}-homology sheaves} \hfill 
\label{subsection preliminaries}

\begin{definition}
\label{definition A1-homotopy sheaves} \cite{Morel-Voevodsky}
For any pointed space $(\sX, x)$, the $n$th $\A^1$-homotopy sheaf of $(\sX,x)$, denoted by $\piA_n(\sX,x)$, is defined to be the Nisnevich sheaf associated with the presheaf $U \mapsto \Hom_{\sH_{\bullet}(k)}(S^n \wedge U_+, (\sX,x))$.
\end{definition}

We will suppress the base-point from the notation whenever it is clear from the context.  For any pointed space $\sX$, the sheaf $\piA_0(\sX)$ is a sheaf of (pointed) sets, and $\piA_n(\sX)$ is a sheaf of groups for $n \geq 1$, and $\piA_n(\sX)$ is a sheaf of abelian groups for $n \geq 2$.  

\begin{definition}
\label{definition A1-invariance}
We say that a Nisnevich sheaf $\sF$ of abelian groups on $Sm_k$ is \emph{$\A^1$-invariant} if the projection map $U \times \A^1 \to U$ induces a bijection 
\[
\sF(U) \to \sF(U \times \A^1),
\]
for every $U \in Sm_k$.  We say that $\sF$ is \emph{strictly $\A^1$-invariant} if for every integer $i \geq 0$, the projection map $U \times \A^1 \to U$ induces a bijection 
\[
H^i_{\rm Nis}(U, \sF) \to H^i_{\rm Nis}(U \times \A^1, \sF),
\]
for every $U \in Sm_k$. 
\end{definition}

One of the main results of \cite{Morel-book} is that for any pointed space $\sX$ (over a perfect base field), the sheaf $\piA_n(\sX)$ is strictly $\A^1$-invariant for $n\geq 2$.  The same is true for $\piA_1(\sX)$ if it happens to be a sheaf of abelian groups.  Observe that this is the case if $\sX$ is a group in the pointed $\A^1$-homotopy category. We refer the reader to \cite{Morel-book} for more details on the $\A^1$-homotopy sheaves.

\begin{notation}
We will denote by $Ab(k)$ the abelian category of Nisnevich sheaves of abelian groups on $Sm_k$.  We will denote the category of strictly $\A^1$-invariant sheaves on $Sm_k$ by $Ab_{\A^1}(k)$.
\end{notation}

The category $Ab_{\A^1}(k)$ of strictly $\A^1$-invariant sheaves on $Sm_k$ happens to be an abelian category.  This assertion is a consequence of the fact that the category of strictly $\A^1$-invariant sheaves on $Sm_k$ can be identified as the heart of the homological $t$-structure on the $\A^1$-derived category \cite[Lemma 6.2.11]{Morel-connectivity}.  The category $Ab_{\A^1}(k)$ has a symmetric monoidal structure \cite[Lemma 6.2.13]{Morel-connectivity}, which we will denote by $\otimes$.  This symmetric monoidal structure is compatible with the symmetric monoidal struture on the stable motivic homotopy category over $k$ given by the smash product; see \cite[Remark 6.2.16]{Morel-connectivity}.

We will use homological conventions while working with complexes of Nisnevich sheaves of abelian groups on $Sm_k$.  Let $Ch_{\geq 0}(Ab(k))$ denote the category of chain complexes $C_*$ of objects in $Ab(k)$ (with differentials of degree $-1$) such that $C_n = 0$, for all $n<0$.  Recall that the normalized chain complex functor
\[
C_*: \Delta^{\rm op} Shv_{\rm Nis}(Sm_k) \to Ch_{\geq 0} (Ab(k)) 
\]
admits a right adjoint called the Eilenberg-MacLane functor
\[
K : Ch_{\geq 0} (Ab(k)) \to \Delta^{\rm op} Shv_{\rm Nis}(Sm_k).
\]
For every $\bM \in Ab(k)$, we set $K(\bM, n) := K(\bM[n])$.  It is a fact that the normalized chain complex functor induces a functor
\[
C_*: \sH_s(k) \to D(Ab(k)) 
\]
which on $\A^1$-localization yields a functor
\[
C_*^{\A^1}: \sH(k) \to D_{\A^1}(k),
\]
where $D_{\A^1}(k)$ is the full subcategory of $D(Ab(k))$ consisting of $\A^1$-local complexes.  The inclusion of $D_{\A^1}(k)$ into $D(Ab(k))$ admits a left adjoint, called the $\A^1$-localization functor
\[
L_{\A^1} : D(Ab(k)) \to D_{\A^1}(k).
\]
Thus for a space $\sX\in \Delta^{\rm op} Shv_{\rm Nis}(Sm_k)$, one has $C_*^{\A^1}(\sX) = L_{\A^1} (C_*(\sX))$. If $\sX$ is a pointed space, the reduced chain complex $\tilde{C}_*(\sX)$ is defined to be the kernel of the canonical morphism  $C_*(\sX)\to \Z$, which is in fact a direct summand of $C_*(\sX)$.  Indeed, the inclusion of the point induces a canonical isomorphism $C_*(\sX)= \Z \oplus\tilde{C}_*(\sX)$. In the same way, the reduced $\A^1$-chain complex of $\sX$ is $\tilde{C}_*^{\A^1}(\sX) = L_{\A^1} (\tilde{C}_*(\sX))$ and we continue to have a canonical isomorphism $C^{\A^1}_*(\sX)= \Z \oplus\tilde{C}^{\A^1}_*(\sX)$.

\begin{definition}
\label{definition A1-homology}
For any $\sX \in \Delta^{\rm op} Shv_{\rm Nis}(Sm_k)$ and $n \in \Z$, the \emph{$n$th $\A^1$-homology sheaf} of $\sX$ is defined to be the $n$th homology sheaf of the $\A^1$-chain complex $C_*^{\A^1}(\sX)$.  If $\sX$ is pointed, then we define the \emph{$n$th reduced $\A^1$-homology sheaf} of $\sX$ by $\HAred_n(\sX) := \HA_n(\tilde{C}^{\A^1}_*(\sX))$.
\end{definition}

Since $\HA_0(\Spec k) \simeq \Z$ and $\HA_n(\Spec k) = 0$, for $n \neq 0$, we have an isomorphism 
\[
\HA_*(\sX) \simeq \Z \oplus \HAred_*(\sX) 
\]
of graded abelian sheaves.  Since the $\A^1$-localization functor commutes with the simplicial suspension functor in $D(Ab(k))$, for any pointed space $\sX$ and any $n \in \Z$, we have a canonical isomorphism
\[
\HAred_n(\sX) \simeq \HAred_{n+1}(S^1\wedge\sX).
\]
As a consequence of the $\A^1$-connectivity theorem \cite[Theorem 6.22]{Morel-book}, one knows that for every space $\sX$ and every integer $n$, the $\A^1$-homology sheaves $\HA_n(\sX)$ are strictly $\A^1$-invariant sheaves that vanish if $n < 0$.

\begin{notation}
For any Nisnevich sheaf of sets $\sF$ on $Sm_k$, we write
\[
\ZA[\sF] : = \HA_0(\sF). 
\]
For a pointed Nisnevich sheaf of sets $\sF$ on $Sm_k$, we write
\[
\ZA(\sF) : = \HAred_0(\sF).  
\]
\end{notation}

The sheaf $\ZA[\sF]$ is the \emph{free strictly $\A^1$-invariant sheaf on $\sF$} in the sense that we have a canonical bijection
\[
\Hom_{Ab(k)}(\ZA[\sF], \bM) \xrightarrow{\sim}  \Hom_{Shv_{\rm Nis}(Sm_k)}(\sF, \bM)
\]
for every $\bM \in Ab(k)$. The obvious variant holds for the free \emph{free strictly $\A^1$-invariant sheaf on the pointed sheaf of sets $\sF$}.

\begin{remark} 
With the above notations, one has the following result (see \cite{Morel-book}): the canonical morphism
$\Z((\G_m)^{\wedge n})\to \KMW_n$ induces an isomorphism
\[
\ZA((\G_m)^{\wedge n})\cong \KMW_n.
\]
As already noted above, the reader who is not acquainted with the sheaves $\KMW_n$ may consider this as a definition of the sheaf $\KMW_n$.
\end{remark}

If $\bM$ is strictly $\A^1$-invariant, then $K(\bM, n)$ is $\A^1$-local for every integer $n \geq 0$ and the Dold-Kan correspondence yields a bijection
\[
H^n_{\rm Nis}(\sX, \bM) = [\sX, K(\bM, n)]_s \xrightarrow{\sim} \Hom_{D(Ab(k))}(C_*^{\A^1}(\sX), \bM[n]).
\]
Moreover, if $\sX$ is $\A^1$-$n$-connected, then by the strong $\A^1$-Hurewicz theorem \cite[Theorem 6.57]{Morel-book} along with the universal coefficient theorem we have $\HA_i(\sX) = 0$ for $i \leq n$ and
\[
H^{n+1}_{\rm Nis}(\sX, \bM) \simeq \Hom_{Ab_{\A^1}(k)}(\HA_{n+1}(\sX), \bM) \simeq \Hom_{Ab_{\A^1}(k)}(\piA_{n+1}(\sX), \bM). 
\]

\subsection{Cellular structures and orientations} \hfill 
\label{subsection cellular structures and orientations}


\begin{definition}
\label{definition strict cellular}
Let $k$ be a field and let $X \in Sm_k$ be an irreducible smooth $k$-scheme of Krull dimension $n$. A \emph{strict cellular structure} on $X$ consists of an increasing filtration 
\[
\emptyset = \Omega_{-1} \subset \Omega_0(X) \subset \Omega_1(X) \subset \cdots \subset \Omega_n(X) = X 
\]
by open subschemes such that for each $i\in\{0,\dots,n\}$ the reduced induced closed subscheme $X_i:=\Omega_i(X) - \Omega_{i-1}(X)$ of $\Omega_i(X)$ is $k$-smooth and each of its irreducible components is isomorphic to $\A^{n-i}$.  We say that $X$ is a \emph{strictly cellular scheme} if $X$ is endowed with a strict cellular structure.  We will simply write $\Omega_i$ for $\Omega_i(X)$ whenever there is no confusion.
\end{definition}

\begin{example}
The most basic example of a strict cellular scheme is the projective space $\P^n$.  Consider the obvious increasing filtration of linear projective subspaces $\emptyset\subset \P^0\subset\dots\subset \P^i\subset\dots\subset\P^n$ corresponding to the canonical flag of subspaces of $\A^{n+1}$ and set $\Omega_i = \P^n-\P^{n-i-1}$.  Since $\Omega_i - \Omega_{i-1} = \P^{n-i} - \P^{n-i-1} \simeq \A^{n-i}$, the sequence of open subschemes $\emptyset = \Omega_{-1} \subset \Omega_0 \subset \Omega_1 \subset \cdots \subset \Omega_n =\P^n$ gives a strict cellular structure on $\P^n$.  
\end{example}

Another classical example of (strict) cellular structure arises from the Bruhat decomposition on the flag variety $G/B$ of a split semisimple algebraic group $G$.  We now generalize this notion of strict cellular schemes by allowing more general schemes as ``cells'' in order to get a ``cellular structure'' on $G$ itself.   

\begin{definition}
\label{definition cohomologially trivial}
We say that $X \in Sm_k$ is \emph{cohomologically trivial} if $H^n_{\rm Nis}(X, \bM) = 0$, for every $n \geq 1$ and for every strictly $\A^1$-invariant sheaf $\bM \in Ab_{\A^1}(k)$.
\end{definition}

\begin{remark} 
Clearly, $X$ is cohomologically trivial if and only if the sheaf $\ZA[X] = \HA_0(X)$ is a projective object of the abelian category $Ab_{\A^1}(k)$.  Examples of cohomologically trivial schemes include $\Spec L$, where $L$ is a finite separable field extension of $k$, $\A^1$, $\G_m$; and more generally, open subschemes of $\A^1$.  It is easy to show that the product of two cohomologically trivial smooth $k$-schemes is again  cohomologically trivial.
\end{remark} 

We can now formulate a slightly more general definition of a cellular structure:

\begin{definition}
\label{definition cellular}
Let $k$ be a field and let $X \in Sm_k$ be a smooth $k$-scheme. A \emph{cellular structure} on $X$ consists of an increasing filtration 
\[
\emptyset = \Omega_{-1} \subsetneq \Omega_0 \subsetneq \Omega_1 \subsetneq \cdots \subsetneq \Omega_s = X 
\]
by open subschemes such that for each $i\in\{0,\dots,s\}$, the reduced induced closed subscheme $X_i:= \Omega_i - \Omega_{i-1}$ of $\Omega_i$ is $k$-smooth, affine, everywhere of codimension $i$ and cohomologically trivial.  We call $X$ a \emph{cellular scheme} if $X$ is endowed with a cellular structure.  
\end{definition}

\begin{remark}
Let the notation be as in Definition \ref{definition cellular}.
\begin{enumerate}
\item In the decomposition $X_i = \Omega_i - \Omega_{i-1} = \coprod_{j \in J_i} X_{ij}$ of $X_i$ in irreducible components, each $k$-scheme $X_{ij}$ is clearly affine, smooth, cohomologically trivial and of codimension $i$ in $\Omega_i$. Observe, however, that here $s$ need not be the Krull dimension of $X$.

\item Clearly, every strict cellular structure is a cellular structure. These definitions are rather classical; here are some comments.  One might be tempted, given a cellular structure on $X$ as above, to define the $i$-th skeleton of that structure as the closed subset $sk_i(X) = \amalg_{r\in\{i,\dots,s\}, j\in J_{r}} Y_{rj}$ (disjoint) union of the strata with higher codimension.  The problem is that in general, these skeletons are not $k$-smooth (for example, consider Schubert cells in a flag variety), and there is no obvious way to express $sk_i(X)$ as obtained from $sk_{i-1}(X)$ by ``attaching'' $i$-cells.

Contrary to the first impression, the above definition relying on an increasing filtration $\emptyset = \Omega_{-1} \subset \Omega_0 \subset \Omega_1 \subset \cdots \subset \Omega_d = X$ of open subsets is closer to the notion of $CW$-structure on a topological space $X$.  If a topological space $X$ is endowed with a $CW$-structure, with $i$-skeleton $sk_i(X)\subset X$, and where $sk_i(X)$ is obtained from $sk_{i-1}(X)$ by attaching $i$-cells, one may choose a (small) open neighborhood $U_i$ of $sk_i(X)$ such that $sk_i(X)\subset U_i$ is an homotopy equivalence and so that $U_i-U_{i-1}$ is a disjoint sum of contractible spaces indexed by the number of $i$-cells.  The quotient (or rather the ``pair'') $U_i/U_{i-1}$ is then homotopy equivalent to a wedge of pointed spaces homotopy equivalent to $i$-spheres.  In the approach taken in Definition \ref{definition cellular}, the quotient $\Omega_i/\Omega_{i-1}$ is $\A^1$-weakly equivalent to the Thom space of the normal bundle $\nu_i$ of the inclusion of $Y_i \subset \Omega_i$, by the $\A^1$-homotopy purity theorem \cite[Theorem 2.23, page 115]{Morel-Voevodsky}. We will see below that the normal bundles $\nu_i$ are always trivial as the $Y_i$ are cohomologically trivial and affine. Thus, the quotient $\Omega_i/\Omega_{i-1}$ is non canonically $\A^1$-weakly equivalent to the space $(\A^{d-i}/(\A^{d-i}-\{0\}))\wedge (Y_{ij\,\,+})$.  

These $\A^1$-equivalences can be made canonical using the notion of an oriented cellular structure introduced below (see Definition \ref{definition ocellular}), making this approach quite close to the topological one!

\item The main example of a cellular structure that is not strict cellular is given by the Bruhat decomposition of a split reductive group $G$, in which the strata $Y_{ij}$ are the Bruhat cells $BwB$ with $\length(w) = \ell-i$, where $\ell$ is the length of the longest word in the Weyl group of $G$. See Section \ref{section Bruhat decomposition} below for the details.
\end{enumerate}
\end{remark}

\begin{lemma}
\label{lemma coh trivial}
Let $X$ be a cohomologically trivial smooth affine variety over a perfect field $k$.  Then every vector bundle of rank $n\geq 1$ on $X$ is trivial.
\end{lemma}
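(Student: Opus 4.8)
The plan is to use Morel's theory of the Euler class obstruction together with the fact that the Chow--Witt (or oriented cohomology) groups that would carry the primary obstruction to splitting off a trivial line bundle are exactly the Nisnevich cohomology groups killed by cohomological triviality. More precisely, I would argue by induction on the rank $n$, reducing at each step to splitting off a free rank-one summand, which is equivalent to the vanishing of an Euler class lying in a group of the form $H^d_{\mathrm{Nis}}(X, \mathbf{K}^{\mathrm{MW}}_d(\mathcal{L}))$ or $H^d_{\mathrm{Nis}}(X, \mathbf{K}^{\mathrm{M}}_d)$, where $d$ is the relevant codimension and $\mathcal{L}$ a twist by the determinant; since such a sheaf is strictly $\A^1$-invariant, cohomological triviality of $X$ forces the group to vanish in positive degrees.

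First I would recall the standard reduction: a vector bundle $E$ of rank $n$ on a smooth affine $k$-scheme $X$ of dimension $d$ splits as $E \cong E' \oplus \sO_X$ once a suitable primary obstruction in top degree vanishes — here one uses the Bass-type stability/splitting results in the form available over a perfect field (Morel's theorem that over a perfect field a rank-$n$ bundle on a smooth affine $d$-fold with $n > d$ has a free summand, and the refined statement at $n = d$ governed by an Euler class). The Euler class $e(E)$ lives in $\widetilde{\mathrm{CH}}{}^n(X, \det E^\vee)$, which is computed as $H^n_{\mathrm{Nis}}(X, \mathbf{K}^{\mathrm{MW}}_n)$ twisted by the line bundle $\det E^\vee$; the twisted sheaf is still strictly $\A^1$-invariant. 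Then I would invoke Definition \ref{definition cohomologially trivial} directly: cohomological triviality gives $H^j_{\mathrm{Nis}}(X, \mathbf{M}) = 0$ for all $j \geq 1$ and every strictly $\A^1$-invariant $\mathbf{M}$, hence all these obstruction groups vanish and $E$ successively splits off trivial summands.

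Iterating, I would reduce to the rank-one case: a line bundle $L$ on $X$ is classified by an element of $\mathrm{Pic}(X) = H^1_{\mathrm{Nis}}(X, \G_m) = H^1_{\mathrm{Nis}}(X, \mathbf{K}^{\mathrm{M}}_1)$, and $\mathbf{K}^{\mathrm{M}}_1$ is strictly $\A^1$-invariant, so cohomological triviality gives $\mathrm{Pic}(X) = 0$ and $L$ is trivial. Combining, every vector bundle of rank $\geq 1$ on $X$ is trivial.

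The main obstacle — and the step that needs the most care — is the input from the theory of vector bundles over affine schemes: one must make sure that over an arbitrary perfect field the Euler class is the \emph{complete} obstruction to splitting off a trivial line bundle in the critical range (rank equal to dimension), and that the lower-rank cases are handled by the $\A^1$-homotopy-theoretic version of Bass's cancellation. This is exactly where the hypothesis that $k$ be perfect is used. Once the correct splitting principle is quoted, the rest is a formal consequence of Definition \ref{definition cohomologially trivial}, since every sheaf appearing as the target of an obstruction class (possibly twisted by a line bundle, which does not affect strict $\A^1$-invariance) is strictly $\A^1$-invariant and hence has vanishing higher Nisnevich cohomology on $X$.
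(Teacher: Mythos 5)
Your overall mechanism is the same one the paper uses: identify bundles on smooth affine $X$ with $\Hom_{\sH(k)}(X,BGL_n)$ via affine representability, and observe that every obstruction group in sight is a Nisnevich cohomology group of $X$ with coefficients in a strictly $\A^1$-invariant sheaf, hence vanishes by Definition \ref{definition cohomologially trivial}. The paper, however, runs the $\A^1$-Postnikov tower of $BGL_n$ itself and concludes directly that $\Hom_{\sH(k)}(X,BGL_n)$ is a point, which handles all ranks uniformly and needs no Euler class, no Chow--Witt groups, and no induction on the rank.

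The specific route you propose has a gap in the range where the rank $n$ is strictly smaller than $\dim X$. Morel's theorem that the Euler class in $H^n_{\rm Nis}(X,\KMW_n(\det E^\vee))$ is the \emph{complete} obstruction to splitting off a trivial summand is only available in the critical range $n=\dim X$ (the range $n>\dim X$ being classical Serre splitting); for $n<\dim X$ the vanishing of the Euler class is not known to suffice, and Bass-type cancellation, which you invoke for ``the lower-rank cases,'' is a statement about cancellation in the stable range, not about splitting in the unstable one. So as written your induction cannot get started for a low-rank bundle on a high-dimensional $X$. The repair is implicit in your own last sentence: instead of quoting the Euler class as the complete obstruction, run the full Moore--Postnikov obstruction theory for the fibration $BGL_{n-1}\to BGL_n$, whose $\A^1$-fiber $\A^n-\{0\}$ has strictly $\A^1$-invariant higher homotopy sheaves, so that \emph{all} obstructions (primary and higher) lie in groups $H^{i+1}_{\rm Nis}(X,\piA_i(\A^n-\{0\}))$ that vanish by cohomological triviality. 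Once phrased this way your argument is correct, but at that point it is simply a rank-by-rank repackaging of the paper's one-line Postnikov argument, with extra machinery that buys nothing here. Your reduction of the rank-one case to $\Pic(X)=H^1_{\rm Nis}(X,\KM_1)=0$ is fine and is also the observation the paper makes when discussing orientability.
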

\begin{proof}
Since $X$ is smooth affine, by a result proved in \cite{Morel-book} for $r\geq 3$ and in \cite{AHW1} in general, the set of isomorphism classes of rank $n$ vector bundles on $X$ can be identified with the set $\Hom_{\sH(k)}(X, BGL_n)$.  Let $\xi$ be a rank $n$ vector bundle on $X$ and let $\gamma$ denote the corresponding element in $\Hom_{\sH(k)}(X, BGL_n)$. By a Postnikov tower argument, obstructions to lifting through the successive Postnikov fibers of $EGL_n \to BGL_n$ lie in Nisnevich cohomology groups of $X$ with coefficients in the higher $\A^1$-homotopy sheaves of $BGL_n$. These cohomology groups vanish by cohomological triviality of $X$ since the higher $\A^1$-homotopy sheaves are strictly $\A^1$-invariant by \cite{Morel-book} and the lemma follows.
\end{proof}

We now briefly recall some facts about the notion of an orientation of a vector bundle on a smooth $k$-scheme.  

\begin{definition}
\label{definition strict orientation}
Let $r\geq 1$, and let $\xi$ be a rank $r$ vector bundle over a smooth $k$-scheme $X$. We say that $\xi$ is \emph{strictly orientable} if the line bundle $\Lambda^r(\xi)$ is trivial. A \emph{strict orientation} of $\xi$ is then a choice of an isomorphism $\theta:\A^1_X \cong \Lambda^r(\xi)$, where $\A^1_X$ is the trivial vector bundle of rank $1$ over $X$.  We will denote by $\sO r^{st}(\xi)$ the set of strict orientations of $\xi$. 
 \end{definition}

For instance, a trivialization of the vector bundle $\xi$ on $X$ induces a strict orientation of $\xi$.  Given $\theta\in\sO r^{st}(\xi)$ and a unit $\alpha\in\sO(X)^\times$, the product $\alpha \cdot \theta$ is clearly a strict orientation of $\xi$ as well. Thus, the group of units $\sO(X)^\times$ acts naturally on the set $\sO r^{st}(\xi)$.

\begin{definition} 
Let $\xi$ be a strictly orientable rank $r$ vector bundle over the smooth $k$-scheme $X$. We denote by $\sO r(\xi)$ the quotient of the set of strict orientations $\sO r^{st}(\xi)$ of $\xi$ by the action of the subgroup $\sO(X)^{\times 2}$ of squares in $\sO(X)^\times$.
\end{definition}

\begin{remark} Let the notation be as above.
\begin{enumerate}
\item If a strict orientation $\theta$ of a vector bundle $\xi$ exists, then the set $\sO r^{st}(\xi)$ is isomorphic to $\sO(X)^\times$ through the action. Hence, the set $\sO r(\xi)$ is isomorphic to the $\Z/2$-vector space $\sO(X)^\times/(\sO(X)^{\times 2})$ of units in $X$ modulo the squares.

\item In general, if $\xi$ is not strictly orientable, we may still define the sheaf of orientations of $\xi$ as the quotient sheaf of the sheaf $\Lambda^r(\xi)^\times$ by the action of the subsheaf $(\G_m)^{(2)}$ of squares in $\G_m$. This is a sheaf of $\Z/2$-vector spaces over $X$ and an orientation of $\xi$ is a section of this sheaf over $X$. There are also other notions of orientability of vector bundles, see for instance \cite[Def. 4.3]{Morel-book} where an orientation is defined to be an isomorphism of $\Lambda^r(\xi)$ with the square of a line bundle. The former definition is clearly more general.

\item Observe that a vector bundle $\xi$ of rank $r$ over a smooth cohomologically trivial $k$-scheme $X$ is automatically strictly orientable.  Indeed, the rank $1$ vector bundle $\Lambda^r(\xi)$ corresponds up to isomorphism to an element of the group $H^1(X;\G_m)$, which is trivial. If $X$ is moreover affine, then Lemma \ref{lemma coh trivial} implies that such a vector bundle is trivial.
\end{enumerate}
\end{remark}

In this article, we use only one notion of orientation, namely, the one of \emph{strict orientation} given by Definition \ref{definition strict orientation}.  We will often drop the adjective \emph{strict} for the sake of brevity.

\begin{definition}
\label{definition ocellular}
Let $X \in Sm_k$ be a smooth $k$-scheme. An \emph{oriented cellular structure} on $X$ consists of a cellular structure on $X$, 
\[
\emptyset = \Omega_{-1} \subsetneq \Omega_0 \subsetneq \Omega_1 \subsetneq \cdots \subsetneq \Omega_s = X 
\]
together with, for each $i$, an orientation $o_i$ of the normal bundle $\nu_{i}$ of the closed immersion $X_i: = \Omega_i - \Omega_{i-1}\subset \Omega_i$. One also says that the choices of the $o_i$'s constitute an orientation of the given cellular structure.
\end{definition}

\begin{definition}
\label{def cellular morphism} 
Let $X$ and $Y$ be smooth $k$-schemes, both endowed with a cellular structure. A morphism of $k$-schemes $f: X\to Y$ is said to be cellular if it preserves the open filtrations, that is, $f(\Omega_i(X))\subset \Omega_i(Y)$ for every $i$. 

For each $i$, we denote by $X_i$ the reduced induced closed subscheme of $\Omega_i(X)$ on the closed complement $\Omega_i(X) -\Omega_{i-1}(X)$, and respectively by $Y_i$, the corresponding closed subschemes of $\Omega_i(Y)$.  Since a cellular morphism $f: X \to Y$  preserves the open filtrations, $f$ induces a morphism of smooth $k$-schemes $f:X_i\to Y_i$ for each $i$, and also a morphism on the induced normal bundles $\nu_f: \nu_{X_i}\to (f|_{X_i})^*(\nu_{Y_i})$. If the cellular structures of $X$ and $Y$ are oriented, then a cellular morphism $f$ is said to be \emph{oriented} if for each $n$ the induced morphism on normal bundles $\nu_f: \nu_{X_n}\to (f|_{X_n})^*(\nu_{Y_n})$ preserves the orientations (which implies in particular that $\nu_f: \nu_{X_n}\to (f|_{X_n})^*(\nu_{Y_n})$ is an isomorphism).
\end{definition}
 
\begin{example}
\label{ex proj space} 
We consider the $n$-dimensional projective space $\P^n$ over $k$. Consider the obvious increasing filtration of linear projective subspaces $$\emptyset\subset \P^0\subset\dots\subset \P^i\subset\dots\subset\P^n$$ corresponding to the canonical flag of subspaces of $\A^{n+1}$ and set $\Omega_i = \P^n-\P^{n-i-1}$. If $X_0, \ldots, X_n$ denote the usual homogeneous coordinates on $\P^n$; or, in other words, the canonical generating sections of $\sO(1)$, then clearly $\Omega_i = \overset{i}{\underset{j = 0}{\cup}} D_+(X_j)$, where $D_+(X_j)$ denotes the complement of the closed subscheme of $\P^n$ defined by $X_j$. Thus $\Omega_i - \Omega_{i-1} = \P^{n-i} - \P^{n-i-1}$ is the zero locus on $\Omega_i$ of the rational functions $\frac{X_0}{X_i}, \ldots, \frac{X_{i-1}}{X_i}$.  Since this sequence is a regular sequence, the normal bundle $\nu_i$ is trivialized by this sequence, thereby inducing an orientation. This data forms the canonical oriented cellular structure on $\P^n$.
\end{example}

The following lemma allows us to generate more examples of oriented cellular schemes. 

\begin{lemma} 
\label{lemma torsor} 
Let $T$ be a split $k$-torus, $X\in Sm_k$ be an oriented cellular scheme, and let $\pi:Y\to X$ be a $T$-torsor over $X$. Let 
\[
\emptyset = \Omega_{-1} \subsetneq \Omega_0 \subsetneq \Omega_1 \subsetneq \cdots \subsetneq \Omega_s = X 
\]
be the open filtration of $X$ defining the cellular structure. Define an increasing filtration by open subsets on $Y$ by pulling back the $\Omega_i$: $\tilde{\Omega}_i:=\pi^{-1}(\Omega_i)$. Then the sequence
\[
\emptyset = \tilde{\Omega}_{-1} \subsetneq \tilde{\Omega}_0 \subsetneq \tilde{\Omega}_1 \subsetneq \cdots \subsetneq \tilde{\Omega}_s = Y
\]
is a cellular structure on $Y$. Moreover, with the obvious notation, the normal bundle of $Y_i\subset \tilde{\Omega}_i$ is the pull-back of the normal bundle of $X_i\subset \Omega_i$ so that the oriented cellular structure on $X$ induces a canonical oriented cellular structure on $Y$.
\end{lemma}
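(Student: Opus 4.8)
The plan is to check, stratum by stratum, that the pulled-back filtration satisfies Definition~\ref{definition cellular}, and then to transport the orientations by pulling back along $\pi$. First I would dispose of the formal points: each $\widetilde\Omega_i=\pi^{-1}(\Omega_i)$ is open in $Y$; the filtration is strictly increasing because $\pi$ is surjective, so $\pi^{-1}$ preserves strict inclusions of opens; and $\widetilde\Omega_{-1}=\pi^{-1}(\emptyset)=\emptyset$, $\widetilde\Omega_s=\pi^{-1}(X)=Y$. Writing $Y_i:=\widetilde\Omega_i-\widetilde\Omega_{i-1}$ with its reduced structure, one has $Y_i=Y\times_X X_i$ (the right-hand side is already reduced, being smooth over the reduced scheme $X_i$), and $\pi$ restricts to a $T$-torsor $\pi_i\colon Y_i\to X_i$, namely the pullback of $Y\to X$ along $X_i\hookrightarrow X$. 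Since a torsor under a smooth group scheme is a smooth morphism and $X_i$ is $k$-smooth, $Y_i$ is $k$-smooth; since $\pi_i$ is flat of pure relative dimension $\dim T$, the locus $Y_i\subset\widetilde\Omega_i$ is everywhere of codimension $i$, by flat base change applied to the codimension-$i$ locus $X_i\subset\Omega_i$.

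The only point requiring an idea is that each $Y_i$ is affine and cohomologically trivial. Here I would use that $X_i$ is affine and cohomologically trivial, so that $\Pic(X_i)=H^1_{\rm Nis}(X_i,\G_m)=0$: the sheaf $\G_m=\KM_1$ is strictly $\A^1$-invariant, so this vanishing is forced by cohomological triviality of $X_i$, exactly as observed in the remark following Definition~\ref{definition strict orientation}. As $T$ is split, $T\cong\G_m^{\,r}$ with $r=\dim T$, whence $H^1_{\rm Nis}(X_i,T)\cong\Pic(X_i)^{\oplus r}=0$ and the torsor $\pi_i$ is trivial: $Y_i\cong\G_m^{\,r}\times_k X_i$. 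This is a product of cohomologically trivial smooth affine $k$-schemes, hence is again affine and cohomologically trivial. This finishes the verification that $\widetilde\Omega_\bullet$ is a cellular structure on $Y$; observe that $\pi$ is then tautologically a cellular morphism.

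For the orientations, I would consider the Cartesian square with horizontal arrows the closed immersions $Y_i\hookrightarrow\widetilde\Omega_i$ and $X_i\hookrightarrow\Omega_i$ and vertical arrows the (smooth, hence flat) restrictions of $\pi$. Since $X_i\subset\Omega_i$ is a regular immersion, flat base change for conormal sheaves shows that $Y_i\subset\widetilde\Omega_i$ is a regular immersion with normal bundle $\nu_{Y_i}\cong\pi_i^\ast\nu_{X_i}$. As pullback commutes with $\Lambda^i(-)$ and carries the trivial line bundle to the trivial line bundle, a strict orientation $\theta_i\colon\A^1_{X_i}\xrightarrow{\sim}\Lambda^i(\nu_{X_i})$ pulls back to a strict orientation $\pi_i^\ast\theta_i\colon\A^1_{Y_i}\xrightarrow{\sim}\Lambda^i(\nu_{Y_i})$ depending only on the class $o_i\in\sO r(\nu_{X_i})$; setting $o_i':=\pi_i^\ast o_i$ for all $i$ yields the asserted oriented cellular structure on $Y$, with respect to which $\pi$ is an oriented cellular morphism in the sense of Definition~\ref{def cellular morphism}. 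I expect the main obstacle to be precisely the affineness and cohomological triviality of the strata $Y_i$, which hinges on the triviality of split-torus torsors over cohomologically trivial affine schemes (equivalently, on $\Pic(X_i)=0$); everything else should be routine bookkeeping with flat (smooth) base change.
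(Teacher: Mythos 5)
Your proof is correct and follows essentially the same route as the paper's (one-sentence) argument: the key point in both is that $Y_i\to X_i$ is a torsor under a split torus over a cohomologically trivial affine base, hence trivial, so $Y_i\cong \G_m^{\,r}\times_k X_i$ is again smooth, affine and cohomologically trivial, and the normal bundles pull back. You have merely made explicit the details (vanishing of $\Pic(X_i)$ via strict $\A^1$-invariance of $\G_m$, flat base change for codimension and for conormal sheaves) that the paper declares to be clear.
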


\begin{proof} The smooth $k$-scheme $\tilde{\Omega}_i - \tilde{\Omega}_{i-1}$ is clearly affine and cohomologically trivial, as it is a (trivial) $\G_m$-torsor on $\Omega_i$. The rest easily follows from this.
\end{proof}

\begin{example} 
In this way we get a canonical oriented cell structure on $\A^{n+1}-\{0\}$ using the usual $\G_m$-torsor $\A^{n+1}-\{0\}\to \P^n$.
\end{example}

\begin{remark}
By Lemma \ref{lemma coh trivial}, the normal bundles $\nu_{i}$ in Definition \ref{definition ocellular} of an oriented cellular structure on a smooth $k$-scheme are in fact always trivial.  The main point is not the orientability, but the choice of an orientation.  In all our cases, for instance the oriented cell structure of $\P^n$ described above, the triviality is always proved directly without invoking Lemma \ref{lemma coh trivial}.
\end{remark}

We next record the following particular case of the Thom isomorphism theorem for oriented bundles.

\begin{lemma}
\label{lem:Thomiso} 
Let $\xi$ be a trivial vector bundle of rank $r$ over a smooth $k$-scheme $X$, endowed with an orientation $o\in \sO r(\xi)$. Then there exists an isomorphism $\phi: \xi \stackrel{\simeq}{\to} \A^r_X$ of vector bundles over $X$, which preserves the orientations. Moreover, if $X$ is cohomologically trivial, then the induced pointed $\A^1$-homotopy class (which is an $\A^1$-homotopy equivalence)
\[
Th(\xi) \to \A^r/(\A^r-\{0\}) \wedge (X_+)
\]
only depends on $o\in \sO r(\xi)$.
\end{lemma}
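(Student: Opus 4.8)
The plan is to prove the two assertions separately: the first is linear algebra, while the second combines the $\A^1$-homotopy purity theorem with the cohomological triviality of $X$ and the Thom isomorphism theorem for oriented line bundles.

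For the first assertion, fix any isomorphism $\psi\colon\xi\xrightarrow{\simeq}\A^r_X$ of vector bundles, which exists since $\xi$ is trivial. Taking top exterior powers yields an isomorphism $\Lambda^r(\psi)\colon\Lambda^r(\xi)\xrightarrow{\simeq}\Lambda^r(\A^r_X)=\A^1_X$, hence a strict orientation $\psi^{\ast}(\theta_{\mathrm{can}})$ of $\xi$, where $\theta_{\mathrm{can}}$ denotes the canonical strict orientation of $\A^r_X$. Choose a strict orientation $\theta$ of $\xi$ lifting the given class $o\in\sO r(\xi)$. Since $\sO(X)^{\times}$ acts simply transitively on the set of strict orientations of $\xi$, and post-composing $\psi$ with $\mathrm{diag}(\alpha,1,\dots,1)$ modifies $\psi^{\ast}(\theta_{\mathrm{can}})$ by $\alpha$, we may choose $\alpha\in\sO(X)^{\times}$ so that $\phi:=\mathrm{diag}(\alpha,1,\dots,1)\circ\psi$ satisfies $\phi^{\ast}(\theta_{\mathrm{can}})=\theta$. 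In particular $\phi$ carries $o$ to the canonical orientation class of $\A^r_X$, which proves the first assertion.

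For the second assertion, identify $Th(\A^r_X)$ canonically with $\A^r/(\A^r-\{0\})\wedge X_+$, so that any orientation-preserving isomorphism $\phi$ as above produces a pointed map $Th(\phi)\colon Th(\xi)\to\A^r/(\A^r-\{0\})\wedge X_+$, which is an $\A^1$-weak equivalence by purity. If $\phi_1,\phi_2$ are two orientation-preserving isomorphisms, set $g:=\phi_2\circ\phi_1^{-1}\in GL_r(\sO(X))$; since each $\phi_i$ takes $o$ to the canonical orientation up to squares, $\det(g)=s^2$ for some $s\in\sO(X)^{\times}$. As $Th(\phi_2)=Th(g)\circ Th(\phi_1)$, it suffices to prove that the self-equivalence $Th(g)$ of $\A^r/(\A^r-\{0\})\wedge X_+$ equals the identity in $\mathcal{H}_{\bullet}(k)$ whenever $\det(g)$ is a square. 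Writing $g':=\mathrm{diag}(s^{-2},1,\dots,1)\circ g\in SL_r(\sO(X))$, we have $g=\mathrm{diag}(s^2,1,\dots,1)\circ g'$, hence $Th(g)=Th(\mathrm{diag}(s^2,1,\dots,1))\circ Th(g')$, and it is enough to show that each of these two factors is the identity in $\mathcal{H}_{\bullet}(k)$.

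This last step is where I expect the real work to lie. For $Th(g')$: the map $g'\colon X\to SL_r$ is trivial in $\mathcal{H}(k)$, because $SL_r$ is $\A^1$-connected with abelian, strictly $\A^1$-invariant higher $\A^1$-homotopy sheaves while $X$ is cohomologically trivial, so that $H^{i}_{\mathrm{Nis}}(X,\mathbf{M})=0$ for all $i\geq1$ and all strictly $\A^1$-invariant $\mathbf{M}$; an $\A^1$-Postnikov tower and obstruction-theory argument then gives $\Hom_{\mathcal{H}(k)}(X,SL_r)=\ast$. Since $Th(g')$ is built functorially from the class of $g'$ together with the action map $\A^r/(\A^r-\{0\})\wedge(SL_r)_+\to\A^r/(\A^r-\{0\})$, it follows that $Th(g')=\mathrm{id}$ in $\mathcal{H}_{\bullet}(k)$. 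For $Th(\mathrm{diag}(s^2,1,\dots,1))$, which is multiplication by the square $s^2$ on the first coordinate, the Thom isomorphism theorem for oriented line bundles identifies it with $\langle s^2\rangle=\langle1\rangle=\mathrm{id}$ in $\GW(\sO(X))$. Combining these gives $Th(g)=\mathrm{id}$, completing the argument. The delicate points are precisely these two inputs — that cohomological triviality forces $\Hom_{\mathcal{H}(k)}(X,SL_r)=\ast$ in a manner compatible with the Thom-space construction, and the oriented Thom isomorphism theorem — together with the more elementary but error-prone bookkeeping of which unit a change of trivialization multiplies $\Lambda^r$ by, and hence why being a square is exactly what can be absorbed.
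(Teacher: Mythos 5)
Your proof follows the paper's argument essentially step for step: the first assertion is handled by the same determinant correction on the first coordinate, and for the second you make the same decomposition $g=\mathrm{diag}(s^2,1,\dots,1)\circ g'$ with $g'\in \SL_r(\sO(X))$, killing $Th(g')$ via the cohomological triviality of $X$ and the $\A^1$-Postnikov tower of the $\A^1$-connected group $\SL_r$. The one place you diverge is the remaining factor $Th(\mathrm{diag}(s^2,1,\dots,1))$: you dispose of it by citing ``the Thom isomorphism theorem for oriented line bundles'' to identify it with $\langle s^2\rangle=1$, but in rank one that theorem \emph{is} the statement being proved, and the identification of a (varying) unit $u\in\sO(X)^\times$ acting on $\A^1/(\A^1-\{0\})\wedge X_+$ with $\langle u\rangle$ in some $\GW(\sO(X))$ is not an off-the-shelf fact over a general base $X$. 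The paper closes this step directly: pass to the $\A^1$-equivalent model $\P^1\wedge X_+$, observe that $[x,y]\mapsto[s^2x,y]$ coincides with $[x,y]\mapsto[sx,s^{-1}y]$, which is induced by an element of $\SL_2(\sO(X))$ and is therefore $\A^1$-nullhomotopic by the same cohomological-triviality argument you already used for $g'$. With that two-line substitution your argument is complete.
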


\begin{proof} Let $\psi: \xi \stackrel{\simeq}{\to} \A^r_X$ be an isomorphism of vector bundles over $X$. Taking the top exterior power $\Lambda^r$ and using the strict orientations on both sides yields an isomorphism $\A^1_X \stackrel{\simeq}{\to} \A^1_X$ of line bundles over $X$, which corresponds to an invertible regular function $\alpha$ on $X$.  Multiplying $\psi$ on the first coordinate by $\alpha^{-1}$ yields now an isomorphism $\phi: \xi \stackrel{\simeq}{\to} \A^r_X$ which preserves the two strict orientations.

Let $\phi': \xi \stackrel{\simeq}{\to} \A^r_X$ be another isomorphism taking $\theta$ to the strict orientation of $\A^r_X$ multiplied by a square $\alpha^2$ of a unit $\alpha\in\sO(X)^\times$. The automorphism $\Theta = \phi'\circ \phi^{-1}: \A^r_X \stackrel{\simeq}{\to} \A^r_X$ preserves the strict orientations up to multiplication by $\alpha^2$; that is, an element of $\Hom_k(X, \GL_r)$ whose determinant is $\alpha^2$. Let $\Theta'\in \Hom_k(X, \GL_r)$ be the diagonal matrix with $\alpha^2$ on the first entry and $1$ on the others. Then $\Theta = \Theta'\circ \Theta''$ with $\Theta''\in \Hom_k(X,\SL_r)$. As $X$ is cohomologically trivial and as $\SL_r$ is $\A^1$-connected, a standard Postnikov tower argument shows that any morphism of $k$-schemes $X\to \SL_r$ is $\A^1$-homotopically trivial. This implies that the induced pointed morphism on Thom spaces 
\[
 Th(\Theta): \A^r/(\A^r-\{0\}) \wedge (X_+) \to \A^r/(\A^r-\{0\}) \wedge (X_+)
\]
induced by $\Theta$ is $\A^1$-homotopic to the one induced by $\Theta'$. Since $\A^r/(\A^r-\{0\})$ is isomorphic to $ (\A^1/(\A^1-\{0\}))\wedge (\A^{r-1}/(\A^{r-1}-\{0\}))$, the induced $\A^1$-homotopy class 
\[
Th(\Theta'): \A^r/(\A^r-\{0\}) \wedge (X_+) \to \A^r/(\A^r-\{0\}) \wedge (X_+) 
\]
is clearly the smash product of the identity of $\A^{r-1}/(\A^{r-1}-\{0\})$ with the morphism $\A^1/(\A^1-\{0\})\wedge (X_+) \stackrel{\simeq}{\to} \A^1/(\A^1-\{0\})\wedge (X_+)$ induced by $\alpha^2$. The latter morphism is $\A^1$-homotopic to the identity: this follows from the standard argument that $\P^1\wedge (X_+)\rightarrow \A^1/(\A^1-\{0\})\wedge (X_+)$ is an $\A^1$-weak equivalence, and that the automorphism $\P^1 \wedge (X_+) \stackrel{\simeq}{\to} \P^1 \wedge (X_+)$ given by $[x,y]\mapsto [\alpha^2 x,y]$ is equal to $[x,y]\mapsto [\alpha x, \alpha^{-1}y]$, which is $\A^1$-homotopically trivial, being induced by an element of $\SL_2(X)$.
\end{proof}

\begin{remark}
\label{rem cell} \hspace{1cm}
\begin{enumerate}
\item 
For an oriented cellular scheme $X\in Sm_k$ and a $T$-torsor $Y\to X$ over $X$, with $T$ a split $k$-torus, the morphism $Y\to X$ is clearly an oriented cellular morphism for the oriented cellular structure on $Y$ described in Lemma \ref{lemma torsor}. 

\item Using the notion of an oriented cellular morphism described in Definition \ref{def cellular morphism}, we may also extend the notion of an oriented cellular structure to ind-smooth $k$-schemes in an obvious way, by requiring that each object has a given (oriented) cellular structure and each morphism is (oriented) cellular.  For instance, the closed immersion $\P^{n-1}\subset \P^n$ (defined by $X_n = 0$) is an oriented cellular morphism; observe that $\Omega_{n-1}$ already contains $\P^{n-1}$, and the intersection of the $\Omega_i$'s, for $i\leq (n-1)$, with $\P^{n-1}$ is exactly the $\Omega_i$'s of $\P^{n-1}$.  In this way, we get an oriented cell structure on $\P^\infty$.

\item We may also adapt the notion of an oriented cellular structure to simplicial smooth $k$-schemes, by requiring that each term of the simplicial smooth $k$-scheme is given a (oriented) cellular structure and each face morphism is (oriented) cellular. For example, the classifying space $BT$ of any split torus $T$ has a canonical oriented cellular structure induced by the one on $T$.
\end{enumerate}
\end{remark}

\subsection{Cellular \texorpdfstring{$\A^1$}{A1}-chain complexes} \hfill 
\label{subsection cellular chain complex}

We are now set to introduce the notions of \emph{cellular $\A^1$-chain complex}, \emph{oriented cellular $\A^1$-chain complex} and the corresponding \emph{cellular $\A^1$-homology}. Let $X$ be a smooth $k$-scheme with a cellular structure. Let
\[
\emptyset = \Omega_{-1} \subset \Omega_0 \subset \Omega_1 \subset \cdots \subset \Omega_s = X  
\]
be the corresponding filtration by open subschemes as in Definition \ref{definition cellular}.  Let $\Omega_i - \Omega_{i-1} = \coprod_{j \in J_i} Y_{ij}$ be the decomposition of $\Omega_i - \Omega_{i-1}$ in irreducible components. We can now apply the motivic homotopy purity theorem \cite[Theorem 2.23, page 115]{Morel-Voevodsky} to get a canonical pointed $\A^1$-weak equivalence
\begin{equation}\label{eq1}
 \Omega_i/\Omega_{i-1} \simeq_{\A^1} Th(\nu_i)
\end{equation}
with $\nu_i$ the normal bundle of the closed immersion $X_i\hookrightarrow \Omega_i$ of smooth $k$-schemes. Consider the cofibration sequence
\[
\Omega_{i-1} \to \Omega_i \to \Omega_i/\Omega_{i-1}.
\]
In the corresponding long exact sequence of reduced $\A^1$-homology sheaves 
\[
\cdots \to \HAred_n(\Omega_{i-1}) \to \HAred_n(\Omega_i) \to \HAred_n(\Omega_i/\Omega_{i-1}) \xrightarrow{\delta} \HAred_{n-1}(\Omega_{i-1}) \to \cdots
\]
we can use the previous identification 
\[
\HAred_n(\Omega_n/\Omega_{n-1}) \simeq \HAred_n(Th(\nu_n))
\]
Define $\partial_n$ to be the composite
\[
\HAred_n(Th(\nu_n))\cong \HAred_n(\Omega_n/\Omega_{n-1}) \xrightarrow{\delta} \HAred_{n-1}(\Omega_{n-1}) \to \HAred_{n-1}(\Omega_{n-1}/\Omega_{n-2})\cong \HAred_{n-1}(Th(\nu_{n-1}))
\]
where the map $\HAred_{n-1}(\Omega_{n-1}) \to \HAred_{n-1}(\Omega_{n-1}/\Omega_{n-2})$ is induced by the previous cofibration sequence
\[
\Omega_{n-2} \to \Omega_{n-1} \to \Omega_{n-1}/\Omega_{n-2}. 
\]
It is easy to see that $\partial_{n-1} \circ \partial_n = 0$, for every $n$, thus we get a chain complex (in the category of strictly $\A^1$-invariant sheaves)
\[
 \Ccell_*(X) := (\HAred_n(Th(\nu_n)),\partial_n)
\]
The chain complex $\Ccell_*(X)$ is called the \emph{cellular $\A^1$-chain complex} associated to the cellular structure on $X$, and its homology sheaves $H_n(\Ccell_*(X))$ are called the \emph{cellular $\A^1$-homology} sheaves of $X$.

If we choose an orientation of the given cellular structure on $X$, the chosen orientations (and the canonical $\A^1$-weak equivalence $\A^i/(\A^i-\{0\}) = S^i \wedge \G_m^{\wedge i}$ ) together with the pointed $\A^1$-equivalence of Lemma \ref{lem:Thomiso} define canonical pointed $\A^1$-weak equivalences
\begin{equation}\label{eq2}
\Omega_i/\Omega_{i-1} \simeq_{\A^1} Th(\nu_i) \simeq_{\A^1}  S^i \wedge \G_m^{\wedge i} \wedge \big(\vee_{j \in J_i} (Y_{ij})_+ \big).
\end{equation}
Thus we can further identify, for each $n$:
\[
\HAred_n(\Omega_n/\Omega_{n-1}) \simeq \HAred_n(Th(\nu_n))  \simeq   \underset{j \in J_n}{\oplus} \HAred_n(S^n \wedge \G_m^{\wedge n} \wedge (Y_{nj})_+).
\]
Using the suspension isomorphisms in $\A^1$-homology, and the identification $\ZA(\G_m^{\wedge n}) = \KMW_n$, we get isomorphisms of strictly $\A^1$-invariant sheaves:
\[
\underset{j \in J_n}{\oplus} \HAred_n(S^n \wedge \G_m^{\wedge n} \wedge (Y_{nj})_+)\simeq
\underset{j \in J_n}{\oplus} \HAred_0(\G_m^{\wedge n} \wedge (Y_{nj})_+)\simeq \underset{j \in J_n}{\oplus} \KMW_n \otimes \ZA[Y_{nj}]
\]
(the tensor product being performed in the category of strictly $\A^1$-invariant sheaves). Thus using the orientations, the cellular chain complex $\Ccell_*(X)$ becomes isomorphic to the chain complex with terms in degree $n$: $\underset{j \in J_n}{\oplus} \KMW_n \otimes \ZA[Y_{nj}]$. 

Now we denote by $\tilde\partial_n$ the induced composite
\[
\underset{j \in J_n}{\oplus} \KMW_n \otimes \ZA[Y_{nj}] \simeq \HAred_n(Th(\nu_n)) \xrightarrow{\partial_n} \HAred_{n-1}(Th(\nu_{n-1})) \simeq \underset{j' \in J_{n-1}}{\oplus} \KMW_{n-1}\otimes \ZA[Y_{n-1,j'}]
\]

\begin{definition}
\label{definition cellular homology}
The chain complex (in the category of strictly $\A^1$-invariant sheaves)
\[
\Ctcell_*(X) := \left(\underset{j \in J_n}{\oplus} \KMW_n \otimes \ZA[Y_{nj}], \tilde\partial_n\right)_n
\]
is called the \emph{oriented cellular $\A^1$-chain complex} associated to the oriented cellular structure on $X$.
The $n$th homology sheaf of this complex will be called $n$-th \emph{oriented cellular $\A^1$-homology} of $X$.  After choosing the orientations, the $n$-th oriented cellular $\A^1$-homology of $X$ is seen to be isomorphic to the cellular $\A^1$-homology sheaves $\Hcell_n(X)$.
\end{definition}

\begin{remark}\label{rem cellular suslin} \hfill 
\begin{enumerate}
\item It is easy to check using the previous description that each term $\Ccell_n(X)$ is a projective object of $Ab_{\A^1}(k)$. Indeed, for any cohomologically trivial smooth $k$-scheme $Y$, the strictly $\A^1$-invariant sheaf $\ZA[Y]$ is clearly a projective object of $Ab_{\A^1}(k)$. More generally, for each $n\geq 1$, the tensor product $\KMW_n \otimes \ZA[Y]$ is also a projective object of $Ab_{\A^1}(k)$.  This can be seen by using the adjunction formula:
\[
\Hom_{Ab_{\A^1}(k)}(\KMW_n \otimes \ZA[Y], \bM) \cong\Hom_{Ab_{\A^1}(k)}(\ZA[Y], \bM_{-n}),
\]
for any $\bM\in Ab_{\A^1}(k)$ and the fact that the functor $\bM\mapsto \bM_{-1}$ is an exact functor, which follows from the fact that $\G_m$ is cohomologically trivial.

\item Let $X$ be a cellular smooth $k$-scheme. Without the choice of orientations, the cellular $\A^1$-chain complex $\Ccell_*(X)$ is practically uncomputable.  Its oriented version, $\Ctcell_*(X)$, depending on the choice of orientations is often more explicitly computable. See also the intrinsic interpretation given below in Section \ref{subsection intrinsic cellular chain complex}.

\item If one uses the variant of Suslin homology instead of $\A^1$-homology, the Thom isomorphisms exist without choice of orientations, so that the cellular Suslin chain complex $\CScell_*(X)$ is a well defined chain complex of $\A^1$-invariant sheaves with transfers in the sense of Voevodsky, and is defined for any cellular structure on $X$.
\end{enumerate}
\end{remark}

Let $f: X\to Y$ be a cellular morphism of cellular smooth $k$-schemes (see Definition \ref{def cellular morphism}). For each $i$, we denote by $X_i$ the reduced induced closed subscheme of $\Omega_i(X)$ on the closed complement $\Omega_i(X)-\Omega_{i-1}(X)$, and in the same way $Y_i$, the corresponding closed subschemes of $\Omega_i(Y)$. Then, as $f$  preserves the open filtrations, $f$ induces a morphism of smooth $k$-schemes $f:X_i\to Y_i$ for each $i$, and also a morphism on the induced normal bundles $\nu_f: \nu_{X_i}\to (f|_{X_i})^*(\nu_{Y_i})$. It follows easily that $f$ induces then a canonical morphism of cellular $\A^1$-chain complexes:
\[
 f^{\rm cell}_*:\Ccell_*(X)  \rightarrow  \Ccell_*(Y)
\]
If the cellular structures of $X$ and $Y$ are oriented, the morphism $f^{\rm cell}_*$ induces a morphism $\tilde{f}^{\rm cell}_*: \Ctcell_*(X)  \rightarrow  \Ctcell_*(Y)$ of oriented cellular $\A^1$-chain complexes, so that in degree $n$, $\tilde{f}^{\rm cell}_n$ is the morphism
\[
\KMW_n \otimes \ZA[X_n] \cong \HAred_n(Th(\nu_{X_n})) \xrightarrow{\HAred_n(Th(\nu_f))} \HAred_n(Th(\nu_{Y_n})) \cong \KMW_n\otimes \ZA[Y_n]
\]
induced by $\nu_f$ in the obvious way. 

If $f$ is an oriented cellular morphism, that is, if for each $n$ the induced morphism on normal bundles $\nu_f: \nu_{X_n}\to (f|_{X_n})^*(\nu_{Y_n})$ preserves the orientations, then $\tilde{f}^{\rm cell}_n$ is just the tensor product of $\HAred_0(f|_{X_n}): \ZA[X_n]\to \ZA[Y_n]$ with $\KMW_n$.

\subsection{Intrinsic interpretation in the derived category \texorpdfstring{$D(Ab_{\A^1}(k))$}{D(AbA1(k))}} \hfill 
\label{subsection intrinsic cellular chain complex}

Given a sheaf of abelian groups $\bM$ on $Sm_k$, we can construct the cohomological cellular cochain complex $C^*_{\rm cell}(X;\bM)$ which is the (cochain) complex of abelian groups with terms $C_{\rm cell}^n(X;\bM) := \Hom_{Ab_{\A^1}(k)} (\Ccell_n(X),\bM)$ and the obvious induced differentials.

\begin{proposition}
\label{prop: cohomology}
Let $X$ be a cellular smooth scheme over $k$.  For any strictly $\A^1$-invariant sheaf $\bM$ on $Sm_k$, we have isomorphisms
\[
H^n_{\rm Nis}(X, \bM) \xrightarrow{\sim} H^n(C_{\rm cell}^*(X; \bM)) \xrightarrow{\sim} \Hom_{D(Ab_{\A^1}(k))}(\Ccell_*(X), \bM[n]),
\]
natural in both $X$ and $\bM$.
\end{proposition}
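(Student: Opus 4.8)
The plan is to deduce both isomorphisms from the behaviour of Nisnevich cohomology with respect to the cellular filtration; the second isomorphism is then a purely formal consequence, and the first one is the substantive statement. I will treat them separately.

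For the second isomorphism, no hypothesis on the geometry of $X$ is needed. By Remark \ref{rem cellular suslin}(1) each term $\Ccell_n(X)=\bigoplus_{j\in J_n}\KMW_n\otimes\ZA[Y_{nj}]$ is a projective object of $Ab_{\A^1}(k)$, and $\Ccell_*(X)$ is concentrated in degrees $0\leq n\leq s$; hence $\Ccell_*(X)$ is a bounded complex of projectives, so it is $K$-projective. Therefore, for every $\bM\in Ab_{\A^1}(k)$ and every $n$ the derived $\Hom$ is computed by the naive $\Hom$-complex, so that
\[
\Hom_{D(Ab_{\A^1}(k))}(\Ccell_*(X),\bM[n])\;=\;H^n\!\big(\Hom^\bullet_{Ab_{\A^1}(k)}(\Ccell_*(X),\bM)\big)\;=\;H^n(C^*_{\rm cell}(X;\bM)),
\]
the last complex being $C^*_{\rm cell}(X;\bM)$ by definition.

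For the first isomorphism, I would use the spectral sequence of the finite increasing filtration $\emptyset=\Omega_{-1}\subset\Omega_0\subset\cdots\subset\Omega_s=X$ in Nisnevich cohomology with coefficients in $\bM$, arising from the long exact sequences of the cofibration sequences $\Omega_{p-1}\to\Omega_p\to\Omega_p/\Omega_{p-1}$:
\[
E_1^{p,q}=\tilde H^{p+q}_{\rm Nis}(\Omega_p/\Omega_{p-1},\bM)\ \Longrightarrow\ H^{p+q}_{\rm Nis}(X,\bM),\qquad d_1\colon E_1^{p,q}\to E_1^{p+1,q},
\]
which converges because the filtration is finite. First I would compute the $E_1$-page. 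By the homotopy purity equivalence \eqref{eq1}, the triviality of the normal bundles $\nu_p$ (Lemma \ref{lemma coh trivial}) and the decomposition of $X_p$ into irreducible components, one has $\Omega_p/\Omega_{p-1}\simeq_{\A^1}\bigvee_{j\in J_p}S^p\wedge\G_m^{\wedge p}\wedge(Y_{pj})_+$, which is in particular $\A^1$-$(p-1)$-connected. Using the simplicial suspension isomorphism, the contraction formula $\tilde H^m_{\rm Nis}(\G_m\wedge\sZ,\bN)\cong\tilde H^m_{\rm Nis}(\sZ,\bN_{-1})$ for a strictly $\A^1$-invariant sheaf $\bN$ \cite{Morel-book}, and the fact that $\bM_{-p}$ is again strictly $\A^1$-invariant, this gives $E_1^{p,q}\cong\bigoplus_{j\in J_p}H^q_{\rm Nis}(Y_{pj},\bM_{-p})$; by cohomological triviality of each cell $Y_{pj}$ this vanishes for $q\neq0$, while for $q=0$ the $\A^1$-$(p-1)$-connectivity of $\Omega_p/\Omega_{p-1}$ together with the bottom-degree universal coefficient isomorphism identifies $E_1^{p,0}$ canonically with $\Hom_{Ab_{\A^1}(k)}(\HAred_p(\Omega_p/\Omega_{p-1}),\bM)=\Hom_{Ab_{\A^1}(k)}(\Ccell_p(X),\bM)=C^p_{\rm cell}(X;\bM)$, recalling that $\Ccell_p(X)=\HAred_p(Th(\nu_p))=\HAred_p(\Omega_p/\Omega_{p-1})$ by construction.

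It then remains to identify $d_1$ with the differential of $C^*_{\rm cell}(X;\bM)$. Both $d_1$ and the differential $\partial_{p+1}$ of $\Ccell_*(X)$ are obtained by composing the connecting maps $\delta$ of the cofibration sequences $\Omega_{p-1}\to\Omega_p\to\Omega_p/\Omega_{p-1}$ with the maps induced by $\Omega_p\to\Omega_p/\Omega_{p-1}$; a diagram chase comparing the exact couple with the explicit composites defining $\partial_*$ in Section \ref{subsection cellular chain complex} — keeping track of the compatibility of the universal coefficient isomorphism with the connecting maps — shows that, under the identifications above, $d_1=\Hom_{Ab_{\A^1}(k)}(\partial_{p+1},\bM)$. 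Hence $(E_1^{\bullet,0},d_1)\cong C^*_{\rm cell}(X;\bM)$ and $E_2^{p,0}=H^p(C^*_{\rm cell}(X;\bM))$. Since $E_1$ is concentrated in the row $q=0$, all higher differentials vanish for bidegree reasons, $E_2=E_\infty$, and the abutment filtration on $H^n_{\rm Nis}(X,\bM)$ has the single nonzero graded piece $E_\infty^{n,0}$, yielding the desired canonical isomorphism $H^n_{\rm Nis}(X,\bM)\xrightarrow{\sim}H^n(C^*_{\rm cell}(X;\bM))$; independence of any trivializations of the $\nu_p$ is visible since the identification of $E_1^{p,0}$ passes through the canonically defined object $\HAred_p(\Omega_p/\Omega_{p-1})$. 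The main obstacle is exactly the identification of $d_1$ with the cellular differential: although it holds essentially by construction, it requires carefully matching the edge maps of the spectral sequence with the definition of $\partial_*$. One could alternatively argue by induction on the length $s$ of the cellular structure, using the long exact cohomology sequences of the pairs $(\Omega_m,\Omega_{m-1})$ in place of the spectral sequence, but the same compatibility point has to be checked.
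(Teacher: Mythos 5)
Your proposal is correct and follows essentially the same route as the paper: the first isomorphism via the cohomological spectral sequence of the open filtration, which degenerates because the strata are cohomologically trivial (with the $E_1$-page identified with $C^*_{\rm cell}(X;\bM)$), and the second via the projectivity of each term $\Ccell_n(X)$ in $Ab_{\A^1}(k)$. You spell out the identification of $d_1$ with the cellular differential more explicitly than the paper does, but this is exactly the compatibility the paper alludes to in the remark following the proposition.
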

\begin{proof} The first isomorphism follows from the cohomological spectral sequence computing $H^*_{\rm Nis}(X,\bM)$ associated with the filtration induced by the flag of open subsets 
\[
\emptyset = \Omega_{-1} \subsetneq \Omega_0 \subsetneq \Omega_1 \subsetneq \cdots \subsetneq \Omega_s = X  
\]
and the cohomological triviality of the $Y_{ij}$, which shows that the spectral sequence collapses from $E_2$ page onwards, yielding the isomorphism. The second isomorphism follows from the fact that each term $\Ccell_n(X)$ is a projective object of $Ab_{\A^1}(k)$, which follows again from the cohomological triviality
of the $Y_{ij}$.
\end{proof} 

\begin{remark} \hspace{1cm}
\begin{enumerate}
\item Note that the proof of the first isomorphism given above exactly parallels the way in which we introduced the cellular $\A^1$-chain complex: observe that the $E_1$ term of the spectral sequence considered in the proof of Proposition \ref{prop: cohomology} is the cochain complex $C_{\rm cell}^n(X;\bM)$ and the $E_2$ terms are $H^*_{\rm Nis}(X, \bM)$.

\item It is known from \cite{Morel-book} that for any strictly $\A^1$-invariant sheaf $\bM$ on $Sm_k$ and any smooth $k$-scheme $X$, the natural comparison morphism:
\[
H^n_{\rm Zar}(X, \bM) \cong H^n_{\rm Nis}(X, \bM)
\]
is an isomorphism for any $n\geq 0$.
\end{enumerate}
\end{remark}

We first observe the following easy consequence:

\begin{corollary}
\label{corollary cellular cohomology vanishing}
Let $X$ be a cellular smooth $k$-scheme. Then for any strictly $\A^1$-invariant sheaf $\bM$ on $Sm_k$ with $M_{-i} = 0$ for some $i\geq 0$ then, for $n\geq i$
\[
 H^n_{\rm Nis}(X, \bM) = 0
\]
\end{corollary}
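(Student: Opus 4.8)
The plan is to run everything through Proposition~\ref{prop: cohomology}. That proposition identifies $H^n_{\rm Nis}(X,\bM)$ with the $n$-th cohomology of the cellular cochain complex $C^*_{\rm cell}(X;\bM)$, whose term in degree $n$ is $C^n_{\rm cell}(X;\bM) = \Hom_{Ab_{\A^1}(k)}(\Ccell_n(X),\bM)$. Since the cohomology of a cochain complex that is zero in all degrees $\geq i$ vanishes in degrees $\geq i$, it will suffice to prove that $C^n_{\rm cell}(X;\bM) = 0$ for every $n \geq i$.

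First I would dispose of the trivial case $i=0$: there $\bM = \bM_0 = 0$, so $H^n_{\rm Nis}(X,\bM)=0$ for all $n$. For $i\geq 1$ it is enough to treat degrees $n\geq i\geq 1$, and in that range the description of the cellular chain complex recalled in Section~\ref{subsection cellular chain complex} applies: by homotopy purity and the triviality of the normal bundles, $\Ccell_n(X)\cong \bigoplus_{j\in J_n}\KMW_n\otimes\ZA[Y_{nj}]$. Applying $\Hom_{Ab_{\A^1}(k)}(-,\bM)$ turns the direct sum into a product, and the adjunction formula from Remark~\ref{rem cellular suslin}(1) gives
\[
C^n_{\rm cell}(X;\bM)\;\cong\;\prod_{j\in J_n}\Hom_{Ab_{\A^1}(k)}\!\big(\KMW_n\otimes\ZA[Y_{nj}],\,\bM\big)\;\cong\;\prod_{j\in J_n}\Hom_{Ab_{\A^1}(k)}\!\big(\ZA[Y_{nj}],\,\bM_{-n}\big).
\]

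Then I would use that the contraction functor $\bM\mapsto\bM_{-1}$ is iterated: $\bM_{-n} = (\bM_{-i})_{-(n-i)}$ for $n\geq i$, so $\bM_{-i}=0$ forces $\bM_{-n}=0$. Hence every factor in the displayed product is $\Hom_{Ab_{\A^1}(k)}(\ZA[Y_{nj}],0)=0$, so $C^n_{\rm cell}(X;\bM)=0$ for $n\geq i$, and Proposition~\ref{prop: cohomology} yields $H^n_{\rm Nis}(X,\bM)=0$ for $n\geq i$. I do not expect any genuine obstacle here: the argument is a short combination of Proposition~\ref{prop: cohomology}, the explicit form of $\Ccell_*(X)$, and the $\Hom$–contraction adjunction. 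The only points that need a moment's care are the degenerate degree-$0$ term of $\Ccell_*(X)$ (which carries no $\KMW$ factor) and the edge case $i=0$; both are harmless, since whenever $i\geq 1$ one only ever looks at degrees $\geq 1$, and $i=0$ is vacuous.
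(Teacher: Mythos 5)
Your proposal is correct and is essentially the paper's own argument: the paper's proof is the one-line observation that $C^*_{\rm cell}(X;\bM)$ vanishes in degrees $\geq i$, which you have simply unwound via Proposition \ref{prop: cohomology}, the explicit form $\Ccell_n(X)\cong\bigoplus_j\KMW_n\otimes\ZA[Y_{nj}]$, and the $\Hom$--contraction adjunction giving $\bM_{-n}=(\bM_{-i})_{-(n-i)}=0$. The extra care you take with the degree-$0$ term and the case $i=0$ is harmless and consistent with the intended reading.
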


\begin{proof}
Indeed, under the assumptions, by Remark \ref{rem cellular suslin} (1) the cohomological cellular cochain complex $C^*_{\rm cell}(X;\bM)$ vanishes in degrees $\geq i$.
\end{proof}

\begin{corollary}
\label{corollary cellular Hurewicz}
Let $n\geq 1$ and let $X$ be an $\A^1$-$(n-1)$-connected, cellular smooth $k$-scheme.  Then $\Hcell_0(X) = \Z$ and $\Hcell_i(X) = 0$ for $1 \leq i \leq n-1$, and there exists a canonical isomorphism 
\[
 \Hcell_{n}(X) \simeq \HA_{n}(X)
\]
and an epimorphism $\HA_{n+1}(X)\twoheadrightarrow \Hcell_{n+1}(X)$.
\end{corollary}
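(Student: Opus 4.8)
The plan is to deduce all three assertions from the cohomological description of the cellular $\A^1$-chain complex given in Proposition \ref{prop: cohomology}, together with a Yoneda-style detection argument inside the abelian category $Ab_{\A^1}(k)$; this avoids the homological spectral sequence of the skeletal filtration, which in general fails to degenerate. Two preliminary observations are needed. First, since $\Omega_0 = X_0$ is cohomologically trivial, $\Ccell_0(X) = \ZA[\Omega_0]$, and the structure morphism induces an augmentation $\Ccell_*(X) \to \Z$ (in degree $0$) which, as one checks from the definition of $\partial_1$, is a morphism of complexes; as $\Z = \ZA[\Spec k]$ is a projective object of $Ab_{\A^1}(k)$ and $\Ccell_0(X)$ surjects onto $H_0$, this splits off a direct summand, so $\Ccell_*(X) \simeq \Z \oplus \tilde C^{\mathrm{cell}}_*(X)$ in $D(Ab_{\A^1}(k))$ with $\tilde C^{\mathrm{cell}}_*(X)$ a bounded-below complex of projectives having $H_0 = 0$. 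Second, since $X$ is $\A^1$-$(n-1)$-connected and $n \geq 1$, we have $\HA_0(X) = \Z$, $\HAred_j(X) = 0$ for $1 \leq j \leq n-1$, and by the Hurewicz / universal-coefficient statements recalled in Section \ref{subsection preliminaries}, for every strictly $\A^1$-invariant sheaf $\bM$ one has $H^j_{\mathrm{Nis}}(X, \bM) = 0$ for $1 \leq j \leq n-1$ and a canonical isomorphism $H^n_{\mathrm{Nis}}(X, \bM) \cong \Hom_{Ab_{\A^1}(k)}(\HA_n(X), \bM)$, natural in $\bM$.

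Now Proposition \ref{prop: cohomology} gives $\Hom_{D(Ab_{\A^1}(k))}(\Ccell_*(X), \bM[m]) \cong H^m_{\mathrm{Nis}}(X, \bM)$, naturally in $\bM$. Taking $m = 0$ yields $\Hom(\Hcell_0(X), \bM) \cong \Hom(\HA_0(X), \bM) = \Hom(\Z, \bM)$, whence $\Hcell_0(X) \cong \Z$ by Yoneda. For $m \geq 1$ the splitting gives $\Hom_D(\tilde C^{\mathrm{cell}}_*(X), \bM[m]) \cong H^m_{\mathrm{Nis}}(X, \bM)$. Suppose $\tilde C^{\mathrm{cell}}_*(X)$ had nonzero homology in some degree $< n$; let $j_0$ be the least such degree, so $1 \leq j_0 \leq n-1$. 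Then $\tilde C^{\mathrm{cell}}_*(X) \in D^{\geq j_0}$ and $\tau_{\leq j_0}\tilde C^{\mathrm{cell}}_*(X) \simeq \Hcell_{j_0}(X)[j_0]$, so the truncation morphism $\tilde C^{\mathrm{cell}}_*(X) \to \Hcell_{j_0}(X)[j_0]$ is a nonzero element of $\Hom_D(\tilde C^{\mathrm{cell}}_*(X), \Hcell_{j_0}(X)[j_0]) = H^{j_0}_{\mathrm{Nis}}(X, \Hcell_{j_0}(X))$ (it is the identity on $H_{j_0}$); here we use that $\Hcell_{j_0}(X)$, being a homology sheaf of a complex in the abelian category $Ab_{\A^1}(k)$, lies in $Ab_{\A^1}(k)$. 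But $H^{j_0}_{\mathrm{Nis}}(X, \Hcell_{j_0}(X)) = 0$ in this range, a contradiction. Hence $\Hcell_j(X) = 0$ for $1 \leq j \leq n-1$, i.e.\ $\tilde C^{\mathrm{cell}}_*(X) \in D^{\geq n}$, and therefore $\Hom_D(\tilde C^{\mathrm{cell}}_*(X), \bM[n]) \cong \Hom_{Ab_{\A^1}(k)}(\Hcell_n(X), \bM)$ for all $\bM$; comparing with the previous paragraph gives a canonical natural isomorphism $\Hom(\Hcell_n(X), \bM) \cong \Hom(\HA_n(X), \bM)$, hence the canonical isomorphism $\Hcell_n(X) \cong \HA_n(X)$.

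For the surjection $\HA_{n+1}(X) \twoheadrightarrow \Hcell_{n+1}(X)$ one returns to the skeletal filtration. Homotopy purity gives $\Omega_i/\Omega_{i-1} \simeq_{\A^1} Th(\nu_i)$ with $\nu_i$ of rank $i$, so $\Omega_i/\Omega_{i-1}$ is $\A^1$-$(i-1)$-connected; in particular $\HAred_{n+1}(\Omega_i/\Omega_{i-1}) = \HAred_n(\Omega_i/\Omega_{i-1}) = 0$ for $i \geq n+2$, so the long exact sequences of $\Omega_{i-1} \to \Omega_i \to \Omega_i/\Omega_{i-1}$ yield $\HAred_{n+1}(\Omega_{n+1}) \twoheadrightarrow \HAred_{n+1}(\Omega_{n+2}) \xrightarrow{\sim} \cdots \xrightarrow{\sim} \HAred_{n+1}(X)$, the kernel of the first arrow being the image of the connecting map $\delta \colon \HAred_{n+2}(\Omega_{n+2}/\Omega_{n+1}) \to \HAred_{n+1}(\Omega_{n+1})$. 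The projection $\HAred_{n+1}(\Omega_{n+1}) \to \HAred_{n+1}(\Omega_{n+1}/\Omega_n) = \Ccell_{n+1}(X)$ lands in $\ker(\partial_{n+1})$ and carries $\im(\delta)$ into $\im(\partial_{n+2})$ — the latter because $\partial_{n+2}$ is by definition this projection precomposed with $\delta$ — so it descends to a natural morphism $\HA_{n+1}(X) \to \Hcell_{n+1}(X)$; a diagram chase through the long exact sequences of the pairs $(\Omega_{n+1}, \Omega_n)$ and $(\Omega_n, \Omega_{n-1})$, using $\Hcell_j(X) = 0$ for $j < n$ and $\Hcell_n(X) \cong \HA_n(X)$, shows that it is surjective.

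The delicate point — and the reason the cohomological detour above is needed — is that, in contrast with the classical $CW$ situation, the relative homology sheaves $\HAred_m(\Omega_i/\Omega_{i-1}) = \HAred_m(Th(\nu_i))$ do \emph{not} vanish for $m > i$ in general (this is exactly the discrepancy making $\Hcell_*$ differ from $\HA_*$ in high degrees); consequently the skeletal spectral sequence does not degenerate, and the naive diagram chase recovers the statement only in the low degrees governed by the connectivity of $X$. Keeping track of the interplay between these higher relative homology sheaves and the connecting maps is where the surjectivity argument of the last step requires the most care.
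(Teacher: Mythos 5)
Your first two paragraphs are correct and are essentially the paper's own argument: the identities $\Hcell_0(X)=\Z$, the vanishing of $\Hcell_i(X)$ for $1\leq i\leq n-1$, and the degree-$n$ isomorphism are all deduced from Proposition \ref{prop: cohomology} together with the connectivity of $X$ and a Yoneda-type detection in $Ab_{\A^1}(k)$; your least-degree truncation argument is a clean way of making the detection step explicit.

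The gap is in the last step. The morphism you construct from the skeletal filtration has image exactly $\ker(\delta_{n+1})/\im(\partial_{n+2})$ inside $\Hcell_{n+1}(X)=\ker(\partial_{n+1})/\im(\partial_{n+2})$, where $\delta_{n+1}\colon \Ccell_{n+1}(X)=\HAred_{n+1}(\Omega_{n+1}/\Omega_n)\to \HAred_n(\Omega_{n})$ is the connecting morphism and $\partial_{n+1}=p_n\circ\delta_{n+1}$ with $p_n\colon \HAred_n(\Omega_n)\to \Ccell_n(X)$. Since $\im(\partial_{n+2})\subset\im(p_{n+1})=\ker(\delta_{n+1})$ automatically, correcting by boundaries buys nothing, and surjectivity of your map is equivalent to $\im(\delta_{n+1})\cap\ker(p_n)=0$. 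On the other hand, the degree-$n$ isomorphism you invoke is (unwinding the comparison) induced by $p_n$, and its injectivity forces the opposite inclusion $\ker(p_n)=\im\big(\HAred_n(\Omega_{n-1})\to\HAred_n(\Omega_n)\big)\subset\im(\delta_{n+1})$. So what you actually need is that the map $\HAred_n(\Omega_{n-1})\to\HAred_n(\Omega_n)$ is zero. This is not accessible to a diagram chase in the long exact sequences of the pairs: $\HAred_n(\Omega_{n-1})$ is assembled from the sheaves $\HAred_n(Th(\nu_i))$ for $i\leq n-1$, which do not vanish in general (already $\HAred_n(\Omega_0)$ involves $\HA_n(\G_m)$, whose vanishing is only conjectural), and none of the inputs you cite controls where they map.

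The missing ingredient is Proposition \ref{prop: cohomology} applied one degree higher. The paper compares, for $C_*=C_*^{\A^1}(X)$ and $C_*=\Ccell_*(X)$, the two exact sequences
\[
0\to \Ext^1_{Ab_{\A^1}(k)}(H_nC_*,\bM) \to \Hom_{D}(C_*,\bM[n+1]) \to \Hom(H_{n+1}C_*,\bM)\to \Ext^2_{Ab_{\A^1}(k)}(H_nC_*,\bM),
\]
identifies the two middle terms with $H^{n+1}_{\rm Nis}(X,\bM)$ and the outer terms with each other via the degree-$n$ isomorphism, and concludes by the four lemma that $\Hom(\Hcell_{n+1}(X),\bM)\to\Hom(\HA_{n+1}(X),\bM)$ is injective for every $\bM$, which is what yields the epimorphism. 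Your geometric map is the right one, but its surjectivity has to come from this dual argument in degree $n+1$ (i.e.\ from the cohomological triviality of the strata one degree beyond where you use it), not from the filtration alone; your closing paragraph correctly identifies this as the delicate point but does not supply the argument.
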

\begin{proof} If $X$ an $\A^1$-$(n-1)$-connected space and $\bM$ a strictly $\A^1$-invariant sheaf, one has $H^0(X;\bM)= \bM(k)$ and $H^i(X;\bM) = 0$ for $i\in\{1,\dots,n-1\}$. The first equality shows that the canonical morphism $\Hcell_0(X)\to \Z$ is an isomorphism. If $\tilde{C}^{\rm cell}_*(X)$ denotes the reduced cell chain complex of $X$ (that is, the kernel of $\Ccell_*(X) \to \Z$), then Proposition \ref{prop: cohomology} easily implies that $H_i(\tilde{C}^{\rm cell}_*(X)) = \Hcell_i(X) = 0$ for $i\in\{1,\dots,n-1\}$. Now, since
\[
H^{n}(C^*_{\rm cell}(X;\bM))=H^n(X;\bM) = \Hom_{Ab_{\A^1}(k)}(\HA_n(X),\bM)
\]
for every $\bM\in Ab_{\A^1}(k)$, the above vanishing gives an isomorphism $\Hcell_{n}(X) \simeq \HA_{n}(X)$. Now if $C_*$ is any chain complex in any abelian category $\sA$ with $H_iC_* = 0$ for $i\leq n-1$ and $M$ an object of $\sA$, then one has an exact sequence of the form
\[
0\to \Ext^1_\sA(H_nC_*;M) \to \Hom_{D\sA}(C_*;M[n+1])) \to \Hom_{\sA}(H_{n+1}C_*;M)\to \Ext^2_\sA(H_nC_*,M).
\]
Using Proposition \ref{prop: cohomology} and the above exact sequences for $\sA=Ab_{\A^1}(k)$ with $C_* = C_*^{\A^1}(X)$ as well as $\Ccell(X)$ we conclude by a diagram chase that the comparison morphism $$\Hom(\Hcell_{n+1}(X),\bM) \to \Hom(\HA_{n+1}(X),\bM)$$ is injective for every $\bM \in Ab_{\A^1}(k)$.  Consequently, the induced morphism $\HA_{n+1}(X)\rightarrow \Hcell_{n+1}(X)$ is an epimorphism.
\end{proof}

If $X$ and $Y$ are cellular smooth $k$-schemes, the product $X\times_k Y$ can be naturally endowed with a cellular structure defined by $\Omega_i(X\times_k Y) := \cup _{j = 0}^\infty \big(\Omega_j(X)\times_k \Omega_{i-j}(Y)\big)$.  In case the cellular structures on $X$ and $Y$ are oriented, the cellular structure on $X \times_k Y$ can be oriented using the obvious tensor products of appropriate orientations for $X$ with those for $Y$.  The proof of the following Lemma is easy and left to the reader.

\begin{lemma}
\label{lemma Kunneth formula}
Let $X$ and $Y$ be cellular smooth $k$-schemes.  The induced morphism of chain complexes (of strictly $\A^1$-invariant sheaves)
\[
\Ccell_*(X) \otimes \Ccell_*(Y)  \to \Ccell_*(X \times_k Y)
\]
is an isomorphism.  If the cellular structures on $X$ and $Y$ are both oriented, then the induced morphism of chain complexes (of strictly $\A^1$-invariant sheaves)
\[
\Ctcell_*(X) \otimes \Ctcell_*(Y)  \to \Ctcell_*(X \times_k Y)
\]
is an isomorphism.
\end{lemma}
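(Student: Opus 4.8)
The plan is to fix orientations on $X$ and $Y$ --- which exist, since the normal bundles occurring in any cellular structure are trivial by Lemma~\ref{lemma coh trivial} --- so that $\Ccell_*(X)$ and $\Ccell_*(Y)$ get identified with the explicit oriented complexes $\Ctcell_*(X)$, $\Ctcell_*(Y)$, and then to show that the comparison morphism is a \emph{termwise} isomorphism; since it is a morphism of chain complexes by construction, this already gives the oriented statement, and the unoriented one follows because, once orientations are chosen, the map $\Ccell_*(X)\otimes\Ccell_*(Y)\to\Ccell_*(X\times_k Y)$ is identified with the oriented one.

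The first step is to unwind the product cellular structure $\Omega_i(X\times_k Y)=\bigcup_j\Omega_j(X)\times_k\Omega_{i-j}(Y)$. Assigning to a point $(x,y)$ the least $p$ with $x\in\Omega_p(X)$ and the least $q$ with $y\in\Omega_q(Y)$, one checks directly that the $i$-th closed stratum is
\[
(X\times_k Y)_i \;=\; \coprod_{p+q=i} X_p\times_k Y_q ,
\]
so its irreducible components are the products $X_{pj}\times_k Y_{qj'}$ with $p+q=i$, each smooth, affine, cohomologically trivial and everywhere of codimension $i$; this confirms that the product filtration is a cellular structure in the sense of Definition~\ref{definition cellular}. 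Since $\Omega_p(X)\times_k\Omega_q(Y)$ is open in $\Omega_i(X\times_k Y)$ and contains $X_{pj}\times_k Y_{qj'}$ as a closed subscheme, the normal bundle of $X_{pj}\times_k Y_{qj'}$ in $\Omega_i(X\times_k Y)$ is the external sum $\pr_1^*\nu_{X_{pj}}\oplus\pr_2^*\nu_{Y_{qj'}}$, carrying the tensor product of the two chosen orientations. Combining homotopy purity \cite[Theorem 2.23]{Morel-Voevodsky} with the elementary identity $Th(\pr_1^*\xi\oplus\pr_2^*\eta)\cong Th(\xi)\wedge Th(\eta)$ over a product, and with Lemma~\ref{lem:Thomiso}, one obtains a canonical pointed $\A^1$-weak equivalence
\[
\Omega_i/\Omega_{i-1}\;\simeq_{\A^1}\;\bigvee_{p+q=i}\;\bigvee_{j,j'}\;\big(S^p\wedge\G_m^{\wedge p}\wedge(X_{pj})_+\big)\wedge\big(S^q\wedge\G_m^{\wedge q}\wedge(Y_{qj'})_+\big) ,
\]
compatibly with the analogous equivalences for $X$ and for $Y$.

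Applying reduced $\A^1$-homology and using the suspension isomorphisms together with the exterior product in $\A^1$-homology, the degree-$i$ component of the comparison morphism becomes the direct sum, over $p+q=i$ and over $j,j'$, of the maps
\[
\big(\KMW_p\otimes\ZA[X_{pj}]\big)\otimes\big(\KMW_q\otimes\ZA[Y_{qj'}]\big)\;\longrightarrow\;\KMW_i\otimes\ZA[X_{pj}\times_k Y_{qj'}] ,
\]
each of which, after rearranging tensor factors, is the tensor product of the canonical maps $\KMW_p\otimes\KMW_q\to\KMW_i$ and $\ZA[X_{pj}]\otimes\ZA[Y_{qj'}]\to\ZA[X_{pj}\times_k Y_{qj'}]$, all tensor products being formed in $Ab_{\A^1}(k)$. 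Both are isomorphisms because the free strictly $\A^1$-invariant sheaf functor is strong symmetric monoidal: it is the composite of the strong monoidal free sheaf-of-abelian-groups functor (strong monoidal because free abelian group sheaves are flat) with the monoidal localization $L_{\A^1}$, and by the $\A^1$-connectivity theorem \cite[Theorem 6.22]{Morel-book} the relevant complexes are connective, so that $H_0$ of their derived tensor product equals the tensor product of the $H_0$'s. In particular $\KMW_p\otimes\KMW_q\cong\ZA(\G_m^{\wedge p}\wedge\G_m^{\wedge q})=\KMW_{p+q}$ and $\ZA[Z]\otimes\ZA[W]\cong\ZA[Z\times_k W]$, and under these identifications the displayed maps are the identity. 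Hence the comparison morphism is a termwise, and therefore a global, isomorphism of chain complexes.

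The hard part is not the algebra --- the monoidality isomorphisms above are soft --- but verifying that the map so described is genuinely a morphism of chain complexes, i.e. that the exterior product in $\A^1$-homology is compatible, up to sign, with the cellular differentials $\partial_n$. This is the Leibniz rule for the connecting homomorphism $\delta$ with respect to smash products, and follows from the naturality and multiplicativity of the long exact sequences attached to the nested cofibre sequences $\Omega_{i-1}\to\Omega_i\to\Omega_i/\Omega_{i-1}$ for $X$, $Y$ and $X\times_k Y$; it is the algebro-geometric counterpart of the Eilenberg--Zilber theorem for cellular chains. The rest is the careful bookkeeping of orientations and of the smash-product decomposition of the Thom spaces described above.
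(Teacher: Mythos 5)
The paper gives no proof of this lemma (it is ``easy and left to the reader''), and your argument is exactly the intended one: the product filtration has closed strata $\coprod_{p+q=i} X_p\times_k Y_q$ with external-sum normal bundles carrying the tensor-product orientations, homotopy purity together with the monoidality of $\HAred_0$ (so $\KMW_p\otimes\KMW_q\cong\KMW_{p+q}$ and $\ZA[Z]\otimes\ZA[W]\cong\ZA[Z\times_k W]$) gives the termwise identification, and the Leibniz rule for the connecting homomorphisms gives compatibility with the differentials. The one step you assert rather than carry out --- the multiplicativity of the connecting maps for the nested cofiber sequences, with its Koszul sign from permuting the simplicial suspension coordinates --- is genuinely routine, so the proposal is correct and complete at the level of detail the paper itself expects.
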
 

\begin{remark} 
Projective objects in the abelian category $Ab_{\A^1}$ need not be flat. For instance, the unramified Milnor-Witt $K$-theory sheaves $\KMW_n$ are projective objects, but not flat since the multiplication by an odd integer on $\KMW_1$ is never injective, although it is injective on $\Z$. However, in the case of cellular smooth $k$-schemes, the tensor product $\Ccell_*(X) \otimes \Ccell_*(Y)$ is the derived tensor product of the two complexes $\Ccell_*(X)$ and $\Ccell_*(Y)$, as the two cellular $\A^1$-chain complexes consist of projective objects in each degree.
\end{remark}

We will next see that up to chain homotopy, the correspondence $X \mapsto \Ccell_*(X)$ is functorial on the category of smooth $k$-schemes admitting a cellular structure.

Let $X$ be a cellular smooth $k$-scheme and $C_*$ be a chain complex of strictly $\A^1$-invariant sheaves. The cellular $\A^1$-chain complex $\Ccell_*(X)$ is bounded and consists of terms that are projective objects in $Ab_{\A^1}(k)$. Thus, the group 
\[                                                                                                                                                                                                                                                               
\Hom_{D(Ab_{\A^1}(k))}(\Ccell_*(X),C_*)                                                                                                                                                                                                                                                             
\]
of morphisms in the derived category of $Ab_{\A^1}(k)$ is the group of chain homotopy classes of morphisms of chain complexes from $\Ccell_*(X)$ to $C_*$. On the other hand we may consider the hypercohomology $H^n_{\rm Nis}(X;C_*)$ of $X$ with coefficients in $C_*$, which is defined by
\[
 H^n_{\rm Nis}(X;C_*): = \Hom_{D(Ab(k))}(\Z(X),C_*[n]).
\]
Clearly, $C_*$ is $\A^1$-local as it consists of strictly $\A^1$-invariant sheaves, so its homology sheaves are strictly $\A^1$-invariant sheaves as well. Thus, we get an identification:
\[
H^n_{\rm Nis}(X;C_*) = \Hom_{D(Ab_{\A^1}(k))}(C_*^{\A^1}(X),C_*[n]).
\]
Proposition \ref{prop: cohomology} above states that if $C_*$ is a sheaf $M$ concentrated in degree $0$, then there is a canonical isomorphism
\[
H^n_{\rm Nis}(X;C_*) \cong \Hom_{D(Ab_{\A^1}(k))}(C_*^{\A^1}(X),M[n]).
\]
It is thus tempting to extend this isomorphism to any complex $C_*$ of strictly $\A^1$-invariant sheaves.

\begin{definition}
We say that a chain complex $D_*$ of abelian sheaves of groups is {\em strictly $\A^1$-resolvable} if there is a chain complex $C_*$ of strictly $\A^1$-invariant sheaves and a morphism 
\[
\phi: D_* \to C_*
\]
in $D(Ab_{\A^1}(k))$ such that for any chain complex $C'_*$ of strictly $\A^1$-invariant sheaves, the morphism
\[
\Hom_{D(Ab_{\A^1}(k))}(C_*,C'_*) \to \Hom_{D_{\A^1}(k)}(D_*,C'_*)
\]
induced by $\phi$ is an isomorphism. We call the pair $(C_*, \phi)$ a {\em strict $\A^1$-resolution} of $D_*$.
\end{definition}

\begin{remark}
\label{rem strict A^1 resolvable}
In other words, $D_*$ is strictly $\A^1$-resolvable if the functor $D(Ab_{\A^1}(k)) \to Ab$ defined by $C'_*\mapsto \Hom_{D_{\A^1}(k)}(D_*,C'_*)$ is representable. If this is the case, then the pair $(C_*, \phi)$ consisting of $C_*$ and the morphism $D_* \to C_*$ is clearly unique up to a canonical isomorphism. Moreover, the correspondence $D_*\mapsto C_*$ defines a functor on the full subcategory of strictly $\A^1$-resolvable chain complexes.
\end{remark}


\begin{lemma} 
\label{lem crit strict A^1 res}
Let $D_*$ be a connective chain complex of sheaves of abelian groups, in the sense that there exists $n_0\in \Z$ such that $H_n(D_*) = 0$ for $n\leq n_0$. Let $\phi: D_* \to C_*$ be a morphism in $D_{\A^1}(k)$, where $C_*$ is a chain complex of strictly $\A^1$-invariant sheaves. Then $(C_*,\phi)$ is strict $\A^1$-resolution of $D_*$ if and only if for any strictly $\A^1$-invariant sheaf $M$ and any $n\in \Z$ the morphism
\[
\Hom_{D(Ab_{\A^1}(k))}(C_*,M[n]) \to \Hom_{D_{\A^1}(k)}(D_*,M[n])
\]
induced by $\phi$ is an isomorphism.
\end{lemma}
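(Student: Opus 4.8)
The ``only if'' direction is immediate: if $(C_*,\phi)$ is a strict $\A^1$-resolution of $D_*$, apply the defining isomorphism to the chain complex $C'_* = M[n]$, which consists of strictly $\A^1$-invariant sheaves concentrated in a single degree. All the content is in the converse, and the plan is a dévissage in $D(Ab_{\A^1}(k))$. As formal input I would use that (as already observed above) a chain complex of strictly $\A^1$-invariant sheaves is $\A^1$-local and has strictly $\A^1$-invariant homology sheaves, so that the tautological triangulated functor $\rho\colon D(Ab_{\A^1}(k)) \to D_{\A^1}(k)$ is $t$-exact for the homology, respectively homotopy, $t$-structures; in particular it commutes with the canonical truncations $\tau_{\le m}$ and $\tau_{\ge m}$, and it is compatible with the notation of the excerpt that tacitly regards such complexes as objects of $D_{\A^1}(k)$.

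Call an object $C'_*$ of $D(Ab_{\A^1}(k))$ \emph{good} if
\[
\phi^*\colon \Hom_{D(Ab_{\A^1}(k))}(C_*, C'_*[n]) \longrightarrow \Hom_{D_{\A^1}(k)}(D_*, \rho(C'_*)[n])
\]
is an isomorphism for every $n \in \Z$; the goal is to show every $C'_*$ is good, since the good objects that are (quasi-isomorphic to) complexes of strictly $\A^1$-invariant sheaves are precisely those for which the defining condition of a strict $\A^1$-resolution holds. By hypothesis every $M[m]$ is good. Now $\Hom_{D(Ab_{\A^1}(k))}(C_*,-)$ converts distinguished triangles into long exact sequences, and since $\rho$ is triangulated the same is true of $\Hom_{D_{\A^1}(k)}(D_*,\rho(-))$; as $\phi^*$ is a natural transformation of these two, the five lemma shows that the good objects form a full triangulated subcategory of $D(Ab_{\A^1}(k))$ (closed under shifts and under cones). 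Inducting on the homological amplitude via the truncation triangles $\tau_{\ge a+1}C'_* \to C'_* \to H_a(C'_*)[a]$, I would conclude that every \emph{bounded} complex $C'_*$ is good.

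To upgrade this to an arbitrary $C'_*$ I would invoke the connectivity hypothesis on $D_*$. By the $\A^1$-connectivity theorem \cite[Theorem 6.22]{Morel-book}, $L_{\A^1}$ preserves connectivity, so $L_{\A^1}D_*$ still satisfies $H_n = 0$ for $n \le n_0$, i.e. it lies in the connective part $D_{\A^1}(k)_{\ge n_0+1}$. Feeding the objects $M[n]$ with $n \le n_0$ into the assumed isomorphism — the right-hand side vanishes for $t$-structure reasons — forces $\Hom_{D(Ab_{\A^1}(k))}(C_*, M[n]) = 0$ for all such $M$ and $n$, and from this one deduces that $C_*$ itself lies in $D(Ab_{\A^1}(k))_{\ge n_0+1}$ (if $\ell \le n_0$ were minimal with $H_\ell(C_*) \neq 0$, the canonical map $C_* \to \tau_{\le \ell}C_* = H_\ell(C_*)[\ell]$ would be a nonzero such morphism). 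With both $D_*$ and $C_*$ connective, mapping out of either kills the coconnective truncation $\tau_{\le n_0}C'_*$ of any $C'_*$, so $C'_*$ may be replaced by the bounded-below complex $\tau_{\ge n_0+1}C'_*$ without affecting either side. Finally, writing a bounded-below complex as $C'_* \simeq \holim_m \tau_{\le m}C'_*$, a tower eventually constant in each homological degree, I would compare the resulting Milnor exact sequences: each expresses the relevant $\Hom$-group in terms of the $\lim$ and $\lim^1$ of the \emph{same} tower of abelian groups, because every $\tau_{\le m}C'_*$ is bounded and hence good. Thus $\phi^*$ is an isomorphism for $C'_*$ as well, which proves that $(C_*,\phi)$ is a strict $\A^1$-resolution.

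The formal dévissage through triangles disposes of the bounded case with no real effort; I expect the genuine obstacle to be the passage to complexes that are not bounded below. This requires, first, extracting connectivity of $C_*$ from the vanishing of $\Hom_{D(Ab_{\A^1}(k))}(C_*,M[n])$ for $n \le n_0$ — precisely the point where the connectivity of $D_*$ is indispensable, and where, $C_*$ not being a priori bounded below, a small truncation argument is needed — and, second, verifying that reconstructing $C'_*$ from its truncations produces matching and no spurious $\lim^1$-terms on the two sides, all while keeping careful track of truncations across the functor $\rho$. That bookkeeping around the limit is, to my mind, the delicate part of the argument.
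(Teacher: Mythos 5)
Your proof is correct and follows essentially the same route as the paper's: a d\'evissage through the canonical truncations of $C'_*$, using the hypothesis for single sheaves $M[n]$ at each stage, followed by a homotopy-limit (Milnor sequence) comparison, with the connectivity of $D_*$ used to discard the coconnective part. Your explicit verification that $C_*$ itself must be connective is a point the paper leaves implicit, but the overall strategy is identical.
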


\begin{proof} 
For any chain complex $C'_*$ of strictly $\A^1$-invariant sheaves, the group $\Hom_{D_{\A^1}(k)}(D_*,C'_*)$ is clearly equal to the group $\Hom_{D_{\A^1}(k)}(D_*,\tau_{\geq n_0} C'_*)$, where $\tau_{\geq n_0} C'_*\to C'_*$ denotes the truncation of $C'_*$ with same homology in degrees $\geq n_0$ and with trivial homology in degrees $< n_0$.  Consider the successive truncations $\tau_{\geq r} C'_*$ for $r\geq n_0$.  The cone of $\tau_{\geq r+1} C'_* \to \tau_{\geq r} C'_*$ is a complex quasi-isomorphic to $H_r (C'_*)$ placed in degree $r$. Let $C'_*/\tau_{\geq r} C'_*$ denotes the cone of $\tau_{\geq r} C'_* \to C'_*$.  Inductively, using the assumptions, we may thus show that for any $r\geq n_0$
\[
\Hom_{D(Ab_{\A^1}(k))}(C_*,C'_*/\tau_{\geq r} C'_*) \to \Hom_{D_{\A^1}(k)}(D_*,C'_*/\tau_{\geq r} C'_*) 
\]
is an isomorphism.  We conclude using the fact that the morphism 
\[
C'_* \to \holim_r C'_*/\tau_{\geq r} C'_*
\]
is a quasi-isomorphism (see \cite[Definition 1.31, page 58 and Theorem 1.37, page 60]{Morel-Voevodsky} for the analogous statement for simplicial sheaves of sets).
\end{proof}

\begin{example} 
\label{ex strict A^1-resolution}
Let $X$ be a smooth, cohomologically trivial $k$-scheme. Then $\HA_0(X)$ is a projective object of $Ab_{\A^1}(k)$ and the pair $(\HA_0(X),\phi_X)$, where $\phi : C_*^{\A^1}(X) \to \HA_0(X)$ is the obvious projection, is a strict $\A^1$-resolution of $C_*^{\A^1}(X)$ by Lemma \ref{lem crit strict A^1 res}.

More generally, for any integer $n\geq 0$, $(\A^{n}/(\A^{n}-\{0\}))\wedge (X_+)$ is strictly $\A^1$-resolvable by $((\KMW_n\otimes \HA_0(X))[n],\phi_n\otimes \phi_X) $, where $\phi_n: \tilde{C}_*^{\A^1}(\A^{n}/(\A^{n}-\{0\})) \to \KMW_n[n]$ is the obvious projection. This can be verified by an easy direct computation and the above observations.
\end{example}

\begin{proposition}  
Let $X$ be a cellular smooth $k$-scheme. Then $C_*^{\A^1}(X)$ is strictly $\A^1$-resolvable by its cellular $\A^1$-chain complex. More precisely, there exists a (unique) morphism 
\[
 \phi_X : C_*^{\A^1}(X)  \to \Ccell_*(X)
\]
in $D_{\A^1}(k)$ which is a strict $\A^1$-resolution.
\end{proposition}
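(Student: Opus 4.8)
The plan is to construct $\phi_X$ by induction along the open filtration of Definition~\ref{definition cellular}, using that each subquotient of the resulting tower is strictly $\A^1$-solvable by a single shifted projective object, and then to verify that $\phi_X$ is a strict $\A^1$-resolution by combining Lemma~\ref{lem crit strict A^1 res} with Proposition~\ref{prop: cohomology}. To set up the tower, I would record that applying the $\A^1$-localization functor to the short exact sequences of chain complexes of sheaves attached to the open immersions $\Omega_{i-1}\subset\Omega_i$ produces, for each $i$, a distinguished triangle in $D_{\A^1}(k)$
\[
C_*^{\A^1}(\Omega_{i-1}) \to C_*^{\A^1}(\Omega_i) \to \tilde{C}_*^{\A^1}(\Omega_i/\Omega_{i-1}) \xrightarrow{\partial_i^{\rm geom}} C_*^{\A^1}(\Omega_{i-1})[1].
\]
By the purity theorem \cite[Theorem~2.23]{Morel-Voevodsky} (see \eqref{eq1}), the triviality of $\nu_i$ (Lemma~\ref{lemma coh trivial}), and Example~\ref{ex strict A^1-resolution}, there is a strict $\A^1$-resolution $\rho_i\colon \tilde{C}_*^{\A^1}(\Omega_i/\Omega_{i-1}) \to \HAred_i(Th(\nu_i))[i] = \Ccell_i(X)[i]$. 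In particular, for any complex $C'_*$ of strictly $\A^1$-invariant sheaves, $\rho_i$ induces an isomorphism $\Hom_{D(Ab_{\A^1}(k))}(\Ccell_i(X)[i], C'_*) \xrightarrow{\sim} \Hom_{D_{\A^1}(k)}(\tilde{C}_*^{\A^1}(\Omega_i/\Omega_{i-1}), C'_*)$, and since $\Ccell_i(X)$ is a projective object of $Ab_{\A^1}(k)$ the left-hand group is $\Hom_{Ab_{\A^1}(k)}(\Ccell_i(X), H_i(C'_*))$; thus such morphisms are detected on the degree-$i$ homology sheaf.

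\textbf{Construction of $\phi_X$.} I would then construct, by induction on $i$, morphisms $\phi_i\colon C_*^{\A^1}(\Omega_i) \to \sigma_{\leq i}\Ccell_*(X)$ in $D_{\A^1}(k)$, where $\sigma_{\leq i}\Ccell_*(X)$ is the subcomplex of $\Ccell_*(X)$ supported in degrees $\leq i$, which refine one another along $\sigma_{\leq i-1}\Ccell_*(X)\subset\sigma_{\leq i}\Ccell_*(X)$, are compatible with $\rho_i$ on the third vertices of the triangles, and induce on $H_i$ the corestriction to $\ker\partial_i$ of the natural map $\HAred_i(\Omega_i)\to\HAred_i(\Omega_i/\Omega_{i-1})=\Ccell_i(X)$. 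The base case $i=0$ is the resolution $C_*^{\A^1}(\Omega_0)\to\HAred_0(\Omega_0)=\Ccell_0(X)$ of Example~\ref{ex strict A^1-resolution}, valid since $\Omega_0$ is cohomologically trivial. For the inductive step one completes $(\phi_{i-1},\rho_i)$ to a morphism of triangles (axiom TR3) from the triangle above to the triangle $\sigma_{\leq i-1}\Ccell_*(X)\to\sigma_{\leq i}\Ccell_*(X)\to\Ccell_i(X)[i]\xrightarrow{\partial_i'}\sigma_{\leq i-1}\Ccell_*(X)[1]$ of the brutal truncation, whose connecting map $\partial_i'$ is the differential $\partial_i$ of $\Ccell_*(X)$ (corestricted to $\ker\partial_{i-1}$) by a routine snake-lemma computation. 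Applying TR3 requires the equality $\phi_{i-1}[1]\circ\partial_i^{\rm geom}=\partial_i'\circ\rho_i$ of maps $\tilde{C}_*^{\A^1}(\Omega_i/\Omega_{i-1})\to\sigma_{\leq i-1}\Ccell_*(X)[1]$; by the preceding paragraph it suffices to check that both sides induce the same map $\Ccell_i(X)\to\ker\partial_{i-1}$ on $H_i$. The right-hand side gives $\partial_i$; the left-hand side gives $H_{i-1}(\phi_{i-1})$ composed with the connecting homomorphism $\delta\colon\HAred_i(\Omega_i/\Omega_{i-1})\to\HAred_{i-1}(\Omega_{i-1})$ of the cofiber sequence $\Omega_{i-1}\to\Omega_i\to\Omega_i/\Omega_{i-1}$, which, by the inductive description of $H_{i-1}(\phi_{i-1})$, equals the composite $\HAred_i(\Omega_i/\Omega_{i-1})\xrightarrow{\delta}\HAred_{i-1}(\Omega_{i-1})\to\HAred_{i-1}(\Omega_{i-1}/\Omega_{i-2})=\Ccell_{i-1}(X)$ --- and this is exactly $\partial_i$ by the very definition of the differentials of $\Ccell_*(X)$ in Section~\ref{subsection cellular chain complex}. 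Hence the two sides agree, $\phi_i$ exists, its required properties propagate by applying $H_i$ to the morphism-of-triangles diagram, and I set $\phi_X := \phi_s$.

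\textbf{Verifying the resolution property and uniqueness.} Since $C_*^{\A^1}(X)$ is connective by the $\A^1$-connectivity theorem \cite[Theorem~6.22]{Morel-book}, Lemma~\ref{lem crit strict A^1 res} reduces the claim that $\phi_X$ is a strict $\A^1$-resolution to showing that $\phi_X^*\colon\Hom_{D(Ab_{\A^1}(k))}(\Ccell_*(X), M[n])\to\Hom_{D_{\A^1}(k)}(C_*^{\A^1}(X), M[n])$ is an isomorphism for every strictly $\A^1$-invariant $M$ and every $n$. Both sides carry finite filtrations for which $\phi_X$ is filtered --- the brutal filtration $\sigma_{\leq\bullet}\Ccell_*(X)$ on the source and the $\Omega_\bullet$-filtration on the target (the latter underlying the spectral sequence used in the proof of Proposition~\ref{prop: cohomology}) --- and $\phi_X$ induces on the associated graded pieces the maps $\rho_i^*$, which are isomorphisms; comparing the two filtrations gives the claim. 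Uniqueness is then formal: by Remark~\ref{rem strict A^1 solvable}, a strict $\A^1$-resolution of $C_*^{\A^1}(X)$ is unique up to a unique isomorphism, and we have exhibited $\Ccell_*(X)$ with $\phi_X$ as one.

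\textbf{Main difficulty.} I expect the crux to be the inductive construction of $\phi_X$, and within it the identity $\phi_{i-1}[1]\circ\partial_i^{\rm geom}=\partial_i'\circ\rho_i$. This is the only non-formal input: it holds precisely because the differentials of $\Ccell_*(X)$ were \emph{defined} through the connecting homomorphisms of the geometric cofiber sequences, and because maps out of the shifted projective $\Ccell_i(X)[i]$ into complexes of strictly $\A^1$-invariant sheaves are detected on a single homology sheaf (Example~\ref{ex strict A^1-resolution} applied to the trivialized normal bundle $\nu_i$). The remaining bookkeeping --- propagating the two filtrations through to the comparison of spectral sequences, and the standard identifications of $\Hom$-groups between $D_{\A^1}(k)$ and $D(Ab_{\A^1}(k))$ for complexes of strictly $\A^1$-invariant sheaves --- is routine but must be carried out with care.
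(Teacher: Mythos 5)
Your proposal is correct and follows essentially the same route as the paper: induction along the open filtration, using the strict $\A^1$-solvability of the Thom-space subquotients (via purity and Example~\ref{ex strict A^1-resolution}), the identification of the transferred connecting morphism with the cellular differential $\partial_i$ (which is exactly how $\partial_i$ was defined), and then Lemma~\ref{lem crit strict A^1 res} together with the inductive hypothesis to verify the resolution property. The only cosmetic difference is that you fill in the map via TR3 against the brutal truncation triangle of $\Ccell_*(X)$, whereas the paper forms the cone of the induced map $\HA_s(Th(\nu_s))[s-1]\to\Ccell_*(\Omega_{s-1})$ and identifies it with $\Ccell_*(X)$ — these amount to the same argument.
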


\begin{proof} 
We proceed by induction on the length of the filtration by open subsets of $X$
\[
\emptyset = \Omega_{-1} \subsetneq \Omega_0 \subsetneq \Omega_1 \subsetneq \cdots \subsetneq \Omega_s = X  
\]
which defines the cellular structure. If $s = 0$, the statement holds by Example \ref{ex strict A^1-resolution}.

Now assume $s\geq 1$. Let 
\[
\Omega_{s-1}\to X \to Th(\nu_s)
\]
be the $\A^1$-cofibration sequence deduced from the cellular structure (in degree $s$). This induces an exact triangle 
\[
C_*^{\A^1}(\Omega_{s-1}) \to C_*^{\A^1}(X)  \to \tilde{C}_*^{\A^1}(Th(\nu_s)).  
\]
Now, the Thom space $Th(\nu_s)$ is (non-canonically) isomorphic to $\A^{s}/(\A^{s}-\{0\})\wedge ((X_s)_+)\simeq_{\A^1} S^{s}\wedge \G_m^{s}\wedge ((X_s)_+)$ by Lemma \ref{lemma coh trivial}, since $X_s$ is cohomologically trivial.  We consider the connecting morphism 
\begin{equation}
\label{eq connecting morphism}
\tilde{C}_*^{\A^1}(Th(\nu_s) [-1] \to C_*^{\A^1}(\Omega_{s-1}) 
\end{equation}
in $D_{\A^1}(k)$ obtained from the above triangle. Both the source and the target of this morphism are strictly $\A^1$-resolvable. The target is strictly $\A^1$-resolvable by the induction hypothesis. The source is strictly $\A^1$-resolvable by Example \ref{ex strict A^1-resolution} above, by remembering that $Th(\nu_s)$ is (non-canonically) isomorphic in the pointed $\A^1$-homotopy category to $(\A^s/(\A^s-\{0\}))\wedge ((Y_s)_+)$, so that the strict $\A^1$-localization is (non-canonically) $\KMW_s\otimes \HA_0(Y_s)[s]$.

It follows from Remark \ref{rem strict A^1 resolvable} that the morphism \eqref{eq connecting morphism} induces a unique morphism in $D(Ab_{\A^1}(k))$ between the corresponding strict $\A^1$-resolutions:
\begin{equation}
\label{eq connecting morphism 2}
\HA_s(Th(\nu_s))[s-1] \to \Ccell_*(\Omega_{s-1}),
\end{equation}
where $\Ccell_*(\Omega_{s-1})$ is the cellular $\A^1$-chain complex corresponding to the cellular structure on $\Omega_{s-1}$ induced by the one on $X$.  Since the chain complex $\Ccell_*(\Omega_{s-1})$ is concentrated in degrees $\leq s-1$ and since $\HA_s(Th(\nu_s))$ is a strictly $\A^1$-invariant sheaf, the morphism \eqref{eq connecting morphism 2} is determined by the morphism it induces on $H_{s-1}$.  Thus, the morphism \eqref{eq connecting morphism 2} corresponds exactly to a morphism in $Ab_{\A^1}(k)$ given by the composition 
\[
\HA_s(Th(\nu_s)) \cong \KMW_s\otimes \HA_0(Y_s) \to Z_{s-1}^{\rm cell}(\Omega_{s-1}) \hookrightarrow \Ccell_{s-1}(\Omega_{s-1}) \cong \HA_{s-1}(Th(\nu_{s-1}))
\]
by the construction of the cellular $\A^1$-chain complex.  This induced morphism $\HA_{s}(Th(\nu_{s}))\to\HA_{s-1}(Th(\nu_{s-1}))$ is a differential of the complex $\Ccell_*(\Omega_{s-1})$.  This follows by functoriality of the strict $\A^1$-resolution applied to the composition 
\[                                                                                                                                                                                                                                                                                             
\tilde{C}_*^{\A^1}(Th(\nu_{s}))[s-1] \to C_*^{\A^1}(\Omega_{s-1}) \to \tilde{C}_*^{\A^1}(\Omega_{s-1}) \cong \tilde{C}_*^{\A^1}(Th(\nu_{s-1}))                                                                                                                                                                                                                                                                                            \]
and the definition of the differential as the morphism $\HA_{s}(Th(\nu_s))\to \HA_{s-1}(Th(\nu_{s-1}))$ induced by the connecting morphism in the $\A^1$-cofiber sequence
\[
Th(\nu_{s-1}) \cong \Omega_{s-1}/\Omega_{s-2} \subset \Omega_{s}/\Omega_{s-2} \to \Omega_{s}/\Omega_{s-1}\cong Th(\nu_{s}).
\]
Therefore, the cone of the morphism $\HA_s(Th(\nu_s))[s-1] \to \Ccell_*(\Omega_{s-1})$ in $D(Ab_{\A^1}(k))$ is quasi-isomorphic to $\Ccell_*(X)$.  

This allows us to extend the morphism $\phi_{\Omega_{s-1}}$ to a morphism $\phi_X$ in $D(Ab_{\A^1}(k))$, which defines a morphism of exact triangles equal to $\phi_{\Omega_{s-1}}$ on one extremity and $\phi_{Th(\nu_{s})}$ on the other one. It is then easy to check, using the induction hypothesis for $\Omega_{s-1}$ that $\phi_X$ satisfies the criterion given by Lemma \ref{lem crit strict A^1 res}.  This completes the proof.
\end{proof}

We get the following interpretation of $\Ccell_*(X)$ up to quasi-isomorphism in $D(Ab_{\A^1}(k))$ from the above discussion. 

\begin{corollary} 
\label{cor: instrinsic}
Let $X$ be a cellular smooth $k$-scheme. Then for any chain complex $C_*$ of strictly $\A^1$-invariant sheaves on $Sm_k$, one has a canonical isomorphism
\[
\Hom_{D_{\A^1}(k)}(C_*^{\A^1}(X),C_*) \cong \Hom_{D(Ab_{\A^1}(k))}(\Ccell_*(X),C_*).
\]
In other words, if $X$ admits a cellular structure, the functor
\[
D(Ab_{\A^1}(k)) \to Ab; \quad C_*\mapsto \Hom_{D_{\A^1}(k)}(C_*^{\A^1}(X),C_*)
\]
is represented by $\Ccell_*(X)$.
\end{corollary}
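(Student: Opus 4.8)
The plan is to obtain this as a purely formal consequence of the Proposition just proved. That Proposition supplies a canonical morphism $\phi_X\colon C_*^{\A^1}(X) \to \Ccell_*(X)$ in $D_{\A^1}(k)$ which is, by construction, a strict $\A^1$-resolution of $C_*^{\A^1}(X)$. So no new geometric input is needed: the only thing to do is to read off the definition of a strict $\A^1$-resolution and specialize it.

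Concretely, I would argue as follows. By definition, the fact that $(\Ccell_*(X), \phi_X)$ is a strict $\A^1$-resolution means precisely that for every chain complex $C'_*$ of strictly $\A^1$-invariant sheaves on $Sm_k$, the precomposition map
\[
\Hom_{D(Ab_{\A^1}(k))}(\Ccell_*(X), C'_*) \longrightarrow \Hom_{D_{\A^1}(k)}(C_*^{\A^1}(X), C'_*)
\]
induced by $\phi_X$ is a bijection; here the target is meaningful because any complex of strictly $\A^1$-invariant sheaves is $\A^1$-local and hence defines an object of $D_{\A^1}(k)$. Specializing $C'_* := C_*$ yields the asserted isomorphism, and it is canonical since it is induced by the (unique) morphism $\phi_X$ and is manifestly natural in $C_*$; cf. the uniqueness statement in Remark \ref{rem strict A^1 solvable}.

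The reformulation ``in other words'' is then a tautology: $\Hom_{D(Ab_{\A^1}(k))}(\Ccell_*(X), -)$ is by definition the functor on $D(Ab_{\A^1}(k))$ represented by $\Ccell_*(X)$, so the displayed isomorphism says exactly that $C_*\mapsto \Hom_{D_{\A^1}(k)}(C_*^{\A^1}(X), C_*)$ is representable, with representing object $\Ccell_*(X)$. One may also note that Proposition \ref{prop: cohomology} is exactly the special case $C_* = \bM[n]$. I do not expect any real obstacle in this corollary: all the substantive work --- the inductive construction of $\phi_X$ from the cellular filtration using the homotopy purity theorem and the strict $\A^1$-solvability of Thom spaces of trivial bundles over cohomologically trivial schemes --- has already been carried out in the preceding Proposition, and this corollary merely repackages it.
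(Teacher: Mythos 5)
Your proposal is correct and follows exactly the route the paper intends: the corollary is nothing more than the definition of a strict $\A^1$-resolution unwound for the pair $(\Ccell_*(X),\phi_X)$ produced by the preceding Proposition, specialized to $C'_*=C_*$. The paper offers no separate argument for this statement precisely because it is this immediate repackaging, so there is nothing to add.
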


\begin{remark}
\label{remark pro-object}
If $X$ is not cellular, then in general the functor in Corollary \ref{cor: instrinsic} is not representable by a complex, but by a pro-object of $D(Ab_{\A^1}(k))$. This follows directly from standard ``Brown representability type'' results. One of the main points here is that for smooth $k$-schemes admitting a cellular structure, this representing pro-object in the category $D(Ab_{\A^1}(k))$ is in fact constant and the cellular $\A^1$-chain complex is an explicit model for it.  Consequently, the homology groups of this pro-object are also constant in the category of pro-objects in $Ab(k)$.
\end{remark}

The following lemma, quite analogous to the classical properties of cellular chain complexes associated to a CW-structure on a topological space $X$, is an easy consequence of the above observations.

\begin{lemma}
\label{lemma cellular functoriality}
Let $f: X\rightarrow Y$ be a morphism between smooth $k$-schemes in the $\A^1$-homotopy category $\sH(k)$ and assume that $X$ and $Y$ are both equipped with cellular structures. Then there exists a canonical homotopy class of morphisms of cellular $\A^1$-chain complexes
\[
[f]^{\rm cell}_*:\Ccell_*(X)  \rightarrow  \Ccell_*(Y)
\]
which only depends on the class of $f$ in $\Hom_{{\sH(k)}}(X,Y)$.  In particular $f$ induces a {\em canonical morphism}
\[
 \Hcell(f)_*: \Hcell_*(X) \rightarrow \Hcell_*(Y)
\]
\end{lemma}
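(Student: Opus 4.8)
The plan is to deduce everything from the intrinsic interpretation of the cellular $\A^1$-chain complex given in Corollary \ref{cor: instrinsic}, which says that $\Ccell_*(X)$ represents the functor $C_*\mapsto \Hom_{D_{\A^1}(k)}(C_*^{\A^1}(X),C_*)$ on $D(Ab_{\A^1}(k))$, and similarly for $Y$. Given $f\colon X\to Y$ in $\sH(k)$, it induces a morphism $C_*^{\A^1}(f)\colon C_*^{\A^1}(X)\to C_*^{\A^1}(Y)$ in $D_{\A^1}(k)$ (functoriality of the $\A^1$-chain complex functor on $\sH(k)$). Composing with the strict $\A^1$-resolution morphism $\phi_Y\colon C_*^{\A^1}(Y)\to \Ccell_*(Y)$ gives an element of $\Hom_{D_{\A^1}(k)}(C_*^{\A^1}(X),\Ccell_*(Y))$. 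Since $\Ccell_*(Y)$ is a complex of strictly $\A^1$-invariant sheaves, Corollary \ref{cor: instrinsic} identifies this group with $\Hom_{D(Ab_{\A^1}(k))}(\Ccell_*(X),\Ccell_*(Y))$, and we define $[f]^{\rm cell}_*$ to be the morphism in the derived category corresponding to $\phi_Y\circ C_*^{\A^1}(f)$ under this identification. Because $\Ccell_*(X)$ is bounded and degreewise projective in $Ab_{\A^1}(k)$ (Remark \ref{rem cellular suslin}(1)), a morphism in $D(Ab_{\A^1}(k))$ out of $\Ccell_*(X)$ is the same as a chain-homotopy class of chain maps, which yields the asserted ``canonical homotopy class of morphisms of cellular $\A^1$-chain complexes.''

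Next I would check the two formal properties: that $[f]^{\rm cell}_*$ depends only on the class of $f$ in $\Hom_{\sH(k)}(X,Y)$, and that it is functorial (so that taking homology gives a well-defined morphism). The first is immediate: the construction only used $C_*^{\A^1}(f)$, which depends only on the homotopy class of $f$ by functoriality of $C_*^{\A^1}(-)\colon \sH(k)\to D_{\A^1}(k)$, together with the canonical (hence choice-free) identifications $\phi_Y$ and the representability isomorphism of Corollary \ref{cor: instrinsic}. For functoriality, given $g\colon Y\to Z$ with $Z$ cellular, one compares $[g\circ f]^{\rm cell}_*$ with $[g]^{\rm cell}_*\circ [f]^{\rm cell}_*$; both are the unique morphism $\Ccell_*(X)\to \Ccell_*(Z)$ in $D(Ab_{\A^1}(k))$ whose composite with nothing corresponds, under Corollary \ref{cor: instrinsic} applied to $X$ and $Z$, to $\phi_Z\circ C_*^{\A^1}(g)\circ C_*^{\A^1}(f)=\phi_Z\circ C_*^{\A^1}(g\circ f)$; uniqueness in the representability statement forces them to agree. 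Similarly $[\mathrm{id}_X]^{\rm cell}_*$ corresponds to $\phi_X$, hence is the identity of $\Ccell_*(X)$ in the derived category. Passing to homology sheaves then produces the canonical morphism $\Hcell(f)_*\colon \Hcell_*(X)\to\Hcell_*(Y)$, functorially.

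The only genuinely delicate point is the compatibility between the $D_{\A^1}(k)$-level identification and the $D(Ab_{\A^1}(k))$-level one — i.e.\ that Corollary \ref{cor: instrinsic} can be applied with $C_*=\Ccell_*(Y)$ a complex (not just a single sheaf) and that the resulting bijection is natural in $Y$; but this is exactly what Corollary \ref{cor: instrinsic} and the uniqueness clause of Remark \ref{rem strict A^1 solvable} provide, the naturality following from functoriality of the strict $\A^1$-resolution noted in Remark \ref{rem strict A^1 solvable}. Everything else is bookkeeping with the adjunctions already set up in Section \ref{subsection intrinsic cellular chain complex}. Hence the construction is forced and the lemma follows; this is why we described the proof as ``an easy consequence of the above observations.''
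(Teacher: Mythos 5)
Your argument is correct and is exactly the route the paper intends: the lemma is stated there as "an easy consequence of the above observations," namely the representability statement of Corollary \ref{cor: instrinsic} together with the fact (recorded just before it) that morphisms in $D(Ab_{\A^1}(k))$ out of the bounded, degreewise projective complex $\Ccell_*(X)$ are chain homotopy classes of chain maps. Your write-up simply makes explicit the bookkeeping (well-definedness via $C_*^{\A^1}(f)$, functoriality via uniqueness in the representability) that the paper leaves implicit.
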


\begin{remark} 
In case $f$ is a cellular morphism of smooth $k$-schemes, in the sense of Definition \ref{def cellular morphism}, then the canonical morphism of cellular $\A^1$-chain complexes $f^{\rm cell}_*:\Ccell_*(X) \rightarrow  \Ccell_*(Y)$ induced by $f$ has $[f]^{\rm cell}_*$ for its chain homotopy class.
\end{remark}

We also observe two trivial consequences of Lemma \ref{lemma cellular functoriality}, obtained by applying it to $f=\Id_X:X\to X$).

\begin{corollary} 
Let $X$ be a smooth $k$-scheme endowed with two cellular structures.  Then there exists a canonical homotopy equivalence between the two corresponding cellular $\A^1$-chain complexes, and consequently, a canonical isomorphism between the cellular $\A^1$-homologies of $X$ corresponding to the two structures.
\end{corollary}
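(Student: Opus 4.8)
The plan is to deduce this from Lemma~\ref{lemma cellular functoriality} applied to the identity morphism, together with the functoriality of the assignment $f\mapsto [f]^{\rm cell}_*$ that is implicit in Corollary~\ref{cor: instrinsic}. Write $\mathcal C$ and $\mathcal C'$ for the two cellular structures on $X$, and let $\Ccell_*(X;\mathcal C)$ and $\Ccell_*(X;\mathcal C')$ denote the associated cellular $\A^1$-chain complexes. Since the object of $\sH(k)$ underlying both is literally the same smooth scheme $X$, the identity $\Id_X$ is a morphism in $\sH(k)$ both from $(X,\mathcal C)$ to $(X,\mathcal C')$ and from $(X,\mathcal C')$ to $(X,\mathcal C)$. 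So Lemma~\ref{lemma cellular functoriality} applies in both directions.

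First I would invoke Lemma~\ref{lemma cellular functoriality} to produce canonical chain homotopy classes
\[
u := [\Id_X]^{\rm cell}_* : \Ccell_*(X;\mathcal C) \to \Ccell_*(X;\mathcal C'), \qquad v := [\Id_X]^{\rm cell}_* : \Ccell_*(X;\mathcal C') \to \Ccell_*(X;\mathcal C).
\]
Next I would check that $v\circ u$ and $u\circ v$ are the identity homotopy classes. For this I would use Corollary~\ref{cor: instrinsic}: each cellular $\A^1$-chain complex represents in $D(Ab_{\A^1}(k))$ the functor $C_*\mapsto \Hom_{D_{\A^1}(k)}(C_*^{\A^1}(X),C_*)$, and under this identification $[f]^{\rm cell}_*$ corresponds, via Yoneda, to precomposition with the morphism $C_*^{\A^1}(X)\to C_*^{\A^1}(Y)$ induced by $f$. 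Because $C_*^{\A^1}(-)$ is a functor and Yoneda is natural, $f\mapsto [f]^{\rm cell}_*$ respects composition and carries $\Id_{(X,\mathcal C)}$ to $\Id_{\Ccell_*(X;\mathcal C)}$; hence $v\circ u = [\Id_X\circ\Id_X]^{\rm cell}_* = \Id$ and symmetrically $u\circ v=\Id$. Thus $u$ is a chain homotopy equivalence, and it is canonical since it equals $[\Id_X]^{\rm cell}_*$. Passing to homology sheaves then yields the canonical isomorphism $\Hcell_*(X;\mathcal C)\xrightarrow{\sim}\Hcell_*(X;\mathcal C')$ of strictly $\A^1$-invariant sheaves. (Alternatively, one can bypass Lemma~\ref{lemma cellular functoriality} entirely: Corollary~\ref{cor: instrinsic} exhibits both complexes as representing objects of one and the same functor on $D(Ab_{\A^1}(k))$, so the Yoneda lemma produces the canonical isomorphism between them in $D(Ab_{\A^1}(k))$ directly, after which one takes homology.)

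The only delicate point --- and it is minor --- is ensuring that $[f]^{\rm cell}_*$ is genuinely functorial (compatible with composition and normalized on identities), not merely well defined for each separate $f$; this is precisely what the Yoneda description extracted from Corollary~\ref{cor: instrinsic} supplies, since representing objects are unique up to unique isomorphism and the comparison maps are forced to compose correctly. I would also remark explicitly that the isomorphism is canonical as a morphism in $D(Ab_{\A^1}(k))$, and hence on homology sheaves, even though writing down each individual complex required choices (trivializations of normal bundles): the complex $\Ccell_*(X)$, as opposed to its oriented refinement $\Ctcell_*(X)$, does not depend on orientations, so no compatibility of orientations between $\mathcal C$ and $\mathcal C'$ needs to be imposed.
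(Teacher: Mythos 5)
Your proposal is correct and is essentially the paper's intended argument: the paper states this corollary as a "trivial consequence" of Lemma \ref{lemma cellular functoriality} applied to $f=\Id_X$, and the functoriality/normalization on identities that you verify via the Yoneda description of Corollary \ref{cor: instrinsic} is exactly what makes the two comparison maps mutually inverse. Your parenthetical alternative (invoking uniqueness of representing objects directly) is also the mechanism underlying the paper's Corollary \ref{cor functor cell}, so nothing here diverges from the text.
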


\begin{corollary}
\label{cor functor cell}
The correspondence $X\mapsto \Ccell_*(X)$ defines a functor from the full subcategory of $\sH(k)$ with objects the smooth $k$-schemes admitting at least one cellular structure to the homotopy category of (non-negative) chain complexes of projective objects in $Ab_{\A^1}(k)$.  Consequently, it induces a functor $X \mapsto \Hcell_*(X)$ from the full subcategory of $\sH(k)$ with objects the smooth $k$-schemes admitting at least one cellular structure to the category of graded strictly $\A^1$-invariant sheaves. 
\end{corollary}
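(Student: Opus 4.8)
The plan is to deduce everything from the representability statement of Corollary \ref{cor: instrinsic}, together with the fact (Remark \ref{rem cellular suslin}) that each term $\Ccell_n(X)$ is a projective object of the abelian category $Ab_{\A^1}(k)$. Recall that $C_*^{\A^1}\colon \sH(k)\to D_{\A^1}(k)$ is a functor (Section \ref{subsection preliminaries}) and that for a cellular smooth $k$-scheme $X$ there is a distinguished strict $\A^1$-resolution $\phi_X\colon C_*^{\A^1}(X)\to \Ccell_*(X)$. Given a morphism $f\colon X\to Y$ in $\sH(k)$ between cellular smooth $k$-schemes, the composite $\phi_Y\circ C_*^{\A^1}(f)\colon C_*^{\A^1}(X)\to \Ccell_*(Y)$ lies in $D_{\A^1}(k)$, and by Corollary \ref{cor: instrinsic} it corresponds to a unique morphism $[f]^{\rm cell}_*\colon \Ccell_*(X)\to \Ccell_*(Y)$ in $D(Ab_{\A^1}(k))$, characterized by
\[
[f]^{\rm cell}_*\circ \phi_X = \phi_Y\circ C_*^{\A^1}(f).
\]
This is the morphism furnished by Lemma \ref{lemma cellular functoriality}.

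Next I would verify the two functoriality axioms, both of which follow from the uniqueness in the characterization above. For identities, $\phi_X\circ C_*^{\A^1}(\Id_X)=\phi_X=\Id_{\Ccell_*(X)}\circ\phi_X$ forces $[\Id_X]^{\rm cell}_*=\Id$. For a composable pair $X\xrightarrow{f}Y\xrightarrow{g}Z$ in $\sH(k)$, the morphism $[g]^{\rm cell}_*\circ[f]^{\rm cell}_*$ satisfies $\big([g]^{\rm cell}_*\circ[f]^{\rm cell}_*\big)\circ\phi_X=[g]^{\rm cell}_*\circ\phi_Y\circ C_*^{\A^1}(f)=\phi_Z\circ C_*^{\A^1}(g)\circ C_*^{\A^1}(f)=\phi_Z\circ C_*^{\A^1}(g\circ f)$, which is precisely the property characterizing $[g\circ f]^{\rm cell}_*$; hence the two coincide in $D(Ab_{\A^1}(k))$.

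It then remains to upgrade ``$D(Ab_{\A^1}(k))$'' to the homotopy category of non-negative chain complexes of projective objects. Since $\Ccell_*(X)$ is a bounded-below complex whose terms are projective objects of $Ab_{\A^1}(k)$, the canonical functor from the homotopy category of such complexes into $D(Ab_{\A^1}(k))$ is fully faithful; thus each $[f]^{\rm cell}_*$ is represented by a genuine chain map, unique up to chain homotopy, and the composition law just verified already holds at the level of chain homotopy classes. This realizes $X\mapsto \Ccell_*(X)$ as a functor to the homotopy category of non-negative chain complexes of projective objects of $Ab_{\A^1}(k)$. Post-composing with the homology functors $H_n$ (which factor through chain homotopy classes) and noting that the homology sheaves of a complex of strictly $\A^1$-invariant sheaves are again strictly $\A^1$-invariant, as $Ab_{\A^1}(k)$ is abelian, yields the induced functor $X\mapsto \Hcell_*(X)$ to graded strictly $\A^1$-invariant sheaves.

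The only nonformal ingredients here are the strict $\A^1$-resolution property of $\phi_X$ and the projectivity of the terms $\Ccell_n(X)$, both already available; the remainder is formal manipulation of the universal property. The single point requiring a little care — and the place where projectivity is genuinely used — is the passage from the derived category $D(Ab_{\A^1}(k))$ to the honest homotopy category of chain complexes, so that the assignment $f\mapsto[f]^{\rm cell}_*$ lands where the statement claims and the composition identities make sense on chain homotopy classes.
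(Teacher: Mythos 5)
Your proposal is correct and matches the paper's intended argument: the paper derives this corollary from Lemma \ref{lemma cellular functoriality}, which in turn rests on exactly the two ingredients you isolate — the uniqueness in the universal property of the strict $\A^1$-resolution $\phi_X$ (Corollary \ref{cor: instrinsic}) and the identification of $\Hom_{D(Ab_{\A^1}(k))}(\Ccell_*(X),C_*)$ with chain homotopy classes, valid because $\Ccell_*(X)$ is a bounded complex of projectives. Your verification of the identity and composition axioms via that uniqueness is precisely the "easy consequence of the above observations" the paper leaves to the reader.
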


\begin{remark} 
As noted in Remark \ref{remark pro-object}, the functor $X\mapsto \Ccell_*(X)$ in Corollary \ref{cor functor cell} extends to a functor defined on the category $\sH(k)$, but with target the category of pro-objects in the homotopy category of (non-negative) chain complexes of strictly $\A^1$-invariant sheaves:
\[
Sm_k \to \text{pro-}D(Ab_{\A^1}(k)); \quad X \mapsto C^{\text{st-}\A^1}_*(X)
\]
and the corresponding homology $H^{\text{st-}\A^1}_*$, called \emph{strict $\A^1$-homology}, takes its values in the category $\text{pro-} Ab_{\A^1}(k)$. Observe that $H^{\text{st-}\A^1}_n(X) = 0$ for $n>\dim(X)$ by the analogue of Proposition \ref{prop: cohomology}, since the Nisnevich cohomological dimension of a smooth scheme $X$ is $\dim(X)$.  These objects seem to be much more computable compared to the corresponding $\A^1$-homology sheaves. In fact, there are reasons to believe that for a smooth (projective) $k$-scheme $X$, the pro-objects $H^{\text{st-}\A^1}_*(X)$ are always constant.
\end{remark}

\subsection{Cellular \texorpdfstring{$\A^1$}{A1}-homology of projective spaces and punctured affine spaces}\hfill 
\label{subsection homology of Pn}

In this section, we explicitly compute the cellular $\A^1$chain complex and the cellular $\A^1$-homology sheaves of the $n$-dimensional projective space $\P^n$ with respect to the canonical oriented cellular structure described in Example \ref{ex proj space}.  If $\Omega_{\bullet}$ denotes the resulting filtration of $\P^n$ by open subschemes, we have $\Omega_{n-1} = \P^n-\{[0,\dots,0,1]\}$.  Thus, $\Omega_{n-1}= \A^{n}-\{0\} \times_{\G_m} \A^1$ is the total space of the canonical line bundle over $\P^{n-1}$.  The projection morphism $\Omega_{n-1} \to \P^{n-1}$ is cellular with respect to the cellular structure on $\Omega_{n-1}$ induced by the one on $\P^n$ and the canonical cellular structure on $\P^{n-1}$.  In fact, the morphism $\pi_{n-1}:\Omega_{n-1} \to \P^{n-1}$ is oriented cellular in the sense of Definition \ref{def cellular morphism} by Lemma \ref{lemma torsor}.  Since $\pi_{n-1}$ is a vector bundle of rank one, the induced morphism
\[
(\tilde{\pi}_{n-1}^{\rm cell})_*: \Ctcell_*(\Omega_{n-1})  \rightarrow  \Ctcell_*(\P^{n-1})
\]
is an isomorphism of (oriented) cellular $\A^1$-chain complexes in $D(Ab_{\A^1}(k))$. 

Let $\~\Omega_{\bullet}$ denote the filtration of $\A^{n+1}-\{0\}$ by open subschemes obtained by pulling back $\Omega_{\bullet}$ by the canonical $\G_m$-torsor $\A^{n+1}-\{0\} \to \P^n$ as described in Lemma \ref{lemma torsor}.  In exactly the same way as above, one can verify that the canonical projection $\tilde{\Omega}_{n-1}\to \A^{n}-\{0\}$, which is the projection of a (trivial) vector bundle of rank one, induces an isomorphism 
\[
\Ctcell_*(\tilde{\Omega}_{n-1}) \xrightarrow{\simeq} \Ctcell_*(\A^{n}-\{0\})
\]
of (oriented) cellular $\A^1$-chain complexes.  Observe that $\tilde{\Omega}_i= (\A^{i+1}-\{0\})\times \A^{n-i}$, and consequently, it follows that $\tilde{\Omega}_i-\tilde{\Omega}_{i-1} = \A^i\times \G_m \times \A^{n-i}$.  We now point out that for the morphism $\A^{n+1}-\{0\} \to \P^n$ to be oriented cellular, the orientations for $\A^{n+1}-\{0\}$ have to be the ones induced by those on $\P^n$, which are described in example \ref{ex proj space}.  If $X_0, \ldots, X_{n}$ denote the $n+1$ coordinate functions on $\A^{n+1}-\{0\}$, then the normal bundle $\nu_i$ of the closed immersion of $\tilde{\Omega}_i-\tilde{\Omega}_{i-1}$ in $\tilde{\Omega}_i$ is trivialized (or rather, oriented) by the functions $\frac{X_j}{X_n}$ (and not by the functions $X_j$) for $j\in\{0,\dots, i-1\}$. Thus, 
\begin{equation}\label{eq An-0}
\Ctcell_i(\A^{n+1}-\{0\}) \cong \KMW_i \otimes \ZA[\G_m] 
\end{equation}
for each $i\leq n$, where the isomorphism $\KMW_i\otimes \ZA[\G_m] \cong \HA_{i} (Th(\nu_i))$ is coming from the above trivializations.

\begin{remark} 
\label{rem H} 
Let $\bH:=\ZA[\G_m]$ denote the free strictly $\A^1$-invariant sheaf on $\G_m$.  Clearly, $\bH$ is a commutative and cocommutative Hopf algebra in $Ab_{\A^1}$ for the $\A^1$-tensor product.  The structure morphism $\G_m\to \Spec k$ induces an augmentation morphism, the counit of the structure, which is split by the unit of $\G_m$.  The augmentation ideal of $\bH$, that is, the kernel of this augmentation morphism, is canonically $\ZA(\G_m) = \KMW_1$ (where we use $1$ as the base point of $\G_m$).  The above splitting thus induces a decomposition
\[
 \bH = \KMW_1 \oplus \Z.
\]
Recall that $\otimes$ denotes the symmetric monoidal structure on $Ab_{\A^1}(k)$.  With the above decomposition, the product $\bH\otimes\bH\to \bH$ is easily computed to be the following: it is the obvious morphism on the factor 
\[
\Z \oplus (\KMW_1\otimes\Z) \oplus (\Z\otimes \KMW_1) \to \bH
\] 
and it is given by
\[
\eta:\KMW_1\otimes \KMW_1 =  \KMW_2 \to \KMW_1\subset \bH 
\]
on the remaining factor $\KMW_1\otimes \KMW_1 \subset \bH$; this is in fact precisely the definition of $\eta$ (see \cite{Morel-book}). The diagonal $\Psi: \bH\to\bH\otimes \bH$ giving the Hopf algebra structure is easily checked to be the inclusion 
\[
\Z=\Z\otimes\Z \subset \bH\otimes \bH
\]
on the factor $\Z \subset \bH$ and on the factor $\KMW_1\subset \bH$, it is given by the morphism
\[
\begin{split}
\KMW_1  &\to (\KMW_1 \otimes \Z) \oplus (\Z \otimes \KMW_1) \oplus (\KMW_1\otimes \KMW_1) \subset \bH \otimes \bH \\
(t) &\mapsto (t) \otimes 1 + 1 \otimes (t) + (-1) \otimes (t).
\end{split}
\]
Observe that for a unit $t$, we have $(t)\otimes(t) = (-1)\otimes(t) = (t)\otimes(-1)$ in $\KMW_2$. 

Using this structure, if $\bM$ and $\bN$ are strictly $\A^1$-invariant sheaves with an $\bH$-module structure, one defines as usual a $\bH$-module structure on the ($\A^1$-) tensor product $\bM\otimes \bN$ by the composition:
\[
\bM \otimes \bN \otimes \bH  \xrightarrow{\Id_{\bM} \otimes \Id_{\bN} \otimes \Psi} \bM \otimes \bN \otimes \bH  \otimes \bH \cong (\bM \otimes \bH)  \otimes (\bN \otimes \bH)  \to \bM\otimes \bN.
\]
For instance, if $\KMW_1$ is endowed with the $\bH$-module structure as the kernel of the augmentation $\bH\to\Z$, then the $n$-th tensor product $(\KMW_1)^{\otimes n} = \KMW_n$ admits an induced $\bH$-module structure. 
\end{remark}

The action of $\G_m$ on $\A^{n+1}-\{0\}$ preserves the oriented cellular structure by Lemma \ref{lemma torsor}, and induces an action of $\bH$ on the entire cellular $\A^1$-chain complex. Thus, $\Ctcell_*(\A^{n+1}-\{0\})$ (and also $\Ccell_*(\A^{n+1}-\{0\})$) is a complex of $\bH$-modules in $Ab_{\A^1}(k)$.  One may show that the isomorphism (\ref{eq An-0}) above is an isomorphism of $\bH$-modules where the action of $\bH$ on $\KMW_i$ is the trivial one. This is contained in the following result, which computes the oriented cellular $\A^1$-chain complex of $\A^{n+1}-\{0\}$ and its cellular $\A^1$-homology.

\begin{theorem}
\label{thm An-0} 
The oriented cellular $\A^1$-chain complex $\Ctcell_*(\A^{n+1}-\{0\})$ of $\A^{n+1}-\{0\}$, as a complex of right $\bH$-modules, is canonically isomorphic to:
\[
\KMW_n \otimes \bH \stackrel{\partial_n}{\to} \KMW_{n-1} \otimes\bH \stackrel{\partial_{n-1}}{\to} \dots \stackrel{\partial_{i+1}}{\to} \KMW_i\otimes \bH \stackrel{\partial_i}{\to} \KMW_{i-1}\otimes \bH \dots \KMW_1\otimes \bH \stackrel{\partial_1}{\to} \bH
\]
where for each $i$ the $\bH$-module structure on $\KMW_i\otimes \bH$ is the tensor product of the trivial structure on $\KMW_i$ and the canonical one on $\bH$ and where the differential 
\[
\partial_i : \big(\KMW_{i+1} \oplus \KMW_{i}\big)  \cong \KMW_{i} \otimes \bH \to \KMW_{i-1} \otimes \bH \cong \big(\KMW_i \oplus \KMW_{i-1} \big)
\]
is given for $i$ odd by the matrix (with obvious notations)
\[
\begin{pmatrix} \eta & 0 \\ \Id_{\KMW_i} & 0\end{pmatrix}
\]
and for $i$ even by the matrix 
\[
\begin{pmatrix} 0 & 0 \\ \<-1\> \cdot \Id_{\KMW_i} & \eta \end{pmatrix}.
\]
\end{theorem}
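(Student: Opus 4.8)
The plan is to split the proof into two parts: first identify the terms and the $\bH$-module structure, then compute the differentials by induction on $n$, the only genuine computation being that of the top differential $\partial_n$.

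For the terms, the purity theorem \cite{Morel-Voevodsky} (cf.\ \eqref{eq1}--\eqref{eq2}) together with the coordinate trivialisations of the normal bundles $\nu_i$ induced from the flag structure on $\P^n$ gives $Th(\nu_i)\simeq_{\A^1}S^i\wedge\G_m^{\wedge i}\wedge(\tilde\Omega_i-\tilde\Omega_{i-1})_+$; as each stratum is $\G_m$ times an affine space, $\Ctcell_i(\A^{n+1}-\{0\})=\HAred_i(Th(\nu_i))\cong\KMW_i\otimes\ZA[\G_m]=\KMW_i\otimes\bH$ for $i\ge 1$, and $\Ctcell_0=\bH$. The diagonal $\G_m$-action on $\A^{n+1}-\{0\}$ preserves the oriented cellular structure (Lemma \ref{lemma torsor}, Remark \ref{rem cell}(1)), hence makes $\Ctcell_*(\A^{n+1}-\{0\})$ a complex of $\bH$-modules; on each stratum it is translation on the $\G_m$-coordinate and it fixes the ratios $X_j/X_i$ that orient $\nu_i$, so it acts trivially on the factor $\KMW_i$ and by the regular coaction on $\bH$. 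This is exactly the asserted tensor-product $\bH$-module structure.

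For the differentials I would induct on $n$, the case $n=0$ being trivial. As recalled just before the statement, the rank-one vector bundle projection $\tilde\Omega_{n-1}=(\A^n-\{0\})\times\A^1\to\A^n-\{0\}$ is an oriented cellular morphism inducing a $\bH$-equivariant isomorphism $\Ctcell_*(\tilde\Omega_{n-1})\xrightarrow{\simeq}\Ctcell_*(\A^n-\{0\})$, and since the cellular structure on $\tilde\Omega_{n-1}$ is the restriction of the one on $\A^{n+1}-\{0\}$, the complexes $\Ctcell_*(\A^{n+1}-\{0\})$ and $\Ctcell_*(\A^n-\{0\})$ coincide in degrees $\le n-1$; the inductive hypothesis then gives $\partial_1,\dots,\partial_{n-1}$, and only $\partial_n\colon\KMW_n\otimes\bH\to\KMW_{n-1}\otimes\bH$ remains. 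A standard diagram chase (octahedral axiom) applied to $\tilde\Omega_{n-2}\subset\tilde\Omega_{n-1}\subset\tilde\Omega_n$ shows that, under the purity identifications, $\partial_n$ is the connecting homomorphism on $\HAred_*$ of the cofibre sequence $Th(\nu_{n-1})\to\tilde\Omega_n/\tilde\Omega_{n-2}\to Th(\nu_n)$. A second application of purity identifies the middle term: the closed complement of $\tilde\Omega_{n-2}$ in $\A^{n+1}-\{0\}$ is $\{X_0=\dots=X_{n-2}=0\}$, a smooth subscheme isomorphic to $\A^2-\{0\}$ of codimension $n-1$ with trivial normal bundle, so $\tilde\Omega_n/\tilde\Omega_{n-2}\simeq_{\A^1}S^{n-1}\wedge\G_m^{\wedge(n-1)}\wedge(\A^2-\{0\})_+$, and the whole cofibre sequence becomes the $(n-1)$-fold $S^1\wedge\G_m$-suspension of the two-step cellular cofibre sequence
\[
(\G_m)_+\longrightarrow(\A^2-\{0\})_+\longrightarrow\A^1/(\A^1-\{0\})\wedge(\G_m)_+
\]
of $\A^2-\{0\}$ (open cell $\{X_{n-1}\ne0\}$, closed cell $\{X_{n-1}=0,\,X_n\ne0\}\cong\G_m$, normal direction $X_{n-1}$). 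Hence $\partial_n$ is the suspension of the connecting map of this last sequence — which is the elementary two-dimensional computation, and for $n=1$ \emph{is} $\partial_1$ — corrected by the discrepancy between the trivialisation of the rank-$(n-1)$ normal bundle used in the second purity step and the $\P^n$-induced trivialisations of $\nu_{n-1}$ and $\nu_n$ (these differ by the unit $\langle X_n\rangle^{\,n-1}$ modulo squares), and by the Koszul-type factors (powers of $\langle-1\rangle$) produced when reassociating the sphere- and $\G_m$-smash factors. It is precisely this correction, interacting with the $\bH$-module structure through the identities $\langle t\rangle=1+\eta(t)$ and the product $\bH\otimes\bH\to\bH$ of Remark \ref{rem H}, that produces the $\eta$- and $\langle-1\rangle$-entries and the dependence on the parity of $n$, yielding the two matrices in the statement. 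Finally one checks $\partial_{n-1}\circ\partial_n=0$, and that the homology of the complex is $\Z$ in degree $0$ and $\KMW_{n+1}$ in degree $n$ as it must be, directly from the matrices using $\langle-1\rangle+1=h$ and $\eta h=0$.

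\textbf{The main obstacle} is the last part of the computation of $\partial_n$: tracking the chosen orientations of the normal bundles through the iterated purity isomorphisms and smash-product reassociations carefully enough to pin down \emph{exactly} the $\eta$- and $\langle-1\rangle$-entries and the parity of $n$ on which they depend. Everything else — purity, the long exact sequence/octahedral formalism, the $\bH$-module bookkeeping, and the inductive reduction to $\partial_n$ — is formal or reduces to the elementary two-dimensional computation with $\A^2-\{0\}$.
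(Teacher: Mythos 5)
Your overall strategy coincides with the paper's: identify the terms and the $\bH$-module structure via purity and the $\G_m$-action, induct on $n$ using the $\A^1$-invariance of the rank-one bundle $\tilde\Omega_{n-1}\to\A^n-\{0\}$ to reduce everything to the top differential, and realize $\partial_n$ as a connecting homomorphism corrected for the discrepancy between the coordinate trivializations and the $\P^n$-induced orientations. Your identification of the middle term $\tilde\Omega_n/\tilde\Omega_{n-2}$ by a second application of purity to the closed subscheme $\{0\}^{n-1}\times(\A^2-\{0\})$ is a clean variant of the paper's argument (which instead uses the explicit tubular neighbourhood $\A^n\times\G_m$ and a commutative diagram of cofibre sequences), and either route is fine.

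The gap is in the step you yourself flag as the main obstacle: the correction factor is only described, not computed, and the description as given is not adequate. You write that the two trivializations ``differ by the unit $\langle X_n\rangle^{\,n-1}$ modulo squares,'' which would reduce to $1$ or $\langle X_n\rangle$ according to parity. But $X_n$ here is the tautological coordinate on the $\G_m$-factor of the stratum, not a constant unit of the base field, so the change of trivialization is not a scalar in $\GW(k)$: it is the endomorphism of $\KMW_{n-1}\otimes\bH$ obtained from the $\bH$-module structure on $\KMW_{n-1}$ twisted through the comultiplication of $\bH$. Computing this endomorphism is exactly the content of the paper's Lemma \ref{lem n.eta}: one must show that the module-structure map $\KMW_m\otimes\bH\to\KMW_m$ is the identity on $\KMW_m\otimes\Z$ and $m_\epsilon\cdot\eta$ on $\KMW_m\otimes\KMW_1=\KMW_{m+1}$ (an induction using the diagonal $(t)\mapsto(t)\otimes1+1\otimes(t)+(-1)\cdot(t)$, the $\epsilon$-commutativity of Milnor--Witt $K$-theory, and $\eta^2(-1)+2\eta=0$), and that the resulting automorphism $\theta_m$ is $\left(\begin{smallmatrix}\langle(-1)^m\rangle & m_\epsilon\eta\\ 0&1\end{smallmatrix}\right)$. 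These off-diagonal $\eta$-terms and the element $m_\epsilon\in\GW(k)$ are precisely what produce the entries of the two matrices and their dependence on the parity of $n$; they cannot be recovered from a ``modulo squares'' count plus Koszul signs. Since the explicit matrices are the entire content of the theorem, the proof is incomplete until this computation is carried out.
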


\begin{remark} 
\label{rem H structure}
It immediately follows from Theorem \ref{thm An-0} that the complex $\Ctcell_*(\A^{n+1}-\{0\})$ is a complex of ``free'' $\bH$-modules. Moreover, it is acyclic in degree $\neq 0$ and $n$, because the space $\A^{n+1}-\{0\}$ is $(n-1)$-connected. In other words $\Ctcell_*(\A^{n+1}-\{0\})$ is an $(n+1)$-extension of $\Z$ by $\KMW_{n+1} = \Hcell_n(\A^{n+1}-\{0\})$. One has a chain homotopy equivalence
\[
\Ctcell_*(\A^{n+1}-\{0\}) \otimes_{\bH} \Z  \cong \Ctcell_*(\P^{n}).
\]
\end{remark}

We will need some preliminaries before embarking upon the proof of Theorem \ref{thm An-0}.  Consider the automorphism of the rank $n$ trivial vector bundle over $\G_m$ defined by 
\[
\begin{split}
\Theta_n : \A^{n}\times \G_m &\xrightarrow{\simeq} \A^{n}\times \G_m \\
((\lambda_0,\ldots,\lambda_{n-1}),t) &\mapsto ((\lambda_0 t,\ldots,\lambda_{n-1} t),t). 
\end{split}
\]
and let 
\[
Th(\Theta_n) : \big(\A^{n}/(\A^{n}-\{0\})\big)\wedge (\G_m)_+ \cong \big(\A^{n}/(\A^{n}-\{0\})\big)\wedge  (\G_m)_+ 
\]
denote the associated isomorphism of Thom spaces.  The space $\left(\A^{n}/(\A^{n}-\{0\})\right)\wedge (\G_m)_+$ is clearly $(n-1)$-connected and the K\"unneth morphism 
\begin{equation}
\label{eq A/A-0}
\KMW_n \otimes \bH \xrightarrow{\simeq} \HA_{n}(\big(\A^{n}/(\A^{n}-\{0\})\big)\wedge (\G_m)_+)
\end{equation}
is an isomorphism (of strictly $\A^1$-invariant sheaves) because of the Hurewicz Theorem \cite[\S 6.3]{Morel-book}. Now using the cofibration sequence 
\[
(\A^{n}-\{0\})\times \G_m \to \A^{n}\times \G_m \to \big(\A^{n}/(\A^{n}-\{0\})\big)\wedge (\G_m)_+
\]
and its associated long exact sequence in $\A^1$-homology, we deduce an exact sequence\footnote{It is conjectured \cite[Conjecture 6.34]{Morel-book} that $\HA_{n}(\G_m) = 0$ for $n\geq1$}
\begin{equation}
\label{eq H A/A-0}
\HA_{n}(\G_m) \to \KMW_n \otimes \bH \to \HA_{n-1}((\A^n-\{0\})\times \G_m) \to \HA_{n-1}(\G_m) \to 0.
\end{equation}
The automorphism 
\[
(\A^{n}-\{0\})\times \G_m \xrightarrow{\simeq} (\A^{n}-\{0\}) \times \G_m; \quad ((\lambda_0,\ldots,\lambda_{n-1}),t)\mapsto ((\lambda_0 t,\ldots,\lambda_{n-1} t),t) 
\]
given by restricting $\Theta_n$ induces an automorphism of the exact sequence \eqref{eq H A/A-0} (the action on the factors  $\HA_{n}(\G_m)$ and $\HA_{n-1}(\G_m)$ being trivial). Thus, the induced automorphism
\begin{equation}
\label{eq theta_n}
\theta_n: \KMW_n \otimes \bH \cong \KMW_n \otimes \bH
\end{equation}
is the same as the one induced by $Th(\Theta_n)$ and the identification \eqref{eq A/A-0}.  The projection of $\theta_n$ onto the factor $\KMW_n \otimes \Z = \KMW_{n}$ is clearly the morphism induced by the projection of $\Theta_n$ on the first factor
\begin{equation}
\label{eq Theta_n projection}
(\A^{n}-\{0\})\times \G_m \to \A^{n}-\{0\}; \quad((\lambda_0,\ldots,\lambda_{n-1}),t) \mapsto (\lambda_0 t,\ldots,\lambda_{n-1} t)  
\end{equation}
on $\HA_{n-1}$:
\[
\KMW_n \otimes \bH \to \KMW_n. 
\]
Using this and the fact that $\A^{n}/(\A^{n}-\{0\})$ is canonically isomorphic to $\big(\A^{1}/(\A^{1}-\{0\})\big)^{\wedge n}$ as a $\G_m$-space, we deduce that the above morphism is exactly the morphism defining the $\bH$-module structure on $\KMW_n$ corresponding to the $n$-th tensor product of the one on $\KMW_1$.  We now explicitly describe this $\bH$-module structure on $\KMW_n$.

\begin{lemma}  
\label{lem n.eta} \hfill 
\begin{enumerate}[label=$(\alph*)$]
\item The morphism 
\[
\KMW_n\otimes\bH \to \KMW_n
\]
corresponding to the canonical $\bH$-module structure on $\KMW_n$ is given as follows. Using the decomposition 
\[
\KMW_n\otimes\bH = (\KMW_n\otimes \Z) \oplus (\KMW_n\otimes\KMW_1) = \KMW_n \oplus \KMW_{n+1}, 
\]
on the factor $\KMW_n\otimes\Z = \KMW_n$ it is the identity and on the factor $\KMW_n\otimes\KMW_1 = \KMW_{n+1}$ it is given by 
\[
 n_\epsilon \cdot \eta : \KMW_{n+1} \to \KMW_n
\]
where $n_\epsilon =  \underset{{i=1}}{\overset{n}{\sum}} \<(-1)^{i-1}\> \in \KMW_0(k) = \GW(k)$. In particular, this morphism is $0$ if $n$ is even, and is multiplication by $\eta$ if $n$ is odd.

\item The automorphism of $\KMW_{n+1}\oplus \KMW_{n}$ defined by the commutative diagram
\[
\begin{xymatrix}{
\KMW_n \otimes \bH \ar[d]^{=}\ar[r]^-{\theta_n}& \KMW_n \otimes \bH \ar[d]^{=}\\
\KMW_{n+1}\oplus \KMW_{n} \ar[r] & \KMW_{n+1}\oplus \KMW_{n}
}\end{xymatrix}
\]
is given by multiplication by the matrix
\[
\begin{pmatrix} \<(-1)^n\> & n_\epsilon \cdot \eta \\ 0 & 1 \end{pmatrix}. 
\]
\end{enumerate}
\end{lemma}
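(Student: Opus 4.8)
The plan is to deduce both parts of the lemma from a single computation on the fibres of $\Theta_n$. Fix a finitely generated separable field extension $F/k$ and a point $u\in\G_m(F)$. Restricted to the fibre of $\Theta_n$ over $u$, the automorphism $Th(\Theta_n)$ is the Thom-space automorphism of the homothety $u\cdot\Id_n\in\GL_n(F)$ of $\A^n_F$. A linear automorphism $g\in\GL_n(F)$ induces on $\HA_n\big(\A^n_F/(\A^n_F-\{0\})\big)=\KMW_n(F)$ multiplication by $\<\det g\>$ (this standard fact underlies Lemma~\ref{lem:Thomiso}: $\SL_n$ is $\A^1$-connected, and $\mathrm{diag}(v,1,\dots,1)$ acts by $\<v\>$), so $u\cdot\Id_n$ acts by $\<u^n\>=\<u\>^n$. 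Writing $[u]=1+(u)\in\bH(F)=\Z\oplus\KMW_1(F)$ for the class of $u$, and using that precomposing $Th(\Theta_n)$ with the inclusion of the fibre over $u$ equals including that fibre after applying $u\cdot\Id_n$, we obtain for $\theta_n\colon\KMW_n\otimes\bH\to\KMW_n\otimes\bH$ the identity
\[
\theta_n\big(\beta\otimes[u]\big)=\<u^n\>\,\beta\otimes[u]\qquad(\beta\in\KMW_n(F)).
\]
Taking $u=1$ gives $\theta_n(\beta\otimes 1)=\beta\otimes 1$; expanding the previous identity along $[u]=1+(u)$ and subtracting then yields
\[
\theta_n\big(\beta\otimes(u)\big)=\big(\<u^n\>-1\big)\,\beta\otimes 1\;+\;\<u^n\>\,\beta\otimes(u).
\]

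For part $(a)$: by the discussion preceding the lemma, $\mu_n$ is the composite of $\theta_n$ with the projection onto the summand $\KMW_n\otimes\Z=\KMW_n$. Since equalities of morphisms in $Ab_{\A^1}(k)$ may be tested on sections over finitely generated separable extensions, the two displayed formulas show that $\mu_n$ restricts to the identity on $\KMW_n\otimes\Z$, and that on $\KMW_n\otimes\KMW_1=\KMW_{n+1}$ it sends a product $\beta\cdot(u)$ (such products generate $\KMW_{n+1}(F)$) to $(\<u^n\>-1)\,\beta$. Now $\<u^n\>-1=n_\epsilon\cdot\eta(u)$ in $\GW(F)$: most transparently, both sides vanish when $n$ is even, while for $n$ odd both equal $\<u\>-1=\eta(u)$; here one uses $\<-1\>\eta=-\eta$, a consequence of $h\eta=0$, to see that $n_\epsilon\cdot\eta$ is $\eta$ for odd $n$ and $0$ for even $n$. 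Hence $\mu_n$ restricted to $\KMW_{n+1}$ is multiplication by $n_\epsilon\cdot\eta$ (the graded-commutativity sign incurred when repositioning $\eta$ relative to $\beta$ is again killed by $h\eta=0$), which proves part $(a)$ including the ``in particular'' assertion.

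For part $(b)$: the same two displayed formulas determine all four components of $\theta_n$ with respect to $\KMW_n\otimes\bH=(\KMW_n\otimes\KMW_1)\oplus(\KMW_n\otimes\Z)=\KMW_{n+1}\oplus\KMW_n$. From $\theta_n(\beta\otimes 1)=\beta\otimes 1$, the restriction of $\theta_n$ to $\KMW_n\otimes\Z$ is the identity. From the formula for $\theta_n(\beta\otimes(u))$, the component $\KMW_{n+1}\to\KMW_n$ sends $\beta\cdot(u)$ to $(\<u^n\>-1)\beta=n_\epsilon\,\eta(u)\,\beta$, i.e.\ equals $n_\epsilon\cdot\eta$ by part $(a)$. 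It remains to compute the component $\KMW_{n+1}\to\KMW_{n+1}$, which sends a symbol $\omega=\beta\cdot(u)$ to $\<u^n\>\,\omega$. For such $\omega$ ending in the symbol $(u)$ one has $\<u\>\,\omega=\<-1\>\,\omega$: indeed $\<u\>-\<-1\>=\eta\big((u)-(-1)\big)=\eta\,\<-1\>\,(-u)$, and $(-u)\,\omega=0$ because $\omega$ contains, up to a unit, the factor $(-u)(u)=0$. Iterating, $\<u^n\>\,\omega=\<-1\>^n\,\omega=\<(-1)^n\>\,\omega$; since such symbols generate $\KMW_{n+1}(F)$, this component is $\<(-1)^n\>\cdot\Id$. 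Assembling the four components gives exactly the matrix of the statement.

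The only step demanding real care is the Milnor--Witt bookkeeping underlying the above: the $\GW$-identity $\<u^n\>-1=n_\epsilon(\<u\>-1)$ (for which $h\cdot(\<u\>-1)=0$, since $h\<u\>=\<u\>+\<-u\>=h$ and the form $\<u,-u\>$ is hyperbolic), the identity $\<u\>\,\omega=\<-1\>\,\omega$ for $\omega\in\KMW_{n+1}$ ending in a symbol $(u)$, and the handling of the graded-commutativity signs (all swallowed by $h\eta=0$) when passing between the ``$\beta\otimes(u)$'' presentation and the matrix components. Each of these reduces to the defining relations of $\KMW_*$ recalled in the conventions (notably $(u)(-u)=0$ and $h\eta=0$) together with the $\A^1$-connectivity of $\SL_n$ already used in Lemma~\ref{lem:Thomiso}; nothing conceptually new is required.
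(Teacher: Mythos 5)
Your proof is correct, but it takes a genuinely different route from the paper's. The paper proves part $(a)$ purely algebraically: it computes the tensor-power $\bH$-module structure on $\KMW_n=(\KMW_1)^{\otimes n}$ by induction on $n$, using the explicit comultiplication $\Psi$ of $\bH$ together with the relations $\eta^2(-1)+2\eta=0$ and $\epsilon$-graded commutativity; part $(b)$ is then deduced from $(a)$ via the K\"unneth formula and the primitivity of the diagonal of $\KMW_n=\HA_{n-1}(\A^n-\{0\})$. You instead extract everything from the single geometric identity $\theta_n(\beta\otimes[u])=\<u^n\>\,\beta\otimes[u]$, obtained by restricting $Th(\Theta_n)$ to the fibre over a field point $u$ and invoking the fact that a linear automorphism acts on $\KMW_n$ through $\<\det\>$ (which is indeed the fact underlying Lemma \ref{lem:Thomiso}); the matrix entries then fall out of the Milnor--Witt identities $\<u^n\>-1=n_\epsilon\cdot\eta(u)$ and $\<u\>\omega=\<-1\>\omega$ for $\omega$ ending in the symbol $(u)$, both of which you justify correctly (and your final answers agree with the paper's, including the sign/$\epsilon$ adjustments absorbed by $h\eta=0$). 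Note that both routes pass through the identification, made in the discussion immediately preceding the lemma, of the module-structure map with the projection of $\theta_n$ onto the summand $\KMW_n\otimes\Z$: the paper uses it to transport its algebraic computation to $\theta_n$, whereas you use it in the opposite direction to read off the module structure from the geometry. What your approach buys is economy and uniformity --- one fibrewise formula yields all four matrix components simultaneously and the induction on $n$ disappears --- at the cost of having to observe that the elements $\beta\otimes[u]$ generate the sections over finitely generated field extensions (they do, since $\KMW_{n+1}(F)$ is generated by symbols $\beta\cdot(u)$) and of the symbol bookkeeping you carry out at the end; the paper's inductive computation, by contrast, stays entirely inside the Hopf-algebra formalism of Remark \ref{rem H} and needs no input about $\GL_n$-actions on Thom spaces.
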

 
\begin{proof}
We only outline the argument and leave the details to the reader.
\begin{enumerate}[label=$(\alph*)$]
\item Recall from Remark \ref{rem H} that the canonical $\bH$-module structure on $\KMW_1$ is given by the morphism $\KMW_1 \otimes \bH \to \KMW_1$, which on the factor $\KMW_1 \otimes \Z$ is the obvious homomorphism and is multiplication by $\eta$ on the factor $\KMW_1 \otimes \KMW_1$.  We will use the description of the diagonal $\Psi: \bH\to\bH\otimes \bH$ of $\bH$ and the definition of the $\bH$-module structure on a tensor product.  The $\bH$-module structure on $\KMW_2$ is given by the composition
\[
\begin{split}
\KMW_2 \otimes \bH & \to \KMW_1 \otimes \KMW_1 \otimes \bH  \xrightarrow{\Id \otimes \Psi} \KMW_1 \otimes \KMW_1 \otimes \bH  \otimes \bH \\ 
&  \cong (\KMW_1 \otimes \bH)  \otimes (\KMW_1 \otimes \bH) \to \KMW_1 \otimes \KMW_1 = \KMW_2,
\end{split}
\]
which is easily seen to be identity on the factor $\KMW_2 \otimes \Z$.  On the factor $\KMW_2 \otimes \KMW_1$, it is given by 
\[
\begin{split}
(\lambda, \mu) \otimes (t) \mapsto & ~(\lambda)\otimes(\mu)\otimes \left( (t) \otimes 1 + 1\otimes(t) + (-1)\otimes(t)\right) \\
\mapsto & ~\eta \cdot (\lambda)\cdot (t) \otimes (\mu) +  (\lambda) \otimes \eta \cdot (\mu) \cdot (t) + \eta \cdot (\lambda)\cdot(-1) \otimes \eta \cdot (\mu)\cdot (t) \\
\mapsto & ~\epsilon \cdot \eta \cdot (\lambda)\cdot(\mu)\cdot(t) +  \eta \cdot (\lambda)\cdot(\mu)\cdot(t) + \epsilon \cdot \eta^2 \cdot(-1) \cdot (\lambda)\cdot(\mu)\cdot(t) \\
& ~= \eta \cdot (\lambda) \cdot (\mu) \cdot (t) - \epsilon \cdot \eta \cdot (\lambda) \cdot (\mu) \cdot (t)= 2_{\epsilon} \cdot \eta \cdot (\lambda) \cdot (\mu) \cdot (t), 
\end{split}
\]
where $\epsilon = -\<-1\> \in \KMW_0(k)$ and we have used the $\epsilon$-graded commutativity of Milnor-Witt $K$-theory and the defining relation $\eta^2\cdot (-1) + 2\eta = 0$.
An easy induction now finishes the proof in the general case.

\item We use the fact that $\theta_n$ is induced by the morphism $(\A^{n}-\{0\})\times \G_m \cong (\A^{n}-\{0\})\times \G_m$, which is the product of the morphism $(\A^{n}-\{0\})\times \G_m \to \A^{n}-\{0\}$ given in \eqref{eq Theta_n projection} and of the projection $(\A^{n}-\{0\})\times \G_m \to \G_m$. 
The former morphism on $\HA_{n-1}$ is just the $\bH$-module structure on $\KMW_n$ determined in part $(a)$ above.  Now using the K\"unneth formula, the fact that the diagonal of the Hopf algebra $\HA_{n-1}(\A^n-\{0\}) = \KMW_n$ is given by the formula $x\mapsto x\otimes 1 \oplus 1\otimes x$, and the description of the diagonal of $\HA_0(\G_m) = \bH$, the desired automorphism can be explicitly computed. The details are left to the reader.
\end{enumerate}
\end{proof} 

\begin{remark}
\label{rem important} 
The automorphism $\theta_n$ described in Lemma \ref{lem n.eta} is of order $2$: $\theta_n \circ \theta_n = \Id$. This is easy to check by using the identity $$\<(-1)^n\> \cdot n_\epsilon \cdot \eta +  n_\epsilon \cdot \eta  = 0.$$  Thus, $\theta_n$ is also the automorphism $\HA_{n-1}(\A^{n}-\{0\}\times \G_m) \cong \HA_{n-1}(\A^{n}-\{0\}\times \G_m)$ induced by the morphism 
\[
(\A^{n}-\{0\})\times \G_m \to (\A^{n}-\{0\})\times \G_m; \quad ((\lambda_0,\dots,\lambda_{n-1}),t)\mapsto ((\lambda_0 t^{-1},\dots,\lambda_{n-1} t^{-1}),t). 
\]
This fact will be used in the proof of Theorem \ref{thm An-0} below.
\end{remark}

\begin{proof}[\bf Proof of Theorem \ref{thm An-0}]
We proceed by induction on $n$. The case $n = 0$ is trivial.  
Now we assume that $n\geq 1$ and that the theorem is proven for $\A^{n}-\{0\}$. 

With our notation described at the beginning of this subsection, we have $\tilde{\Omega}_{n-1}(\A^{n+1}-\{0\}) = (\A^n-\{0\})\times \A^1$ and the filtration  $$\emptyset = \tilde{\Omega}_{-1}(\A^{n+1}-\{0\}) \subset \tilde{\Omega}_0 (\A^{n+1}-\{0\}) \subset \cdots \subset \tilde{\Omega}_{n-1}(\A^{n+1}-\{0\}) = (\A^n-\{0\})\times \A^1$$ is exactly the product with $\A^1$ (on the right) of the filtration  
\[
\emptyset = \tilde{\Omega}_{-1}(\A^{n}-\{0\}) \subset \tilde{\Omega}_0 (\A^{n}-\{0\}) \subset \cdots \subset \tilde{\Omega}_{n-1}(\A^{n}-\{0\}) = (\A^n-\{0\}).
\]
Thus, by $\A^1$-invariance of $\A^1$-homology, the truncation $\tau_{\leq n-1}\Ctcell_*(\A^{n+1}-\{0\})$ in degrees $\leq n-1$ equals $\Ctcell_*(\A^{n}-\{0\})$.  By induction, we thus already know the theorem for the oriented cellular $\A^1$-chain complex of $\A^{n+1}-\{0\}$ in degree $\leq n-1$.  So we only need to identify $\Ctcell_n(\A^{n+1}-\{0\})$ and the differential 
\begin{equation}
\label{eq partial_n}
\partial_n : \Ctcell_n(\A^{n+1}-\{0\}) \to \Ctcell_{n-1}(\A^{n+1}-\{0\}).
\end{equation}
Now, $(\A^{n+1}-\{0\} )- \tilde{\Omega}_{n-1}(\A^{n+1}-\{0\}) = \{0\}\times \G_m \subset \A^n \times \G_m$, so by construction $\Ccell_n(\A^{n+1}-\{0\}) = \HA_n(Th(\nu_n))$, where $\nu_n$ is the normal bundle of the closed immersion 
\[
\{0\}\times \G_m\subset \A^{n+1}-\{0\}.
\]
Since the open subset $\A^n \times \G_m\subset \A^{n+1}-\{0\}$ contains $\{0\}\times \G_m$, it defines a trivialization of $\nu_n$ and produces an isomorphism of pointed spaces $Th(\nu_n) \cong (\A^n/(\A^n-\{0\}))\wedge (\G_m)_+$.  Hence, 
\[
\begin{split}
\Ccell_n(\A^{n+1}-\{0\}) = \HA_n(Th(\nu_n)) &= \HA_n(\A^n/(\A^n-\{0\}))\wedge (\G_m)_+) \\ 
&= \HA_{n-1}((\A^n-\{0\})\times \G_m),  
\end{split}
\]
where the last equality follows from the fact that $\A^n-\{0\}$ is $\A^1$-weak equivalent to $S^{n-1} \wedge \G_m^{\wedge n}$.
The desired differential $\partial_n$ in \eqref{eq partial_n}  of the underlying cellular $\A^1$-chain complex is, by construction, given by the composition
\begin{equation}
\label{eq partial_n description}
\begin{split}
&\Ccell_n(\A^{n+1}-\{0\}) = \HA_{n-1}((\A^n-\{0\})\times \G_m)  \to \HA_{n-1}(\tilde{\Omega}_{n-1})
= \HA_{n-1}(\A^n-\{0\}) \\
&\to \HA_{n-1}((\A^n-\{0\})/\tilde{\Omega}_{n-2}(\A^n-\{0\})) = \Ccell_{n-1}(\A^n-\{0\})  = \Ccell_{n-1}(\A^{n+1}-\{0\}).
\end{split}
\end{equation}
This follows from the fact that $\tilde{\Omega}_{n-1} \cap \left(\A^n \times \G_m \right) = (\A^n-\{0\}) \times \G_m$ and from the induced commutative diagram
\[
\minCDarrowwidth12pt
\begin{CD}
(\A^n-\{0\}) \times \G_m @>>>  \A^n \times \G_m @>>> \dfrac{\A^n}{\A^n-\{0\}}\wedge (\G_m)_+ @>>> S^1 \wedge \big((\A^n-\{0\}) \times \G_m\big)_+  \\
@VVV @VVV @VVV @VVV \\
\tilde{\Omega}_{n-1} @>>> \A^{n+1}-\{0\} @>>> Th(\nu_n) @>>> S^1 \wedge ((\tilde{\Omega}_{n-1})_+).
\end{CD} 
\]
Now, the composition $\HA_{n-1}((\A^n-\{0\})\times \G_m)  \to \HA_{n-1}(\A^n-\{0\}) = \KMW_n$ of the first row in \eqref{eq partial_n description} is just induced by the projection $(\A^n-\{0\})\times \G_m \to \A^n-\{0\}$.

However, in order to compute the differential $\partial_n : \Ctcell_n(\A^{n+1}-\{0\}) \to \Ctcell_{n-1}(\A^{n+1}-\{0\})$ of the oriented cellular $\A^1$-chain complex, recall that we need to use the induced trivialization from $\P^n$ described in Example \ref{ex proj space} and Lemma \ref{lemma torsor}.  If the $X_i$ for $i\in\{0,\dots,n\}$ are the $n+1$-coordinate functions on $\A^{n+1}-\{0\}$, then the normal bundle $\nu_n$ is oriented by the functions $\frac{X_i}{X_n}$ for $i\in\{0,\dots, n-1\}$ (which produces the isomorphism of pointed spaces $Th(\nu_n) \cong (\A^n/(\A^n-\{0\})\wedge ((\G_m)_+)$). Thus, we have to compose the previous identifications with the isomorphism of vector bundles over $\G_m$
\[
\A^n\times \G_m \to \A^n \times \G_m; \quad ((\lambda_0,\ldots,\lambda_{n-1}),t)\mapsto (\lambda_0 t,\ldots,\lambda_{n-1} t),
\]
which takes the corrected trivialization given by the functions $\frac{X_i}{X_n}$ to the one coming from the functions $X_i$, $i\in\{0,\dots, n-1\}$.  Therefore, the differential $\partial_n$ in the oriented cellular $\A^1$-chain complex \eqref{eq partial_n} is the composition 
\[
\HA_{n-1}((\A^n-\{0\})\times \G_m) \xrightarrow{\simeq} \HA_{n-1}((\A^n-\{0\})\times \G_m) \to \HA_{n-1}(\A^n-\{0\}) \to \Ccell_{n-1}(\A^n-\{0\}), 
\]
where the leftmost isomorphism is induced by the above automorphism of $\A^n \times \G_m$.  Now, the composition  
\[
\KMW_{n+1}\otimes \bH \to \HA_{n-1}((\A^n-\{0\})\times \G_m) \to \HA_{n-1}(\A^n-\{0\}) = \KMW_n
\]
is precisely the morphism described in the Lemma \ref{lem n.eta}, where the leftmost arrow is the one in \eqref{eq H A/A-0}.  The canonical morphism $\KMW_n = \HA_{n-1}(\A^{n}-\{0\})\to \Ctcell_{n-1}(\A^{n}-\{0\})$ is given as follows: 
\[
\KMW_n \xrightarrow{\begin{pmatrix} \<-1\> & \eta \end{pmatrix}}   \KMW_n \oplus \KMW_{n-1},
\]
if $n$ is even and 
\[
\KMW_n \xrightarrow{\begin{pmatrix} \Id & 0 \end{pmatrix}}   \KMW_n \oplus \KMW_{n-1},
\]
if $n$ is odd.  To see this, first use the fact that $\KMW_n = \HA_{n-1}(\A^{n}-\{0\})\to \Ctcell_{n-1}(\A^{n}-\{0\})$ is the composition $$\KMW_n = \HA_{n-1}(\A^{n}-\{0\})\to \HA_{n-1}(Th(\nu_{n-1})) \cong \HA_{n-1}((\A^{n-1}/(\A^{n-1}-\{0\})\wedge ((\G_m)_+)),$$ in which the rightmost isomorphism is induced by the trivialization coming from the functions $\frac{X_i}{X_n}$ for $i\in\{0,\dots, n-2\}$.  If one uses the trivialization coming from the functions $X_i$ instead, then it is easy to see that
\begin{equation}
\label{eq KMW to H}
\KMW_n \to \HA_{n-1}((\A^{n-1}/(\A^{n-1}-\{0\})\wedge ((\G_m)_+)) = \KMW_n\oplus \KMW_{n-1} 
\end{equation}
is the canonical inclusion.  Thus, in order to compute the morphism in \eqref{eq KMW to H} for the trivialization coming from the functions $\frac{X_i}{X_n}$, we need to compose the morphism \eqref{eq KMW to H} with the automorphism 
\[
\KMW_n\oplus \KMW_{n-1} = \HA_{n-2}(\A^{n-1}-\{0\}\times \G_m) \cong \HA_{n-2}(\A^{n-1}-\{0\}\times \G_m) = \KMW_n\oplus \KMW_{n-1}
\]
induced by $((\lambda_0,\dots,\lambda_{n-2}),t)\mapsto ((\lambda_0 t^{-1},\dots,\lambda_{n-2} t^{-1}),t)$.  This morphism is computed in Lemma \ref{lem n.eta} $(b)$ and Remark \ref{rem important} above.  Using the induction, it is now easy to conclude the proof of the theorem.
\end{proof}

\begin{remark}
If we let $n$ go to infinity in the statement of Theorem \ref{thm An-0}, one obtains the oriented cellular $\A^1$-chain complex of the oriented cellular (ind-)space $\A^\infty-\{0\}$, which is unbounded below with the differentials given by the same formula as in Theorem \ref{thm An-0}. It is periodic of period $2$, and it is a free $\bH$-resolution of $\Z$.  This suggests that $\G_m$ behaves like a ``cyclic group of order $2$''.  Indeed, consider the standard ``small'' free resolution of $\Z$ by free $\Z[\Z/2]$-modules of rank $1$:
\[
\cdots \Z[\Z/2] \stackrel{\partial_n}{\to} \Z[\Z/2] \to \cdots \to \Z[\Z/2] \stackrel{\partial_2}{\to} \Z[\Z/2] \stackrel{\partial_1}{\to} \Z[\Z/2] \to \Z
\]
where $\Z[\Z/2]$ is the group ring of $\Z/2$, with $\sigma$  the nontrivial element of $\Z/2$, and 
\[
\partial_n =
\begin{cases}
1-\sigma, &\text{ if $n$ is odd; and}\\
1+\sigma, &\text{ if $n$ is even.} 
\end{cases}
\]
Here $\Z[\Z/2]$ replaces $\bH$ (in fact, it is the topological real realization of $\bH$). The augmentation ideal $\Z(\Z/2)\subset \Z[\Z/2]$ of $\Z[\Z/2]$ is now the free subgroup with generator $[1]-[\sigma]$ (with obvious notations). The decomposition $\Z[\Z/2] = \Z\oplus \Z(\Z/2)$ is analogous to $\bH = \Z\oplus \KMW_1$ and the differential 
\[
\partial_1 : \Z(\Z/2) \oplus \Z \xrightarrow{1 - \sigma} \Z(\Z/2) \oplus \Z
\]
can be interpreted with respect to this analogy: the summand $\Z$ is mapped isomorphically to $\Z(\Z/2)$ by $1\mapsto [1] - [\sigma]$, and $\Z(\Z/2)$ maps to itself through the multiplication by $2$ which replaces here $\eta$. The differential 
\[
\partial_2 : \Z(\Z/2) \oplus \Z \xrightarrow{1 + \sigma} \Z(\Z/2) \oplus \Z
\]
maps the factor $\Z(\Z/2)$ to $0$ and since $$(1+\sigma)[1] = [1] + [\sigma] = 2 - ([1]-[\sigma]),$$ we see that $[1]$ maps to $-([1]-[\sigma]) + 2 \cdot [1]$. Thus, the topological real realization of the complex $\Ctcell_*(\A^\infty-\{0\})$ is precisely the $\Z[\Z/2]$-resolution above (observe that the topological real realizations of the $\KMW_n$'s appearing in Theorem \ref{thm An-0} are $\Z$ with trivial $\Z/2$-action).

Of course, it is also possible to interpret the above computations through a complex embedding by replacing $\G_m$ with $S^1$ and $\bH$ with the singular homology group $H_*^{\rm sing}(S^1)$.  We leave the details to the reader.
\end{remark}

As a consequence of Theorem \ref{thm An-0}, we obtain the oriented cellular $\A^1$-chain complex of the projective space $\P^n$ with the canonical oriented cellular structure described above.

\begin{corollary}
\label{cor projective space} 
For any integer $n\geq 1$, the oriented cellular $\A^1$-chain complex $\Ctcell_*(\P^n)$ has the form
\[
\KMW_n \xrightarrow{\partial_n} \cdots \xrightarrow{\partial_{i+1}} \KMW_i \xrightarrow{\partial_i} \cdots \to \KMW_1 \xrightarrow{\partial_1} \Z
\]
with
\[
\partial_i=
\begin{cases}
0, &\text{ if $i$ is odd; and}\\
\eta, &\text{ if $i$ is even.} 
\end{cases}
\]
It follows that
\[
\Hcell_i(\P^n)=
\begin{cases}
\Z, &\text{ if $i = 0$;}\\
\KMW_i/\eta, &\text{ if $i<n$ is odd;} \\ 
{}_\eta\KMW_i, &\text{ if $i<n$ is even;}
\end{cases}
\]
and
\[
\Hcell_n(\P^n)=
\begin{cases}
\KMW_n, &\text{ if $n$ is odd;}\\
{}_\eta\KMW_n, &\text{ if $n$ is even.}
\end{cases} 
\]
\end{corollary}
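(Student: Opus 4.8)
The plan is to read off the corollary from Theorem~\ref{thm An-0} using the chain homotopy equivalence
\[
\Ctcell_*(\P^n) \;\cong\; \Ctcell_*(\A^{n+1}-\{0\})\otimes_{\bH}\Z
\]
recorded in Remark~\ref{rem H structure}, where $\Z$ is the $\bH$-module obtained from the augmentation $\bH\to\Z$. Under the splitting $\bH = \KMW_1\oplus\Z$ of Remark~\ref{rem H}, the functor $-\otimes_\bH\Z$ is simply the operation of killing the augmentation ideal $\KMW_1\subset\bH$; and since every term of $\Ctcell_*(\A^{n+1}-\{0\})$ is a free $\bH$-module (Remark~\ref{rem H structure}), this tensor product is already the derived one, so its homology computes $\Hcell_*(\P^n)$. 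First I would therefore just apply $-\otimes_\bH\Z$ termwise to the complex of Theorem~\ref{thm An-0}.

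In degree $i$ the term $\KMW_i\otimes\bH = (\KMW_i\otimes\KMW_1)\oplus(\KMW_i\otimes\Z) = \KMW_{i+1}\oplus\KMW_i$ has its $\bH$-action through the right tensor factor, so $(\KMW_i\otimes\bH)\otimes_\bH\Z\cong\KMW_i$ (and $\bH\otimes_\bH\Z=\Z$ in degree $0$), the surviving summand being $\KMW_i\otimes\Z$. Hence the induced differential $\KMW_i\to\KMW_{i-1}$ is exactly the component of the matrix of $\partial_i$ running from the summand $\KMW_i\otimes\Z$ of the source to the summand $\KMW_{i-1}\otimes\Z$ of the target. Inspecting the two matrices in Theorem~\ref{thm An-0}, this component is $0$ for $i$ odd and is $\eta$ for $i$ even. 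This already yields the asserted shape $\KMW_n\xrightarrow{\partial_n}\cdots\xrightarrow{\partial_1}\Z$ of $\Ctcell_*(\P^n)$, with $\partial_i$ alternating between $0$ (for $i$ odd) and $\eta$ (for $i$ even).

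Next I would simply read off the homology of this complex, which is routine. Since $\partial_1=0$ one gets $\Hcell_0(\P^n)=\Z$. For $0<i<n$: if $i$ is odd then $\partial_i=0$ and $\partial_{i+1}=\eta$, so $\Hcell_i(\P^n)=\KMW_i/\eta\KMW_{i+1}=\KMW_i/\eta$; if $i$ is even then $\partial_i=\eta$ and $\partial_{i+1}=0$, so $\Hcell_i(\P^n)={}_\eta\KMW_i$. Finally, there is no differential into degree $n$, so $\Hcell_n(\P^n)=\ker(\partial_n)$, which is $\KMW_n$ when $n$ is odd (so that $\partial_n=0$) and ${}_\eta\KMW_n$ when $n$ is even (so that $\partial_n=\eta$).

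I do not expect a genuine obstacle: once Theorem~\ref{thm An-0} and the identification in Remark~\ref{rem H structure} are in hand, the corollary really is a corollary, the only care needed being the bookkeeping of which summand of $\KMW_i\otimes\bH$ is the augmentation ideal (hence is killed by $-\otimes_\bH\Z$) and which block of the $2\times 2$ matrices of Theorem~\ref{thm An-0} descends, together with the degenerate ends $i=0$ and $i=n$. If one wanted an argument independent of Remark~\ref{rem H structure}, the alternative is induction on $n$: the open stratum $\Omega_{n-1}\subset\P^n$ is the total space of the line bundle $\sO_{\P^{n-1}}(-1)$, so its projection induces an isomorphism $\Ctcell_*(\Omega_{n-1})\xrightarrow{\simeq}\Ctcell_*(\P^{n-1})$, and it would remain only to compute the top differential $\partial_n\colon\KMW_n\to\KMW_{n-1}$ — exactly the kind of Thom-space-and-connecting-map computation carried out in the proof of Theorem~\ref{thm An-0}, where the orientations coming from the functions $X_j/X_n$ of Example~\ref{ex proj space} enter; this would be the only nontrivial step.
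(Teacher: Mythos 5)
Your proposal is correct and follows essentially the same route as the paper: the paper also deduces the corollary from the identification $\Ctcell_*(\P^n)\cong\Ctcell_*(\A^{n+1}-\{0\})\otimes_{\bH}\Z$ and the matrices of Theorem~\ref{thm An-0}. Your bookkeeping of which summand of $\KMW_i\otimes\bH$ survives and which matrix entry descends (the $(2,2)$ block, giving $0$ for $i$ odd and $\eta$ for $i$ even) is exactly the detail the paper leaves implicit, and your homology computation from the resulting complex is correct.
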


\begin{proof} 
Since the oriented cellular structure of $\A^{n+1}-\{0\}$ is obtained by pulling back that of $\P^n$ (also see Remark \ref{rem H structure}), we observe that $\Ctcell_*(\P^n)$ is the quotient of $\Ctcell_*(\A^{n+1}-\{0\})$ by the action of $\bH$:
\[
\Ctcell_*(\A^{n+1}-\{0\}) \otimes_{\bH} \Z  \cong \Ctcell_*(\P^{n}).
\]
The corollary now follows by the description of the differentials in $\Ctcell_*(\A^{n+1}-\{0\})$ determined in Theorem \ref{thm An-0}.
\end{proof}

\begin{remark} 
Clearly, the Suslin homology version of the cellular $\A^1$-chain complex of $\P^n$ is the complex with differential $0$ everywhere:
\[
\CScell_*(\P^n): \KM_n \stackrel{0}{\to} \dots \to \KM_i \stackrel{0}{\to} \dots \to \KM_1 \stackrel{0}{\to} \Z,
\]
whose homology is $\HScell_i(\P^n) = \KM_i$ for every $i$. Observe that the cellular Suslin homology of $\P^n$ is much simpler than the Suslin homology of $\P^n$ (which is unknown in general).
\end{remark}


We end this section by giving an application of the abstract characterization of cellular $\A^1$-chain complexes discussed in Section \ref{subsection intrinsic cellular chain complex} and the above computations to degree of morphisms from $\P^n \to \P^n$.

Let $n$ be an odd integer. For any morphism $f: \P^n\to \P^n$ in the $\A^1$-homotopy category $\sH(k)$, we may define its degree $\deg(f)\in \GW(k) = \KMW_0(k)$ as follows. By Lemma \ref{lemma cellular functoriality}, one may define a canonical homotopy class of morphism of cellular $\A^1$-chain complexes $[f]^{\rm cell}_*:\Ccell_*(\P^n)  \rightarrow  \Ccell_*(\P^n)$ and $\Hcell_n(f)$ is an endomorphism of sheaves of $\Hcell_n(\P^n)$. Since $n$ is odd, $\Hcell_n(\P^n) = \KMW_n$ by Corollary \ref{cor projective space}.  Since $\Hom_{Ab_{\A^1}(k)}(\KMW_n,\KMW_n) = \KMW_0(k) = \GW(k)$, it follows that $\Hcell_n(f)$ defines an element of $\GW(k)$. 

\begin{definition} The element of $\GW(k)$ defined above for any morphism $f: \P^n\to \P^n$ for odd $n$ in the $\A^1$-homotopy category $\sH(k)$ is denoted by $\deg(f)$ and is called the \emph{degree} of $f$.
\end{definition}

\begin{remark}
The possibility of defining the degree of $f$ as above is related to the fact that $\P^n$, for $n$ odd, is orientable. This construction can be easily generalized to other examples of cellular orientable smooth projective $k$-schemes, for instance obtained by convenient blow-up of $\P^n$'s.
\end{remark}

\section{Cellular \texorpdfstring{$\A^1$}{A1}-chain complexes associated with the Bruhat decomposition}
\label{section Bruhat decomposition}

Unless otherwise specified, we will fix and use the following notations for the rest of the article.  The base field will be denoted by $k$ and will be assumed to be perfect.

\begin{longtable}{ll}
$G$ & a split reductive (connected) algebraic group over $k$ \\
$G_{\rm der}$ & the derived group of $G$ \\
 & (it is a normal, semisimple subgroup scheme of $G$ with trivial radical) \\
$G_{\rm sc}$ & the simply connected central cover of $G_{\rm der}$ in the sense of algebraic groups\\
$T$ & a split maximal torus of $G$ \\
$B$ & a Borel subgroup of $G$ containing $T$ \\
$W$ & the Weyl group $N_G(T)/T$ of $G$ \\
$w_0$ & the element of longest length in $W$ \\
$X$ & the character group $\Hom(T, \G_m)$ of $T$ \\
$Y$ & the cocharacter group $\Hom(\G_m, T)$ of $T$\\
$\Phi\subset X$ & the set of roots \\
$\Phi^+$ & the set of positive roots corresponding to $B$\\
$\Phi^{-}$ & the complement of $\Phi^+$ in $\Phi$\\
$\Delta\subset\Phi^+$ & the set of simple roots (or a basis) of $\Phi^+$\\
$\alpha^{\vee}\in Y$ & the coroot corresponding to a root $\alpha$; it satisfies $\langle \alpha, \alpha^{\vee}\rangle = 2$ \\
$\Phi^\vee\subset Y$ & the set of coroots\\
$\varpi_{\alpha}$ & the dual of $\alpha^{\vee}$ corresponding to the inner product $\langle , \rangle$\\ &(that is, the fundamental weight corresponding to $\alpha$) \\
$s_{\alpha}$ & the element of $W$ corresponding to the reflection with respect to $\alpha$ \\
$U_{\alpha}$ & the root subgroup corresponding to $\alpha$ \\
$\mathfrak{u}_\alpha$ & the tangent space of $U_\alpha$ at the neutral element (or the Lie algebra of $U_{\alpha}$)
\\
& (it is non-canonically isomorphic to $\G_a$, on which $T$ acts through $\alpha$) \\
$U$ & the product of the $U_\alpha$'s for $\alpha\in \Phi^+$ (in any order); $B$ is the product $T U$\\
$S_{\alpha}$ & the subgroup of $G$ generated by $\alpha^{\vee}(\G_m)$, $U_{\alpha}$ and $U_{-\alpha}$ \\
& ($S_{\alpha} \simeq \SL_2$ or $\PGL_2$; we always have $S_{\alpha} \simeq \SL_2$ if $G$ is simply connected)\\
$P_{\lambda}$ & the parabolic subgroup of $G$ corresponding to a cocharacter $\lambda$ \\
\end{longtable}

The aim of this section is to explicitly describe a cellular structure on any split reductive group $G$ over $k$ and give some information about the associated cellular $\A^1$-chain complex. We will freely use the standard results on the associated root datum $(X,\Phi,Y,\Phi^\vee)$ and Bruhat decomposition (see \cite[Chapter 7 and 8]{Springer}, for example). 
For any $w\in W$ we choose a lifting $\dot{w}$ of $w$ in $N_G(T)$. The Bruhat decomposition of $G$ is the partition
\[
G = \underset{w \in W}\coprod B\dot{w}B.
\]
Observe that $B\dot{w}B$ doesn't depend on the choice of $\dot{w}$. This induces the stratification by open subsets
\begin{equation}
\label{eqn bruhat dec}
\emptyset = \Omega_{-1} \subsetneq \Omega_0 \subsetneq \Omega_1 \subsetneq \cdots \subsetneq \Omega_{\ell} = G, 
\end{equation}
where $\ell$ is the length of the longest element $w_0$ in $W$ and for every $i$, one has
\[
\Omega_i - \Omega_{i-1}= \underset{\ell(w) = \ell-i}{\coprod} B \dot{w}B.
\]
Another way to define $\Omega_i$ is to observe that
\[
\Omega_i = \underset{w\in W; ~ \ell(w)\geq \ell-i}{\bigcup} \dot{w}(\Omega_0),
\]
where $\dot{w}(\Omega_0)$ denotes the translate of $\Omega_0$ by $\dot{w}$.  It is well known (see Section \ref{subsection complex in rank 1} below for a precise recollection) that the $k$-schemes $B \dot{w}B$ are isomorphic to a product of an affine space and $T$. Thus, these are affine, smooth and cohomologically trivial and consequently, we obtain a cellular structure on $G$.

\begin{definition}
\label{def cell G} 
For a split reductive (connected) algebraic $k$-group $G$, with fixed split maximal torus $T$ and Borel subgroup $T\subset B\subset G$, we call the above cellular structure the \emph{canonical} or \emph{Bruhat cellular structure} on $G$.
\end{definition}

For our computations, we will need to orient this cellular structure. We first treat the case where the semi-simple rank of $G$ is one.

\subsection{The rank 1 case} \hfill 
\label{subsection complex in rank 1}

Let $G$ be a split semisimple group of rank $1$, $T$ be a split maximal torus of $G$ and $B$ be a Borel subgroup of $G$ containing $T$.  In this case, the Weyl group $W =  N_G(T)/T = \langle s_{\alpha} \rangle \simeq \Z/2$ has order $2$, where the generating reflection $s_{\alpha}$ corresponds to the positive simple root $\alpha: T \to \G_m$. We denote the corresponding coroot as usual by $\alpha^{\vee}: \G_m \to T$. We also denote by $U_\alpha$ and $U_{-\alpha}$ the root subgroups.  Let $\dot{s}_{\alpha}\in N_G(T)$ be an element whose class in $W$ is $s_\alpha$.

The Bruhat decomposition of $G$ has the form $G = B\dot{s}_{\alpha}B ~ \amalg ~ B$ and induces the increasing filtration of $G$ by open subsets of the form:
\[
\Omega_0 \hookrightarrow \Omega_1 = G
\]
with $\Omega_0 = B \dot{s}_{\alpha} B = U_\alpha\dot{s}_{\alpha}TU_\alpha$.  Its complement $G - \Omega_0$ with the reduced induced subscheme structure is $B$.  The morphisms induced by the product in $G$ 
$$U_\alpha\times T\times U_\alpha \rightarrow B\dot{s}_{\alpha}B; \quad (\lambda,t,\mu)\mapsto \lambda \dot{s}_{\alpha} t \mu$$ 
and 
$$T\times U_\alpha \rightarrow B, (t,\mu)\mapsto t \mu$$ 
are isomorphisms of $k$-schemes. We then observe that the open subscheme 
\[
\dot{s}_{\alpha} (\Omega_0) = U_{-\alpha}  T U_\alpha 
\]
contains $B$; it is thus an open neighborhood of $B= T U_\alpha$.  Clearly, the normal bundle of $B \subset G$ is $\mathfrak{u}_{-\alpha}\times B$, where we let $\mathfrak{u}_{-\alpha}$ denote the tangent space of $U_{-\alpha}$ at $0$, which is a $1$-dimensional $k$-vector space (in other words, $\mathfrak{u}_{-\alpha}$ is the Lie algebra of $U_{-\alpha}$).  This identification is independent of the choice of $\dot{s}_{\alpha}$.

Giving a trivialization of the normal bundle of $B$ in $G$ is thus equivalent to giving an isomorphism $\Ga\cong U_{-\alpha}$ of $k$-groups, or equivalently, a nonzero element of $\mathfrak{u}_{-\alpha}$. By \cite[exp. XX, Th\'eor\`eme 2.1]{SGAIII}, the root groups $U_{\alpha}$ and $U_{-\alpha}$ are canonically in duality; and consequently, giving an isomorphism $u_{-\alpha}: \Ga\cong U_{-\alpha}$ is equivalent to giving an isomorphism $u_{\alpha}:\Ga\cong U_{\alpha}$.  This constitutes a ``pinning'' of $G$ in the sense of \cite[XXIII \S 1 \'Epinglages]{SGAIII} and \cite{Demazure}. 
As $G$ is the union of the two open subschemes $\Omega_0$ and $\dot{s}_{\alpha} (\Omega_0)$, we get a canonical commutative diagram of smooth $k$-schemes and spaces 
\begin{equation}
\label{diagram rank 1}
\begin{xymatrix}{
\Omega_0 \ar[r] & G \ar[r]& G/\Omega_0 \ar[r]& S^1 \wedge ((\Omega_0)_+)\\
\Omega_0 \cap \dot{s}_{\alpha} (\Omega_0) \ar[u] \ar[r] & \dot{s}_{\alpha} (\Omega_0) \ar[u] &  & 
}
\end{xymatrix} 
\end{equation}
in which the top row is a cofiber sequence.  Observe that the bottom row in the above diagram does not depend on the choice of $\dot{s}_{\alpha} \in N_G(T)$.  By the motivic homotopy purity theorem \cite[Theorem 2.23, page 115]{Morel-Voevodsky}, $G/\Omega_0$ is canonically $\A^1$-weakly equivalent to the Thom space of the vector bundle $U_{-\alpha}\times B$ over $B$, that is, $U_{-\alpha}/(U_{-\alpha}^\times)\wedge (B_+)$. Using the pinning and the fact that $B\rightarrow T$ is an $\A^1$-weak equivalence, we see that $G/\Omega_0$ is canonically weakly $\A^1$-equivalent to $\A^1/(\A^1-\{0\})\wedge (T_+) \cong S^1\wedge \G_m \wedge (T_+)$. The cellular $\A^1$-chain complex of $G$ for the chosen (strict) orientation thus has the form
\begin{equation}
\label{eqn Ccell SL2}
\Ccell_*(G): \ZA(\G_m) \otimes \ZA[T] \xrightarrow{\partial} \ZA[\dot{s}_{\alpha}T].
\end{equation}
Now we explicitly compute the differential. At this point, in order to identify $\ZA[\dot{s}_{\alpha}T]$ with $\ZA[T]$, we need to specify an explicit choice of lifting of $s_\alpha$.  By \cite[XX, 3.1]{SGAIII}, given a pinning $u_\alpha$ (and the  corresponding $u_{-\alpha}$), the element 
\begin{equation}\label{eq:walpha}
w_\alpha(X): = u_{\alpha}(X)u_{-\alpha}(-X^{-1})u_\alpha(X)
\end{equation}
belongs to $N_G(T)$ and is a lifting of $s_\alpha$, for every unit $X$ in $k$.  

\begin{convention}
\label{convention lifting}
We will take $X = -1$, and $\dot{s}_\alpha = w_\alpha(-1)$ for our choice of the lift of $s_{\alpha} \in W$ to $N_G(T)$ in what follows. 
\end{convention}

\begin{example}
For $G = \SL_2$ and its standard pinning, we have $\dot{s}_\alpha = \begin{pmatrix} 0 & -1 \\ 1 & 0\end{pmatrix}$. 
\end{example}

With the choice of $\dot{s}_{\alpha}$ as in Convention \ref{convention lifting}, the isomorphism $\dot{s}_{\alpha}T \cong T$ of smooth $k$-schemes induces an isomorphism of sheaves between $\ZA[\dot{s}_{\alpha}T]$ and $\ZA[T]$.  It is easy to see that $\Omega_0 \cap \dot{s}_{\alpha} (\Omega_0) \subset \dot{s}_{\alpha} (\Omega_0) = U_{-\alpha}  T U_\alpha$ is exactly the open subscheme $U_{-\alpha}^\times  T U_\alpha$.  

\begin{lemma}\label{lem:comp} 
Under the above identifications, the vertical morphism in Diagram \eqref{diagram rank 1} 
\[
U_{-\alpha}^\times  T U_\alpha \cong \Omega_0 \cap \dot{s}_{\alpha} (\Omega_0) \subset \Omega_0 = U_\alpha \dot{s}_{\alpha} T U_\alpha                                                           
\]
is the right $TU_\alpha$-equivariant morphism induced by the morphism: 
\[
U_{-\alpha}^\times \to \Omega_0; \quad \lambda\mapsto \lambda^{-1} \cdot \dot{s}_{\alpha} \cdot \alpha^\vee(\lambda) \cdot \lambda^{-1}.
\]
\end{lemma}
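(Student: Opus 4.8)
The plan is to strip the statement down to a single identity in the group $G$ and then to read that identity off from the standard pinning relations of \cite[XX, \S3]{SGAIII}.

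\emph{Reduction to a group identity.} First I would unwind the equivariance. As noted just before the statement, $\dot s_\alpha(\Omega_0)=U_{-\alpha}TU_\alpha$, the product map $U_{-\alpha}\times T\times U_\alpha\xrightarrow{\sim}\dot s_\alpha(\Omega_0)$ is an isomorphism of $k$-schemes, and under it $\Omega_0\cap\dot s_\alpha(\Omega_0)$ corresponds to $U_{-\alpha}^{\times}\times T\times U_\alpha$ (since $\lambda t\mu\in B=TU_\alpha$ forces $\lambda\in U_{-\alpha}\cap B=\{1\}$). Both $\Omega_0=U_\alpha\dot s_\alpha\cdot TU_\alpha$ and $\Omega_0\cap\dot s_\alpha(\Omega_0)=U_{-\alpha}^{\times}\cdot TU_\alpha$ are stable under right multiplication by $TU_\alpha$, the action is free, and $U_\alpha\dot s_\alpha$ (resp.\ $U_{-\alpha}^{\times}$) is a section. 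The inclusion $\iota\colon\Omega_0\cap\dot s_\alpha(\Omega_0)\hookrightarrow\Omega_0$ is tautologically right $TU_\alpha$-equivariant, hence it is the unique right $TU_\alpha$-equivariant extension of its restriction $f\colon U_{-\alpha}^{\times}\to\Omega_0$ to the section; and $f(\lambda)$ is just $\lambda$ itself, read in the chart $U_\alpha\times T\times U_\alpha\xrightarrow{\sim}\Omega_0$, $(\nu,t,\mu)\mapsto\nu\dot s_\alpha t\mu$. Thus the lemma is equivalent to the identity of morphisms $\G_m\to G$
\[
u_{-\alpha}(\ell)=u_\alpha(\ell^{-1})\cdot\dot s_\alpha\cdot\alpha^{\vee}(\ell)\cdot u_\alpha(\ell^{-1}),
\]
$\ell$ the tautological unit, where $u_{\pm\alpha}\colon\Ga\xrightarrow{\sim}U_{\pm\alpha}$ is the chosen pinning; in the shorthand of the statement $\lambda=u_{-\alpha}(\ell)$, the outer factors $\lambda^{-1}$ denote $u_\alpha(\ell^{-1})\in U_\alpha$, and $\alpha^{\vee}(\lambda)$ denotes $\alpha^{\vee}(\ell)\in T$.

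\emph{Proof of the identity.} With $w_\alpha(X)=u_\alpha(X)u_{-\alpha}(-X^{-1})u_\alpha(X)$ as in \eqref{eq:walpha}, taking $X=-\ell^{-1}$ (so $-X^{-1}=\ell$) gives $w_\alpha(-\ell^{-1})=u_\alpha(-\ell^{-1})u_{-\alpha}(\ell)u_\alpha(-\ell^{-1})$; since $u_\alpha(x)^{-1}=u_\alpha(-x)$, this rearranges to $u_{-\alpha}(\ell)=u_\alpha(\ell^{-1})\,w_\alpha(-\ell^{-1})\,u_\alpha(\ell^{-1})$. It then remains to show $w_\alpha(-\ell^{-1})=\dot s_\alpha\cdot\alpha^{\vee}(\ell)=w_\alpha(-1)\,\alpha^{\vee}(\ell)$, i.e.\ $w_\alpha(-1)^{-1}w_\alpha(-\ell^{-1})=\alpha^{\vee}(\ell)$: this is the relation $w_\alpha(a)w_\alpha(b)^{-1}=\alpha^{\vee}(ab^{-1})$ of \cite[XX, \S3]{SGAIII} (equivalently $w_\alpha(-1)^{-1}w_\alpha(s)=\alpha^{\vee}(-s^{-1})$, using that $w_\alpha(-1)$ lifts $s_\alpha$ and hence conjugation by it inverts $T$ along $\alpha^{\vee}$), evaluated at $s=-\ell^{-1}$. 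Combining the two displays yields the identity. These relations hold for an arbitrary split semisimple $k$-group of rank one; alternatively, since such a $G$ is $\SL_2$ or $\PGL_2$ and $u_{\pm\alpha}$, $\dot s_\alpha$, $\alpha^{\vee}$ are all compatible with the central isogeny $\SL_2\to G$, one may simply check the displayed identity by a one-line $2\times 2$ matrix computation in $\SL_2$, with $\dot s_\alpha=\bigl(\begin{smallmatrix}0&-1\\1&0\end{smallmatrix}\bigr)$ and $\alpha^{\vee}(\ell)=\bigl(\begin{smallmatrix}\ell&0\\0&\ell^{-1}\end{smallmatrix}\bigr)$.

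The geometry here is routine; the only point demanding care — and the natural candidate for the ``main obstacle'' — is the bookkeeping of signs and of the pinning/duality conventions, namely matching the abbreviated notation $\lambda^{-1}\cdot\dot s_\alpha\cdot\alpha^{\vee}(\lambda)\cdot\lambda^{-1}$ against the correct elements $u_\alpha(\ell^{-1})$, $w_\alpha(-1)$, $\alpha^{\vee}(\ell)$ consistently with Convention \ref{convention lifting} and \eqref{eq:walpha}. Once those conventions are in force, the argument above goes through without difficulty.
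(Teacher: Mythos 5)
Your proposal is correct and follows essentially the same route as the paper: the paper reduces to $\SL_2$ via the unique pinning-compatible morphism $\Phi_{u_\alpha}\colon\SL_2\to G$ and leaves the resulting matrix identity "to the reader," which is exactly the computation you carry out (and your identity $u_{-\alpha}(\ell)=u_\alpha(\ell^{-1})\,\dot s_\alpha\,\alpha^\vee(\ell)\,u_\alpha(\ell^{-1})$ does check out in $\SL_2$). Your preliminary reduction via right $TU_\alpha$-equivariance and the alternative derivation directly from the $w_\alpha$-relations of SGA III are just more explicit versions of what the paper takes for granted.
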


\begin{proof} By \cite{SGAIII}, there is a unique morphism of $k$-groups $ \Phi_{u_\alpha}: \SL_2\rightarrow G$ which is compatible with the pinnings on both sides.  Recall that in $\SL_2$, the root subgroup $U_\alpha$ is the subgroup of matrices of the form
\[
\begin{pmatrix} 1 & \lambda \\ 0 & 1\end{pmatrix}
\]
and $U_{-\alpha}$ is that of matrices of the form
\[
 \begin{pmatrix} 1 & 0 \\ \lambda & 1\end{pmatrix}.
\]
The induced morphism $\G_m\rightarrow T$ is then the coroot 
\[
 \alpha^\vee: \G_m\to \SL_2; \quad u\mapsto\begin{pmatrix} u & 0 \\ 0 & u^{-1}\end{pmatrix}
\]
and the matrix $\begin{pmatrix} 0 & -1 \\ 1 & 0\end{pmatrix}$ is sent precisely to $\dot{s}_\alpha$ by $\Phi_{u_{\alpha}}$. 
Thus, it suffices to prove the lemma for the standard pinning on $\SL_2$.  The proof now follows from the straightforward computation 
\[
\begin{pmatrix} 1 & \lambda^{-1} \\ 0 & 1\end{pmatrix} \cdot \begin{pmatrix} 0 & -1 \\ 1 & 0\end{pmatrix} \cdot \begin{pmatrix} \lambda & 0 \\ 0 & \lambda^{-1}\end{pmatrix} \cdot \begin{pmatrix} 1 & \lambda^{-1} \\ 0 & 1\end{pmatrix} = \begin{pmatrix} 1 & 0 \\ \lambda & 1\end{pmatrix}. 
\]
\end{proof}

\begin{remark} \label{rem:dots} \hfill
\begin{enumerate}
\item The proof of Lemma \ref{lem:comp} is sensitive to the choice of $\dot{s}_{\alpha}$ made in Convention \ref{convention lifting}. One may verify that choosing $w_\alpha(1)$ instead of $w_\alpha(-1)$ leads to an inclusion up to a sign.

\item Observe also that a change of the pinning of $G$ amounts to multiplying $u_\alpha$ by a unit $\rho$ (and $u_{-\alpha}$ by $\rho^{-1}$).  Consequently, it amounts to multiplying $\dot{s}_{\alpha} = w_\alpha(-1)$ also by the image of $\rho$ in $T$ under $\alpha^\vee:\G_m\to T$.

\item Choosing a pinning $u_\alpha$ of $G$ induces by the formula \eqref{eq:walpha} for $X = -1$ a choice of a lift $\dot{s}_\alpha$ in $N_G(T)$. We observe that the converse holds. Choose a lift $\dot{s}_\alpha$ in $N_G(T)$. We see that the rational map $\dot{x}:G\dashrightarrow \P^1$ corresponding to the projection $\Omega_0 = U_\alpha\dot{s} T U_\alpha\to T$ followed by the dual character $\varpi:T\to \G_m$ of $\alpha^\vee$ can be used to define a pinning using the rational map 
\[
\dot{\gamma} = \frac{\dot{x}}{\dot{x}(\dot{s}_\alpha. )}:G\dashrightarrow \P^1,
\]
which induces an isomorphism $U_\alpha\cong \A^1\subset \P^1$. In the case of $G = \SL_2$, this rational map is given by $\dfrac{b}{a}$, where the elements of $\SL_2$ are written $\begin{pmatrix} a & c \\ b & d\end{pmatrix}$.  One easily verifies that this defines a bijection between the sets 
\[
\{\text{Lifts of $s_{\alpha}$ in $N_G(T)$ }\} \xrightarrow{\simeq} \{\text{Pinnings of $G$}\}
\]
with inverse sending $u_\alpha$ to $\dot{s}_\alpha$. Observe also that both these sets admit an action of $k^\times$, which is preserved by the above bijection.
\end{enumerate}
\end{remark}

Using Lemma \ref{lem:comp}, it follows that the cellular $\A^1$-chain complex (\ref{eqn Ccell SL2}) of $G$ has the form
\begin{equation}
\label{eq:descriptionpartial}
\ZA(\G_m)\otimes\ZA[T]  \xrightarrow{\partial} \ZA[\dot{s}_\alpha T]
\end{equation}
with the differential $\partial$ given by the composition 
\[
\ZA(\G_m)\otimes\ZA[T]  \subset \ZA[\G_m\times T] \xrightarrow{\alpha^\vee \times \Id_T} \ZA[T \times T] \xrightarrow{\mu} \ZA[T] \to \ZA[\dot{s}_{\alpha} T],
\]
where $\mu$ is induced by the product $\mu: T\times T \rightarrow T$ and the rightmost arrow is the isomorphism of sheaves induced by the product on the left with $\dot{s}_\alpha$. We will use this formula several times.

\begin{example}
We now assume that $G$ is simply connected; in other words, $\Phi_{u_\alpha}: \SL_2 \stackrel{\simeq}{\to} G$ is an isomorphism. Then the coroot $\alpha^\vee: \G_m \cong T$ is also an isomorphism. Under the identifications $\ZA[T]=\KMW_1 \oplus \Z$ and $\ZA(\G_m)\otimes\ZA[T]=\KMW_2 \oplus \KMW_1$,  the cellular $\A^1$-chain complex of $G$ with respect to the pinning of $G$ given by $u_{\alpha}$ is isomorphic to 
\begin{equation}
\label{eqn2 Ccell SL2}
\Ctcell_*(\SL_2): \KMW_2 \oplus \KMW_1 \xrightarrow{\begin{pmatrix} \eta & 0 \\ \Id & 0 \end{pmatrix}} \KMW_1 \oplus \Z,
\end{equation}
because $\mu$ is the direct sum of $\eta$ and the identity.  As a consequence, we obtain $$\Hcell_0(G) = \HA_0(G) = \Z$$ and $$\Hcell_1(G) = \HA_1(G) = \KMW_2,$$ where $\KMW_2$ is identified as the subsheaf of $\KMW_2 \oplus \KMW_1$ given by the image of the morphism 
\[
\KMW_2\rightarrow \KMW_2 \oplus \KMW_1; \quad \alpha \mapsto (\alpha,-\eta  \alpha). 
\] 
\end{example}

\begin{example} Now assume that $G$ is not simply connected; in other words, $\Phi_{u_\alpha}: \SL_2 \rightarrow G$ has kernel $\mu_2$ and induces an isomorphism $\PGL_2 \cong G$. Then the root $\alpha: T \cong \G_m$ is an isomorphism and the coroot $\alpha^\vee: \G_m \rightarrow T \cong \G_m$ is the squaring map. In this case, the cellular $\A^1$-chain complex of $G$ is given by 
\begin{equation}
\label{eqn3 Ccell SL2}
\Ctcell_*(\SL_2): \KMW_2 \oplus \KMW_1 \xrightarrow{\begin{pmatrix} 0 & 0 \\ h & 0 \end{pmatrix}} \KMW_1 \oplus \Z,
\end{equation}
because the morphism $\ZA(\G_m)\to \ZA(\G_m)$ induced by the squaring map $\G_m\to \G_m$ is multiplication by $h = 1 + \<-1\> \in \KMW_0(k)$, (see \cite[Lemma 3.14 p. 55]{Morel-book}) and $\eta  h = 0$. Therefore, in this case we obtain $$\Hcell_0(G) = \HA_0(G) = \Z \oplus \KMW_1/h = \KMW_0,$$ since $\KMW_1/h = \mathbf I$, the fundamental ideal sheaf in $\KMW_0$ and $$\HA_1(G) \twoheadrightarrow \Hcell_1(G) = \KMW_2 \oplus \ _h\KMW_1,$$ where ${}_h\KMW_1$ denotes the kernel of the multiplication by $h$ on $\KMW_1$.  Observe that as $G \cong \PGL_2$ is not $\A^1$-connected, $\HA_1(G)$ does not agree with $\piA_1(G)$.  Observe also that $\piA_0(\PGL_2) = \KM_1/2$ and that $\HA_0(\PGL_2)$ is indeed the free strictly $\A^1$-invariant sheaf on the sheaf of sets $\KM_1/2$ (see \cite[Theorem 3.46]{Morel-book}).
\end{example}

\begin{remark}\label{rem pin} \hfill 
\begin{enumerate}
\item The above description of the oriented cellular $\A^1$-chain complex of $G$ depends on the choice of a pinning of $G$, since we used the explicit lift of $s_\alpha$ in $N_G(T)$ defined in Convention \ref{convention lifting} to identify $\HA_0(\Omega_0)$ with $\ZA[\dot{s}_{\alpha}T]$.  If we multiply the standard pinning of $\SL_2$ by $u\in k^\times$, the element $w_\alpha$ becomes $\begin{pmatrix} 0 & -u \\ u^{-1} & 0\end{pmatrix}$ and the identification $\dot{s}_{\alpha}T$ with $T$ has to be modified by multiplication by $u$.  Now, an easy computation using the results of \cite{Morel-book} shows that multiplication by a square on $T$ induces the identity isomorphism on $\ZA[T]$. Thus, it follows that the above description of the oriented cellular $\A^1$-chain complex of $G$ in rank $1$ only depends on the choice of a pinning ``up to square".  This corresponds to an orientation of the normal bundle of $B\subset G$ as opposed to its trivialization.

\item However, if one chooses an orientation of the normal bundle of $B\subset G$ and a lifting of $s_\alpha$ in $N_G(T)$ which is not given by the above formula, the oriented cellular $\A^1$-chain complex will a priori be different (although isomorphic).

\item The automorphisms of $\SL_2$ coming from the action of $\PGL_2$ are given, for every $\gamma\in k^\times$ by
\[
\begin{pmatrix} a & c \\ b & d\end{pmatrix} \mapsto  \begin{pmatrix} a & \gamma^{-1} c \\ \gamma b & d\end{pmatrix}.
\]
Such an automorphism only preserves the oriented cellular structure if $\gamma$ is a square. In general, this automorphism is a cellular (auto)morphism and the induced automorphism of the oriented cellular $\A^1$-chain complex is the multiplication by $\<\gamma\>$ on $\ZA(\G_m) = \KMW_1$ (coming from the orientation).

\item The above computations can be used to describe the cellular $\A^1$-chain complex of a split reductive $k$-group of semisimple rank $1$. Let ${\rm rad}(G)$ denote the radical of $G$, which is a (split) subtorus of $T$ contained in the center of $G$. The oriented cellular $\A^1$-chain complex of $G_{\rm der}$ with the chosen pinning has the form
\[
\KMW_1\otimes\ZA[T] \xrightarrow{\partial} \ZA[\dot{s}_\alpha T].
\]
We know that the intersection ${\rm rad}(G) \cap G_{\rm der}$ is a finite subgroup of the center of $G_{\rm der}$.  If $G_{\rm der}$ is adjoint (that is, isomorphic to $\PGL_2$), then the center of $G_{\rm der}$ is trivial and $G$ is the product group ${\rm rad}(G) \times G_{\rm der}$.  If $G_{\rm der}$ is simply connected (that is, isomorphic to $\SL_2$), then there are two cases. If ${\rm rad}(G) \cap G_{\rm der}$ is trivial, then again $G = {\rm rad}(G) \times G_{\rm der}$ as a group. If ${\rm rad}(G) \cap G_{\rm der} = \mu_2 = Z(\SL_2)$, then $G$ is isomorphic to ${\rm rad}(G)\times_{\mu_2} G_{\rm der}$ as a group. Observe that $\G_m\times_{\mu_2} {\rm rad}(G) \to T$ is an isomorphism of groups, since $\mu_2$ is also the intersection of $\G_m$ (the maximal torus in $G_{\rm der}$ given by the coroot) and ${\rm rad}(G)$. Thus $G_{\rm der} \times_{\G_m} T \to G$ is an isomorphism of $k$-schemes with right and left $T$-action. Now, if one chooses a split torus $T'$ such that $T = \G_m \times T'$ as group, then $G_{\rm der} \times T' \to G$ is an isomorphism of $k$-schemes with right and left $T$-action. In any case, the cellular $\A^1$-chain complex of $G$ is just the tensor product of the cellular $\A^1$-chain complex of $G_{\rm der}$ and the group ring of a split torus.  In fact, one may assume only that $G$ is a connected affine group with reductive quotient split of semi-simple rank $1$ in order to determine its cellular $\A^1$-chain complex.

\item The differential 
\[
\KMW_1\otimes\ZA[T]\xrightarrow{\partial} \ZA[\dot{s} T]
\]
is right and left $T$-equivariant. The right action of $T$ on the factor $\KMW_1$ is trivial, and is the obvious one on the other factor, and the left action of $T$ on $\KMW_1$ is through the character $\alpha: T\to \G_m$, and is the obvious one on $\ZA[\dot{s}T]$.  If $u$ is a unit (in $T$), then the action of $u$ on $\KMW_1$ is the multiplication by $\<u\>:\KMW_1 \to \KMW_1$. 
\end{enumerate}
\end{remark}

\subsection{The general case} \hfill 
\label{subsection complex in rank r}

In this section, we explain how a weak pinning (see Definition \ref{definition weak pinning}) gives an orientation of the cellular $\A^1$-chain complex of a split, semisimple algebraic group.  These results play an important role in orienting the cellular $\A^1$-chain complex of a generalized flag variety, a detailed study of which will be taken up in the sequel (also see  Section \ref{subsection G/B}).  Our proof of the main theorem will use Proposition \ref{prop: normal bundle} and Lemma \ref{lem:comptub}, which rely on the differential calculus on $G$ (Lemma \ref{lem:adjoint}). 

Throughout this subsection, unless otherwise specified, we will assume that $G$ is a split semisimple group of rank $r$ over $k$.  We fix a pinning $(T, B, \{u_{\alpha}\}_{\alpha \in \Delta})$ of $G$ (see \cite[XXIII \S 1 \'Epinglages]{SGAIII}, \cite{Demazure}), which consists of 
\begin{itemize}
\item a maximal $k$-split torus $T$ of $G$;
\item a Borel subgroup $B$ of $G$ containing $T$; and
\item an isomorphism $u_{\alpha}: \G_a\stackrel{\simeq}{\to} U_\alpha$ of the root groups, for every simple root $\alpha \in \Delta$,
\end{itemize}
where $\Delta$ is the basis of of the root system of $G$ determined by the choice of $T$ and $B$.  It will be convenient to fix an ordering $\{\alpha_1,\dots,\alpha_r\}$ of $\Delta$.  It follows from \cite[XXII Th\'eor\`eme 1.1]{SGAIII} that we get for each $\alpha\in\Delta$ an explicit lift $\dot{s}_\alpha$ in $N_G(T)$ of $s_\alpha\in W$ (also see Convention \ref{convention lifting}).  We will also need, for every element $v\in W$, a choice of a reduced expression (as product of simple reflections $s_{\alpha}$, where $\alpha\in \Delta$).  We denote then by $\dot{v}\in N_G(T)$ the product in the same ordering of the lifts $\dot{s}_\alpha$ of the reflections appearing in the chosen reduced expression of $v$.  For an element $v\in W$, we set $$U_v = \underset{\alpha\in\Phi_v}{\prod} U_\alpha,$$ where $\Phi_{v} = \{\alpha\in\Phi^+~|~v(\alpha) \in \Phi^{-}\}$.  Observe that with our notation, $U_{w_0}$ is the big cell $\Omega_0$ of $G$. We also know that the product morphism
\begin{equation}
\label{eq prod U_v}
U_v  U_{w_0 v} \cong U
\end{equation}
is always an isomorphism of $k$-schemes \cite[Lemma 8.3.5]{Springer}.  
We will simply denote the Bruhat cell $B \dot{w}B$ by $BwB$ because of its independence on the choice of the lift $\dot{w} \in N_G(T)$ of $w \in W$.  Recall from \cite[Chapter 8, Section 3]{Springer} that for an element $w\in W$, the Bruhat cell $B{w}B$ is isomorphic as a $k$-scheme through the product in $G$ with
\[
U_{w^{-1}}\times T \times U \xrightarrow{\simeq} B\dot{w}B; \quad (u,t,u')\mapsto u\dot{w}tu'.
\]
Hence, we will write $BwB = U_{w^{-1}} \dot{w} TU$.  The above isomorphism shows that as a $k$-scheme, the Bruhat cell $B\dot{w}B$ is isomorphic to a product of an affine space and $T$; thus, it is cohomologically trivial and the above filtration defines a cellular structure on $G$. In what follows, we need to compute the  differential of the associated cellular $\A^1$-chain complex in low degrees ($\leq 2$ to be precise).  We will hence have to choose orientations of that cellular structure carefully in order to facilitate our cellular homology computations.

We need some recollection on ``differential calculus'' on  $G$ to describe the normal bundles involved in the cellular $\A^1$-chain complex.  

\begin{convention}
Given a vector bundle $\sE$ (that is, a locally free sheaf of $\sO_X$-modules) on a $k$-scheme $X$, we will denote its underlying total space by $\V(\sE)=\Spec(\Sym_k^*(\sE^\vee))$.  We will often abuse the notation and call $\V(\sE)$ a vector bundle on $X$, when the meaning is clear from the context.
\end{convention}

Recall that the tangent bundle $T_G$ (dual to $\Omega^1_{G/k}$) of $G$ is trivial as $G$ is a smooth algebraic group over $k$.  Let $\mathfrak g:= T_{G, e}$ be the tangent space of $G$ at $e$ (that is, the stalk of $T_G$ at $e$; this is the Lie algebra of $G$). Then the morphism 
\begin{equation}
\label{eq theta}
\theta: \V(\mathfrak{g})\times G\to \V(T_G); \quad (v,g)\mapsto (d(?\times g)_e(v),g) 
\end{equation}
and the induced morphism $\mathfrak{g} \otimes_k \sO_G \to T_G$ are isomorphisms of vector bundles over $G$.  Here $?\times g:G\to G$ is the right multiplication by $g$ and $d(?\times g)_e: g\to T_{G, g}$ denotes its differential at $e$.   One easily checks that this isomorphism is $G$-right equivariant, for the trivial $G$ action on $\mathfrak g$ on the left and the natural $G$-action on the right.  However, $\theta$ is not left $G$-equivariant.  For $g\in G$, one may verify (using $(g\times ?)\circ (?\times h) = (?\times h) \circ (g\times ?)$, for $h\in G$) that $\theta^{-1}\circ d(g\times ?) \circ \theta $ is the constant automorphism of vector bundles $\V(\mathfrak{g})\times G \stackrel{\simeq}{\to} \V(\mathfrak{g})\times G$ induced by the adjoint morphism
\[
Ad(g): \mathfrak{g}\xrightarrow{\simeq} \mathfrak{g},
\]
which is the differential at $e$ of the conjugation map $h \mapsto ghg^{-1}$.  In this way one may easily prove the following fact, the proof of which is left to the reader. 

\begin{lemma}
\label{lem:adjoint} 
Let $\mu:G\times G\to G$ be the product morphism. Using our identification $\V(T_G) = \V(\mathfrak{g})\times G$, and thus $\V(T_{G\times G}) = \V(\mathfrak{g} \oplus \mathfrak{g})\times G \times G$, the differential $$d(\mu): \V(\mathfrak{g} \oplus \mathfrak{g})\times G \times G \to  \V(\mathfrak{g})\times G$$ at $(h,g)\in G\times G$ (with obvious meaning) is the $k$-linear morphism 
\[ 
(\Id_{\mathfrak{g}}, Ad(h)) :\mathfrak{g}\oplus \mathfrak{g} \to \mathfrak{g}.
\]
\end{lemma}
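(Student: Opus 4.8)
The plan is to compute $d\mu$ at a point $(h,g)\in G\times G$ by restricting $\mu$ to the two obvious ``slices'' through $(h,g)$ and then invoking the behaviour of the trivialization $\theta$ under left and right translations, which was recorded above. First I would observe that, $G\times G$ being a product, the differential $d(\mu)_{(h,g)}\colon T_{G,h}\oplus T_{G,g}\to T_{G,hg}$ is the direct sum of its restrictions to the two summands, and that the restriction of $\mu$ to $G\times\{g\}$ is the right translation $?\times g\colon G\to G$, while its restriction to $\{h\}\times G$ is the left translation $h\times?\colon G\to G$. So it suffices to compute $d(?\times g)_h$ and $d(h\times?)_g$ in the trivializations of $\V(T_G)$ induced by $\theta$, i.e.\ the two $k$-linear endomorphisms $\theta_{hg}^{-1}\circ d(?\times g)_h\circ\theta_h$ and $\theta_{hg}^{-1}\circ d(h\times?)_g\circ\theta_g$ of $\mathfrak g$.

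For the first slice I would use that $\theta$ is built from right translations, so that a vector $v\in\mathfrak g$ sitting over $h$ is $d(?\times h)_e(v)$; applying $d(?\times g)_h$ and using the chain rule together with the identity $(?\times hg)=(?\times g)\circ(?\times h)$ yields $d(?\times hg)_e(v)$, which over $hg$ corresponds again to $v$. Hence the first component of $d(\mu)_{(h,g)}$ is $\Id_{\mathfrak g}$; this is nothing but the right-$G$-invariance of $\theta$ already noted. For the second slice, the same chain-rule manipulation, now using that left and right translations commute, gives $\theta_{hg}^{-1}\circ d(h\times?)_g\circ\theta_g=d(?\times h)_e^{-1}\circ d(h\times?)_e$, which is the differential at $e$ of the conjugation $x\mapsto hxh^{-1}$, namely $Ad(h)$. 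This is exactly the assertion, already established in the excerpt, that $\theta^{-1}\circ d(h\times?)\circ\theta$ is the constant automorphism of $\V(\mathfrak g)\times G$ induced by $Ad(h)$, so I would simply cite it. Combining the two slices gives $d(\mu)_{(h,g)}=(\Id_{\mathfrak g},Ad(h))$, as claimed.

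The computation is essentially routine, and the main (admittedly minor) obstacle is the bookkeeping of trivializations: because $\theta$ is defined using \emph{right} translations, the \emph{right} variable of $\mu$ contributes the identity whereas the \emph{left} variable contributes the adjoint twist, and one must keep straight which translations act on which side when applying the chain rule to $(?\times x)\circ(?\times y)=(?\times xy)$ and $(h\times?)\circ(?\times g)=(?\times g)\circ(h\times?)$. Once this is organized, the lemma follows at once, which is presumably why it was left to the reader.
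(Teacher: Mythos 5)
Your proof is correct and follows exactly the route the paper intends: the lemma is left to the reader after the remarks that $\theta$ is right-$G$-invariant and that $\theta^{-1}\circ d(h\times ?)\circ\theta$ is the constant automorphism $Ad(h)$, and your slice-by-slice computation via the identities $(?\times g)\circ(?\times h)=(?\times hg)$ and $(h\times ?)\circ(?\times g)=(?\times g)\circ(h\times ?)$ is precisely how those two facts combine to give $(\Id_{\mathfrak g},Ad(h))$. Nothing is missing.
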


Let $w\in W$ and set $v: = w_0w \in W$; we then also have $ w_0v = w$, since $w_0$ is an element of order $2$.  We are going to define a canonical tubular neighborhood of $BwB$ in $G$.  We consider the open subset $\dot{v}(\Omega_0)\subset G$ obtained by left translation of $\Omega_0$ by $\dot{v}$. Using 
\eqref{eq prod U_v}, it follows that
\[
\dot{v}(\Omega_0) = \big(\dot{v} U_v \dot{v}^{-1} \big) \cdot \big( \dot{v} U_{w_0v}\dot{v}^{-1} \big) \cdot \dot{v}\dot{w}_0 TU.
\]
Clearly, $\dot{w}_0\dot{v}$ is a lift of $w$ in $N_G(T)$, so that $\dot{v}\dot{w}_0 TU = \dot{w} TU$.

\begin{lemma} 
\label{lem: tubular neighborhood} 
With the above assumptions and notation, we have:
\begin{enumerate}[label=$(\alph*)$]
\item $\dot{v} U_{w_0v}\dot{v}^{-1} = U_{w^{-1}}$;

\item $\dot{v}U_v\dot{v}^{-1} = \underset{\beta\in\Phi^-|w(\beta)\in\Phi^-}{\prod} U_{\beta}$.
\end{enumerate}
Consequently, 
\begin{equation}
\label{eq v.Omega0}
\dot{v}(\Omega_0) = \left(\underset{\beta\in\Phi^-|w^{-1}(\beta)\in\Phi^-}{\prod} U_{\beta}\right) \cdot \big( U_{w^{-1}} \dot{w} TU \big) = \left(\underset{\beta\in\Phi^-|w^{-1}(\beta)\in\Phi^-}{\prod} U_{\beta}\right) \cdot BwB  
\end{equation}
is an open subset of $G$ only depending on $w \in W$ and containing $BwB =  U_{w^{-1}} \dot{w} TU$.
\end{lemma}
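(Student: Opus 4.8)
The plan is to prove Lemma~\ref{lem: tubular neighborhood} by reducing everything to standard combinatorics of root subgroups under conjugation by Weyl group elements, and then deducing the description \eqref{eq v.Omega0} of $\dot{v}(\Omega_0)$ formally from parts (a) and (b).

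First I would recall the basic conjugation rule: for $w \in W$ and $\beta \in \Phi$, one has $\dot{w} U_\beta \dot{w}^{-1} = U_{w(\beta)}$ (this is \cite[Lemma 8.1.4]{Springer} or the analogous statement in \cite{SGAIII}; note it holds independently of the chosen lift $\dot{w}$, up to the obvious identification of root groups). Applying this with $w$ replaced by $v = w_0 w$ and using that a product $\prod_{\beta \in S} U_\beta$ is carried to $\prod_{\beta \in S} U_{v(\beta)}$, I reduce (a) and (b) to the purely combinatorial assertions
\[
v(\Phi_{w_0 v}) = \Phi_{w^{-1}}, \qquad v\big(\{\beta \in \Phi^+ \mid v(\beta) \in \Phi^-\}\big) = \{\beta \in \Phi^- \mid w(\beta) \in \Phi^-\},
\]
where I have used $\Phi_{w_0 v} = \{\beta \in \Phi^+ \mid w_0 v(\beta) \in \Phi^-\} = \{\beta \in \Phi^+ \mid v(\beta) \in \Phi^+\}$ since $w_0$ exchanges $\Phi^+$ and $\Phi^-$. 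For (a): if $\beta \in \Phi^+$ with $v(\beta) \in \Phi^+$, then setting $\gamma = v(\beta) \in \Phi^+$ we must check $w^{-1}(\gamma) \in \Phi^-$; indeed $w^{-1}(\gamma) = w^{-1} v(\beta) = w^{-1} w_0 w(\beta)$... here I would instead argue directly: $v = w_0 w$ means $w = w_0 v$ (as $w_0^2 = e$), so $w^{-1} = v^{-1} w_0$, hence $w^{-1}(\gamma) = v^{-1} w_0 (\gamma)$; since $\gamma = v(\beta) \in \Phi^+$ we get $w_0(\gamma) \in \Phi^-$, and I must then see $v^{-1}$ sends this particular negative root to a negative root — this is where I need the identity $\Phi_{w^{-1}} = -w^{-1}(\Phi_w)$ and $\Phi_v$-bookkeeping. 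The cleanest route is to use the standard bijection $\Phi_{w} \xrightarrow{\sim} \Phi_{w^{-1}}$, $\beta \mapsto -w(\beta)$, together with $\Phi_v \sqcup v^{-1}(\text{stuff}) $ decompositions; I would carry this out root-by-root rather than trying to be slick. For (b): an element $\beta \in \Phi^+$ with $v(\beta) \in \Phi^-$ is sent by $v$ to $v(\beta) \in \Phi^-$, and I must check $w(v(\beta)) \in \Phi^-$, i.e. $w v(\beta) = w w_0 w(\beta)$; again using $w = w_0 v$, $wv = w_0 v v$, which is not obviously helpful, so I would instead note $w(v(\beta)) = w_0 v \cdot v(\beta)$ and track signs via the length additivity coming from the chosen reduced expressions. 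The main obstacle is precisely this sign/length bookkeeping: making sure the two sets match \emph{on the nose} (not just up to relabeling) so that the product decomposition \eqref{eq prod U_v} and the equality $\dot{v}\dot{w}_0 TU = \dot{w}TU$ can be invoked without ambiguity about lifts.

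Granting (a) and (b), the final display \eqref{eq v.Omega0} follows by substituting into the factorization $\dot{v}(\Omega_0) = (\dot{v} U_v \dot{v}^{-1})(\dot{v} U_{w_0 v} \dot{v}^{-1})(\dot{v}\dot{w}_0 TU)$ obtained from \eqref{eq prod U_v}, using $\dot{v}\dot{w}_0 TU = \dot{w}TU$ (since $\dot{v}\dot{w}_0$ is a lift of $w$ and $BwB$ is independent of the lift), and recognizing $U_{w^{-1}} \dot{w} T U = BwB$ from the isomorphism recalled just before the lemma. Openness of $\dot{v}(\Omega_0)$ is immediate since $\Omega_0$ is open and left translation is an isomorphism; that it depends only on $w$ and not on $v$-as-indexed-by-a-reduced-expression follows because $v = w_0 w$ is determined by $w$, and the resulting subset is a product of root groups times $BwB$, visibly independent of the lift $\dot{v}$. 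Finally, the containment $BwB \subset \dot{v}(\Omega_0)$ is read off directly from the last expression in \eqref{eq v.Omega0}. I would end by noting that this open set is the ``canonical tubular neighborhood'' alluded to before the lemma, to be used in Lemma~\ref{lem:comptub} and Proposition~\ref{prop: normal bundle}.
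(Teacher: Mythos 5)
Your overall strategy coincides with the paper's: reduce (a) and (b) to identities between subsets of $\Phi$ via the conjugation rule $\dot v U_\beta \dot v^{-1}=U_{v(\beta)}$, and then obtain \eqref{eq v.Omega0} formally from \eqref{eq prod U_v} together with $\dot v\dot w_0 TU=\dot w TU$. The formal deduction of the displayed formula, the openness, and the independence of the lift are all fine. However, the two set identities $v(\Phi_{w_0v})=\Phi_{w^{-1}}$ and $v(\Phi_v)=\{\beta\in\Phi^-\mid w^{-1}(\beta)\in\Phi^-\}$ are the entire mathematical content of the lemma, and you do not actually establish either of them: in both (a) and (b) you begin a computation, observe that it ``is not obviously helpful,'' and defer to unspecified ``$\Phi_v$-bookkeeping'' and ``length additivity.'' That is a genuine gap rather than a routine omission, because with the relation you are manipulating the signs really do not close.

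The source of your trouble is the relation between $v$ and $w$. For the factorization $\dot v(\Omega_0)=(\dot vU_v\dot v^{-1})(\dot vU_{w_0v}\dot v^{-1})\cdot\dot v\dot w_0TU$ to end in $\dot wTU$ one needs $vw_0=w$, i.e. $v^{-1}=w_0w^{-1}$; this is \emph{not} the same as $v^{-1}=w^{-1}w_0$, which is what $v=w_0w$ literally gives (the two differ unless $w_0$ is central, and with the wrong one part (a) is actually false: take $w=s_1$ in type $A_2$, where $v=w_0w=s_1s_2$ sends $\Phi_{s_1}=\{\alpha_1\}$ to $\{\alpha_2\}\neq\Phi_{s_1}$). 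Once you work with $v^{-1}=w_0w^{-1}$, each identity is a one-line computation: for $\gamma\in\Phi^+$ one has $\gamma\in v(\Phi_{w_0v})$ iff $v^{-1}\gamma=w_0(w^{-1}\gamma)\in\Phi^+$ iff $w^{-1}\gamma\in\Phi^-$, i.e. iff $\gamma\in\Phi_{w^{-1}}$; and $v(\Phi_v)=\{\gamma\in\Phi^-\mid v^{-1}\gamma\in\Phi^+\}=\{\gamma\in\Phi^-\mid w^{-1}\gamma\in\Phi^-\}$ (note this shows the exponent in the displayed statement of (b) should be $w^{-1}$, consistently with \eqref{eq v.Omega0}). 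The paper closes (a) slightly differently, by proving one containment and then comparing cardinalities, $|\Phi_{w_0v}|=\ell(w)=\ell(w^{-1})=|\Phi_{w^{-1}}|$; that counting argument is an alternative ending you could have reached for, but some completed argument is required, and as submitted your proposal supplies neither.
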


\begin{proof} \hfill 
\begin{enumerate}[label=$(\alph*)$]
\item By \cite[Exercise 8.1.12.(2)]{Springer}, for any root $\alpha$, we have $\dot{v} U_\alpha \dot{v}^{-1} = U_{v(\alpha)}$, where $v(\alpha)$ denotes the standard action of $v \in W$ on the root $\alpha$. Thus, one has 
$$\dot{v} U_{w_0v}\dot{v}^{-1} = \underset{\alpha\in \Phi_{w_0v}}{\prod} U_{v(\alpha)}.$$ 
Now, $\alpha\in \Phi_{w_0v}$ if and only if $\alpha\in \Phi^+$ and $w_0 (v(\alpha))\in \Phi^{-}$.  Since $w_0(\Phi^-) = \Phi^+$, this is equivalent to $\alpha\in\Phi^+$ and $v(\alpha) \in\Phi^+$. But $\alpha = v^{-1} (v(\alpha)) = w_0  w^{-1}(v(\alpha)) \in \Phi^+$. Thus, such $\alpha$'s are precisely those in $\Phi^+$ such that $v(\alpha)\in \Phi^+$ and $w^{-1}(v(\alpha)) \in \Phi^-$.  This means that $v(\alpha) \in \Phi_{w^-}$. But the cardinality of $\Phi_{w_0v}$ is the length of $w_0v=w$, and so also of $w^{-1}$. This proves $(a)$.

\item Since $\dot{v}U_v\dot{v}^{-1}= \underset{\alpha\in\Phi^+|v(\alpha)\in\Phi^-}{\prod} U_{v(\alpha)}$, setting $\beta = v(\alpha)$ one easily obtains 
\[
\{\alpha\in\Phi^+|v(\alpha)\in\Phi^-\} = v^{-1}\{\beta\in\Phi^-|w^{-1}(\beta)\in \Phi^{-}\},
\]
using $v^{-1} =  w^{-1} w_0$ and $(b)$ follows.
\end{enumerate}
The formula \eqref{eq v.Omega0} is a straightforward consequence of $(a)$ and $(b)$ and of the discussion preceding Lemma \ref{lem: tubular neighborhood}.
\end{proof}

\begin{remark} \hfill 
\begin{enumerate}
\item Let $\alpha\in\Phi$. We denote by $\mathfrak{g}_\alpha\subset \mathfrak{g}$ the image of the differential at $1\in G$ of the morphism $U_\alpha\to G$.  This differential induces an isomorphism of $1$-dimensional $k$-vector spaces $\mathfrak{u}_\alpha\cong \mathfrak{g}_\alpha$ and it is well known that $\mathfrak{g}_\alpha\subset \mathfrak{g}$ is the rank one weight $k$-vector subspace of $\mathfrak{g}$ corresponding to the character $\alpha$. This leads to the decomposition $\mathfrak{g} = \underset{\alpha\in\Phi}{\oplus} \mathfrak{g}_\alpha \oplus \mathfrak{h}$, where $\mathfrak{h}$ is the Cartan subalgebra of $\mathfrak{g}$ (which is the Lie algebra or the tangent space of $T$).

\item The normal bundle $\nu_w$ of $BwB$ in $G$ is constant in the sense that it is of the form $\nu_w = \mathfrak{v}_w \otimes_k \sO_G$.  But observe that the canonical epimorphism
\[
T_G|_{BwB} \twoheadrightarrow \nu_w
\]
is not constant! It can be made explicit at any point of $BwB$ by using Lemma \ref{lem:adjoint} and the standard formulas for the adjoint morphism acting on $\mathfrak{g}$.
\end{enumerate}
\end{remark}

\begin{proposition} 
\label{prop: normal bundle} 
We keep the above notation and assumptions. Let $w\in W$ be an element of the Weyl group of $G$.
\begin{enumerate}[label=$(\alph*)$]
\item If $\nu_w$ denotes the normal bundle of $BwB$ in $G$, then the composition 
\[
\underset{\beta\in\Phi^-|w^{-1}(\beta)\in \Phi^-}{\oplus} \mathfrak{u}_\beta \hookrightarrow \mathfrak{g}|_{BwB} \twoheadrightarrow \mathfrak{v}_w
\]
is an isomorphism, where $\mathfrak{g}|_{BwB}$ is the vector space underlying $T_G|_{BwB}$.

\item The right $B$-action on $G$ induces through the above isomorphism the trivial action on the normal bundle of $BwB$ in $G$ (and the obvious right action on the base).

\item The left $B$-action on $G$ induces the following action on the normal bundle of $BwB$ in $G$: the adjoint action of $T$ (through $B \twoheadrightarrow T$) on the $k$-vector space $\underset{\beta\in\Phi^-|w^{-1}(\beta)\in \Phi^-}{\oplus} \mathfrak{u}_\beta$ (and the obvious left action on the base).
\end{enumerate}
\end{proposition}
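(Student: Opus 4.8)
The plan is to deduce all three statements from Lemma \ref{lem: tubular neighborhood}, which identifies the open subset $\dot v(\Omega_0)$ of $G$ as $\left(\prod_{\beta\in\Phi^-|w^{-1}(\beta)\in\Phi^-} U_\beta\right)\cdot B\dot wB$, together with the differential calculus on $G$ recorded in Lemma \ref{lem:adjoint}. First I would prove $(a)$: the open subset $\dot v(\Omega_0)$ is a tubular neighborhood of $B\dot wB$, in the sense that the product morphism $P\times B\dot wB\to \dot v(\Omega_0)$, where $P:=\prod_{\beta\in\Phi^-|w^{-1}(\beta)\in\Phi^-}U_\beta$, is an isomorphism of $k$-schemes (this follows from the uniqueness-of-factorization statements in \cite[Chapter 8]{Springer}, i.e.\ from \eqref{eq prod U_v} and the Bruhat cell factorization $B\dot wB=U_{w^{-1}}\dot w TU$ combined with Lemma \ref{lem: tubular neighborhood}$(b)$). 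Since $P$ is an affine space with a distinguished origin (the identity), the normal bundle $\nu_w$ of $B\dot wB$ in $G$ is identified with $\mathfrak t_P\otimes_k\sO_{B\dot wB}$ where $\mathfrak t_P$ is the tangent space of $P$ at $e$; and the differential at $e$ of $U_\beta\hookrightarrow G$ identifies $\mathfrak t_P$ with $\bigoplus_{\beta\in\Phi^-|w^{-1}(\beta)\in\Phi^-}\mathfrak u_\beta\subset\mathfrak g$. That the composite $\bigoplus\mathfrak u_\beta\hookrightarrow\mathfrak g|_{B\dot wB}\twoheadrightarrow\mathfrak v_w$ is then an isomorphism is a dimension count (both sides have rank $=\ell(w_0)-\ell(w)=\codim B\dot wB$) together with the fact that this composite is, at the point $\dot w\in B\dot wB$, literally the inclusion of the complementary summand corresponding to $\mathfrak g=\bigoplus_{\alpha\in\Phi}\mathfrak g_\alpha\oplus\mathfrak h$.

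Next, $(b)$ and $(c)$ describe how the right and left $B$-actions on $G$ act on $\nu_w$ under the trivialization of $(a)$. For the right action: right multiplication by $b\in B$ preserves $B\dot wB$ and, by Lemma \ref{lem:adjoint} applied to $\mu$ with the second coordinate fixed at $b$, the differential of $?\times b$ at a point $g$ is the identity of $\mathfrak g$ under the right-invariant trivialization $\theta$ of $T_G$ from \eqref{eq theta}. Since the trivialization of $\nu_w$ in $(a)$ is visibly compatible with $\theta$ (it is induced by the identification of $T_G$ with $\mathfrak g\otimes\sO_G$), the induced action on $\nu_w$ is trivial on fibers, giving $(b)$. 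For the left action: left multiplication by $b\in B$ also preserves $B\dot wB$ (because $B\cdot B\dot wB=B\dot wB$), and by the computation preceding Lemma \ref{lem:adjoint}, $\theta^{-1}\circ d(b\times ?)\circ\theta$ is the constant automorphism $Ad(b):\mathfrak g\to\mathfrak g$. Writing $b=tu$ with $t\in T$, $u\in U$, one has $Ad(b)=Ad(t)\circ Ad(u)$; the key point is that on passing to the quotient $\mathfrak g\twoheadrightarrow\mathfrak v_w\cong\bigoplus_{\beta\in\Phi^-|w^{-1}(\beta)\in\Phi^-}\mathfrak u_\beta$, the unipotent part $Ad(u)$ acts trivially (it is a sum of the identity plus nilpotent terms raising root height, which land in $\bigoplus_{\alpha\in\Phi^+}\mathfrak g_\alpha\oplus\mathfrak h$, hence in the complement of $\mathfrak v_w$), so only $Ad(t)$ survives, acting on $\mathfrak u_\beta$ through the character $\beta$, i.e.\ by the adjoint action of $T$. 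This is precisely $(c)$.

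The main obstacle I anticipate is step $(c)$: making rigorous the claim that $Ad(u)$ for $u\in U$ descends to the \emph{identity} on $\mathfrak v_w$, rather than merely to a unipotent automorphism. The cleanest way is not to argue directly at a general point but to use the tubular neighborhood structure from $(a)$ together with the fact that the quotient map $T_G|_{B\dot wB}\twoheadrightarrow\nu_w$ is compatible with the left $B$-action \emph{in a way that factors through the base-point fibre at $\dot w$}: since $\nu_w$ is constant ($\nu_w=\mathfrak v_w\otimes\sO_G$) and the left $B$-action on $\nu_w$ is $B$-equivariant over the left $B$-action on $B\dot wB$, the fibrewise action is determined by its value on a single $B$-orbit representative, and one computes there using that $B$ acts on the fibre over $\dot w$ through $B\to T$ followed by $Ad(t)$ on $\mathfrak g_\beta$. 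The rest is the standard computation of $Ad(t)$ on weight spaces, and the verification that the relevant $\mathfrak u_\beta$'s are exactly those indexed by $\{\beta\in\Phi^-\mid w^{-1}(\beta)\in\Phi^-\}$, which is already done in Lemma \ref{lem: tubular neighborhood}. I would leave the explicit matrix form of these adjoint actions, as suggested in the remark preceding the proposition, to the reader.
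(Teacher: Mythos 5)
Your treatment of parts $(a)$ and $(b)$ is essentially the paper's argument: the paper likewise realizes $\dot{v}(\Omega_0)$ as the image of the product morphism $P\times B\dot{w}B\to G$ with $P=\prod_{\beta\in\Phi^-|w^{-1}(\beta)\in\Phi^-}U_\beta$ and obtains the splitting $\mathfrak{g}=\mathfrak{v}_w\oplus (T_{B\dot{w}B})_x$ by differentiating at $(e,x)$, which proves $(a)$; it then declares $(b)$ and $(c)$ to be easy consequences. Your derivation of $(b)$ from the right-invariance of the trivialization $\theta$ of \eqref{eq theta} is correct.

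Part $(c)$ is where there is a genuine gap. Your key claim --- that for $u\in U$ the operator $Ad(u)-1$ sends $\mathfrak{v}_w=\bigoplus_{\beta\in\Phi^-|w^{-1}(\beta)\in\Phi^-}\mathfrak{g}_\beta$ into $\bigoplus_{\alpha\in\Phi^+}\mathfrak{g}_\alpha\oplus\mathfrak{h}$ --- is false: one has $[\mathfrak{g}_\alpha,\mathfrak{g}_\beta]\subset\mathfrak{g}_{\alpha+\beta}$, and for $\alpha\in\Phi^+$, $\beta\in\Phi^-$ the sum $\alpha+\beta$ can again be a negative root, possibly one indexing a summand of $\mathfrak{v}_w$. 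Concretely, take $G=\SL_3$ and $w=s_1$, so that $\mathfrak{v}_w=\mathfrak{g}_{-\alpha_2}\oplus\mathfrak{g}_{-\alpha_1-\alpha_2}=\langle E_{32},E_{31}\rangle$ and $B\dot{s}_1B=\{g\mid g_{31}=g_{32}=0,\ g_{21}\neq 0\}$. For $u=I+sE_{12}\in U_{\alpha_1}$ one computes $Ad(u)(E_{31})=E_{31}-sE_{32}$, and comparing the trivialization of $(a)$ with the one given by the global equations $(g_{31},g_{32})$ (in which left multiplication by $u$ visibly acts trivially on the conormal directions) shows that in the trivialization of $(a)$ the induced map on the fibre of $\nu_{s_1}$ over $\dot{s}_1$ is the nontrivial unipotent matrix $E_{31}\mapsto E_{31}-sE_{32}$, $E_{32}\mapsto E_{32}$. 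Your proposed repair --- reduce to a single point by equivariance --- does not close the gap either, because the left $B$-action on $B\dot{w}B$ is not transitive: its orbits are the cosets $B\dot{w}b$, and the orbit space has positive dimension as soon as $\ell(w)<\ell$, so the fibrewise action is not determined by its value over one orbit representative. Thus the assertion of $(c)$ for the unipotent radical of $B$ is not something your argument (or, as far as I can see, any argument) will deliver in the trivialization of $(a)$; what is true, and what your argument does establish correctly, is the left $T$-equivariance: $T$ normalizes each $U_\beta$, hence $P$, so the tubular neighbourhood $P\times B\dot{w}B\to G$ is $T$-equivariant for $t\cdot(p,x)=(tpt^{-1},tx)$, and differentiating gives the action through the characters $\beta$ on the summands $\mathfrak{u}_\beta$. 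This $T$-statement is the only part of $(c)$ used later in the paper (Lemma \ref{lem:gammai}, Lemma \ref{lem:comptub} and Remark \ref{rem important1} all work with right and left $T$-equivariant trivializations, and the comparison of orientations in Lemma \ref{lem:comptub} is carried out after restricting to $\dot{w}T$), so you should restrict your proof of $(c)$ to the torus rather than asserting triviality of the unipotent part.
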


\begin{proof} 
Consider the open subset 
\[
\underset{\alpha\in\Phi^+|v(\alpha)\in\Phi^-}{\prod} U_{v(\alpha)} \times B\dot{w}B
\]
as a tubular neighborhood of $B\dot{w}B$ and observe that the morphism of smooth $k$-schemes induced by taking the product in $G$: 
\[
\left(\underset{\beta\in\Phi^-|w^{-1}(\beta)\in\Phi^-}{\prod} U_{\beta}\right) \times B\dot{w}B \to \left(\underset{\alpha\in\Phi^+|v(\alpha)\in\Phi^-}{\prod} U_{v(\alpha)}\right) \times B\dot{w}B
\]
is an isomorphism. For every point $x\in B\dot{w}B$, the above tubular neighborhood induces, by taking the differential at the point $(0,x)$, the isomorphism 
\[
\left( \underset{\beta\in\Phi^-|w^{-1}(\beta)\in \Phi^-}{\oplus} \mathfrak{u}_\beta \right)  \oplus (T_G|_{B\dot{w}B})_x \cong \mathfrak{g}|_{B\dot{w}B},
\]
given by the sum of the canonical inclusions (the canonical inclusion of the first summand being given by $\underset{\beta\in\Phi^-|w^{-1}(\beta)\in \Phi^-}{\oplus} \mathfrak{u}_\beta \cong \underset{\beta\in\Phi^-|w^{-1}(\beta)\in \Phi^-}{\oplus} \mathfrak{g}_\beta \subset \mathfrak{g}$). This proves $(a)$ and the parts $(b)$, $(c)$ are easy consequences of $(a)$.  
\end{proof}

\subsubsection{\bf Orientations in codimension $1$}
\label{subsubsection codim1} \hfill 

Let $w\in W$ be an element of length $\ell-1$, where $\ell$ is the length of the longest element $w_0 \in W$. There is a unique $\alpha_i\in \Delta$ with $w= w_0 s_i$.  If $\alpha'_i$ denotes the simple root $w_0(-\alpha_i) \in \Delta$, then we have $w = s'_iw_0$, where $s'_i\in W$ is the reflection corresponding to $\alpha_i'$.  We claim that \begin{equation}
\label{eq s'}                                                                                                                                                                                                                                                    
s'_i= w_0s_iw_0^{-1} = w_0s_iw_0.                                                                                                                                                                            \end{equation}
Set $\sigma_i := w_0s_iw_0^{-1} = w_0s_iw_0$.  Indeed, for $w\in W$, the set $\Phi_w = \{\alpha\in\Phi^+|w(\alpha)\in\Phi^-\}$ has $\ell(w)$ elements, by \cite[Lemma 8.3.2 (ii)]{Springer}. If $\beta \in \Phi^+-\{\alpha'_i\}$, then $w_0(\beta)\in\Phi^--\{-\alpha_i\}$ and $s_iw_0(\beta) \in \Phi^-$ so that $\sigma_i(\beta)\in\Phi^+$. If $\beta = \alpha'_i$, then  $w_0(\beta) = -\alpha_i$ and $\sigma_i(\beta) = -\alpha'_i$, so that $\Phi_{\sigma_i} = \{\alpha'_i\}$.  Hence, $\sigma_i = s_i'$ as claimed.

For $i\in\{1,\dots,r\}$, let us denote by $w_i$ the product $w_0s_i$ and  $Y_i:= B\dot{w}_iB$ the corresponding codimension $1$-cell.  In that case, the open neighborhood of $Y_i$ obtained in Lemma \ref{lem: tubular neighborhood} is $U_{-\alpha'_i}Y_i$.  It follows that we get an isomorphism 
\[
\left(\mathfrak{u}_{-\alpha'_i} \otimes \sO_{Y_i} \right) \oplus T_{Y_i} \cong T_G|_{Y_i}, 
\]
which is determined by the isomorphism $\mathfrak{u}_{-\alpha'_i}\cong \mathfrak{g}_{-\alpha'_i}$ on the first factor (followed by the canonical inclusion) and the canonical inclusion $T_{Y_i,y} \subset \mathfrak{g}$ for each $y\in Y_i$ on the second factor. This isomorphism induces the above isomorphism
\begin{equation}
\label{eq nu_Y_i 1}
\nu_{Y_i} \xrightarrow{\simeq} \mathfrak{u}_{-\alpha'_i} \otimes_k \sO_{Y_i}.
\end{equation}
Observe here that one may also consider the product $Y_iU_{-\alpha_i}$ for an open neighborhood of $Y_i$. We get in that case an isomorphism of vector bundles 
\[
T_{Y_i}\oplus \left(\mathfrak{u}_{-\alpha_i} \otimes \sO_{Y_i} \right) \cong T_G|_{Y_i}, 
\]
which, for every $y \in Y_i$, is the canonical inclusion $T_{Y_i,y} \subset \mathfrak{g}$ on the first factor and induced by the morphism $Ad(y): \mathfrak{u}_{-\alpha_i} \subset \mathfrak{g}$. This isomorphism induces also an isomorphism
\begin{equation}
\label{eq nu_Y_i 2}
\nu_{Y_i} \xrightarrow{\simeq} \mathfrak{u}_{-\alpha_i}\otimes_k \sO_{Y_i},
\end{equation}
which does not agree in general with \eqref{eq nu_Y_i 1}.

\begin{remark}
In fact one may generalize the above observations and prove the analogues of Lemma \ref{lem: tubular neighborhood} and Proposition \ref{prop: normal bundle} for multiplying on the right of the Bruhat cell $B\dot{w}B$. In that case, one would see that the left $B$-action induces the trivial action on the normal bundle of the cell. 
\end{remark}

Comparing the isomorphisms \eqref{eq nu_Y_i 1} and \eqref{eq nu_Y_i 2} above, we get an isomorphism
\begin{equation}
\label{eq nu_Y_i}
\mathfrak{u}_{-\alpha_i}\xrightarrow{\simeq} \mathfrak{u}_{-\alpha'_i} 
\end{equation}
of $k$-vector spaces inducing the isomorphism of line bundles $\mathfrak{u}_{-\alpha'_i} \otimes_k \sO_{Y_i} \cong  \mathfrak{u}_{-\alpha_i}\otimes_k \sO_{Y_i}$ over $Y_i$ which we now describe. Observe first that this isomorphism is canonical and does not depend on any choices, except $\alpha_i$, which also determines $\alpha'_i = w_0(-\alpha_i)$. 

\begin{lemma} \label{lem:comptub} With the above assumptions and notation, let $y_{i}:Y_i\to \G_m$ denote the invertible regular function obtained by composing the canonical projection $Y_i\to T$ and the character $\alpha_i:T\to\G_m$.  Then the isomorphism \eqref{eq nu_Y_i} is the composition of the multiplication $\mathfrak{u}_{-\alpha_i}\stackrel{\simeq}{\to} \mathfrak{u}_{-\alpha_i}$ by $(y_{i})^{-1}$, the isomorphism $Ad(\dot{s}_i):\mathfrak{u}_{-\alpha_i}\stackrel{\simeq}{\to}\mathfrak{u}_{\alpha_i}$ and $Ad(\dot{w_0}): \mathfrak{u}_{\alpha_i}\stackrel{\simeq}{\to}\mathfrak{u}_{-\alpha'_i}$.
\end{lemma}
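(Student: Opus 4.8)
The plan is to trace explicitly how the two trivializations \eqref{eq nu_Y_i 1} and \eqref{eq nu_Y_i 2} of $\nu_{Y_i}$ arise, and then to compose one with the inverse of the other. The key point is that both trivializations come from an open neighborhood of $Y_i$ of the form (a one-parameter root subgroup) $\cdot$ (the Bruhat cell), and that the identification of $\nu_{Y_i}$ produced by such a neighborhood is governed by the differential of multiplication in $G$, which is computed by Lemma \ref{lem:adjoint}. Concretely: the neighborhood $U_{-\alpha_i'}\, Y_i$ gives, by differentiating the product morphism $U_{-\alpha_i'}\times Y_i\to G$ at a point $(0,y)$ with $y\in Y_i$ and using Lemma \ref{lem:adjoint} with $h$ the identity component $0$ on the first factor, the \emph{constant} inclusion $\mathfrak{u}_{-\alpha_i'}\cong\mathfrak{g}_{-\alpha_i'}\subset\mathfrak{g}$; this is exactly \eqref{eq nu_Y_i 1}. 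On the other hand, writing the neighborhood as $Y_i\, U_{-\alpha_i}$ and differentiating $Y_i\times U_{-\alpha_i}\to G$ at $(y,0)$, Lemma \ref{lem:adjoint} inserts the factor $\mathrm{Ad}(y)$ on the $\mathfrak{u}_{-\alpha_i}$-summand, giving \eqref{eq nu_Y_i 2} with the twist $v\mapsto \mathrm{Ad}(y)(v)$.

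First I would reduce the computation of $\mathrm{Ad}(y)$ on $\mathfrak{u}_{-\alpha_i}$ to a product of standard pieces. Writing $y = u\,\dot w_i\, t\, u'$ in the coordinates $U_{w_i^{-1}}\dot w_i T U$ of the Bruhat cell $Y_i=B\dot w_iB$, I would observe that $\mathrm{Ad}(u)$ and $\mathrm{Ad}(u')$, where $u,u'$ lie in unipotent subgroups of $B$ built from \emph{positive} roots, act on the weight line $\mathfrak{g}_{-\alpha_i}$ (which is as negative as possible in the relevant sense: $-\alpha_i$ is a \emph{simple} negative root, not a sum of two roots of $\Phi^-$) by the identity — adding any positive root to $-\alpha_i$ never lands inside $\Phi^-$, so the nilpotent corrections vanish. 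Hence on $\mathfrak{g}_{-\alpha_i}$ we get $\mathrm{Ad}(y) = \mathrm{Ad}(\dot w_i)\circ \mathrm{Ad}(t)$. Next, $\mathrm{Ad}(t)$ acts on $\mathfrak{g}_{-\alpha_i}$ by the character $(-\alpha_i)(t) = \alpha_i(t)^{-1}$, which by definition of $y_i$ is multiplication by $(y_i(y))^{-1}$; this is the scalar factor in the statement. Finally, using $\dot w_i = \dot w_0 \dot s_i$ — here I would appeal to the chosen reduced expression conventions of Section \ref{subsection complex in rank r}, checking that the length is additive so that the lift $\dot w_i$ is indeed the product $\dot w_0\dot s_i$ (or handle the sign discrepancy coming from Convention \ref{convention lifting} and Remark \ref{rem:dots}(1) if the reduced word forces the opposite order) — I get $\mathrm{Ad}(\dot w_i) = \mathrm{Ad}(\dot w_0)\circ\mathrm{Ad}(\dot s_i)$, and $\mathrm{Ad}(\dot s_i)$ carries $\mathfrak{u}_{-\alpha_i}$ to $\mathfrak{u}_{\alpha_i}$ while $\mathrm{Ad}(\dot w_0)$ carries $\mathfrak{u}_{\alpha_i}$ to $\mathfrak{u}_{w_0(\alpha_i)} = \mathfrak{u}_{-\alpha_i'}$ since $\alpha_i' = w_0(-\alpha_i)$.

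Assembling these: the isomorphism \eqref{eq nu_Y_i} is obtained by transporting \eqref{eq nu_Y_i 2} across \eqref{eq nu_Y_i 1}, i.e.\ it is $\mathrm{Ad}(y)$ restricted to $\mathfrak{g}_{-\alpha_i}\to\mathfrak{g}_{-\alpha_i'}$ read through the isomorphisms $\mathfrak{u}_{\pm\alpha}\cong\mathfrak{g}_{\pm\alpha}$, which by the above factors as multiplication by $(y_i)^{-1}$, followed by $\mathrm{Ad}(\dot s_i):\mathfrak{u}_{-\alpha_i}\xrightarrow{\simeq}\mathfrak{u}_{\alpha_i}$, followed by $\mathrm{Ad}(\dot w_0):\mathfrak{u}_{\alpha_i}\xrightarrow{\simeq}\mathfrak{u}_{-\alpha_i'}$ — exactly the claimed composition. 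I expect the main obstacle to be bookkeeping rather than conceptual: pinning down that the nilpotent parts of $\mathrm{Ad}(u)$, $\mathrm{Ad}(u')$ genuinely act trivially on the extremal weight line $\mathfrak{g}_{-\alpha_i}$ (this uses that $-\alpha_i$ is simple, together with the structure constants of $\mathfrak{g}$), and carefully matching the lift $\dot w_i$ with $\dot w_0\dot s_i$ under the sign conventions of Convention \ref{convention lifting}, so that no stray unit or sign is introduced. Everything else is a direct unwinding of Lemma \ref{lem:adjoint} and Proposition \ref{prop: normal bundle}.
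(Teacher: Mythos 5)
Your proposal is correct in substance and follows the same basic strategy as the paper: identify the comparison automorphism with $Ad(y)$ on the relevant summand and factor it as $Ad(\dot w_0)\circ Ad(\dot s_i)\circ \alpha_i(t)^{-1}$. The difference is in how the unipotent part of $y$ is disposed of. The paper's proof is shorter: it observes that the automorphism in question is a unit in $\sO(Y_i)^\times$, that the closed immersion $\dot w_0\dot s_i T\subset Y_i$ induces an isomorphism on unit groups (since $Y_i$ is an affine space times $T$), and therefore computes only at points $y=\dot w_0\dot s_i t$, where no unipotent factors appear at all. You instead work at a general point $y=u\,\dot w_i\,t\,u'$ of the Bruhat cell and argue that the unipotent factors contribute nothing. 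That route works, but one step is stated too strongly: $Ad(u')$ does \emph{not} act as the identity on the weight line $\mathfrak g_{-\alpha_i}$ inside $\mathfrak g$ (e.g.\ for $u'\in U_{\alpha_i}$ the correction term lands in the Cartan subalgebra $\mathfrak h$). What is true, and what your root-theoretic observation actually proves, is that since $-\alpha_i$ is a simple negative root, all correction terms land in $\mathfrak h\oplus\bigoplus_{\gamma\in\Phi^+}\mathfrak g_\gamma=\mathrm{Lie}(B)$, which is contained in $T_{Y_i,y}$ by the left $B$-invariance of $Y_i$ (and $Ad(y)(\mathrm{Lie}(B))\subset T_{Y_i,y}$ by right $B$-invariance); hence the corrections die only after passing to the normal bundle. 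You also need the same argument a second time, for $Ad(u)$ acting on $\mathfrak g_{-\alpha'_i}$ after $Ad(\dot w_i t)$ has been applied, which again uses that $\alpha'_i$ is simple. With these two clarifications your argument closes; the paper's reduction to the torus locus is simply a way of never having to make them.
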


\begin{proof} 
The automorphism group of a line bundle over $Y_i$ is the group of units $\sO({Y_i})^\times$ of $Y_i$.  We need to compute the unit in $\sO(Y_i)^\times$ corresponding to the automorphism $\text{\eqref{eq nu_Y_i 1}}^{-1} \circ \text{\eqref{eq nu_Y_i 2}}$ of $\nu_{Y_i}$.  Observe that the closed immersion $\dot{w_0} \dot{s_i}T\subset Y_i$ induces an isomorphism on the group of invertible functions.  Now, we explicitly compare the two isomorphisms $T_{Y_i} \oplus \left(\mathfrak{u}_{-\alpha_i} \otimes_k \sO_{Y_i}\right) \cong T_G|_{Y_i}$ and $\left(\mathfrak{u}_{-\alpha'_i} \otimes_k \sO_{Y_i} \right)\oplus T_{Y_i} \cong T_G|_{Y_i}$, corresponding to \eqref{eq nu_Y_i 1} and \eqref{eq nu_Y_i 2} respectively, on restriction to $\dot{w_0}\dot{s_i}T$. On the summand $T_{Y_i}$, both are given by the obvious inclusion, as well as on the summand $\mathfrak{u}_{-\alpha'_i}$. On the other hand, on the summand $\mathfrak{u}_{-\alpha_i}$ it is given by $Ad(y)$, with $y\in Y_i$ the point at which it is computed. Setting $y = \dot{w_0}\dot{s_i}t \in \dot{w_0}\dot{s_i}T$, we get $Ad(y) = Ad(\dot{w}_0)\circ Ad(\dot{s}_i) \circ (\alpha_i(t))^{-1}$ and the lemma follows.
\end{proof}

\begin{remark} \hfill
\label{rem codim 1 comparison}
\begin{enumerate}
\item Observe that the isomorphism $Ad(\dot{s_i}):\mathfrak{u}_{-\alpha_i}\stackrel{\simeq}{\to} \mathfrak{u}_{\alpha_i}$ can be explicitly described. Indeed, the choice of the pinning of $G$ uniquely determines an isomorphism $u_{\alpha_i}:\G_a\cong U_{\alpha_i}$. Now by \cite[ XX, Th\'eor\`eme 2.1]{SGAIII}, we have a canonical duality between $U_{\alpha_i}$ and $U_{-\alpha_i}$, so that in that case we get an isomorphism $U_{\alpha}\stackrel{\simeq}{\to} U_{-\alpha}$ and in \cite[XXIII, \S 1.2]{SGAIII}, it is proved that $Ad(\dot{s_i})$ is exactly the inverse to this isomorphism.

\item Observe also that, although the isomorphism \eqref{eq nu_Y_i} between $\mathfrak{u}_{-\alpha_i}$ and $\mathfrak{u}_{-\alpha'_i}$ is canonical, its description in Lemma \ref{lem:comptub} does depend on the pinning and the choices of reduced expressions made at the beginning! Indeed, the invertible function $x_{\alpha_i}:Y_i\to \G_m$ on $Y_i = U\dot{w}_0\dot{s}_iTU$ obtained by pulling back $\alpha_i: T \to \G_m$ by the natural morphism $Y_i \to \dot{w}_iT \stackrel{\simeq}{\to} T$ depends on the choice of the lifting of $w_i= w_0s_i$, which is precisely cancelled by $Ad(\dot{w_0}) \circ Ad(\dot{s}_i)$.
\end{enumerate}
\end{remark}

\subsubsection{\bf Orientations for a general Bruhat cell}
\label{subsubsection orientations} \hfill

It follows from the previous discussion that defining an orientation of the cellular structure on $G$ in the sense of Definition \ref{definition cellular} is equivalent to giving, for each $w\in W$, an orientation of the $k$-vector space
\begin{equation}
\label{eq orient nu_w}
\mathfrak{v}_w:= \underset{\beta\in\Phi^-|w^{-1}(\beta)\in \Phi^-}{\oplus} \mathfrak{u}_\beta.  
\end{equation}
In codimension one, as we have seen in the previous example, it exactly amounts to give an orientation of each of the $\mathfrak{u}_{-\alpha}$, $\alpha\in \Delta$. Now by \cite[XXII Th\'eor\`eme 1.1]{SGAIII} $\mathfrak{u}_\alpha$ and $\mathfrak{u}_{-\alpha}$ are canonically dual to each other, for $\alpha\in \Phi$.  Thus, an orientation for the Bruhat cellular structure on $G$ in codimension $1$ is the same thing as a pinning of $G$ up to the multiplication of each $u_\alpha$ by squares of units (for each $\alpha$). 

\begin{definition}
\label{definition weak pinning}
Consider the equivalence relation on the set of pinnings of $G$ defined as follows: two pinnings $(T, B, \{u_{\alpha}\}_{\alpha \in \Delta})$ and $(T', B', \{u'_{\alpha}\}_{\alpha \in \Delta'})$ are equivalent if and only if $T=T'$, $B=B'$ and $u_{\alpha}$ is a multiple of $u'_{\alpha}$ by an element of $k^{\times 2}$ for every $\alpha \in \Delta=\Delta'$.  We will call an equivalence class of pinnings a {\em weak pinning} of $G$.
\end{definition}

Given a pinning $(T,B, \{u_\alpha\}_\alpha)$ of $G$, one has the canonical lift $\dot{s}_\alpha$ of $s_\alpha$ in $N_G(T)$ for each simple root $\alpha$ (see Convention \ref{convention lifting}).  Using the formula \eqref{eq:walpha}, one sees that multiplying the $u_\alpha$ by squares in $k^\times$ multiplies the element $\dot{s}_\alpha$ also by a square in $T$ (by which we mean an element of $\underset{i=1}{\overset{r}{\prod}} k^{\times 2}$).  Now, fix for each $v\in W$ a reduced expression $$v = \sigma_1\circ \dots \circ \sigma_{\ell(v)}$$ of $v$ as a product of reflections $\sigma_i$ associated to simple roots in $\Delta$. Then the product (in the given ordering) $\prod_i \dot{\sigma}_i$ is a lift of $v$ in $N_G(T)$, which we denote by $\dot{v}$.

Let $w\in W$, and consider $v: = w_0w$ so that we have $w =  w_0v$. Clearly,
\[
\{\beta\in\Phi^-|w^{-1}(\beta)\in \Phi^-\} = \{\beta\in\Phi^-|v^{-1}(\beta)\in \Phi^+\} = -\Phi_{v^{-1}}.
\]
Thus, for any $w$ we can rewrite the isomorphism \eqref{eq orient nu_w} concerning the normal bundle of $B\dot{w}B$ as
\[
\mathfrak v_{w} \cong \underset{\alpha\in \Phi_{v^{-1}}}{\oplus} \mathfrak{u}_{-\alpha}. 
\]
By \cite[Chapter 8 Section 3]{Springer}, if $v = \sigma_{1} \circ\dots \circ\sigma_{\ell(v)}$ is a reduced expression of $v$ and if $\beta_j$ is the simple root corresponding to $\sigma_j$, then
\[
\Phi_{v^{-1}} = \{\beta_{1}, \sigma_1(\beta_{2}),\dots,\sigma_{1}\circ\dots\circ \sigma_{\ell(v)-1}(\beta_{\ell(v)})\}.
\]
Thus with our assumptions and choices, we get an constant isomorphism of constant vector bundles
\[
\underset{i = 1}{\overset{\ell(v)}{\oplus}}~ \G_a \cong \nu_{w} \cong \underset{\alpha\in \Phi_{v^{-1}}}{\oplus} \mathfrak{u}_{-\alpha} \otimes_k \sO_{B\dot{w}B}
\]
induced by the direct sum of the isomorphisms of vector spaces
\[
k \xrightarrow {u_{-\beta_i}} \mathfrak{u}_{-\beta_i} \xrightarrow{\simeq} \mathfrak{u}_{-\sigma_{1}\circ\dots\circ \sigma_{i-1}(\beta_i)}
\]
given by the pinning followed by the conjugation by $\dot{\sigma}_{1}\circ\dots\circ \dot{\sigma}_{i-1}$.  The associated orientation is easily seen to be only dependent on the weak pinning of $G$ and the reduced expression of $v$. This shows that a weak pinning of $G$ and a choice of a reduced expression of $v$ for each $v\in W$ define an orientation of the Bruhat cellular structure of $G$.
We will call this orientation {\em standard orientation} associated to the weak pinning and the choice of reduced expressions.  Observe that an element of $W$ of length $\leq 2$ admits a unique reduced expression and consequently, there are no choices involved in that case.  We assume henceforth that a choice of a weak pinning of $G$ and for each $v\in W$ a choice of a reduced expression of $v$ is made. 

Let $w\in W$. The Bruhat cell $B\dot{w}B$ does not depend on any of the above choices, but in order to make computations with the oriented cellular $\A^1$-chain complex of $G$ we need to describe the corresponding summand in $\Ctcell_*(G)$ explicitly. The normal bundle is already oriented by our above choices; the corresponding summand in $\Ctcell_{i}(G)$ where $i = \ell(w_0) - \ell(w)$ is the codimension of $B\dot{w}B$ in $G$ is
\[
\KMW_i\otimes \HA_0(B\dot{w}B).
\]
Assume now that we are given a reduced expression of $w$, which is not necessarily the one previously used to orient the normal bundles of the Bruhat cells. We then get a lift $\tilde{w} \in N_G(T)$ of $w$ and have $$B\dot{w}B = B\tilde{w}B = U\tilde{w}TU$$ so that $\HA_0(B\dot{w}B) = \ZA[\tilde{w}T]$.  We will use this freedom later in the proof of the main theorem, to get, at least in degree $\leq 2$ a canonical description of $\Ctcell_*(G)$ in degree $\leq 2$ depending only on a weak pinning of $G$ and a choice of a reduced expression of $w_0$.

\begin{remark} 
The obvious isomorphism of sheaves induced by the multiplication on the left by $\tilde{w}$ 
\[
\ZA[T] \cong \ZA[\tilde{w}T]
\]
only depends on both the chosen reduced expression of $w$ and the weak pinning. Indeed, $\ZA[T]$ is isomorphic to $\ZA[\G_m]\otimes_{\A^1}\dots \otimes_{\A^1} \ZA[\G_m]$ ($r$ factors), and (as in the rank one case above), multiplication by squares (in $\G_m$) induces the identity on each factor of the tensor product. Observe that our choice of the lift $\tilde{w}$ does depend (up to squares) on the weak pinning as the lifts $\dot{s}_i$ for the simple roots do, and of course also on the chosen reduced expression of $w$.
\end{remark}

For each $i\in\{1,\dots,\ell\}$ the above choices and conventions thus define an isomorphism:
\[
\Ctcell_{i}(G) \cong \underset{w\in W; ~\ell(w) = \ell - i}{\oplus} \KMW_{i}\otimes \ZA[\tilde{w}T], 
\]
where the factor $\KMW_{i}\otimes \ZA[\tilde{w}T]$ corresponds to the normal bundle of $BwB:=B\dot{w}B = B\tilde{w}B$. Again, one should notice that the identification of the factor $\KMW_{i}\otimes \ZA[\tilde{w}T]$ not only depends on the orientation of the normal bundle of $BwB$, which uses a reduced expression of $v: = w_0w$, but also on the choice of a reduced expression of $w$ itself, giving the lifting $\tilde{w}$. In dimension $0$ for instance, observe that the writing $\Ctcell_{0}(G) \cong \ZA[\~{w}_0T]$ depends on a choice of a reduced expression for $w_0$ and the weak pinning.  On the other hand, note that the identification $\Ctcell_{\ell}(G) = \KMW_{\ell}\otimes \ZA[B]$ corresponding to the unique cell of maximal codimension $\ell$ in $G$ (that is, the Borel subgroup $B$), one uses another reduced expression for $w_0$. 

\begin{remark}
\label{remark normal horizontal}
In order to explicitly describe the oriented cellular $\A^1$-chain complex $\Ctcell_*(G)$, we need a weak pinning of $G$ and for any element of $W$ two choices of reduced expressions: the ``normal expression'' and the ``horizontal expression''.  For $w\in W$, one gets an orientation of the normal bundle of the cell $BwB$ by using a the normal reduced expression for $v = w_0 w$ and the identification  $\HA_0(BwB) = \ZA[\tilde{w}T]$ using the horizontal reduced expression of $w$. The lift of $w\in W$ in $N_G(T)$ corresponding to the normal reduced expression of $w$ will be denoted by $\dot{w}$ and the one corresponding to the horizontal reduced expression will be denoted by $\tilde{w}$. 
\end{remark}

\begin{remark} \hfill
\label{rem important1}
\begin{enumerate}
\item Observe that the structure of a right $\ZA[T]$-module on $\Ctcell_{i}(G)$ is the obvious one. The right action of $T$ on the factors $\KMW_i$ is trivial, and is the obvious one on the other factors. The structure of left $\ZA[T]$-module is the obvious one on $\ZA[\dot{w}T]$ and is given on $\KMW_i$ as the tensor product of the characters $\alpha: T\to \G_m$ of $T$ corresponding to the roots involved in the reduced expression of the $v$'s used; this follows from Proposition \ref{prop: normal bundle}.

\item The cellular $\A^1$-chain complex $\Ccell_*(G)$ is also a complex of left and right $\ZA[B]$-modules, but observe that in fact $\ZA[B] = \ZA[T]$. Given a weak pinning of $G$ and choice of reduced expressions for elements of $W$, the oriented cellular structure on $G$ is $B$-right invariant by (1) and induces an oriented cellular structure on $G/B$ (which is in fact strictly cellular) as well. The morphism $G\to G/B$ is thus automatically oriented cellular in the obvious way and induces an isomorphism of chain complexes $\Ctcell_*(G)\otimes_{\ZA[T]}\Z \cong \Ctcell_*(G/T)$.

\item It is clear by construction that the differential in the complex $\Ctcell_*(G)$ restricted on the summand $\KMW_{i}\otimes \ZA[\dot{w}T]\subset \Ctcell_{i}(G)$ corresponding to $BwB$ (where $\ell(w)=\ell - i$) has its image contained in the direct sum
\[
\underset{w<v; ~\ell(v)= \ell -i +1}{\oplus} \KMW_{i-1}\otimes \ZA[\dot{w}T], 
\]
where $\leq$ denotes the Bruhat ordering, with $w\leq v$ meaning $BwB \subset \overline{BvB}$ (see, for example, \cite[\S 8.5.4]{Springer}).

\item It is possible to choose a pinning of $G$ so that the lift of an element $w\in W$ defined as above using a reduced expression of $w$ does not depend on the reduced expression. However it seems impossible to find a (weak) pinning such that the orientation of each normal bundle of the Bruhat cell $BwB$, as defined above, only depends on $w$ and not on the chosen reduced expression! This may be verified explicitly for $SL_3$ for the normal bundle of $B$.
\end{enumerate}
\end{remark}


Given a weak pinning of $G$ and a choice of normal and horizontal reduced expressions for elements of $W$, there is a canonical description of the oriented cellular $\A^1$-chain complex of $G$. We will have to use it explicitly in low degrees. In degree $0$ one has:
\[ 
\Ctcell_{0}(G) = \ZA[\tilde{w}_0T].
\]
It is known that codimension $1$ cells $BwB$ correspond exactly to words $w\in W$ of length $\ell-1$, which are exactly of the form $w_i:=w_0s_i = s'_iw_0$ for some $i\in \{1,\dots,r\}$, where $r$ denotes the rank of $G$. Thus, one has
\[ 
\Ctcell_{1}(G) = \underset{i = 0}{\overset{r}{\oplus}} \KMW_{1}\otimes \ZA[\tilde{w}_iT].
\]
Let us analyze now the oriented chain complex of $G$ in degree $2$, which corresponds to the codimension $2$ Bruhat cells. Each word $w\in W$ of length $\ell-2$ can be written as $w_0s_is_j$ for some $i \neq j$ as $\ell(w_0w) = 2$ by \cite[Proposition 1.77]{Abramenko-Brown}.  We will assume here for simplicity that the Dynkin diagram of $G$ is connected.  

\begin{notation}
\label{notation codim 2 indices}
For a pair of indices $(i,j)$ let us write $|i-j|$ for the distance between $\alpha_i$ and $\alpha_j$ in the Dynkin diagram of $G$. If $|i-j|\geq 2$ (that is, there is no edge connecting $\alpha_i$ and $\alpha_j$ in the Dynkin diagram), then $w_0s_is_j = w_0s_js_i$ (as $s_i$ and $s_j$ commute with each other).  If $|i-j| = 1$, then necessarily $w_0s_is_j \neq w_0s_js_i$ (recall that we assume that the Dynkin diagram is connected).  We will write $[i,j]$ for the class of $(i,j)$ modulo the following equivalence relation on $\{1, \ldots, r\}^2$: $(i,j)\equiv (j,i)$ if and only if $|i-j|\geq 2$ or $i = j$. We will also denote by $w_{[i,j]}$ the word $w_0s_is_j$.
\end{notation}

With the above notation, we get
\[
\Ctcell_2(G) = \underset{[i,j]}{\oplus} \KMW_2 \otimes \Z[\tilde{w}_{[i,j]}T].
\]
The right $\ZA[T]$-module structure of $\Ctcell_2(G)$ is the obvious one on $\Z[\tilde{w}_{[i,j]}T]$ and trivial on each of the $\KMW_2$ factors. The left $\ZA[T]$-module structure is, as described in Remark \ref{rem important1}, the obvious one on the factor $\ZA[\dot{w}_{[i,j]}T]$, taking into account the formula for $t\in T$: $t \cdot \dot{w} = \dot{w}  w^{-1}(t)$, and the left action of $T$ on the factor $\KMW_*$ is through the tensor product of the characters, in the given ordering, associated to the simple roots appearing in writing the reduced expression of $v=ww_0$, which here has length $\leq 2$.

\subsection{The free strictly \texorpdfstring{$\A^1$}{A1}-invariant sheaf on a split torus} \hfill  
\label{subsection Z[T]}

We end this section by describing the structure of the sheaf $\ZA[T]$ for a split torus $T$ over $k$, understanding of which is clearly central to all our computations. 

\begin{convention}
\label{convention MW}
For $u$ a unit (in some field extension $F$ of $k$), the associated symbol in $\KMW_1(F)$ is denoted by $(u) \in \KMW_1(F)$. By abuse of notation, we will often drop $F$ from the notation and simply write $(u) \in \KMW_1$ etc.  Recall that $T$ is $k$-split torus of rank $r$ and is pointed by $1$. For $t\in T(F)$, we will denote by $[t] \in \ZA[T]$ the image of the basis element of $\Z[T]$ under the morphism $\Z[T]\to \ZA[T]$ (note again the abuse of notation, where we suppress $F$).  We will denote by $(t)\in \ZA[T]$ the element $[t]-[1]$. Observe that if $T = \G_m$, then $\ZA[T] = \Z \oplus \KMW_1$ and for $t\in T$ the element $(t)$ just defined is in $\KMW_1$ and equals the corresponding symbol, making the two notations compatible.
\end{convention}

Choose an isomorphism
\[
\prod_{i=1}^r \alpha^\vee_i : \prod_{i=1}^r \G_m \xrightarrow{\simeq} T.
\]
In the situation of our main theorem (Theorem \ref{theorem intro precise}, Theorem \ref{thm: main}), the torus $T$ is a maximal $k$-split torus of a simply connected group $G$, so the above isomorphism is chosen to be the one given by the simple coroots.  For $t\in T$, we let $t_i\in \G_m$ be its $i$-th component through the above isomorphism; so in other words,
\[
 t_i = \varpi_i(t),
\]
where the $\{\varpi_i\}_i$ denote the fundamental weights, or the basis of $\Hom(T, \G_m)$ dual to $\{\alpha_i\}_i$.

Notice that given the group homomorphism $\alpha_i^\vee: \G_m \to T$, we have $\alpha_i^\vee(u) \in T$ for a unit $u$, which corresponds to the element $[\alpha_i^\vee(u)]$ of $\ZA[T]$.  The symbol $(\alpha_i^\vee(u))$  denotes the element $[\alpha_i^\vee(u)] - [1]$ of $\ZA[T]$.  The morphism
\[
(\alpha^\vee_i(-)): \G_m \to \ZA[T]; \quad u \mapsto (\alpha_i^\vee(u)) \in \ZA[T]
\]
extends automatically to a morphism of sheaves  $\alpha^\vee_i(-): \ZA(\G_m) = \KMW_1 \to \ZA[T]$ because $\ZA[T]$ is strictly $\A^1$-invariant.

\begin{notation}
\label{notation []_i}
For the sake of brevity, we simply denote the morphism $\KMW_1 \to \ZA[T]$ induced by $\alpha_i^{\vee}$ by 
\[
u\mapsto (u)_i := (\alpha_i^\vee(u)) = [\alpha_i^\vee(u)] - [1], 
\]
where $u$ is a unit (in a field extension $F$ of $k$).  For $t \in T$, we let $[t]_i\in \ZA[T]$ denote the image of $[\varpi_i(t)] = 1 + (\varpi_i(t)) \in \ZA[\G_m] = \Z \oplus \ZA(\G_m)$ through the morphism $\ZA[\G_m] \to \ZA[T]$ induced by the coroot $\alpha^\vee_i$. 

\end{notation}

Since the ring $\ZA[T]$ is commutative, so for $v$ a unit (in the field extension $F$) and $i \neq j$, one has in $\ZA[T](F)$:
\[
(u)_i(v)_j = (v)_j(u)_i.
\]
In case $i = j$ we have further:
\[
(u)_i (v)_i = (\eta(u) (v))_i,
\]
where $\eta: \KMW_2 \to \KMW_1$ is the multiplication by $\eta$ \cite{Morel-book}.  This follows easily from the fact that in the ring $\ZA[\G_m] = \Z \oplus \ZA(\G_m) = \Z \oplus \KMW_1$ the product of $(u)$ and $(v)$ is given by $\eta(u)(v)$, which follows from the very definition of $\eta$.  The isomorphism
\begin{equation}
\label{eqn ZT decomposition}
\underset{i=1}{\overset{r}{\otimes}} \ZA[\G_m] \cong \ZA[T] 
\end{equation}
as a sheaf of rings induced by taking the product in $\ZA[T]$
\begin{equation}
\label{eq:ZA[T]}
\begin{split}
\Z \oplus \left( \underset{1\leq i_1<\dots<i_s\leq r}{\oplus} \KMW_s \right) &\xrightarrow{\simeq} \ZA[T] \\
(u_1)\cdots(u_s) &\mapsto (u_1)_{i_1}\cdots(u_s)_{i_s}
\end{split}
\end{equation}
is an isomorphism of sheaves. If $t = (u_1,\ldots,u_r) \in T = \underset{i=1}{\overset{r}{\prod}}~ \G_m $, then the following equality holds in $\ZA[T]$ through the above isomorphism:
\[
(t) = [u_1,\dots, u_r ] - [ 1 ] = \oplus_{1\leq i_1<\dots<i_s\leq r} (u_1)_{i_1}\cdots (u_r)_{i_r}.
\]
For example, in the case $r=2$, $\G_m\times \G_m = T$ and if $t = (u,v)\in \G_m\times \G_m$, then we have
\[
(t) =  (u)_1 + (v)_2 + (u)_1(v)_2.
\]

Set $\bH_T := \ZA[T]$ and for each $i$, let $\bH_i\cong \ZA[\G_m]$ denote the sub-Hopf algebra of $\bH_T$ given by the image of the morphism $\ZA[\G_m]\to \bH_T$ induced by $\alpha^\vee_i:\G_m\to T$. Thus $\bH_T = \underset{i=1}{\overset{r}{\otimes}} \bH_i$ as a Hopf algebra. Now, define  
\begin{equation}
\label{eq:phi_i}
\phi_i: \KMW_2 \to \KMW_1 \otimes \bH_i; \quad (u,v) \mapsto (u)\otimes (v)_i - \eta (u)(v)\otimes [1],
\end{equation}
where $(u)_i\in \bH_i$ denotes the element $[u]-[1]$ of $\ZA[\G_m] \cong \bH_i$.
Thus for each $i$, we have the obvious exact sequence
\[
(E_i): \quad \quad 0\to \KMW_2 \xrightarrow{\phi_i} \KMW_1 \otimes \bH_i \xrightarrow{\cdot} \bH_i \to \Z \to 0,
\]
where $\cdot$ is (induced by) the product.  This exact sequence, considered as a chain complex $(E_i)_*$ with $\Z$ in degree $-1$, is contractible as each of its terms is a projective object in $Ab_{\A^1}(k)$.  It follows that the tensor product $C_*: = \otimes_{i=1}^r (E_i)_*$ is also contractible and hence, acyclic. So we get an exact sequence
\begin{equation}
\label{eq:C*}
C_3 \to C_2 \to C_1 \to C_0 \to \Z \to 0, 
\end{equation} 
where $C_{-1} = \Z$, $C_0 = \underset{i=1}{\overset{r}{\otimes}} \bH_i = \bH$, 
$$C_1 \cong \left( \underset{i=1}{\overset{r}{\oplus}} \KMW_1\right) \otimes \left( \underset{i=1}{\overset{r}{\otimes}} \bH_i \right) \cong \underset{i=1}{\overset{r}{\oplus}} \KMW_1 \otimes \bH$$
and the differential $\~\partial_1: C_1 \to C_0$ on restriction to the $i$th summand is given by the $\bH$-module homomorphism
\[
\KMW_1 \otimes \bH \to \bH; \quad (u) \otimes [t] \mapsto [\alpha_i^{\vee}(u)t],
\]
for every $i$.  Thus, the previous contractible chain complex produces a presentation of $\~Z_1:= \Ker \~\partial_1$ (by right $\bH$-modules):
\[
 C_3 \to C_2 \to \~Z_1 \to 0.
\]
Thus, it follows that the induced exact sequence
\[
C_3\otimes_{\ZA[T]} \Z \to C_2\otimes_{\ZA[T]} \Z \to \~Z_1\otimes_{\ZA[T]} \Z \to 0
\]
is a presentation of $\~Z_1\otimes_{\ZA[T]} \Z$. Since the complexes $(E_i)$ are exact sequences of $\bH_i$-modules, we have 
\[
C_*\otimes_{\ZA[T]} \Z \cong \underset{i=1}{\overset{r}{\otimes}} (E_i\otimes_{\bH_i}\Z)_*. 
\]
Now, for every $i$, the complex
\[
(E_i)_*\otimes_{\bH_i}\Z: \quad 0\to \KMW_2 \stackrel{-\eta}{\longrightarrow} \KMW_1  \stackrel{0}{\longrightarrow} \Z \to \Z \to 0
\]
is (isomorphic to) the augmented cellular $\A^1$-chain complex of $\P^2$.  Consequently, $C_*\otimes_{\ZA[T]} \Z$ is (isomorphic to) the cellular $\A^1$-chain complex of $(\P^2)^r$.  It follows easily that the morphism 
\[
C_3\otimes_{\ZA[T]} \Z \to C_2\otimes_{\ZA[T]} \Z
\]
is the morphism
\[
\left( \underset{(i,j,m); ~i<j<m}{\oplus} \KMW_3 \right) \oplus \left(\underset{(i,j); ~i\not=j}{\oplus} \KMW_3\right) 
\xrightarrow{\begin{pmatrix}
              0 & 0 \\ 0 & \underset{(i,j)}{\oplus} (\pm \eta)
            \end{pmatrix}
}
\left(\underset{i}{\oplus}~ \KMW_2 \right) \oplus \left( \underset{(i,j); ~i<j}{\oplus} \KMW_2 \right),
\]
where $i,j, m$ run through $\{1,\ldots, r\}$. Going through the previous construction, it is easy to identify the isomorphism just constructed:

\begin{lemma}\label{lem: Z1} The isomorphism
\[
\left(\underset{i}{\oplus}~ \KMW_2 \right) \oplus \left( \underset{(i,j); ~i<j}{\oplus} \KM_2 \right) \xrightarrow{\simeq} \~Z_1\otimes_{\ZA[T]} \Z 
\]
is induced by the following morphisms: for each $i$, its restriction to the $i$-th $\KMW_2$ is $\phi_i$ from \eqref{eq:phi_i} composed with the inclusion $\KMW_1\otimes \bH_i \subset \KMW_1\otimes \bH \subset C_1$. For each $(i,j)$ with $i\neq j$, its restriction to the $\KM_2$ corresponding to $(i,j)$ is 
\[
(u,v) \mapsto (u)\otimes (v)_i - (v)\otimes (u)_j \in (\KMW_1\otimes \bH_i) \oplus (\KMW_1\otimes \bH_j) \subset C_1.
\]
\end{lemma}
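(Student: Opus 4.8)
The plan is to unwind the construction of $\~Z_1\otimes_{\ZA[T]}\Z$ as a cokernel and to trace the generators of that cokernel through the connecting differentials of the acyclic complex $C_* = \bigotimes_{i=1}^r (E_i)_*$. Since $C_*$ is acyclic it is exact at $C_1$ and $C_2$, so $\im\~\partial_2 = \~Z_1$ and $\Ker\~\partial_2 = \im\~\partial_3$; thus $\~\partial_2$ induces an isomorphism $C_2/\im\~\partial_3\xrightarrow{\sim}\~Z_1$. Applying $-\otimes_{\ZA[T]}\Z$ and using right exactness of the tensor product gives an exact sequence $C_3\otimes_{\ZA[T]}\Z\to C_2\otimes_{\ZA[T]}\Z\to\~Z_1\otimes_{\ZA[T]}\Z\to 0$, so that $\~Z_1\otimes_{\ZA[T]}\Z$ is the cokernel of $\~\partial_3\otimes\Z$, with the isomorphism induced by $\~\partial_2\otimes\Z$ (well defined on the cokernel because $\~\partial_2\circ\~\partial_3 = 0$).

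First I would make $C_*\otimes_{\ZA[T]}\Z$ and its differentials completely explicit. As already observed in the text, $C_*\otimes_{\ZA[T]}\Z\cong\bigotimes_{i=1}^r(E_i\otimes_{\bH_i}\Z)_*$, and each factor $(E_i\otimes_{\bH_i}\Z)_*$ is isomorphic to the augmented cellular $\A^1$-chain complex $0\to\KMW_2\xrightarrow{-\eta}\KMW_1\xrightarrow{0}\Z$ of $\P^2$ (Corollary \ref{cor projective space}). By the K\"unneth formula for cellular $\A^1$-chain complexes (Lemma \ref{lemma Kunneth formula}), this tensor product is the augmented $\Ctcell_*((\P^2)^r)$, whose terms and differentials I can read off slot by slot: in degree $2$ it is $\big(\bigoplus_i\KMW_2\big)\oplus\big(\bigoplus_{i<j}\KMW_1\otimes\KMW_1\big)$, in degree $3$ it is $\big(\bigoplus_{i<j<m}\KMW_3\big)\oplus\big(\bigoplus_{i\neq j}\KMW_3\big)$, and by the Leibniz rule for the differential of a tensor product of complexes — together with the fact that $\partial_1 = 0$ on each $\P^2$-factor — the map $\~\partial_3\otimes\Z$ kills the summands indexed by triples and sends the $\KMW_3$ indexed by an ordered pair $(i,j)$ into the unordered-pair summand $\KMW_1\otimes\KMW_1 = \KMW_2$ by multiplication by $\pm\eta$. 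Hence each unordered-pair summand of the cokernel is $\KMW_2/\eta = \KM_2$, and $\Coker(\~\partial_3\otimes\Z) = \big(\bigoplus_i\KMW_2\big)\oplus\big(\bigoplus_{i<j}\KM_2\big)$, which is the source of the claimed isomorphism.

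Next I would identify that isomorphism concretely, by choosing lifts of the generator summands into $C_2$ and applying $\~\partial_2$. For the $i$-th copy of $\KMW_2$, the corresponding summand of $C_2$ is $\KMW_2\otimes\bigotimes_{l\neq i}\bH_l$, namely the top term of $(E_i)_*$ tensored with the units on the other slots; the differential of the tensor-product complex acts by $\phi_i$ of \eqref{eq:phi_i} in slot $i$ and by the identity elsewhere, so its image in $C_1$ is $\phi_i$ followed by the inclusion $\KMW_1\otimes\bH_i\hookrightarrow\KMW_1\otimes\bH$ as the $i$-th summand — exactly the first morphism of the statement. For an unordered pair $i<j$, I would lift to the degree-$(1,1)$ element $\big((u)\otimes 1\big)\otimes\big((v)\otimes 1\big)$ of $(E_i)_1\otimes(E_j)_1$ (units on the remaining slots) and compute $\~\partial_2$ by the Leibniz rule, using that the middle map of each $(E_k)$ is the product $\KMW_1\otimes\bH_k\to\bH_k$ and the Hopf-algebra description of $\bH_i$ (Remark \ref{rem H}); re-expressing $(E_i)_0 = \bH_i$ and $(E_j)_0 = \bH_j$ inside $\bH = \ZA[T]$ and unwinding the symbol conventions of Notation \ref{notation []_i} then yields the displayed element $(u)\otimes(v)_i - (v)\otimes(u)_j$ of $(\KMW_1\otimes\bH_i)\oplus(\KMW_1\otimes\bH_j)\subset C_1$, up to the $\phi$-correction terms that vanish after $\otimes_{\ZA[T]}\Z$ and up to the sign conventions for the symbols. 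Finally, these elements lie in $\~Z_1 = \Ker\~\partial_1$ automatically, being in the image of $\~\partial_2$.

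The main obstacle is purely a matter of bookkeeping rather than anything conceptual: one has to carry all the normalization conventions coherently — the identifications $\ZA(\G_m^{\wedge n}) = \KMW_n$ and $\KMW_1\otimes\KMW_1 = \KMW_2$, the distinction between $(u)$ and $[u]-[1]$ in $\ZA[\G_m]$ and in $\ZA[T]$, the precise form of the maps $\phi_i$, the Koszul signs in the differential of a tensor product of complexes, and the $\phi_i$-correction terms — and, crucially, one must verify that $(E_i\otimes_{\bH_i}\Z)_*$ really is identified with the augmented cellular $\A^1$-chain complex of $\P^2$ compatibly with all differentials, so that the K\"unneth identification transports $\~\partial_3\otimes\Z$ to the $\pm\eta$ matrix and $\~\partial_2\otimes\Z$ to the stated formulas. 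There is no conceptual difficulty once this slot-by-slot bookkeeping is set up, which is why the identification is, as the text says, a routine (if tedious) verification.
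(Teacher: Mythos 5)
Your argument is exactly the route the paper intends: the paper gives no separate proof of this lemma beyond the remark that one ``goes through the previous construction'', namely the presentation $C_3\otimes_{\ZA[T]}\Z\to C_2\otimes_{\ZA[T]}\Z\to\~Z_1\otimes_{\ZA[T]}\Z\to 0$ coming from acyclicity and right-exactness, the identification of $C_*\otimes_{\ZA[T]}\Z$ with the augmented cellular $\A^1$-chain complex of $(\P^2)^r$ to compute the cokernel as $\big(\oplus_i\KMW_2\big)\oplus\big(\oplus_{i<j}\KM_2\big)$, and the slot-by-slot Leibniz evaluation of $\~\partial_2$ on the obvious lifts --- all of which you carry out correctly. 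The only point to watch is the bookkeeping you already flag: your Leibniz computation puts the term involving $\alpha_j^\vee(v)$ in the $i$-th summand of $C_1$ and the term involving $\alpha_i^\vee(u)$ in the $j$-th (consistent with the formula for $\delta_{ij}$ in Lemma \ref{lem:Z1modT}), so the subscripts in the displayed formula of the statement must be read as labelling summands of $C_1$ rather than copies of $\bH$; this is a notational wrinkle of the statement, not a gap in your argument.
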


\section{The oriented cellular \texorpdfstring{$\A^1$}{A1}-chain complex of a split, semisimple, simply connected algebraic group in low degrees}
\label{section differential}

In this section we prove our main theorem, which determines the sheaf $\Hcell_1(G)$, where $G$ is a split, semisimple, almost simple, simply connected algebraic group over $k$ of rank $r$.  Thus, the Dynkin diagram of $G$ is reduced irreducible with $r$ vertices.  We choose an ordering $\{\alpha_1,\dots,\alpha_r\}$ of the set $\Delta$ of simple roots.  For each $i$, we write $w_i:= w_0s_i$, and for $i\not= j$ we write $w_{[i,j]}:=w_0s_is_j$. Recall also that for each $i$ we let $s'_i$ be the reflection (corresponding to the simple root $w_0(-\alpha_i)$) such that $s'_iw_0 = w_i = w_0s_i$.

We briefly recall our conventions that will be used throughout this section.  We fix the choice of a weak pinning of $G$ and for each $w\in W$ a reduced expression as a product of the $s_{\alpha}$'s, where $\alpha\in \Delta$ runs through the set of simple roots. For $w\in W$ we denote by $\dot{w}$ the lift of $w$ in $N_G(T)$ given by the above formula using the elements $\dot{s}_{\alpha}$ given by the (weak)-pinning and using the chosen reduced expression of $w$. Usually, we will simply write $BwB$ instead of $B\dot{w}B$ and $\ZA[wT]$ instead of $\ZA[\dot{w}T]$.  Recall that $\Ccell_*(G)$ is a complex of right and left $\ZA[T]$-modules (see Remark \ref{rem important1}).

Since $G$ is simply connected, it is also $\A^1$-connected, so we have $\Hcell_0(G) = \Z$.  It follows that the right (and left) action of $\ZA[T]$ on $\Hcell_*(G)$ is trivial, as it extends to an action of $\Hcell_0(G) = \Z$.  The main idea of our main computation is that the exact sequence
\[
\Ccell_2(G) \to Z_1(G) \to \Hcell_1(G)\to 0
\]
induces an exact sequence
\begin{equation}\label{eq: H1cokernel}
\Ccell_2(G)\otimes_{\ZA[T]} \Z \to Z_1(G)\otimes_{\ZA[T]} \Z \to \Hcell_1(G)\to 0,
\end{equation}
by the above observation.

In the Section \ref{subsection differential in degree 1} we describe the sheaf $Z_1(G)\otimes_{\ZA[T]} \Z$ and in the Section \ref{subsection differential in degree 2} we describe the morphism $\Ccell_2(G)\otimes_{\ZA[T]} \Z \to Z_1(G)\otimes_{\ZA[T]}\Z$ induced by the differential. The main theorem will then follow directly from these computations.

\subsection{The differential in degree \texorpdfstring{$1$}{1}} \hfill
\label{subsection differential in degree 1}

We fix a reduced expression of the longest element
\begin{equation}\label{eq:redw0}
w_0 = \sigma_{\ell} \cdots \sigma_1
\end{equation}
in the Weyl group of $G$ as a product of simple reflections.  Recall that (see Remark \ref{remark normal horizontal}) in order to make computations with the oriented cellular $\A^1$-chain complex of $G$, we need to choose a ``normal'' reduced expression $\dot{w}$ and a ``horizontal'' reduced expression $\~w$ for every element $w \in W$.  For every $w \in W$, we have already fixed a choice of the normal reduced expression $\dot{w}$ at the beginning of this section.  We need to fix the horizontal reduced expressions.  We begin by recalling a few known facts about the Weyl group of $G$, which will be used.

\begin{lemma}
\label{lm:red1} 
Let $w\in W$ and let $\sigma_{q}\cdots \sigma_1$ be a reduced expression of $w$. Let $v\in W$ with $v<w$ for the Bruhat order and such that $\ell(v) = \ell(w)-1 = q-1$.  Then there is a unique $\lambda\in\{1,\dots,\ell(w)\}$ such that the product
\[
\sigma_q \cdots \widehat{\sigma}_{\lambda} \cdots \sigma_1 := \sigma_q \cdots \sigma_{\lambda + 1}  \sigma_{\lambda - 1} \cdots \sigma_1
\]
is a reduced expression of $v$.
\end{lemma}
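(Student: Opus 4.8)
The plan is to obtain existence from the subword characterisation of the Bruhat order and uniqueness from an elementary cancellation, so that essentially no root-theoretic input is needed.

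For existence, I would invoke the standard fact that $v \le w$ if and only if some subword of the given reduced expression $\sigma_q \cdots \sigma_1$ of $w$ is a reduced expression of $v$ (the subword property of the Bruhat order; see \cite[Chapter 1]{Abramenko-Brown}). Since by hypothesis $\ell(v) = \ell(w) - 1 = q - 1$, such a reduced subword is obtained by deleting exactly one letter, say $\sigma_\lambda$, and this produces the required $\lambda$. It is here --- and only here --- that the hypothesis $\ell(v) = \ell(w) - 1$, rather than merely $v < w$, is used.

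For uniqueness, I would argue by contradiction. Suppose $1 \le \lambda < \mu \le q$ are such that both $\sigma_q \cdots \widehat{\sigma}_\lambda \cdots \sigma_1$ and $\sigma_q \cdots \widehat{\sigma}_\mu \cdots \sigma_1$ are reduced expressions of $v$, hence represent the same element of $W$. Writing $A = \sigma_q \cdots \sigma_{\mu+1}$, $B = \sigma_{\mu-1} \cdots \sigma_{\lambda+1}$, $C = \sigma_{\lambda-1} \cdots \sigma_1$ (any of which may be the empty word), the equality $A\,\sigma_\mu\,B\,C = A\,B\,\sigma_\lambda\,C$ simplifies, after cancelling $A$ on the left and $C$ on the right, to $\sigma_\mu B = B \sigma_\lambda$, i.e. $\sigma_\mu = B \sigma_\lambda B^{-1}$. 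Substituting this back into $w = A\,\sigma_\mu\,B\,\sigma_\lambda\,C$ and using $\sigma_\lambda^2 = 1$ collapses the expression to $w = A B C$, an expression of $w$ as a product of only $q - 2$ simple reflections. This contradicts $\ell(w) = q$ (forced by the hypothesis that $\sigma_q\cdots\sigma_1$ is reduced), so $\lambda = \mu$.

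The computations involved are completely routine; the only point to watch is that the subword property is invoked in exactly the form above. Alternatively, one could phrase uniqueness through the distinctness of the reflections $t_i := \sigma_1 \cdots \sigma_{i-1} \sigma_i \sigma_{i-1} \cdots \sigma_1$ together with the identity $\sigma_q \cdots \widehat{\sigma}_i \cdots \sigma_1 = w\,t_i$ (which is the right-inversion-set picture), but the direct collapse $w = ABC$ seems cleanest. I do not anticipate any genuine obstacle.
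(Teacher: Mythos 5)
Your proof is correct and follows essentially the same route as the paper's: existence from a standard Coxeter-theoretic fact (you cite the subword property of the Bruhat order where the paper cites the exchange condition --- these are equivalent standard tools, both in Abramenko--Brown), and uniqueness by cancelling the common prefix and suffix to derive a length contradiction. Your packaging of the uniqueness step (substituting $\sigma_\mu = B\sigma_\lambda B^{-1}$ back to write $w$ as a product of $q-2$ simple reflections) is a cosmetic variant of the paper's cancellation, and both are valid.
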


\begin{proof} 
The existence of $\lambda$ follows from the exchange condition \cite[{(E)} p.79 and 3.59, p.137]{Abramenko-Brown}.  If $\mu\in\{1,\dots,q\}$ were such that $\sigma_q\cdots \widehat{\sigma}_{\lambda} \cdots \sigma_1 = \sigma_q \cdots\widehat{\sigma}_{\mu} \cdots \sigma_1$, with say $\lambda>\mu$, then it follows after simplification and multiplication on the right with $\sigma_\mu$ that 
\[
{\sigma}_{\lambda -1 } \cdots {\sigma}_{\mu+1} = \sigma_{\lambda} \cdots \sigma_\mu,
\]
contradicting the minimality of the reduced expression we started with.
\end{proof}

\begin{remark} There is no generalization of Lemma \ref{lm:red1} for the case where $\ell(v) = \ell(w)-2$. For example, in the Weyl group of type $A_2$ (so $G \cong \SL_3$), the longest word is $w_0 = s_1s_2s_1$ (also equal to $s_2s_1s_2$) and there are two different ways to get $s_1$ by removing two simple reflections. For the Weyl group of type $C_2$ (so $G \cong \Sp_4$), the longest word has the form $w_0 = s_2s_1s_2s_1$ (also equal to $s_1s_2s_1s_2$) and there are two different ways to get $s_2s_1$ by removing two reflections.
\end{remark}


\begin{lemma}\label{lm:red2} Let $w\in W$ be of length $q$ and let $s, s'\in W$ be simple reflections such that $s'w = ws$ and $\ell(ws) = q+1$.  Let $\alpha$ and $\alpha'$ the simple roots corresponding to $s$ and $s'$, respectively. Then
\[
 w(\alpha) = \alpha'.
\]
\end{lemma}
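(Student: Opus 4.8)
\textbf{Proof plan for Lemma \ref{lm:red2}.}

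The plan is to exploit the standard dictionary between simple reflections and simple roots together with the characterization of the length of an element of the Weyl group in terms of the set of positive roots it sends to negative roots. First I would recall the basic fact (see \cite[Chapter 8]{Springer} or \cite{Abramenko-Brown}) that for $w \in W$ and a simple reflection $s$ with simple root $\alpha$, one has $\ell(ws) = \ell(w)+1$ if and only if $w(\alpha) \in \Phi^+$, and $\ell(ws) = \ell(w)-1$ if and only if $w(\alpha) \in \Phi^-$. Dually, $\ell(s'w) = \ell(w)+1$ if and only if $w^{-1}(\alpha') \in \Phi^+$. Since we are given $\ell(ws) = q+1 = \ell(w)+1$, the first fact gives $w(\alpha) \in \Phi^+$; this is the positive root we must now identify with $\alpha'$.

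The key computational step is to analyze the effect of the hypothesis $s'w = ws$ on reflections. Recall that for a reflection $s_\beta$ and any $u \in W$, one has $u s_\beta u^{-1} = s_{u(\beta)}$. Applying this with $u = w$ and $\beta = \alpha$, the identity $ws = s'w$ rewritten as $w s w^{-1} = s'$ yields $s_{w(\alpha)} = s_{\alpha'}$. Since a reflection determines its root up to sign, this forces $w(\alpha) = \pm \alpha'$. But we have already established $w(\alpha) \in \Phi^+$ and $\alpha' \in \Phi^+$ is a simple (hence positive) root, so the sign must be $+$, giving $w(\alpha) = \alpha'$ as claimed.

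The only point requiring a little care — and the step I would expect to be the mildest obstacle — is making sure the hypotheses are being used consistently: one must check that $\alpha'$ is indeed a \emph{positive} root (it is, being simple) so that the ambiguity of sign in $w(\alpha) = \pm\alpha'$ is genuinely resolved, rather than needing the length hypothesis a second time. In fact the length condition $\ell(ws) = q+1$ is exactly what rules out $w(\alpha) = -\alpha'$, so both hypotheses are used: $s'w = ws$ gives $w(\alpha) = \pm\alpha'$, and $\ell(ws) > \ell(w)$ pins down the sign. I would present the argument in essentially this order: recall the length criterion, deduce $w(\alpha) \in \Phi^+$, conjugate to get $s_{w(\alpha)} = s_{\alpha'}$, and conclude $w(\alpha) = \alpha'$. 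No reduced-expression bookkeeping is needed here, in contrast to Lemma \ref{lm:red1}.
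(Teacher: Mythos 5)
Your proof is correct and follows essentially the same route as the paper's: conjugation ($wsw^{-1}=s'$) forces $w(\alpha)=\pm\alpha'$, and the hypothesis $\ell(ws)=q+1$ rules out the minus sign. The only difference is that you invoke the standard length criterion ($\ell(ws)=\ell(w)+1$ iff $w(\alpha)\in\Phi^+$) as a known fact, whereas the paper re-derives this step by contradiction from the explicit description of $\Phi_w$ attached to a reduced expression of $w$; both are valid.
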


\begin{proof} Let $\sigma_{q} \cdots \sigma_1$ be a reduced expression of $w$.  The equation $s'w = ws$, that is, $s' = wsw^{-1}$ shows that $w(\alpha) = \pm\alpha'$. If $w(\alpha) = -\alpha'$, then $\alpha\in \Phi_{w}$.  From \cite[Chapter 8 Section 3]{Springer} we have
\[
\Phi_{w} = \{\beta_{1}, \sigma_1(\beta_{2}),\dots,\sigma_{1}\circ\dots\circ \sigma_{q-1}(\beta_{q})\},
\]
where $\beta_{1},\dots,\beta_q$ are the simple roots corresponding to the $\sigma_1, \ldots, \sigma_q$, respectively. Thus, there exists $j\in \{0,\dots,q-1\}$ with $\alpha = \sigma_{1}\circ\dots\circ \sigma_{j}(\beta_{j+1})$. This implies that in $W$
\[
s = \sigma_{1}\cdots \sigma_{j} \sigma_{j+1}  (\sigma_{1}\cdots \sigma_{j})^{-1};
\]
but then
\[
w s = \sigma_{q} \cdots\sigma_{j+2} \sigma_{j} \cdots \sigma_{1},
\]
which contradicts that the length of $ws$ is $q+1$.  Therefore, we must have $w(\alpha) = \alpha'$.
\end{proof}

\begin{remark} \hfill
\label{rem:red2}
\begin{enumerate}
\item Observe that Lemma \ref{lm:red2} (and its proof) is still valid if one replaces $s$ by a reflection in $W$, that is, an element of the form $vsv^{-1}$ for some $v\in W$ and a simple reflection $s$ such that the root $v(\alpha)$ is positive (with $\alpha$ the simple root corresponding to $s$).

\item Observe that Lemma \ref{lm:red2} does not contradict the equation $w_0(-\alpha_i) = \alpha'_i$ that was already used in Section \ref{section Bruhat decomposition}. Indeed, $w_0s_i = s'_iw_0$; however, $\ell(s'_iw_0) \neq \ell(w_0)+1$.
\end{enumerate}
\end{remark}

We now fix the horizontal reduced expressions for Bruhat cells in codimension $\leq 1$.

\begin{notation}
\label{notation horizontal codim 1}
For the chosen reduced expression $w_0 = \sigma_{\ell}\cdots\sigma_1$ of $w_0$ as in \eqref{eq:redw0}, we set $$\dot{w}_0 = \tilde{w}_0 = \dot{\sigma}_{\ell}\cdots \dot{\sigma}_1 \in N_G(T)$$ using the chosen weak pinning of $G$. We denote by $\beta_{\mu}$ the simple root corresponding to $\sigma_{\mu}$.  For $i\in\{1,\dots,r\}$, let $\lambda_i\in\{1,\dots,\ell\}$ be given by Lemma \ref{lm:red1} applied to $w_i := w_0s_i$.  Define $w_i'':= \sigma_\ell \cdots{\sigma}_{\lambda_i +1}$ and $w'_i:= \sigma_{\lambda_i-1}\cdots\sigma_{1}$ so that the chosen reduced expression \eqref{eq:redw0} of $w_0$ has the form
\[
w_0 = w_i''\sigma_{\lambda_i} w_i'.
\]
We then take $w_i''w_i'$ for the horizontal reduced expression of $w_i = w_0s_i$ and set $$\tilde{w}_i := \dot{w}_i''\dot{w}_i'$$ to be the corresponding lift to $N_G(T)$.
\end{notation}

In particular, for the cellular $\A^1$-chain complex of $G$ with the orientation described in Section \ref{subsection complex in rank r}, we will use the following identifications: $\Ctcell_0(G) = \ZA[\~w_0T]$ and $\Ctcell_1(G) = \underset{i = 1}{\overset{r}{\oplus}} \KMW_1\otimes \ZA[\tilde{w}_iT]$. 

We now determine the differential $\partial_1: \Ctcell_1(G) \to \Ctcell_0(G)$.  We will continue to use the notation used in Section \ref{section Bruhat decomposition} (more precisely, Sections \ref{subsubsection codim1} and \ref{subsubsection orientations}) regarding the Bruhat cellular structure on $G$.

We use the above horizontal reduced expressions to identify the factor $\HA_1(Th(\nu_{Y_i}))$ of $\Ccell_1(G)$ corresponding to $Y_i$ in the following way.  We set $\overline{\Omega}_{0i}:=\Omega_0 \cup Y_i$, where $\Omega_0 = Bw_0B$ and $Y_i:= Bw_iB$.  It is easy to see that $\overline{\Omega}_{0i}$ is an open subset in $G$ and the product in $G$ induces an isomorphism of smooth $k$-schemes 
\[
\Omega_{\ell}\times_B \dots\times_B\Omega_{\lambda_{i+1}}\times_B P_{\lambda_i} \times_B \Omega_{\lambda_{i-1}} \times_B\dots \times_B \Omega_1 \xrightarrow{\simeq} \overline{\Omega}_{0i},
\]
where $P_{\lambda_i}$ is the parabolic subgroup corresponding to the root $\beta_{\lambda_i}$. Indeed, $P_{\lambda_i}$ is the union of its big cell $\Omega_{\lambda_i}$ and its translate $\dot{\sigma}_{\lambda_i}(\Omega_{\lambda_i})$, so that the above iterated fiber product is the union of the two open subschemes $\Omega_{\ell}\times_B \dots\times_B\Omega_{\lambda_{i+1}}\times_B \Omega_{\lambda_i} \times_B \Omega_{\lambda_{i-1}} \times_B\dots \times_B \Omega_1$ and $\Omega_{\ell}\times_B \dots\times_B\Omega_{\lambda_{i+1}}\times_B \dot{\sigma}_{\lambda_i}(\Omega_{\lambda_i}) \times_B \Omega_{\lambda_{i-1}} \times_B\dots \times_B \Omega_1$ and the morphism induced by the product is clearly a bijection onto $\overline{\Omega}_{0i}$ (using the Bruhat decomposition) and its restriction on each of the previous open subschemes is an isomorphism onto an open subscheme of $\overline{\Omega}_{0i}$: its big cell $\Omega_{\ell}\dots\Omega_{\lambda_i} \dots \Omega_1$ and its translate $\dot{\Sigma}.\Omega_{\ell}\dots\Omega_{\lambda_i} \dots \Omega_1$ with $\dot{\Sigma} = (\dot{\sigma}_{\ell} \cdots \dot{\sigma}_{\lambda_{i}+1}).\dot{\sigma}_{\lambda_{i}}.(\dot{\sigma}_{\ell} \cdots \dot{\sigma}_{\lambda_{i}+1})^{-1}$.
For every $\mu\in\{1,\dots,\ell\}$, the big cell in the parabolic subgroup 
\[
P_{\mu} = B\sigma_{\mu}B \amalg B
\] 
corresponding to $\beta_{\mu}$ is given by 
\[
\Omega_\mu = U_{\beta_{\mu}}\sigma_{\mu}B = B\sigma_{\mu}B = B\sigma_{\mu}U_{\beta_{\mu}}.
\]
Using these descriptions, we get an isomorphism 
\[
\overline{\Omega}_{0i} \cong U_{\beta_\ell}\dot{\sigma}_\ell \cdots U_{\beta_{\lambda_i+1}}\dot{\sigma}_{\lambda_i+1} P_{\lambda_i} \dot{\sigma}_{\lambda_i-1} U_{\beta_{\lambda_i-1}} \cdots \dot{\sigma}_1 U_{\beta_1}
\]
as well as the isomorphisms
\[
\Omega_0 = \Omega_{\ell}\times_B \cdots \times_B \Omega_1 \cong U_{\beta_\ell}\dot{\sigma}_\ell \cdots U_{\beta_1}\dot{\sigma}_1 T  U
\]
and 
\[
\begin{split}
 Y_i & = \Omega_{\ell}\times_B \cdots\times_B \Omega_{\lambda_i+1}\times_B B\times_B \Omega_{\lambda_i-1}\times_B\dots\times_B\Omega_1 \\
& \cong U_{\beta_\ell} \dot{\sigma}_\ell \cdots U_{\beta_{\lambda_i+1}}\dot{\sigma}_{\lambda_i+1} B \dot{\sigma}_{\lambda_i-1} U_{\beta_{\lambda_i-1}}\cdots \dot{\sigma}_1 U_{\beta_1},
\end{split}
\]
for every $i$.  Let $\tilde{\gamma}_i: G \dashrightarrow \A^1$ be the rational map on $G$ defined on $\Omega_0$ by the formula 
\begin{equation}
\label{eq:gammai}
\~\gamma_i(g_{\beta_\ell}\dot{\sigma}_\ell \cdots g_{\beta_1}\dot{\sigma}_1 t  u):= (g_{\beta_{\lambda_i}})^{-1},
\end{equation}
for all $g_{\beta_\mu} \in U_{\beta_\mu}$, $t \in T$ and $u \in U$.  Observe that this definition depends upon the choice of the  pinning of $G$.  Using the above description of $Y_i$ we see that the open subscheme 
\begin{equation}\label{eq:tub}
\sV_i:= U_{\beta_\ell} \dot{\sigma}_\ell \cdots U_{\beta_{\lambda_i+1}}\dot{\sigma}_{\lambda_i+1} U_{-\beta_{\lambda_i}}TU \dot{\sigma}_{\lambda_i-1}U_{\beta_{\lambda_i-1}}\cdots \dot{\sigma}_1 U_{\beta_1}
\end{equation}
of $\overline{\Omega}_{0i}$ can be considered as an open tubular neighborhood of $Y_i$. Through the pinning $u_{-\beta_{\lambda_i}}:\Ga \stackrel{\simeq}{\to} U_{-\beta_{\lambda_i}}$ we get a regular function on $\sV_i$, whose zero locus is exactly $Y_i$.  The corresponding rational function on $G$ will be denoted by $\check{\gamma}_i$.

In a dual way, we observe that the open subscheme
\begin{equation}\label{eq:tub'}
U_{\beta_\ell} \dot{\sigma}_\ell \cdots U_{\beta_{\lambda_i+1}}\dot{\sigma}_{\lambda_i+1} UTU_{-\beta_{\lambda_i}} \dot{\sigma}_{\lambda_i-1}U_{\beta_{\lambda_i-1}}\cdots \dot{\sigma}_1U_{\beta_1}
\end{equation}
of $\overline{\Omega}_{0i}$ can also be considered as an open tubular neighborhood of $Y_i$, and gives rise to another regular function on this tubular neighborhood whose zero locus is exactly $Y_i$. The corresponding rational function on $G$ is denoted by ${}^t\tilde{\gamma}_i$. 

\begin{lemma} 
\label{lem:gammai} Using the above notations, we have the following.
\begin{enumerate}[label=$(\alph*)$]
\item One has $\tilde{\gamma}_i = \check{\gamma}_i$.  Moreover, for any $t\in T$ and any $x\in \Omega_0$, one has
\[
\tilde{\gamma}_i (xt) = \tilde{\gamma}_i(x)
\]
and 
\[
\tilde{\gamma}_i (tx)  = \alpha'_i(t)\cdot \tilde{\gamma}_i (x),
\]
where $\alpha'_i = w_0(-\alpha_i)$.

\item For any $t\in T$ and any $x\in \Omega_0$, one has
\[
{}^t \tilde{\gamma}_i (tx) = {}^t\tilde{\gamma}_i(x)
\]
and 
\[
{}^t\tilde{\gamma}_i (xt) = \alpha_i(t)^{-1} \cdot {}^t\tilde{\gamma}_i (x).
\]
\end{enumerate}
\end{lemma}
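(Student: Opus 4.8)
The plan is to deduce Lemma \ref{lem:gammai} from a direct computation inside $G$, using the parabolic refinement of the big cell recalled above, the commutation of $T$ with the root groups and with the chosen lifts $\dot\sigma_\mu$, and one identity in the Weyl group. I will treat part $(a)$ first and then explain how $(b)$ follows.

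For the equality $\tilde\gamma_i=\check\gamma_i$, since both are rational functions on $G$ it suffices to compare them on the dense open $\Omega_0\cap\sV_i$. Through the isomorphism $\overline\Omega_{0i}\cong U_{\beta_\ell}\dot\sigma_\ell\cdots U_{\beta_{\lambda_i+1}}\dot\sigma_{\lambda_i+1}\,P_{\lambda_i}\,\dot\sigma_{\lambda_i-1}U_{\beta_{\lambda_i-1}}\cdots\dot\sigma_1U_{\beta_1}$, a point of $\overline\Omega_{0i}$ has a well-defined ``$P_{\lambda_i}$-component'', and by construction both $\tilde\gamma_i$ and $\check\gamma_i$ depend only on it: the outer factors $U_{\beta_\mu}\dot\sigma_\mu$ with $\mu>\lambda_i$ on the left, and $\dot\sigma_\mu U_{\beta_\mu}$ with $\mu<\lambda_i$ on the right, do not enter either the formula \eqref{eq:gammai} or the defining equation of $Y_i$ in $\sV_i$. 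One is thus reduced to the rank-one group $P_{\lambda_i}$; transporting everything through the pinned homomorphism $\Phi_{u_{\beta_{\lambda_i}}}\colon\SL_2\to G$, the assertion becomes the rank-one computation underlying Lemma \ref{lem:comp}: an element of $\Omega_{\lambda_i}\cap\dot\sigma_{\lambda_i}(\Omega_{\lambda_i})$ written from the big-cell side as $u_{\beta_{\lambda_i}}(p)\dot\sigma_{\lambda_i}(\cdots)$ and from the tubular side as $u_{-\beta_{\lambda_i}}(q)(\cdots)$ satisfies $q=p^{-1}$, i.e.\ the reciprocal of the $U_{\beta_{\lambda_i}}$-coordinate, which is $\tilde\gamma_i$, coincides with the $U_{-\beta_{\lambda_i}}$-coordinate, which is $\check\gamma_i$.

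The right $T$-invariance in $(a)$ is immediate: writing $x=g_{\beta_\ell}\dot\sigma_\ell\cdots g_{\beta_1}\dot\sigma_1\,\tau\,u$ in standard form and using that $T$ normalises $U$, one has $xt=g_{\beta_\ell}\dot\sigma_\ell\cdots g_{\beta_1}\dot\sigma_1\,(\tau t)\,(t^{-1}ut)$, again in standard form with the same factors $g_{\beta_\mu}$. For the left action one commutes $t$ rightward across the word $g_{\beta_\ell}\dot\sigma_\ell\cdots g_{\beta_1}\dot\sigma_1$: crossing $g_{\beta_\mu}=u_{\beta_\mu}(c_\mu)$ turns it into $u_{\beta_\mu}(\beta_\mu(t_\mu)c_\mu)$, where $t_\mu$ is the torus element accumulated so far, and crossing $\dot\sigma_\mu$ sends $t_\mu$ to $\sigma_\mu(t_\mu)$; beginning with $t_\ell=t$ one arrives at $t_{\lambda_i}=(\sigma_{\lambda_i+1}\cdots\sigma_\ell)(t)=(w_i'')^{-1}(t)$ with $w_i''=\sigma_\ell\cdots\sigma_{\lambda_i+1}$ as in Notation \ref{notation horizontal codim 1}, and the $\lambda_i$-th factor of the resulting standard form of $tx$ is $u_{\beta_{\lambda_i}}(\beta_{\lambda_i}((w_i'')^{-1}t)\,c_{\lambda_i})$. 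Hence $\tilde\gamma_i$ gets multiplied by the character $t\mapsto\beta_{\lambda_i}((w_i'')^{-1}t)^{\pm1}$. The final step is combinatorial: from the chosen reduced expression $w_0=w_i''\sigma_{\lambda_i}w_i'$ of \eqref{eq:redw0} (see Lemma \ref{lm:red1}), the relation $w_i=w_0s_i=w_i''w_i'$, the fact that $\sigma_{\lambda_i}w_i'=w_i's_i$ with both sides reduced, and Lemma \ref{lm:red2}, one obtains $w_i'(\alpha_i)=\beta_{\lambda_i}$ and hence $w_i''(\beta_{\lambda_i})=-w_0(\alpha_i)=\alpha_i'$; this identifies the above character with $\alpha_i'$ and gives $\tilde\gamma_i(tx)=\alpha_i'(t)\,\tilde\gamma_i(x)$.

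For part $(b)$ the plan is to rerun the same three steps using the opposite tubular neighbourhood \eqref{eq:tub'}, in which $U_{-\beta_{\lambda_i}}$ sits to the right of the central $TU$-block; the roles of the two $T$-actions are then interchanged, so left translation leaves the relevant coordinate unchanged while a right translation by $t$ is absorbed into the trailing $U$-block and scales that coordinate by $\alpha_i(t)^{-1}$, yielding ${}^t\tilde\gamma_i(tx)={}^t\tilde\gamma_i(x)$ and ${}^t\tilde\gamma_i(xt)=\alpha_i(t)^{-1}\,{}^t\tilde\gamma_i(x)$. Alternatively, one can argue that $\tilde\gamma_i$ and ${}^t\tilde\gamma_i$ both cut out the smooth divisor $Y_i$ transversally on the overlap of the two tubular neighbourhoods, so they differ there by an invertible regular function; since units on such a neighbourhood of $Y_i\cong(\text{affine space})\times T$ are products of a constant and a character of $T$ (compare the proof of Lemma \ref{lem:comptub}), the left and right $T$-equivariances already established for $\tilde\gamma_i$ pin this unit down and force the stated transformation laws for ${}^t\tilde\gamma_i$. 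The main obstacle throughout is the bookkeeping of the torus twists produced by commuting $T$ past the $\dot\sigma_\mu$ and the root groups, culminating in the Weyl-group identity $w_i''(\beta_{\lambda_i})=\pm\alpha_i'$ (whose sign must be tracked with care against the conventions for the $W$-action on $T$ and on characters); a secondary but real point is checking, in the first step, that the outer factors on both sides of the $P_{\lambda_i}$-slot genuinely drop out of $\tilde\gamma_i$ and $\check\gamma_i$, which is precisely where the parabolic refinement of the big cell is used.
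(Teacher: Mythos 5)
Your proposal is correct and follows essentially the same route as the paper's proof: the identity $\tilde{\gamma}_i=\check{\gamma}_i$ is reduced through the parabolic refinement of the big cell to the rank-one computation of Lemma \ref{lem:comp}, right $T$-invariance is immediate from the definition, and the left transformation law is obtained by commuting $t$ through the word $g_{\beta_\ell}\dot{\sigma}_\ell\cdots g_{\beta_1}\dot{\sigma}_1$ and then invoking Lemma \ref{lm:red2} to identify the resulting character with $\alpha_i'$ (you apply Lemma \ref{lm:red2} to $w_i'$ and compose, the paper applies it directly to $w_i''$ — an immaterial difference), with part $(b)$ handled symmetrically via $w_i'(\alpha_i)=\beta_{\lambda_i}$. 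The only caveat, which you yourself flag, is the sign/inverse bookkeeping in the $W$-action on characters, which must be tracked against the conventions $\tilde{\gamma}_i=(g_{\beta_{\lambda_i}})^{-1}$ and $\alpha_i'=w_0(-\alpha_i)$.
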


\begin{proof} 
The first statement in $(a)$ follows from Lemma \ref{lem:comp} and an easy computation. The equality $\tilde{\gamma}_i (xt) = \tilde{\gamma}_i(x)$ is clear by definition of $\~\gamma_i$. For the last statement, note that for all $g_{\beta_\mu} \in U_{\beta_\mu}$, $t,t' \in T$ and $u \in U$, we have
\[
tg_{\beta_\ell}\dot{\sigma}_\ell \cdots g_{\beta_1}\dot{\sigma}_1 t' u = g'_{\beta_\ell}\dot{\sigma}_\ell \cdots g'_{\beta_1}\dot{\sigma}_1 tt' u,
\]
where $g'_{\beta_{i}} = \sigma_{i+1}\circ \cdots \circ \sigma_{\ell}(t) \cdot g_{\beta_{i}} \cdot \left(\sigma_{i+1}\circ \cdots \circ \sigma_{\ell}(t)\right)^{-1}$, for every $i$.  Thus, in particular, we have $g'_{\beta_{\lambda_i}}= (w''_i)^{-1}(\beta_{\lambda_i})(t) \cdot g_{\beta_{\lambda_i}}$ and consequently, for $x = g_{\beta_\ell}\dot{\sigma}_\ell \cdots g_{\beta_1}\dot{\sigma}_1 t' u$, we have $\~\gamma_i(tx) = w''_i(\beta_{\lambda_i})(t) \cdot \~\gamma_i(x)$.  Now, it remains to show that $w''_i(\beta_{\lambda_i}) = \alpha'_i$.  Since
\[
\sigma_{\ell}\cdots \sigma_{1} = w_0 = s'_is'_iw_0 = s'_i \sigma_{\ell}\cdots \widehat{\sigma}_{\lambda_i}\cdots \sigma_{1}, 
\]
we have $s'_i\sigma_{\ell}\cdots \sigma_{\lambda_i+1} = \sigma_{\ell}\cdots \sigma_{\lambda_i}$.  By Lemma \ref{lm:red2} applied to $w=w_i''=\sigma_{\lambda_\ell}\cdots \sigma_{\lambda_i+1}$, it follows that $w''_i(\beta_{\lambda_i}) = \alpha'_i$.  This completes the proof of $(a)$.  The proof of $(b)$ can be obtained in a similar way using the relation $(w'_i)^{-1}(\beta_{\lambda_i}) = \alpha_i$ and is left to the reader.
\end{proof}

The following lemma compares the orientation $\theta_i$ and ${}^t\theta_i$ of the normal bundle $\nu_{Y_i}$ of $Y_i$ in $G$ obtained from $\tilde{\gamma}_i$ and ${}^t\tilde{\gamma}_i$, respectively.

\begin{lemma}\label{lem:comptheta} 
We have the following equality of trivializations as well as orientations of $\nu_{Y_i}$ (on $Y_i$):
\[
{}^t\theta_i = {x}_{-\beta_{\lambda_i}} \cdot \theta_i,
\]
where ${x}_{-\beta_{\lambda_i}}: Y_i \to \G_m$ is the invertible function obtained by composing the projection $Y_i = B\tilde{w}_iTU \to T$ and the character associated with the root $-\beta_{\lambda_i}$.
\end{lemma}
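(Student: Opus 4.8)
\textbf{Proof proposal for Lemma \ref{lem:comptheta}.}

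The plan is to compare the two trivializations of $\nu_{Y_i}$ directly, by working on the two open tubular neighborhoods of $Y_i$ in $\overline{\Omega}_{0i}$ introduced in \eqref{eq:tub} and \eqref{eq:tub'}. First I would recall that the trivialization $\theta_i$ arises from the tubular neighborhood $\sV_i$: the function $\check{\gamma}_i$ (which equals $\tilde{\gamma}_i$ by Lemma \ref{lem:gammai}$(a)$) is obtained by composing the projection $\sV_i \to U_{-\beta_{\lambda_i}}$ onto the factor sitting to the left of $T$, with the pinning isomorphism $u_{-\beta_{\lambda_i}}^{-1}: U_{-\beta_{\lambda_i}} \xrightarrow{\simeq} \Ga$; its differential along $Y_i$ (the zero locus) is precisely $\theta_i \in \sO r^{st}(\nu_{Y_i})$. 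Symmetrically, ${}^t\theta_i$ arises from the tubular neighborhood \eqref{eq:tub'}, where the copy of $U_{-\beta_{\lambda_i}}$ now sits to the \emph{right} of $T$; the associated function is ${}^t\tilde{\gamma}_i$ and its differential along $Y_i$ is ${}^t\theta_i$.

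The key step is then to pass between these two positions of $U_{-\beta_{\lambda_i}}$ by conjugating across $T$. Concretely, on a point of $Y_i$ of the form $x = b\,\tilde{w}_i\,t\,u$ with $b \in B$, $t \in T$, $u \in U$ (so the relevant torus value is $t$), a point of the normal direction in $\sV_i$ is obtained by inserting $u_{-\beta_{\lambda_i}}(\lambda)$ to the left of the $T$-factor, while a point in \eqref{eq:tub'} is obtained by inserting it to the right; the relation
\[
t \cdot u_{-\beta_{\lambda_i}}(\lambda) = u_{-\beta_{\lambda_i}}\big({-\beta_{\lambda_i}}(t)\cdot \lambda\big) \cdot t
\]
coming from the fact that $T$ acts on $\mathfrak{u}_{-\beta_{\lambda_i}}$ through the character $-\beta_{\lambda_i}$ (exactly as in Proposition \ref{prop: normal bundle}$(c)$, restricted to the codimension-one cell) shows that the two normal coordinates differ by multiplication by $({-\beta_{\lambda_i}})(t)$. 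Since the function $Y_i \to \G_m$, $x = b\tilde{w}_i t u \mapsto ({-\beta_{\lambda_i}})(t)$, is precisely $x_{-\beta_{\lambda_i}}$ (the composite of the projection $Y_i = B\tilde{w}_i T U \to T$ with the character $-\beta_{\lambda_i}$), taking differentials along $Y_i$ gives ${}^t\theta_i = x_{-\beta_{\lambda_i}} \cdot \theta_i$ as strict trivializations, and a fortiori as orientations. As a sanity check, this is consistent with Lemma \ref{lem:gammai}: there $\tilde{\gamma}_i$ transforms on the left by $\alpha'_i = w_0(-\alpha_i)$ and is right-$T$-invariant, while ${}^t\tilde{\gamma}_i$ is left-$T$-invariant and transforms on the right by $\alpha_i^{-1}$; since $\tilde{w}_i$ conjugates the relevant characters, the discrepancy in the equivariances of $\tilde{\gamma}_i$ versus ${}^t\tilde{\gamma}_i$ is exactly accounted for by a factor that restricts to $x_{-\beta_{\lambda_i}}$ on $Y_i$.

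The main obstacle I anticipate is purely bookkeeping: one must be careful that $x_{-\beta_{\lambda_i}}$ is the function attached to $-\beta_{\lambda_i}$ and not to some conjugate such as $\alpha_i$ or $\alpha'_i$, and that no sign (of the kind flagged in Remark \ref{rem:dots}$(1)$ for the choice $\dot{s}_\alpha = w_\alpha(-1)$ in Convention \ref{convention lifting}) or extra square of a unit creeps in. Since we are only claiming an equality of orientations — elements of $\sO r(\nu_{Y_i})$, i.e.\ trivializations modulo $\sO(Y_i)^{\times 2}$ — a square ambiguity would in any case be harmless; but in fact the computation above yields the equality on the nose at the level of strict trivializations, which is what the statement asserts. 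Once the conjugation identity above is written out carefully using the explicit isomorphism $\overline{\Omega}_{0i} \cong U_{\beta_\ell}\dot{\sigma}_\ell \cdots P_{\lambda_i} \cdots \dot{\sigma}_1 U_{\beta_1}$ and the pinning $u_{-\beta_{\lambda_i}}$, the lemma follows.
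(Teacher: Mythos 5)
Your argument is essentially the paper's own proof: the paper likewise compares the two tubular neighborhoods \eqref{eq:tub} and \eqref{eq:tub'}, reduces the comparison to a single unit via $\sO(Y_i)^\times = \sO(\tilde{w}_iT)^\times$, and evaluates that unit on elements of the form $\tilde{w}_i\cdot t$ using the fact that $Ad_t$ acts on $\mathfrak{u}_{-\beta_{\lambda_i}}$ by multiplication by $\beta_{\lambda_i}(t)^{-1}$ --- exactly the conjugation identity you write down. The bookkeeping point you flag (that the resulting character is $-\beta_{\lambda_i}$ of the stated projection rather than a Weyl conjugate such as $-\alpha_i$) is indeed the only delicate step, and the paper's proof handles it with no more detail than you do.
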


\begin{proof} 
The assertion follows from the comparison of the tubular neighborhoods and the fact that $\sO(Y_i)^\times = \sO(\tilde{w}_iT)^\times$. Then one may explicitly compute this unit on elements of the form $\tilde{w}_i \cdot t$ with $t\in T$ and use the fact that the adjoint automorphism $Ad_{t}:\mathfrak{u}_{-\beta_{\lambda_i}}\cong \mathfrak{u}_{-\beta_{\lambda_i}}$ is the multiplication by $\beta_{\lambda_i}(t)^{-1}$.
\end{proof}

Unless otherwise stated, we will always use for $\nu_{Y_i}$ the right and left $T$-equivariant trivialization $\theta_i$ (which depends upon $\tilde{\gamma_i}$).  The factor $\HA_1(Th(\nu_{Y_i}))$ of $\Ccell_1(G)$ corresponding to $Y_i$ can now be described as the sheaf
\[
\HA_1(Th(\nu_{Y_i})) = \KMW_1 \otimes \ZA[Y_i]  \cong \KMW_1 \otimes \ZA[\tilde{w}_iT].
\]
This description depends on the weak pinning of $G$ and the choice of the reduced expression \eqref{eq:redw0} of $w_0$. Observe that the above tensor product is a tensor product of left and right $\ZA[T]$-modules over $\Z$.  The left and right $\ZA[T]$-module structure is as usual obtained by using the diagonal morphism
\[
\Psi_T: \ZA[T] \to \ZA[T]\otimes \ZA[T]; \quad [t] \mapsto [t] \otimes [t] = [t,t] \in \ZA[T]\otimes \ZA[T] = \ZA[T\times T].
\]

\begin{remark}
\label{rem twist}
Let $M$ be a right and left $\ZA[T]$-module.  Let $\sigma$ be an automorphism of $T$.  We define the left twist
\[
\Z[\sigma]\otimes M 
\]
of $M$ by $\sigma$ to be the right and left $\ZA[T]$-module $\Z[\sigma]\otimes M$ with the obvious right $\ZA[T]$-module structure, and with the left $\ZA[T]$-module structure $\odot$ induced by:
\[
t \odot ([\sigma] \otimes m) = [\sigma] \otimes (\sigma^{-1}(t) \cdot m).
\]
We analogously define the right twist of $M$ by $\sigma$:
\[
M \otimes \Z[\sigma]
\]
with
\[
(m \otimes [\sigma] )\odot t = (m \cdot \sigma(t)) \otimes [\sigma]
\]
giving the right $\ZA[T]$-module structure.  
\end{remark}

In order to describe the differential $\partial_1:\Ctcell_1(G) \to\Ctcell_0(G)$, it will be convenient to use the canonical isomorphism of right and left $\ZA[T]$-modules
\[
\begin{split}
\KMW_1 \otimes \ZA[\tilde{w}_i T] &\xrightarrow{\simeq} \Z[\tilde{w}''_i]\otimes (\KMW_1 \otimes \ZA[T])\otimes \Z[\tilde{w}'_i];\\
(u)\otimes [\tilde{w}_i \cdot t] & \mapsto \tilde{w}''_i\otimes \big( (u) \otimes [\tilde{w}'_i(t)]\big)\otimes \tilde{w}'_i,
\end{split}
\]
with the right and left $\ZA[T]$-module structures given in Remark \ref{rem important1} and Remark \ref{rem twist}.

\begin{lemma} 
\label{lem: partial1}
The restriction of $\partial_1: \Ctcell_1(G) \to \Ctcell_0(G)$ to the factor $\KMW_1 \otimes \ZA[\tilde{w}_iT]\subset \Ctcell_1(G)$ is the morphism (of right and left $\ZA[T]$-modules):
\[
\partial_1: \KMW_1 \otimes \ZA[\tilde{w}_iT] \to \ZA[\tilde{w}_0 T]
\]
induced by 
\[
(u)\otimes [\tilde{w}_i \cdot 1] \mapsto (\tilde{w}_0 \cdot \alpha_i^\vee(u)) \in \ZA(\~w_0T) \subset \ZA[\tilde{w}_0 T].
\]
\end{lemma}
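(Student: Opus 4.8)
The plan is to reduce the computation of $\partial_1$ to the rank $1$ situation already handled in Section~\ref{subsection complex in rank 1}, using the tubular neighborhood $\sV_i$ of $Y_i$ inside $\overline\Omega_{0i}$ and the explicit factorization $w_0 = w_i''\sigma_{\lambda_i}w_i'$ of the fixed reduced expression. Concretely, recall that $\partial_1$ is, by the construction of the cellular $\A^1$-chain complex, the composite
\[
\HAred_1(Th(\nu_{Y_i})) \cong \HAred_1(\overline\Omega_{0i}/\Omega_0) \xrightarrow{\delta} \HAred_0(\Omega_0) \to \HAred_0(\Omega_0/\emptyset) = \ZA[\tilde w_0 T],
\]
where the last identification uses the horizontal reduced expression of $w_0$. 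The key structural input is that the product isomorphism
\[
\overline\Omega_{0i} \cong \Omega_\ell \times_B \cdots \times_B \Omega_{\lambda_i+1} \times_B P_{\lambda_i} \times_B \Omega_{\lambda_i-1} \times_B \cdots \times_B \Omega_1
\]
exhibits $\overline\Omega_{0i}$, together with its open subscheme $\Omega_0$ and the codimension $1$ stratum $Y_i$, as obtained from the rank $1$ datum $(P_{\lambda_i}, \Omega_{\lambda_i}, B \subset P_{\lambda_i})$ by a sequence of $\A^1$-weak equivalences (contracting the $U_{\beta_\mu}$'s and $T$-factors) and a base change along a cellular morphism. So the first step is to make this reduction precise: set $S := S_{\beta_{\lambda_i}}$, the rank $1$ group attached to the root $\beta_{\lambda_i}$, with its pinning inherited from the pinning of $G$, and observe that the cofiber sequence $\Omega_0 \to \overline\Omega_{0i} \to \overline\Omega_{0i}/\Omega_0$ is, after the appropriate $\A^1$-contractions, the one of \eqref{diagram rank 1} for $S$, twisted on both sides by the translations $\dot w_i''$ (on the left) and $\dot w_i'$ (on the right).

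Second, I would quote the rank $1$ computation: by the description \eqref{eq:descriptionpartial} of the differential for $S$, together with Lemma~\ref{lem:comp}, the boundary map for $S$ sends the generator of $\KMW_1 \otimes \ZA[\dot s_{\beta_{\lambda_i}} T_S]$ (where $T_S$ is the maximal torus of $S$) to $(\beta_{\lambda_i}^\vee(u)) \in \ZA(T_S) \subset \ZA[T_S]$, composed into $\ZA[T]$ via the natural inclusion. The translations by $\dot w_i''$ and $\dot w_i'$ then carry this into $\ZA[\tilde w_0 T]$: the left translation by $\dot w_i'' = \tilde w_i''$ and right translation by $\dot w_i' = \tilde w_i'$ reassemble $\dot s_{\beta_{\lambda_i}}$ together with $\beta_{\lambda_i}^\vee(u)$ into $\tilde w_0$ together with an element of $T$. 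The only bookkeeping here is to identify which element of $T$ one lands on; since $\tilde w_0 = \tilde w_i'' \dot\sigma_{\lambda_i} \tilde w_i'$ and $w_i = w_0 s_i = \tilde w_i'' \tilde w_i'$, and since $\tilde w_i'(\alpha_i) = \beta_{\lambda_i}$ by the relation $(w_i')^{-1}(\beta_{\lambda_i}) = \alpha_i$ established in the proof of Lemma~\ref{lem:gammai}, the coroot $\beta_{\lambda_i}^\vee$ conjugates back to $\alpha_i^\vee$ under the relevant translations, giving exactly $(\tilde w_0 \cdot \alpha_i^\vee(u))$.

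Third, one must check $T$-equivariance on both sides, which is where Lemma~\ref{lem:gammai} does the real work: the right $T$-invariance of $\tilde\gamma_i$ and the left transformation rule $\tilde\gamma_i(tx) = \alpha_i'(t)\tilde\gamma_i(x)$ are precisely what identifies the left and right $\ZA[T]$-module structures on $\KMW_1 \otimes \ZA[\tilde w_i T] \cong \Z[\tilde w_i''] \otimes (\KMW_1 \otimes \ZA[T]) \otimes \Z[\tilde w_i']$ (with the twists of Remark~\ref{rem twist}) with the structures coming from the rank $1$ picture, so that the formula for $\partial_1$ on the single generator $(u)\otimes[\tilde w_i \cdot 1]$ determines it as a morphism of bimodules. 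I expect the main obstacle to be neither the rank $1$ input nor the equivariance, but rather the careful tracking of \emph{orientations} through the reduction: one must verify that the orientation $\theta_i$ of $\nu_{Y_i}$ — defined via $\tilde\gamma_i$ and the pinning element $u_{-\beta_{\lambda_i}}$ — is exactly the one induced from the standard orientation of the rank $1$ normal bundle under the product decomposition of $\overline\Omega_{0i}$, with no stray unit appearing. This is a matter of matching the trivialization coming from $u_{-\beta_{\lambda_i}}$ against the one forced by Convention~\ref{convention lifting} and the choice $\dot s_\alpha = w_\alpha(-1)$; Lemma~\ref{lem:comp} and Remark~\ref{rem:dots}(1) show that the sign conventions are compatible, so the orientations agree on the nose, and no correction factor enters $\partial_1$.
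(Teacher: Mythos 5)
Your proposal is correct and follows essentially the same route as the paper: reduce via the fiber-product decomposition of $\overline{\Omega}_{0i}$ to the differential of $\Z[\tilde{w}''_i]\otimes \Ctcell_*(P_{\lambda_i}) \otimes \Z[\tilde{w}'_i]$, apply the rank $1$ formula \eqref{eq:descriptionpartial}, and use $(\tilde{w}'_i)^{-1}(\beta^\vee_{\lambda_i}) = \alpha_i^\vee$ (from Lemma \ref{lm:red2}) to reassemble $\tilde{w}_0 \cdot (\alpha_i^\vee(u))$. Your additional remarks on $T$-equivariance and on matching the orientation $\theta_i$ with the rank $1$ conventions are points the paper delegates to the surrounding setup (Lemmas \ref{lem:comp} and \ref{lem:gammai}) rather than to the proof itself, but they do not change the argument.
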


\begin{proof}
From the discussion preceding Lemma \ref{lem:gammai}, it follows that the restriction of $\partial_1$ to the factor $\KMW_1 \otimes \ZA[\tilde{w}_iT]$ is precisely the differential in the complex
\[
\Z[\tilde{w}''_i]\otimes \Ctcell_*(P_{\lambda_i}) \otimes \Z[\tilde{w}'_i]. 
\]
The formula then immediately follows from the rank $1$ case already treated in Section \ref{subsection complex in rank 1}, see \eqref{eq:descriptionpartial}:
\[
\begin{split}
\partial_1: (u)\otimes [\tilde{w}_i \cdot 1] & \mapsto \tilde{w}''_i \cdot \dot{\sigma}_{\lambda_i}\cdot (\beta^\vee_{\lambda_i}(u)) \cdot \tilde{w}'_i\\
& = \tilde{w}''_i \cdot \dot{\sigma}_{\lambda_i}\cdot \tilde{w}'_i \cdot ((\tilde{w}'_i)^{-1}\beta^\vee_{\lambda_i}(u))\\
& = \tilde{w}_0 \cdot (\alpha_i^\vee(u)) = (\tilde{w}_0 \cdot\alpha_i^\vee(u)),
\end{split}
\]
where we have used the fact that $(\tilde{w}'_i)^{-1}(\beta^\vee_{\lambda_i}) = \alpha_i^\vee$, which follows from Lemma \ref{lm:red2}.
\end{proof}

Thus, we obtain a complete description of the first differential $\partial_1: \Ctcell_1(G) \to \Ctcell_0(G)$ in $\Ctcell_*(G)$.  Using this and our computations in Section \ref{subsection Z[T]}, we will now describe the sheaf $Z_1(G) := \Ker(\partial_1)$ and its quotient $Z_1(G)\otimes_{\ZA[T]} \Z$.  Let 
\[
\theta_0:  \ZA[T] \xrightarrow{\simeq}  \ZA[\tilde{w}_0T]
\]
denote the obvious isomorphism $[t]\mapsto [\tilde{w}_0t]$ and
\[
\theta_1: \underset{i=1}{\overset{r}{\oplus}} ~ \KMW_1 \otimes \ZA[T] \xrightarrow{\simeq} \underset{i=1}{\overset{r}{\oplus}} ~ \Z[\tilde{w}''_i] \otimes \KMW_1 \otimes \ZA[\tilde{w}'_iT] = \Ctcell_1(G)
\]
be the obvious isomorphism, induced by 
$(u)\otimes [t] \mapsto \tilde{w}''_i \otimes (u) \otimes [\tilde{w}'_i \cdot t]$ 
on the direct summand indexed by $i$.  The composition $\theta_0^{-1}\circ \partial_1 \circ \theta_1$ is the morphism
\[
\~\partial_1: \underset{i=1}{\overset{r}{\oplus}} ~ \KMW_1 \otimes \ZA[T]  \to  \ZA[T] 
\]
induced by 
$(u)\otimes [t] \mapsto (\tilde{w}_0 \cdot \alpha_i^\vee(u))$ 
on the direct summand indexed by $i$. Observe that the morphism $\~\partial_1$ only depends on the isomorphism given by the ``decomposition'' of $T$ into coroots:
\[
\prod_{i=1}^r \alpha^\vee_i : \prod_{i=1}^r \G_m. \xrightarrow{\simeq} T.
\]
Note further that the differential $\~\partial_1$ precisely agrees with the first differential in the complex $C_*$ (see \eqref{eq:C*} in Section \ref{subsection Z[T]}).  As a direct consequence of Lemma \ref{lem: Z1}, we get the following description of the quotient $Z_1(G)\otimes_{\ZA[T]} \Z$ of $Z_1(G)$.

\begin{lemma} \label{lem:Z1modT} The morphism of sheaves 
\[
\left(\underset{i}{\oplus}~ \KMW_2 \right) \oplus \left( \underset{(i,j); ~i<j}{\oplus} \KM_2 \right) \to Z_1(G) \otimes_{\ZA[T]} \Z 
\]
induced for each $i$ by
\[
(u,v) \mapsto \phi_{i}(u,v): =  (u) \otimes (\tilde{w}_i\cdot \alpha^\vee_i(v)) - (\eta(u)(v))\otimes [\tilde{w}_i \cdot 1_T]  
\]
(in $\KMW_1 \otimes \ZA[\tilde{w}_i.T]$) 
and for each $(i,j)$ with $i \neq j$ by
\[
(u, v) \mapsto \delta_{ij}(u,v):=  (u) \otimes (\tilde{w}_i \cdot \alpha^\vee_j(v)) - (v)\otimes (\tilde{w}_j \cdot \alpha^\vee_i(u)) 
\]
is an isomorphism.
\end{lemma}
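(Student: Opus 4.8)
The plan is to deduce the lemma from Lemma \ref{lem: Z1} by transporting that isomorphism along the identifications $\theta_0$ and $\theta_1$ fixed just above its statement. First I will record the reduction. By Lemma \ref{lem: partial1} and the discussion following it, the composite $\theta_0^{-1}\circ\partial_1\circ\theta_1$ is the morphism $\tilde\partial_1\colon\bigoplus_{i=1}^r\KMW_1\otimes\ZA[T]\to\ZA[T]$ induced on the $i$-th summand by $(u)\otimes[t]\mapsto[\alpha_i^\vee(u)\,t]$, and this is exactly the first differential of the complex $C_*$ of \eqref{eq:C*}. Since $\theta_0$ and $\theta_1$ are isomorphisms of sheaves of right and left $\ZA[T]$-modules intertwining $\partial_1$ with $\tilde\partial_1$, they restrict to an isomorphism $\tilde Z_1:=\Ker(\tilde\partial_1)\xrightarrow{\simeq}Z_1(G):=\Ker(\partial_1)$ of $\ZA[T]$-bimodules, and hence induce an isomorphism $\tilde Z_1\otimes_{\ZA[T]}\Z\xrightarrow{\simeq}Z_1(G)\otimes_{\ZA[T]}\Z$. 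Composing this with the isomorphism $\bigl(\bigoplus_i\KMW_2\bigr)\oplus\bigl(\bigoplus_{i<j}\KM_2\bigr)\xrightarrow{\simeq}\tilde Z_1\otimes_{\ZA[T]}\Z$ supplied by Lemma \ref{lem: Z1}, the problem is reduced to computing the resulting composite on each generator.

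For that computation I will use that $\theta_1$, restricted to the $i$-th summand $\KMW_1\otimes\ZA[T]$ of $C_1$, sends $(u)\otimes[t]$ to $(u)\otimes[\tilde{w}_i\cdot t]$ in $\KMW_1\otimes\ZA[\tilde{w}_iT]$ --- which follows from the defining formula for $\theta_1$ by left-translating the coset $\tilde{w}_i'T$ by $\tilde{w}_i''$, the coherence with the twisted presentation of $\ZA[\tilde{w}_iT]$ displayed before Lemma \ref{lem: partial1} being exactly the internal Weyl conjugation $\tilde{w}_i't=w_i'(t)\tilde{w}_i'$. Lemma \ref{lem: Z1} exhibits the $i$-th copy of $\KMW_2$ as the class of $(u)\otimes(v)_i-\eta(u)(v)\otimes[1]\in\KMW_1\otimes\bH_i\subset C_1$ in the notation of \eqref{eq:phi_i}; since $(v)_i=(\alpha_i^\vee(v))=[\alpha_i^\vee(v)]-[1]$ inside $\ZA[T]$, its image under $\theta_1$ is $(u)\otimes(\tilde{w}_i\cdot\alpha_i^\vee(v))-(\eta(u)(v))\otimes[\tilde{w}_i\cdot 1_T]$, which is the map $\phi_i$ of the statement. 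Likewise Lemma \ref{lem: Z1} exhibits the $\KM_2$ indexed by $(i,j)$ with $i\neq j$ as the class of $(u)\otimes(v)_i-(v)\otimes(u)_j\in(\KMW_1\otimes\bH_i)\oplus(\KMW_1\otimes\bH_j)\subset C_1$, whose image under $\theta_1$ is $(u)\otimes(\tilde{w}_i\cdot\alpha_j^\vee(v))-(v)\otimes(\tilde{w}_j\cdot\alpha_i^\vee(u))=\delta_{ij}(u,v)$. Being a composite of two isomorphisms, the morphism so obtained is an isomorphism, and it is the one in the statement.

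The step that will demand the most care is the explicit form of $\theta_1$ just invoked: one has to keep track of the left-twist factors $\Z[\tilde{w}_i'']$, $\Z[\tilde{w}_i']$ and of the twisted $\ZA[T]$-module structures of Remark \ref{rem twist}, so as to be sure that the internal conjugation by $w_i'$ cancels and that the $\eta$-correction term of $\phi_i$ lands on the component $[\tilde{w}_i\cdot 1_T]$ with the sign indicated. This is of the same nature as the rank one computation of Section \ref{subsection complex in rank 1} (see \eqref{eq:descriptionpartial}) and as the proof of Lemma \ref{lem: partial1}; once it is dispatched, the proof is complete.
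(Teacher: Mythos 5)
Your proposal is correct and follows exactly the route the paper takes: Lemma \ref{lem:Z1modT} is recorded there as a direct consequence of Lemma \ref{lem: Z1}, obtained by transporting that computation of $\Ker(\tilde{\partial}_1)\otimes_{\ZA[T]}\Z$ along the right-$\ZA[T]$-equivariant isomorphisms $\theta_0$, $\theta_1$ that intertwine $\partial_1$ with $\tilde{\partial}_1$. Your explicit evaluation on the generators of Lemma \ref{lem: Z1} (including tracking $\theta_1$ through the twists $\Z[\tilde{w}''_i]$, $\Z[\tilde{w}'_i]$) supplies precisely the bookkeeping the paper leaves implicit.
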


\begin{remark}
The above isomorphism in Lemma \ref{lem:Z1modT} depends only on the weak-pinning of $G$ fixed at the beginning.  We will see below that its dependence on the choices of reduced expressions is very mild (see \ref{cor:changeexpression}). In fact, its dependence on the weak pinning itself is also very mild.  Indeed, when one changes the weak pinning, one gets a multiplication by a unit of the form $\<\pi\>$ with $\pi \in k^{\times}$ on each $\KMW_2$ factor, but the identity on the sum of Milnor-K-theory factors, as units in $\GW(k)$ act trivially on $\KM_2$. 
\end{remark}

\subsection{The differential in degree \texorpdfstring{$2$}{2}} \hfill
\label{subsection differential in degree 2} 

In this section, we determine the morphism
\[
\Ctcell_2(G)\otimes_{\ZA[T]} \Z \to Z_1(G)\otimes_{\ZA[T]} \Z, 
\]
whose cokernel is $\Hcell_1(G)$ by \eqref{eq: H1cokernel}. With our conventions and notation fixed earlier in this section and Notation \ref{notation codim 2 indices}, we  have 
\[
\Ctcell_2(G)  = \underset{[i,j]}{\oplus} ~ \KMW_2 \otimes \ZA[\tilde{w}_{[i,j]}T].
\]
We use the reduced expression \eqref{eq:redw0} of $w_0$.  Let $w\in W$ be an element of length $\ell-2$.  There is a pair $(i,j) \in \{1, \ldots, r\}^2$ with $w = w_0s_is_j = w_{[i,j]}$.  The pair $(i,j)$ is unique except in the case $s_i$ and $s_j$ commute, in which case $(j,i)$ is the only other pair satisfying $w=w_{[j,i]}$.

\begin{notation}
\label{notation lambda_i mu_j}
For every $i$, let $\lambda_i\in\{1,\dots,\ell\}$ be the index given by Lemma \ref{lm:red1} so that $\sigma_\ell \cdots \widehat{\sigma}_{\lambda_i} \cdots \sigma_1$ is a reduced expression of $w_i = w_0s_i$.  By Lemma \ref{lm:red1} again, it follows that there is a unique index $\mu_j\in \{1,\dots,\ell\}-\{\lambda_i\}$ such that a reduced expression of $w_{[i,j]}$ can be obtained by removing $\sigma_{\mu_j}$ in the above reduced expression of $w_i$. 

\end{notation}

\begin{lemma} 
\label{lm:lambdaij} 
With the above notation, the following conditions are equivalent:
\begin{enumerate}[label=$(\alph*)$]
\item $\lambda_i > \mu_j$;

\item $\lambda_i > \lambda_j$.
\end{enumerate}
Moreover, if both the conditions hold, then $\mu_j = \lambda_j$.
\end{lemma}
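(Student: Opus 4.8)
The setup fixes the reduced expression $w_0 = \sigma_\ell\cdots\sigma_1$ of~\eqref{eq:redw0}. By Lemma~\ref{lm:red1} applied to $w_i = w_0 s_i < w_0$, the index $\lambda_i$ is the unique position one can delete from $\sigma_\ell\cdots\sigma_1$ to obtain a reduced expression of $w_i$. Applying Lemma~\ref{lm:red1} a second time, this time to the pair $w_{[i,j]} = w_0 s_i s_j < w_i$ and the reduced expression $\sigma_\ell\cdots\widehat{\sigma}_{\lambda_i}\cdots\sigma_1$ of $w_i$, yields the index $\mu_j$ (an element of $\{1,\dots,\ell\}\setminus\{\lambda_i\}$) whose removal produces a reduced expression of $w_{[i,j]}$. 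The same construction applied in the other order — first deleting the position giving $w_j = w_0 s_j$, then the position giving $w_{[i,j]} = w_0 s_j s_i < w_j$ (note $w_{[i,j]}=w_{[j,i]}$ exactly when $s_i,s_j$ commute; but when they do not commute $w_0 s_i s_j \neq w_0 s_j s_i$, so we must be slightly careful which word we are deleting down to) — gives $\lambda_j$. The heart of the matter is a combinatorial bookkeeping of positions inside the fixed word $\sigma_\ell\cdots\sigma_1$.

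First I would prove the implication $(a)\Rightarrow(b)$ together with the last assertion, by a direct argument. Suppose $\lambda_i > \mu_j$. Then deleting $\sigma_{\lambda_i}$ and $\sigma_{\mu_j}$ (in either order, since the two positions are distinct) from $\sigma_\ell\cdots\sigma_1$ gives a reduced expression of $w_{[i,j]}$; call the resulting word $\tau$. Now I want to recover $w_j$ from $w_0$ by a single deletion and compare with $\lambda_j$. Since $\lambda_i>\mu_j$, the word obtained by deleting only $\sigma_{\mu_j}$ from $\sigma_\ell\cdots\sigma_1$ still contains $\sigma_{\lambda_i}$ in a position to its left, and one checks this word is a reduced expression of $w_0 s_i$ — wait, it should be $w_0 s_j$: here I would invoke the exchange-condition uniqueness from Lemma~\ref{lm:red1} applied to whichever of $w_i, w_j$ is produced, using that the element obtained by deleting $\sigma_{\mu_j}$ alone has length $\ell-1$ and lies below $w_0$, hence equals $w_0 s$ for the unique simple reflection $s$ with $w_0 s$ expressible that way. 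The key point is that, with $\lambda_i>\mu_j$, deleting $\sigma_{\mu_j}$ first produces $w_j$ (not $w_i$), and the position $\lambda_i$ is preserved under that deletion, so a reduced expression of $w_{[i,j]}=w_0 s_j s_i$ is obtained from this $w_j$-word by deleting (the image of) $\sigma_{\lambda_i}$; by the uniqueness clause of Lemma~\ref{lm:red1} this forces $\lambda_j = \mu_j$. In particular $\lambda_j = \mu_j < \lambda_i$, giving $(b)$, and simultaneously the final assertion $\mu_j=\lambda_j$.

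For the converse $(b)\Rightarrow(a)$ I would argue by contradiction: assume $\lambda_i>\lambda_j$ but $\lambda_i<\mu_j$ (the case $\lambda_i=\mu_j$ is excluded by construction). Running the analogous analysis with the roles of the two deletions interchanged — delete $\sigma_{\lambda_i}$ first to land at $w_i$, then $\sigma_{\mu_j}$ to land at $w_{[i,j]}$, but now $\mu_j>\lambda_i$ means the position $\mu_j$ survives the first deletion in a position to its \emph{left} of what was $\sigma_{\lambda_i}$ — I would show this forces $\mu_j$, under deletion-first, to produce $w_j$ via $\lambda_j$, yielding $\lambda_j=\mu_j>\lambda_i$, contradicting $(b)$. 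Equivalently, since $(a)\Rightarrow(b)$ and the hypotheses are a dichotomy $\lambda_i>\lambda_j$ or $\lambda_i<\lambda_j$ (equality impossible as $\lambda_i\neq\lambda_j$) paired with $\lambda_i>\mu_j$ or $\lambda_i<\mu_j$, it suffices to also establish the ``swapped'' implication: $\lambda_i<\mu_j \Rightarrow \lambda_i<\lambda_j$, which is proved by the identical symmetric argument exchanging the two deletions. Combining the two implications gives the equivalence.

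\textbf{Expected main obstacle.} The only genuinely delicate point is the bookkeeping of how positions shift under successive deletions in a fixed reduced word, and correctly tracking \emph{which} of $w_i$ or $w_j$ is obtained by deleting $\sigma_{\mu_j}$ (resp.\ $\sigma_{\lambda_i}$) first — this depends precisely on the relative order of $\lambda_i$ and $\mu_j$, which is the content of the lemma. The uniqueness in Lemma~\ref{lm:red1} is the lever that converts these positional observations into the equality $\mu_j=\lambda_j$; the risk is an off-by-one or a mislabelling of which simple reflection gets exchanged, so I would double-check the small rank-$2$ cases $A_2$ and $C_2$ (both featured in the Remark after Lemma~\ref{lm:red1}) explicitly as a sanity check.
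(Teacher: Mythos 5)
Your overall strategy is the same as the paper's: show that deleting $\sigma_{\mu_j}$ alone from the fixed reduced word of $w_0$ yields $w_0s_j$, then conclude $\mu_j=\lambda_j$ from the uniqueness clause of Lemma~\ref{lm:red1}. The problem is that the step which does all the work is exactly the one you assert rather than prove. You claim that the element $v$ obtained by the single deletion at $\mu_j$ has length $\ell-1$ and that it ``produces $w_j$ (not $w_i$)''; neither is justified in your writeup, and the first is not automatic: without the hypothesis $\lambda_i>\mu_j$ it can fail outright (type $A_2$, $w_0=s_1s_2s_1=\sigma_3\sigma_2\sigma_1$, $i=1$, $j=2$: one computes $\lambda_1=1$, $\mu_2=2$, and deleting $\sigma_2$ alone gives $s_1s_1=e$, of length $0$), so it genuinely needs an argument that uses $\lambda_i>\mu_j$. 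The paper gets both facts at once by a three-line cancellation: writing $w_0s_i=A\sigma_{\mu_j}B$ and $w_0s_is_j=AB$ with $B=\sigma_{\mu_j-1}\cdots\sigma_1$ (possible precisely because $\lambda_i>\mu_j$ leaves the suffix below position $\mu_j$ untouched), the identity $w_0s_i=(w_0s_is_j)s_j$ forces $\sigma_{\mu_j}B=Bs_j$, and multiplying back by the full prefix $\sigma_\ell\cdots\sigma_{\mu_j+1}$ (which still contains $\sigma_{\lambda_i}$) gives $w_0s_j=\sigma_\ell\cdots\widehat{\sigma}_{\mu_j}\cdots\sigma_1$, i.e.\ $v=w_0s_j$. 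Your proposed abstract route could be repaired --- once $\ell(v)=\ell-1$ is known one has $v=w_0s$ for a unique simple $s$; the subword relation $w_0s_is_j\le v$ together with $s_m\le s_is_j\iff s_m\in\{s_i,s_j\}$ pins $s$ down to $\{s_i,s_j\}$; and $s=s_i$ is excluded because uniqueness in Lemma~\ref{lm:red1} would then give $\mu_j=\lambda_i$ --- but none of these three points appears in your plan, and the length claim itself still needs the cancellation (or an equivalent use of the hypothesis) as input. Your own parenthetical (``one checks this word is a reduced expression of $w_0 s_i$ --- wait, it should be $w_0 s_j$'') shows the crux is unresolved.

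A second, smaller issue: you write $w_{[i,j]}=w_0s_js_i$, whereas Notation~\ref{notation codim 2 indices} sets $w_{[i,j]}=w_0s_is_j$; these differ exactly when $s_i$ and $s_j$ do not commute, which is the only interesting case. This is the mislabelling risk you flag yourself, and it matters: the double deletion at $\{\lambda_i,\mu_j\}$ represents $w_0s_is_j$, so the bookkeeping of which reflection is exchanged at each stage must be done with the correct word. For $(b)\Rightarrow(a)$ the paper does not argue by contradiction but runs the analogous direct cancellation starting from the deletion at $\lambda_j$ and reinserting the prefix with $\lambda_i$ removed, concluding $\mu_j=\lambda_j<\lambda_i$ by uniqueness; your symmetric-argument plan would work in principle but inherits the same unproven assertions. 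Writing out the cancellation identity explicitly removes all the ambiguity and is shorter than the abstract detour.
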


\begin{proof} $(a) \Rightarrow (b)$:  Suppose that $\lambda_i > \mu_j$, so that $w_0s_is_j = \sigma_\ell \cdots \widehat{\sigma}_{\lambda_i} \cdots \widehat{\sigma}_{\mu_j} \cdots \sigma_1$. Since $w_0 s_i = (w_0 s_i s_j) s_j$, it follows after cancelling on the left up to $\sigma_{\mu_j +1}$ that:
\[
\sigma_{\mu_j}\cdots \sigma_1 = {\sigma}_{\mu_j-1} \cdots \sigma_1 s_j, 
\]
which implies that $\sigma_\ell\cdots\sigma_1 = \sigma_\ell\cdots\widehat{\sigma}_{\mu_j}\cdots\sigma_1 s_j$.  Hence, we obtain
$$\sigma_\ell\cdots\widehat{\sigma}_{\lambda_j}\cdots\sigma_1 = w_0 s_j = \sigma_\ell\cdots\sigma_1 s_j = \sigma_\ell\cdots\widehat{\sigma}_{\mu_j}\cdots\sigma_1$$ 
and this implies that $\mu_j = \lambda_j$ by the uniqueness assertion in Lemma \ref{lm:red1}.

$(b) \Rightarrow (a):$  Suppose that $\lambda_i > \lambda_j$. From $w_0s_j = \sigma_\ell \cdots \widehat{\sigma}_{\lambda_j}\cdots\sigma_1$, we get ${\sigma}_{\lambda_j-1}\cdots \sigma_1 = \sigma_{\lambda_j}\cdots \sigma_1 s_j$, after cancelling on the left. Now, using $\lambda_i>\lambda_j$ and multiplying on the left by $\sigma_\ell \cdots \widehat{\sigma}_{\lambda_i}\cdots\sigma_{\lambda_j+1}$, we obtain $\sigma_\ell\cdots \widehat{\sigma}_{\lambda_i}\cdots\widehat{\sigma}_{\lambda_j}\cdots\sigma_1 = \sigma_\ell\cdots \widehat{\sigma}_{\lambda_i}\cdots\sigma_1  s_j$.  Thus,  
\[
\sigma_\ell\cdots\widehat{\sigma}_{\lambda_i}\cdots\sigma_1 = w_0s_i = (w_0s_is_j) s_j= \sigma_\ell\cdots \widehat{\sigma}_{\lambda_i}\cdots\widehat{\sigma}_{\lambda_j}\cdots\sigma_1 s_j = \sigma_\ell\cdots\widehat{\sigma}_{\lambda_j}\cdots\sigma_1,
\]
which implies that $\lambda_j = \mu_j$, again by the uniqueness assertion in Lemma \ref{lm:red1} applied to $w_0s_i = \sigma_\ell \cdots \widehat{\sigma}_{\lambda_i}\cdots \sigma_1$.
\end{proof}

\begin{remark} \hfill
\label{rem:mu=lambda}
\begin{enumerate}
\item In a similar way, Lemma \ref{lm:red1} implies that there is a unique index $\mu_i\in \{1,\dots,\ell\}-\{\lambda_j\}$ so that the expression obtained from the above one for $w_j$ is a reduced expression of $w_{[i,j]}$. One may prove the analogue of the previous lemma; that is, $\mu_i > \lambda_j$ if and only if $\lambda_i > \lambda_j$ and if both the conditions hold, then $\mu_i = \lambda_i$.

\item Observe that if $\lambda_i>\lambda_j$, then $\mu_i = \lambda_i$ and $\mu_j = \lambda_j$. In fact, the following conditions are equivalent:
\begin{enumerate}[label=(\roman*)]
\item $\mu_i = \lambda_i$;

\item $\mu_j = \lambda_j$;

\item $\lambda_i>\lambda_j$ or $\alpha_i$ and $\alpha_j$ are not adjacent in the Dynkin diagram of $G$. 
\end{enumerate}
Indeed, if $\lambda_i<\lambda_j$ then $w_{ji} = \sigma_\ell\cdots \widehat{\sigma}_{\lambda_j} \cdots \widehat{\sigma}_{\lambda_i} \cdots \sigma_1$, which is also $w_{ij}$ by (i) or (ii). Thus, we get $s_is_j = s_js_i$.
\end{enumerate}
\end{remark}

Let $Z_{ij}:=Bw_{[i,j]}B$ denote the Bruhat cell corresponding to $w_{[i,j]}$, and recall that $Y_i$ and $Y_j$ denote the Bruhat cells $Bw_iB$ and $Bw_jB$ corresponding to $w_i$ and $w_j$, respectively. Observe that $Y_i$ and $Y_j$ are exactly the two cells of codimension $1$ in $G$ whose closure contains $Z_{ij}$. Let $\overline{Y}_i$ be the union $Y_i\amalg Z_{ij}$ and $\overline{Y}_j$ be the union $Y_j\amalg Z_{ij}$, for all $i\neq j$.  These are all smooth $k$-subschemes of $G$.  It follows, as we already observed above, that the differential 
\[
\partial_2: \Ccell_2(G)\to \Ccell_1(G)
\]
restricted to the summand of $\Ccell_2(G)$ corresponding to $Z_{ij}$ has its image contained in the direct sum of the two summands of $\Ccell_1(G)$ corresponding to $Y_i$ and $Y_j$.  We will denote by $\partial_2^{(i)}: \HAred_2 (Th(\nu_{ij})) \to \HAred_1(Th(\nu_{i}))$ and $\partial_2^{(j)}: \HAred_2 (Th(\nu_{ij})) \to \HAred_1(Th(\nu_{j}))$ the corresponding components, where for all $i \neq j$, we denote by $\nu_i:=\nu_{Y_i}$ the normal bundle of the immersion $\iota_i:Y_i\hookrightarrow G$ (it is a locally free sheaf of rank $1$ over $\overline{Y}_i$) and $\nu_{ij}$ denotes the normal bundle of the immersion $Z_{ij}\hookrightarrow G$.  We will denote by $\overline{\nu}_i$ the normal bundle of the immersion $\overline{Y}_i\hookrightarrow G$, for every $i$.  The multiplication (in $G$) induces an isomorphism 
\begin{equation}\label{eq: Yi}
\overline{Y}_i \cong \Omega_{\ell}\times_B \dots\times_B P_{\mu_j}\times_B \dots \times_B B \times_B\dots\times_B \Omega_1 
\end{equation}
with $B$ at the $\lambda_i$-th place as well as
\begin{equation}\label{eq: Yj}
\overline{Y}_j \cong \Omega_{\ell}\times_B \dots\times_B P_{\mu_i}\times_B \dots \times_B B \times_B\dots\times_B \Omega_1
\end{equation}
with $B$ at the $\lambda_j$-th place (note that \eqref{eq: Yi} depends upon whether $\lambda_i > \mu_j$ or otherwise and hence, $B$ may appear to the left of $P_{\mu_j}$; similarly for \eqref{eq: Yj}).

Let $\tilde{X}_{ij}: = \Omega_0 \amalg Y_i \amalg Y_j \amalg Z_{ij}$, which is an open subscheme of $G$.  Let $\nu_{ij/i}$ denote the normal bundle of the closed immersion $Z_{ij} \hookrightarrow \overline{Y}_i$ and $\nu_{ij/j}$ denote that of the closed immersion $Z_{ij} \hookrightarrow \overline{Y}_j$. In the cartesian square of closed immersions
\[
\begin{array}{ccc} Z_{ij} & \subset & \-{Y}_i \\ \cap & & \cap \\ \overline{Y}_j & \subset & \tilde{X}_{ij}\end{array}
\]
the closed immersions $\overline{Y}_i \subset  \tilde{X}_{ij}$ and $\overline{Y}_j \subset  \tilde{X}_{ij}$ are transversal.  We thus obtain the identifications
\[
\nu_{ij/i} = (\overline{\nu}_j)|_{Z_{ij}}
\]
and 
\[
 \nu_{ij/j} = (\overline{\nu}_i)|_{Z_{ij}}
\]
as well as a canonical isomorphism
\begin{equation}\label{eq:nuij}
(\overline{\nu}_i)|_{Z_{ij}}\oplus (\overline{\nu}_j)|_{Z_{ij}}   \cong\nu_{ij}.
\end{equation}

We will use the following well-known generalization of $\A^1$-homotopy purity:

\begin{lemma} 
\label{lem:cof} 
There is a canonical $\A^1$-cofiber sequence of the form:
\[
 Th(\nu_i) \to Th(\overline{\nu}_i) \to Th(\nu_{ij}).
\]
\end{lemma}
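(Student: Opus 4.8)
The statement is a relative version of the homotopy purity theorem of Morel--Voevodsky \cite[Theorem 2.23, page 115]{Morel-Voevodsky}, applied to the flag of smooth closed subschemes $Z_{ij}\subset \overline{Y}_i\subset \tilde X_{ij}$, where $\tilde X_{ij} = \Omega_0\amalg Y_i\amalg Y_j\amalg Z_{ij}$ is an open subscheme of $G$. The plan is to run the standard deformation-to-the-normal-cone argument at two levels simultaneously and compare. First I would record the basic geometric facts already available: $\overline{Y}_i$ is a smooth closed subscheme of the smooth scheme $\tilde X_{ij}$ with normal bundle $\overline\nu_i$; the open complement $\tilde X_{ij}-\overline Y_i$ is precisely $\Omega_0\amalg Y_j = \overline{Y}_j - Z_{ij}$ (an open subscheme of the smooth scheme $\overline Y_j$, complementary to the smooth closed subscheme $Z_{ij}\subset\overline Y_j$), and inside $\overline Y_i$ the open complement of $Z_{ij}$ is $Y_i$. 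Thus both of the relevant pairs are smooth closed pairs of smooth schemes, so purity applies verbatim to each of them.

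\textbf{Key steps.} (1) Apply homotopy purity to the smooth closed pair $(\tilde X_{ij},\overline Y_i)$: this gives an $\A^1$-weak equivalence $\tilde X_{ij}/(\tilde X_{ij}-\overline Y_i)\simeq_{\A^1} Th(\overline\nu_i)$, where $\tilde X_{ij}-\overline Y_i = \Omega_0\amalg Y_j$. (2) Apply homotopy purity to the smooth closed pair $(\overline Y_i, Z_{ij})$, which has open complement $Y_i$: this gives $\overline Y_i/Y_i\simeq_{\A^1} Th(\nu_{ij/i})$, and by the transversality computation \eqref{eq:nuij} in the cartesian square of the excerpt one has $\nu_{ij/i} = (\overline\nu_j)|_{Z_{ij}}$, while the full normal bundle of $Z_{ij}$ in $G$ (equivalently in $\tilde X_{ij}$) splits as $(\overline\nu_i)|_{Z_{ij}}\oplus(\overline\nu_j)|_{Z_{ij}}\cong\nu_{ij}$. (3) Now consider the filtration of $\tilde X_{ij}$ by open subschemes $\Omega_0\amalg Y_j\subset \Omega_0\amalg Y_i\amalg Y_j\cup(\text{nbhd})$; more precisely, form the cofiber sequence associated to the inclusion of the open subscheme $\tilde X_{ij}-\overline Y_i = \Omega_0\amalg Y_j$ into $\tilde X_{ij}-Z_{ij}$ (an open subscheme of $G$ obtained from $\tilde X_{ij}$ by deleting the deeper cell $Z_{ij}$; its closed complement in $\tilde X_{ij}-Z_{ij}$ is the smooth $Y_i$). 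Purity for the pair $(\tilde X_{ij}-Z_{ij}, Y_i)$ gives $(\tilde X_{ij}-Z_{ij})/(\Omega_0\amalg Y_j)\simeq_{\A^1} Th(\nu_i)$, with $\nu_i$ the normal bundle of $Y_i$ in $G$. (4) Splice the two cofiber sequences
\[
(\Omega_0\amalg Y_j)\to (\tilde X_{ij}-Z_{ij})\to Th(\nu_i)
\]
and
\[
(\tilde X_{ij}-Z_{ij})\to \tilde X_{ij}\to Th(\nu_{ij})
\]
— the latter being purity for $(\tilde X_{ij}, Z_{ij})$ using $\nu_{ij}$ — together with the purity equivalence $\tilde X_{ij}/(\Omega_0\amalg Y_j)\simeq Th(\overline\nu_i)$ of step (1). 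Chasing through the octahedral axiom applied to the composite of open inclusions $(\Omega_0\amalg Y_j)\hookrightarrow(\tilde X_{ij}-Z_{ij})\hookrightarrow \tilde X_{ij}$ yields an $\A^1$-cofiber sequence identifying the cofiber of $Th(\nu_i)\to Th(\overline\nu_i)$ (the map induced by the open inclusion $\tilde X_{ij}-Z_{ij}\hookrightarrow\tilde X_{ij}$ on the purity quotients, equivalently by the inclusion $Y_i\subset\overline Y_i$) with $Th(\nu_{ij})$, giving the desired sequence $Th(\nu_i)\to Th(\overline\nu_i)\to Th(\nu_{ij})$.

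\textbf{Main obstacle.} The content is not in any single purity application — each is a black box — but in checking that the three purity equivalences are compatible, i.e. that the connecting maps in the spliced triangles are the ones induced by the natural inclusions of Thom spaces (in particular that the map $Th(\nu_i)\to Th(\overline\nu_i)$ is the canonical one coming from $\nu_i = (\overline\nu_i)|_{Y_i}$, which is legitimate since $Y_i$ is open in $\overline Y_i$ so the normal bundles restrict correctly). Concretely the hard part will be bookkeeping the octahedron: one must verify that the relative purity equivalence of step (1) restricts, over the open subscheme $\tilde X_{ij}-Z_{ij}$, to the purity equivalence of step (3), and that the remaining "error term" is exactly $Th(\nu_{ij})$ via the splitting \eqref{eq:nuij}. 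This is the naturality of homotopy purity with respect to open immersions and transversal intersections; it is standard (it is why the statement is called "well-known" in the excerpt) and follows from the functoriality of the deformation-to-the-normal-cone construction, but it is the only step requiring genuine care. I would therefore phrase the proof as: apply purity to the three smooth closed pairs above, invoke naturality of purity for the open immersion $\tilde X_{ij}-Z_{ij}\hookrightarrow\tilde X_{ij}$ and the transversality identification \eqref{eq:nuij}, and conclude by the octahedral axiom in the pointed $\A^1$-homotopy category.
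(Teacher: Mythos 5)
Your proposal is correct and is essentially the paper's own argument: the paper likewise considers the flag of open subsets $\tilde{X}_{ij}-\overline{Y}_{i}\subset \tilde{X}_{ij}-Z_{ij}\subset\tilde{X}_{ij}$, forms the cofiber sequence of the successive quotients, and identifies the three quotients with $Th(\nu_i)$, $Th(\overline{\nu}_i)$ and $Th(\nu_{ij})$ by homotopy purity. The paper states this in three lines and leaves the naturality/octahedron bookkeeping implicit, which is exactly the part you flag as the only step requiring care.
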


\begin{proof} One considers the flag of open subsets
\[
\tilde{X}_{ij}-\overline{Y}_{i}\subset \tilde{X}_{ij}-Z_{ij}\subset\tilde{X}_{ij}
\]
in $\tilde{X}_{ij}$ and obtains a cofibration sequence
\[
\big( \tilde{X}_{ij}- Z_{ij}\big) /\big( \tilde{X}_{ij}- \overline{Y}_{i} \big)\subset \tilde{X}_{ij}/\big( \tilde{X}_{ij}- \overline{Y}_{i} \big) \twoheadrightarrow \tilde{X}_{ij}/\big( \tilde{X}_{ij}-Z_{ij} \big)
\]
and applies the $\A^1$-homotopy purity theorem \cite[Theorem 2.23, page 115]{Morel-Voevodsky}.
\end{proof}

It follows from the very construction of the cellular $\A^1$-chain complex described in Section \ref{subsection cellular chain complex} that the connecting homomorphism 
\begin{equation}\label{eq:partial2}
\HAred_2 (Th(\nu_{ij})) \to \HAred_1(Th(\nu_{i}))
\end{equation}
in the $\A^1$-homology sequence of the $\A^1$-cofibration of Lemma \ref{lem:cof} is the restriction of the differential $\partial_2^{(i)}$ to the factor $\HAred_2 (Th(\nu_{ij}))\subset \Ccell_2(G)$, and analogously for $\partial^{(j)}_2$.  The following property uses the fact that $G$ is simply connected in the sense of algebraic groups.

\begin{lemma} 
\label{lem:Pic} 
The group $\Pic(\-{Y}_i)$ is trivial (and so is $\Pic(\-{Y}_j)$).
\end{lemma}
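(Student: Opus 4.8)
The plan is to identify $\-{Y}_i$ with a tower of $\P^1$-bundles over a point, so that its Picard group is free of known rank, and then observe that this rank drops by one precisely because of the extra $\times_B B$ factor sitting at the $\lambda_i$-th place; but since we will only need triviality, I will instead argue directly from the iterated fiber product description. Recall from \eqref{eq: Yi} that the product morphism in $G$ gives an isomorphism
\[
\-{Y}_i \cong \Omega_{\ell}\times_B \dots\times_B P_{\mu_j}\times_B \dots \times_B B \times_B\dots\times_B \Omega_1,
\]
with $B$ at the $\lambda_i$-th place (and all other factors equal to the big cells $\Omega_{\mu} = B\sigma_{\mu}B$ of the rank-one parabolics $P_{\mu}$, except for the single $P_{\mu_j}$). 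First I would recall that each $\Omega_{\mu}$ is, as a right $B$-space, isomorphic to $\A^1 \times B$ (it is $U_{\beta_\mu}\dot\sigma_\mu B$), so the contracted products $\Omega_{\mu}\times_B(-)$ have the homotopy type of $\A^1$-bundles and, being affine-space bundles, do not change the Picard group of the base they are built over. The only subtle factor is $P_{\mu_j}$, which is $\P^1$-bundle-like over its quotient: $P_{\mu_j}\times_B Z \to Z$ has fibers $P_{\mu_j}/B \cong \P^1$.

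The key steps, in order, are as follows. (1) Rewrite $\-{Y}_i$ by pushing the rightmost $B$ and all the big-cell factors $\Omega_\mu$ through the contracted product, using that $\Omega_\mu \times_B X \cong \A^1 \times X$ and $B \times_B X \cong X$ for any right/left $B$-space $X$; this collapses the tower to a single iterated bundle whose total Picard group equals that of a twisted product built out of just the one genuinely projective factor $P_{\mu_j}/B$. (2) Conclude that $\-{Y}_i$ is an iterated affine-space bundle over $\P^1$ (or, if $\mu_j$ happens to be outside the relevant range, over a point), hence that the projection $\-{Y}_i \to \P^1$ (resp. to $\Spec k$) induces an isomorphism on Picard groups by homotopy invariance of $\Pic$ for affine-bundle projections. (3) In the $\P^1$ case, I must then rule out the generator of $\Pic(\P^1) = \Z$; this is where I will invoke that $G$ is simply connected in the sense of algebraic groups: the Schubert-cell closure $\-{Y}_i$ is an open subscheme of $G$, and $\Pic(G) = 0$ for split simply connected $G$ (this is the standard fact that the character group of a simply connected semisimple group is the full weight lattice, equivalently $\Pic(G/B)$ surjects onto $\Pic(G) = 0$; see e.g. the Bruhat-decomposition references already cited). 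Since $\-{Y}_i \subset G$ is open with complement of codimension $\geq 1$ that is a union of Bruhat cells, restriction $\Pic(G) \to \Pic(\-{Y}_i)$ is surjective (localization sequence for Picard groups, the cells contributing divisors), and as $\Pic(G) = 0$ we get $\Pic(\-{Y}_i) = 0$. The identical argument applies verbatim to $\-{Y}_j$ using \eqref{eq: Yj}.

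The main obstacle is step (3): reconciling the ``bundle over $\P^1$'' picture with the vanishing. The cleanest route is really the localization/restriction argument — $\-{Y}_i$ is $G$ minus a closed subset which is a finite union of Bruhat cells $BwB$ of codimension $\geq 1$, so $\Pic(G) \twoheadrightarrow \Pic(\-{Y}_i)$ is surjective because divisor class groups only grow when one removes codimension-one loci and each removed cell of codimension $\geq 2$ contributes nothing while codimension-one cells contribute classes that are killed anyway. Combined with $\Pic(G) = 0$ for split simply connected $G$, this gives the result directly, and in particular shows the ``$\P^1$'' that appears in the bundle description is in fact trivialized inside $G$. I would present the proof in this order: first the isomorphism $\Pic(G) \xrightarrow{\sim} \Pic(\Omega_0) \cdots$ via surjectivity of restriction along the open Bruhat strata together with $\Pic(G)=0$, then transport along the product isomorphism \eqref{eq: Yi} (resp. \eqref{eq: Yj}). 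One should double-check that the complement $G - \-{Y}_i$ really does consist only of Bruhat cells $BwB$ (with $w$ ranging over an order ideal avoiding $w_0, w_i, w_{[i,j]}$), which is immediate from the open-cell descriptions preceding \eqref{eq: Yi}; this is the only place where the precise combinatorics of $\lambda_i, \mu_j$ enters, and it is routine.
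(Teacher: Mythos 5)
Your argument has a genuine gap at its central step (3). The scheme $\overline{Y}_i = Y_i \amalg Z_{ij}$ is \emph{not} an open subscheme of $G$: here $Y_i = Bw_iB$ is a Bruhat cell of codimension $1$, so $\overline{Y}_i$ is a locally closed subvariety of $G$ of codimension $1$ (it is an open subset of the closure of $Y_i$, equivalently a closed subvariety of the open set $\tilde{X}_{ij}$), and in particular it is disjoint from the dense big cell $\Omega_0$, so it cannot be open. Consequently there is no localization sequence producing a surjection $\Pic(G) \twoheadrightarrow \Pic(\overline{Y}_i)$ --- restriction of line bundles to a codimension-one locally closed subvariety need not be surjective on Picard groups --- so the ``cleanest route'' you propose does not get off the ground, even though $\Pic(G)=0$ is indeed true for split simply connected $G$.

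The localization sequence you actually need is the one for the codimension-one closed immersion $Z_{ij} \subset \overline{Y}_i$ with open complement $Y_i$: it reads
\[
\sO(Y_i)^\times \to \Z \to \Pic(\overline{Y}_i) \to \Pic(Y_i) = 0,
\]
where the middle $\Z$ is generated by the class of the divisor $Z_{ij}$ and $\Pic(Y_i)=0$ because $Y_i$ is isomorphic to $T\times\A^N$. The entire content of the lemma is then the surjectivity of the valuation map $\sO(Y_i)^\times \to \Z$, i.e.\ the existence of an invertible function on $Y_i$ vanishing to order exactly $1$ along $Z_{ij}$; this is where simple connectedness genuinely enters, since the units of $Y_i$ are $k^\times$ times the characters of $T$ and the fundamental weights form a basis of the character lattice (the paper invokes \cite[Lemma 4.2]{Brylinski-Deligne}, which reduces to $\SL_2$). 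Your bundle picture in steps (1)--(2) could in principle be pushed through instead --- $\overline{Y}_i$ is an affine-space bundle over a $T$-torsor over $\P^1$, and $\Pic$ of such a total space is $\Z$ modulo the degrees of the line bundles classifying the torsor --- but you would then still have to show those degrees generate $\Z$, which is exactly the same fundamental-weight computation, and your proposal never supplies it.
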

\begin{proof} 
The cofiber sequence of Lemma \ref{lem:cof} gives rise to the exact sequence
\[
\sO(Y_i)^\times \to H^1(Th(\nu_{ij/i});\G_m)\to \Pic(\-{Y}_i) \to \Pic(Y_i). 
\]
Now, $\Pic(Y_i) = 0$, since $Y_i$ is isomorphic to the product of $T$ with an affine space and $H^1(Th(\nu_{ij/i});\G_m) = \Z$.  The morphism $\sO(Y_i)^\times \to H^1(Th(\nu_{ij/i});\G_m) = \Z$ is the valuation at $Z_{ij}$. Recall from the notation at the beginning of Section \ref{section Bruhat decomposition} that $X$ and $Y$ denote the character group and the cocharacter group of $T$. Since $G$ is simply connected, the pairing $X\otimes Y\to \Z$ induces an isomorphism $X\to Y^*$ so that the fundamental weights form a basis of $X$.  Now, \cite[Lemma 4.2]{Brylinski-Deligne} implies that the valuation $\sO(Y_i)^\times \to \Z$ is surjective.
\end{proof}

\begin{remark} 
\label{rem lemma 4.2}
Note that \cite[Lemma 4.2]{Brylinski-Deligne} is not proved in \cite{Brylinski-Deligne}, and is referred to \cite{Demazure} (where it is shown by reduction to the case of $\SL_2$).  In fact, by reduction to the case of $\SL_2$, it is easy to prove this lemma using Lemma \ref{lem:comp}.
\end{remark}

It follows from Lemma \ref{lem:Pic} that one can uniquely determine a 
trivialization of $\nu_i$ by choosing a trivialization of $\overline{\nu}_i$.  Let $\overline{\theta}_i$ be a right and left $T$-equivariant trivialization of $\overline{\nu}_i$ and let $\theta_i$ denote the induced trivialization of $\nu_i$. Then the above $\A^1$-cofiber sequence 
\[
Th(\nu_i) \to Th(\overline{\nu}_i) \to Th(\nu_{ij})
\]
becomes canonically and right and left $T$-equivariantly isomorphic to
\begin{equation}\label{eq:cof}
\Sigma(\G_m)\wedge((Y_i)_+) \to \Sigma(\G_m)\wedge((\overline{Y}_i)_+) \to \Sigma(\G_m)\wedge Th(\nu_{ij/i}),
\end{equation}
where the factor $\Sigma(\G_m)$ on the rightmost term corresponds to  $\theta_i$ (note that $\nu_{ij}$ is the direct sum of $\nu_{ij/i}$ and a rank one trivial bundle). Observe that the axioms of symmetric monoidal triangulated categories implies that the connecting morphism 
\[
\Sigma(\G_m)\wedge Th(\nu_{ij/i}) \to \Sigma(\Sigma(\G_m)\wedge((Y_i)_+))
\]
of the triangle \eqref{eq:cof} is the composite of the smash product with $\Sigma(\G_m)$ of the connecting morphism $\partial_{ij/i}: Th(\nu_{ij/i})\to \Sigma((Y_i)_+)$ of the triangle $(Y_i)_+ \to (\overline{Y}_i)_+ \to Th(\nu_{ij/i})$ and the permutation $\Sigma(\G_m) \wedge \Sigma((Y_i)_+)\cong\Sigma(\Sigma(\G_m)\wedge((Y_i)_+)) $.

Thus, the restriction $\partial_2^{(i)}$ of the differential $\partial_2$ to the factor of $\Ccell_2(G)$ corresponding to $Z_{ij}$ is the morphism of sheaves
\[
\begin{split}
\KMW_1 \otimes \HA_1(Th(\nu_{ij/i})) &\xrightarrow{\simeq} \HA_2(\Sigma(\G_m)\wedge Th(\nu_{ij/i})) \to \KMW_1 \otimes \ZA[Y_i] \\
(u)\otimes x &\mapsto - \HA_1(\partial_{ij/i})(x),
\end{split}
\]
where $u$ stands for the factor $\G_m$ above coming from the restriction of $\overline{\theta}_i$ to $\nu_i$. The sign comes from the permutation morphism on the smash product of two $1$-dimensional simplicial spheres.  Observe that by Lemma \ref{lem:Thomiso}, a change of the trivialization preserving the orientation acts trivially on the $\A^1$-cofibration $\Sigma(\G_m)\wedge((Y_i)_+) \to \Sigma(\G_m)\wedge((\overline{Y}_i)_+)$, up to $\A^1$-homotopy. Thus the previous computations are only dependent on the orientations induced by $\-\theta_i$.

Now, we need to conveniently choose orientations (or trivializations) of both $\overline{\nu}_i$ and $\overline{\nu}_j$ so as to facilitate our computation of $\partial_2$.

\begin{lemma} \label{lem:fiberproduct} In the case $\lambda_i>\lambda_j$, the morphism of $k$-schemes
\begin{equation}
\label{eqn lem:fiberproduct}
\Omega_{\ell}\times_B \dots\times_B\Omega_{\lambda_{i+1}}\times_B P_{\beta_{\lambda_i}} \times_B \dots \times_B P_{\beta_{\lambda_j}} \times_B\dots\times_B \Omega_1 \to \tilde{X}_{ij}
\end{equation}
induced by taking the product in $G$ is an isomorphism of cellular $k$-schemes. 
\end{lemma}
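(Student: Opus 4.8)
The plan is to prove this in three moves: decompose the source of \eqref{eqn lem:fiberproduct} into four locally closed strata coming from the $B$-bi-equivariant decomposition $P_{\beta_\mu} = B\sigma_\mu B \sqcup B$ of each of the two parabolic factors; check that the product morphism carries these four strata isomorphically onto the four Bruhat cells $\Omega_0, Y_i, Y_j, Z_{ij}$ that make up $\tilde X_{ij}$; and then upgrade this stratumwise bijection to a global isomorphism of cellular $k$-schemes. The basic input, used repeatedly already, is that for any reduced expression $\tau_q\cdots\tau_1$ of $v\in W$, multiplication in $G$ induces an isomorphism $(B\tau_q B)\times_B\cdots\times_B(B\tau_1 B)\xrightarrow{\simeq} BvB$ (see \cite[Chapter 8, \S 3]{Springer}, as in \eqref{eq: Yi}, \eqref{eq: Yj}), and that inserting one parabolic $P_{\beta_\mu}$ in place of $\Omega_\mu$ yields an isomorphism onto an open subscheme of $G$ of the form $BvB\sqcup Bv'B$ (the one-parabolic case treated in Section \ref{subsection differential in degree 1}, e.g. for $\overline{\Omega}_{0i}$). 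Using $A\times_B B\times_B C\cong A\times_B C$, replacing a factor $P_{\beta_\mu}$ by its closed stratum $B$ simply deletes the $\mu$-th factor of the iterated product.

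Next I would identify the four strata. Writing $P_{\beta_{\lambda_i}}=\Omega_{\lambda_i}\sqcup B$ and $P_{\beta_{\lambda_j}}=\Omega_{\lambda_j}\sqcup B$ decomposes the source $\mathcal P$ into four locally closed pieces indexed by the choice of open cell or closed stratum at positions $\lambda_i$ and $\lambda_j$. Keeping both open cells gives $\Omega_\ell\times_B\cdots\times_B\Omega_1\xrightarrow{\simeq}\Omega_0=Bw_0B$. Replacing $\Omega_{\lambda_i}$ by $B$ leaves the twisted product for $\sigma_\ell\cdots\widehat{\sigma}_{\lambda_i}\cdots\sigma_1$, which is a reduced expression of $w_i=w_0s_i$ by Lemma \ref{lm:red1}, so this stratum maps isomorphically onto $Y_i$; symmetrically for $Y_j$. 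Replacing both leaves the twisted product for $\sigma_\ell\cdots\widehat{\sigma}_{\lambda_i}\cdots\widehat{\sigma}_{\lambda_j}\cdots\sigma_1$, and here the hypothesis $\lambda_i>\lambda_j$, equivalently $\mu_j=\lambda_j$ by Lemma \ref{lm:lambdaij} and Remark \ref{rem:mu=lambda}, guarantees that this is a reduced expression of $w_{[i,j]}=w_0s_is_j$, so this stratum maps isomorphically onto $Z_{ij}$. Thus the product morphism is a bijection of $\mathcal P$ onto $\tilde X_{ij}=\Omega_0\sqcup Y_i\sqcup Y_j\sqcup Z_{ij}$, an isomorphism on each stratum, with matching codimensions $0,1,1,2$ on the two sides; in particular it respects the open filtrations defining the two cellular structures.

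Finally I would pass from stratumwise to global. The open subschemes of $\mathcal P$ obtained by keeping only $\Omega_{\lambda_i}$ (resp. $\Omega_{\lambda_j}$) at the distinguished position and the full parabolic at the other position are one-parabolic twisted products, hence map isomorphically onto $\overline{\Omega}_{0j}=\Omega_0\sqcup Y_j$ (resp. $\overline{\Omega}_{0i}=\Omega_0\sqcup Y_i$); since they agree on the overlap $\Omega_\ell\times_B\cdots\times_B\Omega_1\cong\Omega_0$, gluing shows the morphism restricts to an isomorphism onto $\Omega_0\sqcup Y_i\sqcup Y_j$. Surjecting onto all of $\tilde X_{ij}$ then follows either from a third chart covering $\mathcal P$ (using the other big-cell chart $\dot\sigma_\mu(\Omega_\mu)$ of each $P_{\beta_\mu}$) or, more cheaply, from Zariski's main theorem, since the morphism is bijective and birational between the smooth irreducible source and the smooth target. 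Compatibility with the cellular structures having been noted, this finishes the proof.

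The hard part will be the combinatorial bookkeeping in the identification of the $Z_{ij}$-stratum: checking that deleting the two letters $\sigma_{\lambda_i}$ and $\sigma_{\lambda_j}$ from the chosen reduced expression \eqref{eq:redw0} of $w_0$ produces a genuine reduced expression of $w_{[i,j]}$. This is precisely where $\lambda_i>\lambda_j$ is indispensable — it fails for the opposite ordering, which is exactly why the companion cell $Z_{ji}$ must be handled with a different reduced expression of $w_0$ — but it has essentially been prepared by Lemmas \ref{lm:red1}, \ref{lm:lambdaij} and Remark \ref{rem:mu=lambda}; the scheme-theoretic upgrade is routine.
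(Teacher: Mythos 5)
Your proposal is correct and follows essentially the same route as the paper's own (much terser) proof: establish the set-theoretic bijection onto $\tilde{X}_{ij}$ from the Bruhat decomposition and the reduced-expression combinatorics of Lemmas \ref{lm:red1} and \ref{lm:lambdaij} (with $\lambda_i>\lambda_j$ entering exactly where you say, to make the doubly-deleted word a reduced expression of $w_{[i,j]}$), and then upgrade to an isomorphism by covering both sides with the open charts obtained from the two big-cell charts $\Omega_{\mu}$ and $\dot\sigma_{\mu}(\Omega_{\mu})$ of each parabolic factor. The paper simply uses all four such charts at once where you glue two one-parabolic charts and then invoke a third chart or Zariski's main theorem; this is a cosmetic difference, not a different argument.
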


\begin{proof} We proceed in the same way as in Section \ref{subsection differential in degree 1} to describe $\overline{\Omega}_{0i}$ as a fiber product. From a combinatorial point of view, the Bruhat decomposition implies that this morphism induces a bijection onto $\tilde{X}_{ij}$, which is open in $G$ (its complement in $G$ being a union of Bruhat cells stable under the Bruhat ordering). Now the big cell $\Omega_{\ell}\times_B \dots\times_B\Omega_{\lambda_{i+1}}\times_B \Omega_{\beta_{\lambda_i}} \times_B \dots \times_B \Omega_{\beta_{\lambda_j}} \times_B\dots\times_B \Omega_1$ and its four left translates, obtained by replacing the big cell in $P_{\beta_{\lambda_j}}$ and $P_{\beta_{\lambda_j}}$ by their left translate through $\dot{\sigma}_{\lambda_i}$ and or $\dot{\sigma}_{\lambda_j}$ respectively, cover both left and right hand side of the morphism \eqref{eqn lem:fiberproduct}.   The restriction of \eqref{eqn lem:fiberproduct} on each of these open sets is clearly an isomorphism of schemes.
\end{proof}

In view of Lemma \ref{lem:fiberproduct}, one may compute $\Ccell_*(\~X_{ij})$ using the rank one case and the K\"unneth formula in the case $\lambda_i > \lambda_j$.  However, we take a more direct approach.

\begin{lemma} \label{lem:gammaij}
We will use the notation of Lemma \ref{lem:gammai}. If $\lambda_i>\lambda_j$, then $\tilde{\gamma}_i$ is defined on a neighborhood of $\overline{Y}_i$ so that it defines a right and left $T$-equivariant trivialization $\overline{\theta}_i$ of $\overline{\nu}_i$ and ${}^t\tilde{\gamma}_j$ is defined on a neighborhood of $\overline{Y}_j$ so that it defines a right and left $T$-equivariant trivialization ${}^t\overline{\theta}_j$ of $\overline{\nu}_j$.
\end{lemma}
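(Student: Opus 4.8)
The statement asserts that, under the hypothesis $\lambda_i > \lambda_j$, the rational functions $\tilde\gamma_i$ and ${}^t\tilde\gamma_j$ constructed in Section \ref{subsection differential in degree 1} are in fact regular in neighborhoods of $\overline{Y}_i$ and $\overline{Y}_j$ respectively, and that the resulting trivializations of the rank-one normal bundles $\overline\nu_i$ and $\overline\nu_j$ are equivariant for the two-sided $T$-action.

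The plan is to use the explicit fiber-product description of $\tilde X_{ij}$ from Lemma \ref{lem:fiberproduct} together with the defining formula \eqref{eq:gammai} for $\tilde\gamma_i$. First I would recall that, by Notation \ref{notation lambda_i mu_j} and Lemma \ref{lm:lambdaij}, the hypothesis $\lambda_i>\lambda_j$ gives $\mu_i = \lambda_i$ and $\mu_j = \lambda_j$, so that $\overline{Y}_i$ is identified via \eqref{eq: Yi} with the iterated fiber product $\Omega_\ell\times_B\cdots\times_B P_{\beta_{\lambda_i}}\times_B\cdots\times_B B\times_B\cdots\times_B\Omega_1$, with $B$ sitting at the $\lambda_j$-th place and the parabolic $P_{\beta_{\lambda_i}}$ at the $\lambda_i$-th. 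In particular, along $\overline{Y}_i$ the $\lambda_i$-th factor still ranges over the whole parabolic $P_{\beta_{\lambda_i}}$, which contains the big cell $\Omega_{\beta_{\lambda_i}} = U_{\beta_{\lambda_i}}\dot\sigma_{\lambda_i}B$ as a dense open but also the translate $\dot\sigma_{\lambda_i}(\Omega_{\beta_{\lambda_i}}) = U_{-\beta_{\lambda_i}}B$. The key point is that the factor $(g_{\beta_{\lambda_i}})^{-1}$ appearing in \eqref{eq:gammai} — equivalently, the coordinate $\check\gamma_i$ coming from the pinning $u_{-\beta_{\lambda_i}}$ on the tubular neighborhood $\sV_i$ of \eqref{eq:tub} — extends as a regular function not just over $\Omega_0 \cup Y_i$ but over the larger open set $\tilde X_{ij}$, precisely because $Z_{ij}$ is obtained from $Y_i$ by deleting $\sigma_{\mu_j}=\sigma_{\lambda_j}$ with $\lambda_j < \lambda_i$, so the $\lambda_i$-th coordinate is untouched. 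Concretely, I would rewrite the tubular neighborhood \eqref{eq:tub} using the fiber-product decomposition of Lemma \ref{lem:fiberproduct} and observe that $\sV_i$, enlarged to allow the $\lambda_j$-th factor to range over $P_{\beta_{\lambda_j}}$ rather than just over the big cell, is an open neighborhood of $\overline{Y}_i$ in $\tilde X_{ij}$ on which the same formula defines a regular function with zero locus exactly $\overline{Y}_i$. By Lemma \ref{lem:Pic}, $\Pic(\overline{Y}_i)$ is trivial, so this regular function, restricted appropriately, gives a genuine trivialization $\overline\theta_i$ of the line bundle $\overline\nu_i$, which by construction restricts on $Y_i$ to the previously defined $\theta_i$.

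Next I would check the two-sided $T$-equivariance. This is the same bookkeeping already carried out in Lemma \ref{lem:gammai}: the identity $\tilde\gamma_i(xt) = \tilde\gamma_i(x)$ is immediate from the form of \eqref{eq:gammai} (right $T$-translation does not affect the $U_{\beta_{\lambda_i}}$-coordinate), and the computation $\tilde\gamma_i(tx) = w_i''(\beta_{\lambda_i})(t)\cdot\tilde\gamma_i(x) = \alpha_i'(t)\cdot\tilde\gamma_i(x)$ goes through verbatim on the enlarged domain since it only uses the conjugation action of $T$ on the root groups and the relation $w_i''(\beta_{\lambda_i}) = \alpha_i'$ established there. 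Hence the trivialization $\overline\theta_i$ it induces is right $T$-invariant and left $T$-equivariant with character $\alpha_i'$ — in particular it is right and left $T$-equivariant in the sense required. The argument for ${}^t\tilde\gamma_j$ on a neighborhood of $\overline{Y}_j$ is entirely symmetric: here one uses that $\overline{Y}_j$ is given by \eqref{eq: Yj} with $P_{\beta_{\lambda_j}}$ at the $\lambda_j$-th place (and $\mu_i = \lambda_i$, so $Z_{ij}$ is cut out from $Y_j$ by deleting $\sigma_{\lambda_i}$ with $\lambda_i > \lambda_j$), so the "dual" tubular neighborhood \eqref{eq:tub'} enlarges to a neighborhood of $\overline{Y}_j$, the regular function ${}^t\check\gamma_j$ extends, and the equivariance properties $(b)$ of Lemma \ref{lem:gammai} propagate.

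I expect the main obstacle to be making the "enlargement of the tubular neighborhood" step precise: one must verify that replacing $\Omega_{\lambda_j}$ (or $\dot\sigma_{\lambda_j}(\Omega_{\lambda_j})$) by the full parabolic $P_{\beta_{\lambda_j}}$ inside the iterated fiber product of Lemma \ref{lem:fiberproduct} genuinely yields an open subscheme of $G$ containing $\overline{Y}_i$, and that the formula \eqref{eq:gammai}, which a priori was defined only on the big cell $\Omega_0$, still makes sense there — i.e. that the $U_{\beta_{\lambda_i}}$-coordinate $g_{\beta_{\lambda_i}}$ of a point of this neighborhood is still invertible exactly off $\overline{Y}_i$. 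This is a matter of carefully combining the open-cover description of each $P_{\beta_\mu}$ as $\Omega_\mu \cup \dot\sigma_\mu(\Omega_\mu)$ with the Bruhat-order stability of the complement of $\tilde X_{ij}$ in $G$, exactly as in the proof of Lemma \ref{lem:fiberproduct}; everything else is a transcription of computations already done in Section \ref{subsection differential in degree 1}.
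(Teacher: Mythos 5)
Your argument is essentially the paper's own proof: using Lemma \ref{lem:fiberproduct}, one enlarges the tubular neighborhood $\sV_i$ of \eqref{eq:tub} by letting the $\lambda_j$-th factor range over the full parabolic $P_{\lambda_j}$ rather than its big cell, obtaining an open neighborhood of $\overline{Y}_i$ on which $\tilde{\gamma}_i=\check{\gamma}_i$ is regular with zero locus exactly $\overline{Y}_i$, and dually for ${}^t\tilde{\gamma}_j$ with the roles of $i$ and $j$ exchanged; the $T$-equivariance then carries over verbatim from Lemma \ref{lem:gammai}. One slip to correct in your write-up: in your paraphrase of \eqref{eq: Yi} the two distinguished factors are transposed --- along $\overline{Y}_i$ it is the $\lambda_i$-th factor that is restricted to $B$ while the $\lambda_j$-th factor ranges over the whole parabolic $P_{\mu_j}=P_{\lambda_j}$ (not the other way around), which is in fact exactly what your subsequent ``enlargement of the $\lambda_j$-th factor'' step correctly uses.
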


\begin{proof} 
This follows easily from Lemma \ref{lem:fiberproduct}. For instance, one sees that
\[
U_{\beta_\ell}\dot{\sigma}_\ell \cdots U_{\beta_{\lambda_i}+1}\dot{\sigma}_{\lambda_{i}+1}  U_{-\beta_{\lambda_i}}  U_{\beta_{\lambda_i}-1}\dot{\sigma}_{\lambda_{i}-1}\cdots U_{\beta_{\lambda_j}+1}\dot{\sigma}_{\lambda_{j}+1}  P_{\lambda_j} U_{\beta_{\lambda_j}-1}\dot{\sigma}_{\lambda_{j}-1}\cdots \dot{\sigma}_1 U_1
\]
is an open neighborhood of $\overline{Y}_i = U_{\beta_\ell}\dot{\sigma}_\ell \cdot \widehat{U}_{-\beta_{\lambda_i}} \cdots P_{\lambda_j}\cdots \dot{\sigma}_1 U_1$ on which $\tilde{\gamma}_i$ is defined and clearly defines $\overline{Y}_i$ as its zero locus. In the case of ${}^t\tilde{\gamma}_j$, one uses the open neighborhood
\[
U_{\beta_\ell}\dot{\sigma}_\ell \cdots U_{\beta_{\lambda_i}+1}\dot{\sigma}_{\lambda_{i}+1} P_{\beta_{\lambda_i}} U_{\beta_{\lambda_i}-1}\dot{\sigma}_{\lambda_{i}-1} \cdots U_{\beta_{\lambda_j}+1}\dot{\sigma}_{\lambda_{j}+1}  U_{-\lambda_j}  U_{\beta_{\lambda_j}-1}\dot{\sigma}_{\lambda_{j}-1} \cdots \dot{\sigma}_1 U_1
\]
of $\overline{Y}_j$ and proceeds similarly.
\end{proof}

\begin{remark} \hfill
\label{rem: triv}
\begin{enumerate}
\item One may show that under the assumptions of Lemma \ref{lem:gammaij}, $\tilde{\gamma}_j$ does not extend to $\overline{Y}_j$.

\item In the case $\lambda_i < \lambda_j$, there is no analogue of Lemma \ref{lem:fiberproduct}.  In this case, neither ${}^t\tilde{\gamma}_i$ nor $\tilde{\gamma}_i$ is defined on an open neighborhood of $\overline{Y}_i$.   Thus, we have to choose a right and left $T$-equivariant trivialization of $\overline{\nu}_i$ in another way, see Section \ref{subsection i<j}.
\end{enumerate}
\end{remark}

In view of Lemma \ref{lem:gammaij}, we will always use the following orientation of $\nu_{ij}$ in what follows. By \eqref{eq:nuij}, we have $\Lambda^2(\nu_{ij})= \overline{\nu}_i|_{Z_{ij}}\otimes \overline{\nu}_j|_{Z_{ij}}$. So in order to orient $\nu_{ij}$, it suffices to choose an orientation $\theta'_i$ of $\overline{\nu}_i$ and $\theta'_j$ of $\overline{\nu}_j$ and then take the tensor product of these. In the case $\lambda_i>\lambda_j$, we take for $\theta'_i$ the orientation induced by $\tilde{\gamma}_i$ and for $\theta'_j$ the one induced by ${}^t\tilde{\gamma}_j$. 

We thus get an identification of sheaves
\[
\KMW_1\otimes \KMW_1 \otimes \ZA[Z_{ij}] \cong \HAred_2 (Th(\nu_{ij}))
\]
with the first factor $\KMW_1$ corresponding to $\theta'_i$ and the second one corresponding to $\theta'_j$.  The units in $\G_m$ corresponding to symbols in $\KMW_1$ in the computations below will be denoted by $u$ for $\theta'_i$ and $v$ for $\theta'_j$. We also get identifications 
\[
\KMW_1 \otimes \ZA[Z_{ij}] \cong \HAred_1 (Th(\nu_{ij/i}))
\]
using $\theta'_j|_{Z_{ij}}$ as the orientation of $\nu_{ij/i}$ and similarly 
\[
\KMW_1 \otimes \ZA[Z_{ij}] \cong \HAred_1 (Th(\nu_{ij/j}))
\]
using $\theta'_i|_{Z_{ij}}$ as the orientation of $\nu_{ij/j}$.  Each of these isomorphisms is left and right $T$-equivariant, where each factor is equipped with the corresponding left and right $\ZA[T]$-module structure. For instance, the factor $\KMW_1$ corresponding to $\theta'_i$ is endowed with the left and right $\ZA[T]$-module structures coming from the equivariant properties of the function used to define the trivialization (see Lemma \ref{lem:gammai}).  If we use these orientations $\theta'_i$ of $\overline{\nu}_i$ and $\theta'_j$ of $\overline{\nu}_j$ to identify $\HAred_1 (Th(\nu_{i}))$ with $\KMW_1\otimes \ZA[Y_i]$ and $\HAred_1 (Th(\nu_{j}))$ with $\KMW_1\otimes \ZA[Y_j]$, then we obtain the following fact from the above discussion.

\begin{lemma}
\label{lem:diff2} 
The differential
\[
\partial^{(i)}_2: \KMW_1\otimes \KMW_1 \otimes \ZA[Z_{ij}] \cong \HAred_2 (Th(\nu_{ij}))  \to \HAred_1 (Th(\nu_{i})) \cong \KMW_1 \otimes \ZA[Y_{i}]
\]
in $\Ctcell_*(G)$ is given by 
\[
(u)\otimes (v) \otimes x\mapsto - (u)\otimes \HA_1(\partial_{ij/i})\big((v) \otimes x \big) 
\]
and the differential
\[
\partial^{(j)}_2: \KMW_1\otimes \KMW_1 \otimes \ZA[Z_{ij}] \cong \HAred_2 (Th(\nu_{ij})) \to \HAred_1 (Th(\nu_{j})) \cong \KMW_1 \otimes \ZA[Y_{j}]
\]
is given by 
\[
(u)\otimes (v) \otimes x \mapsto (v) \otimes \HA_1(\partial_{ij/j})\big((u) \otimes x \big).
\]
\end{lemma}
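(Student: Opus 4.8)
The plan is to obtain Lemma \ref{lem:diff2} by consolidating the discussion that immediately precedes it, keeping careful track of signs and of the left and right $T$-equivariance. I would start from the observation (which follows from the construction of the cellular $\A^1$-chain complex in Section \ref{subsection cellular chain complex}) that $\partial^{(i)}_2$ is, up to the identifications made, the connecting homomorphism in the reduced $\A^1$-homology long exact sequence of the $\A^1$-cofiber sequence $Th(\nu_i) \to Th(\overline{\nu}_i) \to Th(\nu_{ij})$ of Lemma \ref{lem:cof}, and $\partial^{(j)}_2$ the connecting homomorphism of $Th(\nu_j) \to Th(\overline{\nu}_j) \to Th(\nu_{ij})$.

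For $\partial^{(i)}_2$: I would first use Lemma \ref{lem:Pic} to see that the chosen trivialization $\theta'_i$ of $\overline{\nu}_i$ restricts to the trivialization $\theta_i$ of $\nu_i$, and then invoke the decomposition $\nu_{ij} \cong (\overline{\nu}_i)|_{Z_{ij}} \oplus \nu_{ij/i}$ of \eqref{eq:nuij}, in which $(\overline{\nu}_i)|_{Z_{ij}}$ carries $\theta'_i|_{Z_{ij}}$ and $\nu_{ij/i} = (\overline{\nu}_j)|_{Z_{ij}}$ carries $\theta'_j|_{Z_{ij}}$. By Lemma \ref{lem:Thomiso} these orientations produce canonical, right and left $T$-equivariant $\A^1$-equivalences under which the cofiber sequence becomes the smash product $\Sigma(\G_m) \wedge \big[(Y_i)_+ \to (\overline{Y}_i)_+ \to Th(\nu_{ij/i})\big]$ of the suspension $\Sigma(\G_m)$ provided by $\theta'_i$ (carrying the variable $(u)$) with the $\A^1$-homotopy purity cofiber sequence of $Z_{ij}\hookrightarrow \overline{Y}_i$. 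Since $\sH_\bullet(k)$ is a symmetric monoidal triangulated category, the connecting morphism of the smashed triangle is $\mathrm{id}_{\Sigma(\G_m)}\wedge\partial_{ij/i}$ composed with the transposition $\Sigma(\G_m)\wedge\Sigma((Y_i)_+) \cong \Sigma\big(\Sigma(\G_m)\wedge(Y_i)_+\big)$ of two simplicial circles. Passing to $\HAred_2$ and using the identifications $\HAred_2(Th(\nu_{ij})) \cong \KMW_1\otimes\KMW_1\otimes\ZA[Z_{ij}]$ (with first factor attached to $\theta'_i$) and $\HAred_1(Th(\nu_i)) \cong \KMW_1\otimes\ZA[Y_i]$ fixed above, the circle transposition acts by $-1$ in homology, and one reads off $(u)\otimes(v)\otimes x\mapsto -(u)\otimes \HA_1(\partial_{ij/i})((v)\otimes x)$. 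The case of $\partial^{(j)}_2$ is entirely parallel with the roles of $i$ and $j$ exchanged; the one extra point is that our fixed identification of $\HAred_2(Th(\nu_{ij}))$ lists the $\theta'_i$-factor first, so the presentation $\Sigma(\G_m)_{(v)}\wedge\Sigma(\G_m)_{(u)}\wedge(Z_{ij})_+$ that arises naturally on the $j$-side must be matched with $\Sigma(\G_m)_{(u)}\wedge\Sigma(\G_m)_{(v)}\wedge(Z_{ij})_+$ by a permutation of coordinates, whose sign combines with the circle-transposition sign to give a net $+1$, producing $(u)\otimes(v)\otimes x\mapsto (v)\otimes \HA_1(\partial_{ij/j})((u)\otimes x)$. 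The $T$-equivariance claims come for free, since the Thom equivalences (Lemma \ref{lem:Thomiso}), the purity cofiber sequences and the transpositions are all $T$-equivariant for the relevant actions, as recorded in Lemma \ref{lem:gammai} and the discussion of the equivariant trivializations $\theta'_i$, $\theta'_j$.

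I expect the genuinely delicate step to be the sign bookkeeping: one must verify that the transposition of the two $1$-dimensional simplicial spheres contributes exactly $-1$ in reduced $\A^1$-homology — as opposed to the Milnor--Witt unit $\epsilon = -\langle -1\rangle$, which governs transpositions of $\G_m$-factors — and that in the $j$-component the reshuffling of simplicial-circle and $\G_m$-coordinates combines with this to cancel the sign. This is precisely where the asymmetry between the $i$- and $j$-components in the statement arises; it is the only place where care is needed, and it requires no new geometric input beyond the structure already set up.
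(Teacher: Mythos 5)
Your argument is correct and follows essentially the same route as the paper: the connecting morphism of the cofiber sequence of Lemma \ref{lem:cof}, rewritten via the extended $T$-equivariant trivializations $\theta'_i$, $\theta'_j$ as $\Sigma(\G_m)$ smashed with the purity sequence for $Z_{ij}\subset \overline{Y}_i$, the symmetric-monoidal identification of the connecting map as $\mathrm{id}\wedge \partial_{ij/i}$ followed by a circle transposition, and the same sign bookkeeping including the cancellation in the $j$-component coming from reordering the two $\G_m$-suspension factors. Your explicit remark distinguishing the simplicial-circle sign $-1$ from the Milnor--Witt unit $\epsilon$ governing $\G_m$-transpositions is precisely the subtlety the paper handles implicitly, so no gap remains.
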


\begin{remark} 
The sign in the formula for $\partial^{(i)}_2$ in Lemma \ref{lem:diff2} has already been explained in the paragraph preceding Lemma \ref{lem:gammai}.  Observe that the sign disappears in the formula for $\partial^{(j)}_2$ for the following reason.  We have to first use the permutation $\Sigma(\G_m)\wedge \Sigma(\G_m) \cong \Sigma(\G_m)\wedge \Sigma(\G_m)$, which induces after applying $\HA_2$ the automorphism
\[
\KMW_1\otimes \KMW_1 \cong \KMW_1\otimes \KMW_1; \quad (u)\otimes (v) \mapsto - (v)\otimes (u)
\]
and then apply the formula analogous to that of $\partial_2^{(i)}$.  In the computation of the differential, we use again a permutation of the two simplicial circles as explained above, and this cancels the first sign.
\end{remark}

However, in Section \ref{subsection differential in degree 1}, we made the convention to always orient $\nu_i$ with $\tilde{\gamma}_i$ (see the paragraph after Lemma \ref{lem:comptheta}). This was made to be able to explicitly compute $Z_1(G)$ and $Z_1(G)\otimes_{\ZA[T]} \Z$. We have to take this into account in our explicit computations as the restriction of $\theta'_i$ need not be $\theta_i$ (Remark \ref{rem: triv}).

We also need to determine the degree $1$ differentials
\[
\HA_1(\partial_{ij/i}): \KMW_1 \otimes \ZA[Z_{ij}] \cong \HA_1(Th(\nu_{ij/i})) \to \HA_0(Y_i) = \ZA[Y_i]
\]
and
\[
\HA_1(\partial_{ij/j}): \KMW_1 \otimes \ZA[Z_{ij}] \cong \HA_1(Th(\nu_{ij/j})) \to \HA_0(Y_j) = \ZA[Y_j].
\]
The problem here is the following: the restriction of $\theta'_j$ on $Y_i$ is a local parameter for the closed immersion $Z_{ij}\subset Y_i$ and consequently, defines the orientation of $\nu_{ij/i}$ which is used in the determination of the differential. However, there is another local parameter, $\tilde{\gamma}_{ij/i}$, for the closed immersion $Z_{ij}\subset Y_i$, for which the morphism $\HA_1(\partial_{ij/i})$ is ``easy'' to compute, and it is not always the restriction of $\theta'_j$.  We now describe $\tilde{\gamma}_{ij/i}$.

The union $\overline{Y}_{ij}: = Y_i\amalg Z_{ij}$ is a smooth $k$-scheme isomorphic to
\[
\Omega_\ell\times_B \cdots \times_B P_{\mu_j} \times_B \cdots \times_B B \times_B \cdots \Omega_1
\]
(by taking the product in $G$), where $P_{\mu_j}$ is in the $\mu_j$th place and $B$ is in the $\lambda_i$th place. Thus, 
\[
\overline{Y}_{ij} \cong U_{\beta_\ell} \dot{\sigma}_\ell \cdots P_{\mu_j} \dot{\sigma}_{\mu_j-1}U_{\beta_{\mu_j-1}}\cdots \widehat{\sigma}_{\lambda_i}\widehat{U}_{\lambda_i} \cdots \dot{\sigma}_{1}U_{\beta_{1}}
\]
and  
\[
Z_{ij} \cong U_{\beta_\ell}\dot{\sigma}_\ell \cdots B_{\mu_j} \dot{\sigma}_{\mu_j-1}U_{\beta_{\mu_j-1}}\cdots \widehat{\sigma}_{\lambda_i}\widehat{U}_{\lambda_i} \cdots \dot{\sigma}_{1}U_{\beta_{1}}.
\]
Thus, it is easy to compute the morphism $\HA_1(\partial_{ij/i})$, if one chooses the local parameter defining $B$ in $P_{\mu_j}$ through the open neighborhood $U_{-\beta_{\mu_j}}\times B\subset P_{\mu_j}$ for the orientation as it was done in the determination of the rank one case in Section \ref{subsection complex in rank 1}. We denote by $\tilde{\gamma}_{ij/i}$ this local parameter.  

\begin{lemma}\label{lem:gammaij'} 
If $\lambda_i>\lambda_j$, then $\tilde{\gamma}_i|_{Y_j} = \tilde{\gamma}_{ij/j}$ and ${}^t\tilde{\gamma}_j|_{Y_i} = {}^t\tilde{\gamma}_{ij/i}$.
\end{lemma}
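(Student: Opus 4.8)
\textbf{Proof proposal for Lemma \ref{lem:gammaij'}.}

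The plan is to reduce everything to an explicit identification of open neighborhoods of $Y_j$ (respectively $Y_i$) inside $\overline{Y}_{ij}$ (respectively inside the smooth scheme $\overline{Y}_j \amalg Y_i$, i.e. a neighborhood of $Z_{ij}$ glued with $Y_i$), and then to compare the two local parameters cutting out the codimension-one stratum $Z_{ij}$. Concretely, in the regime $\lambda_i > \lambda_j$ we have at our disposal the fiber-product description of $\tilde X_{ij}$ from Lemma \ref{lem:fiberproduct}, and the two trivializations $\tilde\gamma_i$ (defined on a neighborhood of $\overline{Y}_i$) and ${}^t\tilde\gamma_j$ (defined on a neighborhood of $\overline{Y}_j$) furnished by Lemma \ref{lem:gammaij}. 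First I would restrict $\tilde\gamma_i$ to $Y_j$. By the fiber-product decomposition, a point of $Y_j$ lies in the product $\Omega_\ell \times_B \cdots \times_B P_{\mu_i} \times_B \cdots \times_B B \times_B \cdots \times_B \Omega_1$ with $B$ at the $\lambda_j$-th place, and the open neighborhood of $Y_j$ cut out by $\tilde\gamma_i$ (which is a function of the $U_{\beta_{\lambda_i}}$-coordinate, inverted, at a factor sitting to the \emph{left} of the $\lambda_j$-th place since $\lambda_i > \lambda_j$) is precisely the neighborhood $U_{\beta_\ell}\dot\sigma_\ell \cdots U_{-\beta_{\lambda_i}}\cdots P_{\mu_i}\cdots B \cdots \dot\sigma_1 U_{\beta_1}$; on this neighborhood the function $\tilde\gamma_i$ is, by its very defining formula \eqref{eq:gammai}, the inverse of the $U_{-\beta_{\lambda_i}}$-coordinate, which is exactly the local parameter $\tilde\gamma_{ij/j}$ used to define the orientation of $\nu_{ij/j}$ (the local parameter cutting $B$ out of $P_{\mu_i}$ via the standard rank-one neighborhood $U_{-\beta_{\mu_i}}\times B \subset P_{\mu_i}$, noting $\mu_i = \lambda_i$ by Lemma \ref{lm:lambdaij} and Remark \ref{rem:mu=lambda} when $\lambda_i > \lambda_j$). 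This gives $\tilde\gamma_i|_{Y_j} = \tilde\gamma_{ij/j}$.

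The second equality ${}^t\tilde\gamma_j|_{Y_i} = {}^t\tilde\gamma_{ij/i}$ is entirely dual: I would restrict ${}^t\tilde\gamma_j$ — which by \eqref{eq:tub'} and its defining formula is built from the $U_{-\beta_{\lambda_j}}$-coordinate appearing on the \emph{right} side of a Bruhat factorization — to $Y_i$, and observe via the open neighborhood $U_{\beta_\ell}\dot\sigma_\ell\cdots P_{\beta_{\lambda_i}}\cdots U_{-\beta_{\lambda_j}}\cdots\dot\sigma_1 U_1$ of $\overline{Y}_j$ from Lemma \ref{lem:gammaij} that, when further restricted to $Y_i$ (where the $\lambda_i$-th factor degenerates to $B$), this function becomes the local parameter cutting $B$ out of $P_{\mu_j}$ through the ``transposed'' rank-one neighborhood $B \times U_{-\beta_{\mu_j}}\subset P_{\mu_j}$. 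One then checks that this coincides with ${}^t\tilde\gamma_{ij/i}$, the transposed version of the local parameter $\tilde\gamma_{ij/i}$ described just before the lemma. Both checks are straightforward bookkeeping with the Bruhat factorizations $BwB = U_{w^{-1}}\dot w TU$ and the commutation relations $\dot v U_\alpha \dot v^{-1} = U_{v(\alpha)}$ already recalled, together with the explicit rank-one computation of Section \ref{subsection complex in rank 1} (especially Lemma \ref{lem:comp}) which tells us precisely which coordinate function serves as the orienting local parameter.

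The main obstacle I anticipate is \emph{not} any deep input but rather keeping the combinatorics of indices consistent: one must track carefully whether a given $U_{-\beta}$ factor sits to the left or to the right of the ``collapsing'' factor ($B$ at place $\lambda_i$ or $\lambda_j$), since this is exactly what distinguishes $\tilde\gamma$ from ${}^t\tilde\gamma$, and the hypothesis $\lambda_i > \lambda_j$ is used precisely to guarantee that $\tilde\gamma_i$ (not ${}^t\tilde\gamma_i$) extends over $\overline{Y}_i$ while ${}^t\tilde\gamma_j$ (not $\tilde\gamma_j$) extends over $\overline{Y}_j$ — cf. Remark \ref{rem: triv}. I would therefore set up the argument by first writing out the iterated-fiber-product presentations \eqref{eq: Yi}, \eqref{eq: Yj} of $\overline{Y}_i, \overline{Y}_j$ with the positions of $P_{\mu_j}, P_{\mu_i}, B$ made fully explicit (using $\mu_i = \lambda_i$, $\mu_j = \lambda_j$ from Remark \ref{rem:mu=lambda}), and only then perform the two restrictions; with the presentations in hand the identities $\tilde\gamma_i|_{Y_j} = \tilde\gamma_{ij/j}$ and ${}^t\tilde\gamma_j|_{Y_i} = {}^t\tilde\gamma_{ij/i}$ become immediate from comparing defining formulas coordinate by coordinate.
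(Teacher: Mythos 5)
Your proposal is correct and follows essentially the same route as the paper, which simply remarks that the lemma ``follows in the same way as the proof of Lemma \ref{lem:gammaij}'': you use the fiber-product presentation of $\tilde{X}_{ij}$ from Lemma \ref{lem:fiberproduct}, the explicit neighborhoods of $\overline{Y}_i$ and $\overline{Y}_j$ exhibited in Lemma \ref{lem:gammaij}, and the identities $\mu_i=\lambda_i$, $\mu_j=\lambda_j$ valid when $\lambda_i>\lambda_j$, to match the restricted coordinate with the rank-one local parameter slot by slot. The only blemish is notational: in your displayed neighborhood you list $U_{-\beta_{\lambda_i}}$ and $P_{\mu_i}$ as if they occupied different factors, whereas $\mu_i=\lambda_i$ means $U_{-\beta_{\lambda_i}}B$ is precisely the open subset of $P_{\mu_i}$ being used, and (per Lemma \ref{lem:gammai}$(a)$) on the tubular neighborhood $\sV_i$ the function $\tilde{\gamma}_i=\check{\gamma}_i$ is the $U_{-\beta_{\lambda_i}}$-coordinate itself, the inversion in \eqref{eq:gammai} referring to its expression on $\Omega_0$.
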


\begin{proof} 
This follows in the same way as the proof of Lemma \ref{lem:gammaij} and we leave the details to the reader.
\end{proof}

\begin{remark} \hfill
\label{rem triv comparison}
\begin{enumerate}
\item One may prove an analogous formula to Lemma \ref{lem:comptheta} comparing the trivializations $\theta_{ij/i}$ and ${}^t\theta_{ij/i}$ of $\nu_{ij/i}$ induced by $\tilde{\gamma}_{ij/i}$ and ${}^t\tilde{\gamma}_{ij/i}$:
\[
{}^t\theta_{ij/i} = \tilde{y}_{\beta_{\lambda_j}} \cdot \theta_{ij/i},
\]
where $\tilde{y}_{\beta_{\lambda_j}}: Z_{ij} \to \G_m$ is the invertible function obtained by composing the projection $Z_{ij} = B\tilde{w}_{ij}TU \to T$ and the character $\beta_{\lambda_j}$.

\item If $\lambda_i<\lambda_j$ and if $\alpha_i$ and $\alpha_j$ are adjacent in the Dynkin diagram, then it can be shown that ${}^t\tilde{\gamma}_i|_{Y_j}$ does not agree with ${}^t\tilde{\gamma}_{ij/j}$ and $\tilde{\gamma}_j|_{Y_i}$ does not agree with $\tilde{\gamma}_{ij/i}$ as well.

\item One may verify that for $G = \SL_3$ with the standard pinning, with $w_0 = s_2s_1s_2$ the chosen reduced expression of the longest word in $W$, we are in the case $\lambda_1 = 3 > 1 = \lambda_2$, and that the restriction of $\tilde{\gamma}_2$ to $Y_1$ is different from $\tilde{\gamma}_{12/1}$.
\end{enumerate}
\end{remark}

Now, it remains to put together the previous computations, but at this point it is convenient to distinguish the case $\lambda_i>\lambda_j$ from the case  $\lambda_i<\lambda_j$.

\subsection{Image of the degree \texorpdfstring{$2$}{2} differential: the case \texorpdfstring{$\lambda_i>\lambda_j$}{lambda i > lambda j}} \hfill
\label{subsection i>j}

In this case, we have $\lambda_j = \mu_j$ by Lemma \ref{lm:lambdaij} and this yields a canonical reduced expression of $w_{ij}$ obtained by removing two simple reflections in the chosen reduced expression \eqref{eq:redw0} of $w_0$:
\[
w_{[i,j]}:= w_0s_is_j =  \sigma_\ell\cdots\widehat{\sigma}_{\lambda_i}\cdots\widehat{\sigma}_{\lambda_j}\cdots\sigma_1.
\]
We write this expression in an obvious way as $w_{[i,j]} = w'''_{[i,j]}\widehat{\sigma}_{\lambda_i}w''_{[i,j]}\widehat{\sigma}_{\lambda_j} w'_{[i,j]}$, thereby defining $w'_{[i,j]}$, $w''_{[i,j]}$ and $w'''_{[i,j]}$. Thus, 
\[
w_i = w_0s_i =  w'''_{[i,j]} \widehat{\sigma}_{\lambda_i} w''_{[i,j]}\sigma_{\lambda_j} w'_{[i,j]}
\]
is the reduced expression for $w_i$ obtained by removing $\sigma_{\lambda_i}$ from that of $w_0$.  Consequently, $w''_i = w'''_{[i,j]}$ and $w''_{[i,j]} \sigma_{\lambda_j}w'_{[i,j]} = w'_i$ (see Notation \ref{notation horizontal codim 1}). Also,
\[
w_j = w_0s_j =  w'''_{[i,j]}\sigma_{\lambda_i}w''_{[i,j]}\widehat{\sigma}_{\lambda_j}w'_{[i,j]}
\]
is the reduced expression for $w_j$ obtained by removing $\lambda_j$ from that of $w_0$ and hence, we have $w''_j = w'''_{[i,j]}\sigma_{\lambda_i}w''_{[i,j]}$ and $w'_j= w'_{[i,j]}$.  We now fix the horizontal reduced expressions for Bruhat cells in codimension $2$.

\begin{notation}
\label{notation horizontal codim 2}
With the above notation, we set 
\[
\~w_{[i,j]}:= \dot{w}'''_{[i,j]} \dot{w}''_{[i,j]} \dot{w}'_{[i,j]} = \dot{\sigma_\ell}\cdots\dot{\sigma}_{\lambda_i+1}\dot{\sigma}_{\lambda_i-1}\cdots\dot{\sigma}_{\lambda_j+1}\dot{\sigma}_{\lambda_j-1}\cdots\dot{\sigma_1} \in N_G(T). 
\]
When there is no confusion, we will simply write $w_{ij}, w'_{ij}$ etc for $w_{[i,j]}, w'_{[i,j]}$ etc for the sake of brevity.
\end{notation}

We will use the identification $\ZA[Z_{ij}] \cong \ZA[\tilde{w}_{ij}T]$.  In that case, the restriction of $\theta'_i$ to $\nu_i$ is $\theta_i$ and it follows by Lemma \ref{lem:diff2} that
\begin{equation}
\label{eqn d_2^i}
\partial_2^{(i)} ((u)\otimes (v) \otimes [\tilde{w}_{ij} \cdot t]) = -(u) \otimes \HA_1(\partial_{ij/i})\big((v) \otimes [\tilde{w}_{ij} \cdot t]\big),
\end{equation}
for all $t \in T$.  Now, we want to compute $\HA_1(\partial_{ij/i})\big((v) \otimes [\tilde{w}_{ij} \cdot t]\big)$.  
By the very definition, $\HA_1(\partial_{ij/i})$ is the connecting morphism
\[
\HA_1(Th(\nu_{ij/i})) \to \HA_0(Y_i) = \ZA[\tilde{w}_iT]
\]
coming from the cofibration sequence $Y_i\subset \overline{Y}_i\to Th(\nu_{ij/i})$. If we use the orientation of $\nu_{ij/i}$ induced by $\tilde{\gamma}_{ij/i}$, we get an identification $$\HA_1(Th(\nu_{ij/i})) \cong \KMW_1\otimes \ZA[Z_{ij}] = \KMW_1\otimes \ZA[\tilde{w}_{ij}T],$$ and $\HA_1(\partial_{ij/i})$ is given by the formula for the rank $1$ case determined in Lemma \ref{lem: partial1}. Our conventions say that $v$ in this case comes from the orientation $\theta'_j$ induced by ${}^t\tilde{\gamma}_j$, which restricts on $Y_i$ exactly to ${}^t\tilde{\gamma}_{ij/i}$ by Lemma \ref{lem:gammaij'}.  Thus, in order to compute 
\[
\HA_1(\partial_{ij/i}) :\KMW_1\otimes \ZA[\tilde{w}_{ij}T] \to \ZA[\tilde{w}_iT]
\]
using our orientations, we first need to change the orientation of $\nu_{ij/i}$ to that induced by $\tilde{\gamma}_{ij/i}$, and then apply Lemma \ref{lem: partial1}, together with an appropriate twist. More precisely, with the previous orientations and conventions, the cellular $\A^1$-chain complex of $\overline{Y}_i$ is given by
\[
\Ctcell(\overline{Y}_i) = \Z[\dot{w}'''_{ij}\dot{w}''_{ij}]\otimes \Ctcell_*(P_{\lambda_j}) \otimes \Z[\dot{w}'_{ij}], 
\]
with $\Ctcell_*(P_{\lambda_j})$ being the chain complex: 
\[
\KMW_1\otimes \ZA[T] \xrightarrow{\partial} \ZA[\dot{\sigma}_{\lambda_j}T]; \quad \partial((v) \otimes [t]) \mapsto [\dot{\sigma}_{\lambda_j} \cdot (\beta^\vee_{\lambda_j}(v) \cdot t ) ],
\]
by Remark \ref{rem pin} (4).  We will use Remark \ref{rem triv comparison} and the following straightforward lemma.

\begin{lemma} 
\label{lem:autthom} 
Let $\zeta\in\sO^{\times}_X$ be an invertible function on $X$. The automorphism of $$\HA_1(Th(\sO_X)) = \KMW_1\otimes \ZA[X]$$ induced by the automorphism of the line bundle $\sO_X$ given by multiplication by $\zeta$ is 
given at every point $x \in X$ by multiplication by $\<\zeta(x)\>$ on the factor $\KMW_1$.
\end{lemma}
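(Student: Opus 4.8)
\textbf{Proof proposal for Lemma \ref{lem:autthom}.}

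The plan is to reduce the statement to the already-understood behaviour of the $\A^1$-weak equivalence $\P^1 \wedge (X_+) \to \A^1/(\A^1-\{0\}) \wedge (X_+)$ and the action of units on $\KMW_1 = \ZA(\G_m)$. First I would unwind the identification $\HA_1(Th(\sO_X)) = \KMW_1 \otimes \ZA[X]$: since $\sO_X$ is the trivial line bundle, $Th(\sO_X) \simeq_{\A^1} (\A^1/(\A^1-\{0\})) \wedge (X_+) \simeq_{\A^1} S^1 \wedge \G_m \wedge (X_+)$, and the suspension isomorphism together with $\ZA(\G_m) = \KMW_1$ and the K\"unneth formula gives the displayed identification. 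Under this chain of equivalences, the automorphism of $\sO_X$ given by multiplication by $\zeta \in \sO(X)^\times$ corresponds to the self-map of $(\A^1/(\A^1-\{0\})) \wedge (X_+)$ induced on the first smash factor by the automorphism $\A^1_X \to \A^1_X$, $x \mapsto \zeta x$ of vector bundles over $X$.

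Next I would analyze this self-map pointwise. Passing to the $\A^1$-equivalent model $\P^1 \wedge (X_+)$, the map $[x,y] \mapsto [\zeta x, y]$ is, as used already in the proof of Lemma \ref{lem:Thomiso}, $\A^1$-homotopic to $[x,y]\mapsto [\zeta x, y]$ itself; but unlike the case of a square $\zeta = \alpha^2$ treated there, a general $\zeta$ cannot be rewritten via an element of $\SL_2(X)$, so the induced map on $\HA_1$ need not be the identity. Instead, on the factor $\KMW_1 = \ZA(\G_m)$ it is precisely the endomorphism induced by the automorphism of $\G_m$ given by (pointwise) multiplication by $\zeta$; by the fundamental computation of \cite{Morel-book} (see \cite[Lemma 3.14]{Morel-book} and the discussion around the $\GW$-module structure on $\KMW_1$, already recalled in Remark \ref{rem pin}(3) and in the non-simply-connected rank one example where the squaring map induces multiplication by $h$), multiplication by a unit $\zeta$ on $\G_m$ induces multiplication by $\<\zeta\> \in \GW(k) = \KMW_0(k)$ on $\ZA(\G_m) = \KMW_1$. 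Since $\zeta$ is a regular function on $X$, this has to be interpreted fibrewise: over a point $x \in X$ (or on sections over a finitely generated field extension, as is sufficient for checking equalities of morphisms in $Ab_{\A^1}(k)$), it is multiplication by $\<\zeta(x)\>$, acting only on the $\KMW_1$ tensor factor and trivially on $\ZA[X]$.

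I do not expect a serious obstacle here; this is essentially a bookkeeping exercise combining the Thom isomorphism for the trivial bundle with the definition of the $\GW$-action on $\KMW_1$. The one point requiring a little care is keeping the two tensor factors separate, i.e. checking that multiplication by $\zeta$ on $\sO_X$ acts \emph{only} through the $\G_m$-coordinate and leaves the $X_+$-coordinate untouched; this follows formally because the equivalence $Th(\sO_X) \simeq (\A^1/(\A^1-\{0\})) \wedge (X_+)$ is $X$-linear (it is induced by the chosen trivialization of the bundle), so the automorphism $x \mapsto \zeta x$ is the smash product of a self-map of $\A^1/(\A^1-\{0\})$ (depending on $\zeta$, pointwise over $X$) with the identity of $X_+$. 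Combining this with the computation of the $\GW$-action completes the proof.
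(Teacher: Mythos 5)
Your argument is correct; the paper itself offers no proof of this lemma (it is flagged as ``straightforward''), and your reduction — trivialize, identify $Th(\sO_X)$ with $(\A^1/(\A^1-\{0\}))\wedge (X_+)$, observe that multiplication by $\zeta$ acts only through the first smash factor, and compute that $[x,y]\mapsto[\zeta x,y]$ has degree $\<\zeta\>$ — is exactly the intended one and mirrors the paper's own proof of Lemma \ref{lem:Thomiso}. The only point worth tightening is the phrase ``the automorphism of $\G_m$ given by multiplication by $\zeta$'': this map is not pointed at $1$, so the computation should be read on the augmentation ideal $\KMW_1\subset\ZA[\G_m]$ via the connecting map of the cofiber sequence $\G_m\to\A^1\to\A^1/(\A^1-\{0\})$, where $(t)=[t]-[1]\mapsto[\zeta t]-[\zeta]=\<\zeta\>(t)$ indeed gives multiplication by $\<\zeta(x)\>$ fibrewise.
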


Therefore, since $\tilde{w}_i = \dot{w}'''_{ij}\dot{w}''_{ij}\dot{\sigma}_{\lambda_j} \dot{w}'_{ij}$, it can be verified that
\[
\HA_1(\partial_{ij/i}): \KMW_1\otimes \ZA[\tilde{w}_{ij}T] \to \ZA[\tilde{w}_iT]
\]
is given by
\begin{equation}
\label{eqn d_2^i'}
(v) \otimes [\tilde{w}_{ij} \cdot t] \mapsto \tilde{w}_i \cdot (\<\beta_{\lambda_j}(t)\> \cdot (v)_j) [t] 
\end{equation}
as $(\dot{w}'_{ij})^{-1}(\beta^\vee_{\lambda_j}) = \alpha^\vee_j$ by Lemma \ref{lm:red2} (as seen in the proof of Lemma \ref{lem:gammai}). 

Over a field extension $F$ of $k$, the sections of the sheaf $\KMW_1\otimes \ZA[\tilde{w}_{ij}T]$ are generated as a right $\ZA[T]$-module by elements of the form $(v)\otimes [\tilde{w}_{ij} \cdot 1]$ and the sections of the sheaf $\KMW_1\otimes \KMW_1\otimes \ZA[\tilde{w}_{ij}T]$ are generated as a right $\ZA[T]$-module by elements of the form $(u)\otimes (v)\otimes [\tilde{w}_{ij} \cdot 1]$, where $u, v \in F^{\times}$ and $1$ denotes the neutral element of $T(F)$. So it suffices to determine $\partial_2$ on these symbols.  Substituting \eqref{eqn d_2^i'} into \eqref{eqn d_2^i}, we obtain
\begin{equation}
\label{eqn d_2^i formula}
\partial^{(i)}_2((u)\otimes (v)\otimes [\tilde{w}_{ij} \cdot 1 ]) = - (u) \otimes \big( (v)_j \cdot [\tilde{w}_i \cdot 1] \big) 
\end{equation}
in $\KMW_1\otimes \ZA[\tilde{w}_iT]$.  
We now compute $\partial^{(j)}_2((u)\otimes (v)\otimes [\tilde{w}_{ij} \cdot 1 ])$. In that case the restriction of $\theta'_j$ to $\nu_j$ is $\beta_{\lambda_j}^{-1}\cdot\theta_j$ and the restriction of $\tilde{\gamma}_i$ to $Y_{j}$ is $\tilde{\gamma}_{ij/j}$ by Lemma \ref{lem:gammaij'}. Now, the chain complex of $\overline{Y}_j$ in this case is:
\[
\Ctcell(\overline{Y}_i) = \Z[\dot{w}'''_{ij}]\otimes \Ctcell_*(P_{\lambda_i}) \otimes \Z[\dot{w}''_{ij}\dot{w}'_{ij}] 
\]
with $\Ctcell_*(P_{\lambda_i})$ being the chain complex (by Remark \ref{rem pin} (4)): 
\[
\KMW_1\otimes \ZA[T] \xrightarrow{\partial} \ZA[\tilde{w}_jT], \quad \partial((u) \otimes [t]) = [\dot{\sigma}_{\lambda_i} \cdot (\beta^\vee_{\lambda_i}(u) \cdot t )]
\]
Now, since $w_0s_is_j = \sigma_\ell\cdots \widehat{\sigma}_{\lambda_i}\cdots\widehat{\sigma}_{\lambda_j}\cdots \dot\sigma_1 s_i  s_j$, we have $\dot{w}''_{ij}\dot{w}'_{ij} = \dot\sigma_{\lambda_i}\cdots \dot\sigma_1\dot{s}_i\dot{s}_j$.  We rewrite this as
\[
\begin{split}
\dot\sigma_{\lambda_i} \tilde{w}''_{ij}\tilde{w}'_{ij} & = \dot{\sigma}_{\lambda_i-1}\cdots\dot\sigma_1 \dot{s}_i \dot{s}_j\\
& = \dot{\sigma}_{\lambda_i-1}\cdots\widehat{\sigma}_{\lambda_j}\cdots\dot\sigma_1 \dot{s}_j \dot{s}_i \dot{s}_j,
\end{split}
\]
which follows from Lemma \ref{lm:red2} in case $s$ is the reflection $s_js_is_j$.  As observed in Remark \ref{rem:red2}, we have $s_j(\alpha_i)\in \Phi^+$.  Consequently, it follows that 
\[
(\dot{w}'_{ij})^{-1}\circ(\dot{w}''_{ij})^{-1}(\beta^\vee_{\lambda_i}) = s_j(\alpha_i^\vee).
\]
Thus, $\HA_1(\partial_{ij/j}): \KMW_1\otimes \ZA[\tilde{w}_{ij}T] \to \ZA[\tilde{w}_jT]$ is given by
\begin{equation}
\label{eqn d_2^j}
(v) \otimes [\tilde{w}_{ij} \cdot t] \mapsto \tilde{w}_j \cdot \big(\<\beta_{\lambda_i}(t)\> \cdot s_j((v)_i)\big) [t]. 
\end{equation}
However, note that the formula \eqref{eqn d_2^j} is with respect to the orientation $\theta_j$.  With respect to the orientation $\theta_j$, the formula for $\partial_2^{(j)}$ is given by Lemma \ref{lem:diff2} as follows:
\[
\partial_2^{(j)}((u)\otimes (v)\otimes [\tilde{w}_{ij} \cdot 1]) = (v) \otimes (\~w_j \cdot s_j\cdot (u)_i).
\]
According to our choice of orientations (made in the paragraph above Lemma \ref{lem:diff2}), in order to get the correct formula for the differential $\partial_2^{(j)}$, we need to compute the image of $(v) \otimes (\~w_j \cdot s_j\cdot (u)_i)$ under the automorphism of $\KMW_1\otimes \ZA[\tilde{w}_jT]$ given by the change of orientation from $\theta_j$ to ${}^t\theta_j$.  By Lemma \ref{lem:comptheta}, ${}^t\theta_j$ differs from $\theta_j$ by the character associated with the root $-\beta_{\lambda_j}$.  We will use the following observation in the determination of $\partial_2^{(j)}$.

\begin{lemma} 
\label{lem:action} 
For any $v \in F^{\times}$, $w \in W$ with a lift $\~w \in N_G(T)$ and $t\in T$, if $T$ acts on $\KMW_1$ on the left through the character $\beta:T\to \G_m$, then one has
\[
(t) \odot \big((v) \otimes [\tilde{w} \cdot 1]\big) = \big(\eta(\beta(t))(v)\big) \otimes [\tilde{w} \cdot 1] + \<\beta(t)\>(v) \otimes [\tilde{w} \cdot w^{-1}(t)]
\]
in $\KMW_1\otimes \ZA[\~{w}T]$, where $\odot$ denotes the $\ZA[T]$-action on $\KMW_1\otimes \ZA[\dot{w}T]$.
\end{lemma}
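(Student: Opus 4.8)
\textbf{Proof plan for Lemma \ref{lem:action}.}

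The plan is to reduce everything to the structure of the sheaf $\ZA[T]$ and the diagonal $\Psi_T$, using the fact that the $\ZA[T]$-action $\odot$ on $\KMW_1 \otimes \ZA[\tilde{w}T]$ is, by construction, defined via $\Psi_T$: one first applies $\Psi_T$ to the acting class $[t] \in \ZA[T]$, then lets the first factor act on $\KMW_1$ through the character $\beta$ and the second factor act on $\ZA[\tilde{w}T]$ in the obvious way (incorporating the formula $t \cdot \tilde{w} = \tilde{w} \cdot w^{-1}(t)$ recalled in Remark \ref{rem important1} and Section \ref{subsubsection orientations}). So the first step is to write $\Psi_T([t]) = [t] \otimes [t]$ and unwind the definition of $\odot$ to get
\[
(t) \odot \big((v) \otimes [\tilde{w} \cdot 1]\big) = \big(\beta(t) \cdot (v)\big) \otimes [\tilde{w} \cdot w^{-1}(t)],
\]
where $\beta(t) \cdot (v)$ denotes the left action of the unit $\beta(t)$ on the class $(v) \in \KMW_1$, which by Remark \ref{rem pin} (5) and Lemma \ref{lem:autthom} is multiplication by $\<\beta(t)\>$ \emph{up to the correction coming from passing from $[\beta(t)] \in \ZA[\G_m]$ to the symbol}.

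The second, and genuinely computational, step is to make the identity $[\beta(t)] \cdot (v) = \<\beta(t)\>(v) + \eta(\beta(t))(v)$ precise inside $\KMW_1 = \ZA(\G_m)$. This is exactly the content of the ring structure on $\bH = \ZA[\G_m] = \Z \oplus \KMW_1$ described in Remark \ref{rem H}: for units one has $[\beta(t)] = [1] + (\beta(t))$, so that acting by $[\beta(t)]$ on $(v)$ gives $(v) + (\beta(t))\cdot(v)$, and the product $(\beta(t))\cdot(v)$ in $\bH$ equals $\eta(\beta(t))(v) \in \KMW_1$ by the very definition of $\eta$. Combining, $[\beta(t)] \cdot (v) = (v) + \eta(\beta(t))(v) = \<\beta(t)\>(v) + \eta(\beta(t))(v)$ once one rewrites $(v) + \eta(\beta(t))(v)$ using $\<\beta(t)\> = 1 + \eta(\beta(t))$ in $\KMW_0$; here I would be careful that the term $\big(\eta(\beta(t))(v)\big)\otimes[\tilde{w}\cdot 1]$ carries the \emph{untwisted} base point $[\tilde{w}\cdot 1]$ rather than $[\tilde{w}\cdot w^{-1}(t)]$, which is forced because $\eta(\beta(t))(v) \in \KMW_2$ is annihilated by the augmentation ideal contribution, i.e. the difference $[\tilde{w}\cdot w^{-1}(t)] - [\tilde{w}\cdot 1]$ acting on it lands in $\KMW_1 \otimes (\text{augmentation ideal})$ which is killed after the product. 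I expect this bookkeeping — tracking which summand of $\ZA[\tilde{w}T]$ each term lives over, and checking that the ``lower order'' term does not acquire a twist — to be the main obstacle, though it is entirely formal given the decomposition \eqref{eqn ZT decomposition} and the description of the Hopf algebra structure on $\bH$.

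Finally I would assemble the two steps: expanding $[\beta(t)]\cdot(v)$ and distributing over the two base-point terms yields precisely
\[
(t) \odot \big((v) \otimes [\tilde{w} \cdot 1]\big) = \big(\eta(\beta(t))(v)\big) \otimes [\tilde{w} \cdot 1] + \<\beta(t)\>(v) \otimes [\tilde{w} \cdot w^{-1}(t)],
\]
as claimed. As usual in this article, since $\KMW_1 \otimes \ZA[\tilde{w}T]$ is a strictly $\A^1$-invariant sheaf and both sides are additive in the evident sense, it suffices to check the identity on sections over a finitely generated separable field extension $F/k$ and on the generating symbols $(v) \otimes [\tilde{w}\cdot 1]$ with $v \in F^\times$, which is exactly the computation carried out above; the general case follows by right $\ZA[T]$-linearity.
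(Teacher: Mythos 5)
Your overall instinct --- reduce everything to the Hopf-algebra structure of $\ZA[T]$ and its diagonal --- is the right one, and it is what the paper does, but the way you execute it contains two errors that happen to partially compensate, so the argument as written does not prove the formula. First, the lemma concerns the action of the augmentation-ideal element $(t) = [t]-[1]$, not of $[t]$. Your first displayed equation computes $[t]\odot\big((v)\otimes[\tilde w\cdot 1]\big)$ via $\Psi_T([t])=[t]\otimes[t]$ but labels it $(t)\odot(\cdots)$; these differ by the identity action, i.e.\ by the summand $(v)\otimes[\tilde w\cdot 1]$. The correct starting point is the \emph{reduced} diagonal
\[
\Psi\big((t)\big) \;=\; (t)\otimes[1] \;+\; [1]\otimes(t) \;+\; (t)\otimes(t),
\]
and this three-term expansion is exactly where the two-term shape of the answer comes from: the summand $(t)\otimes[1]$ acts only on the $\KMW_1$ factor (on which $(t)$ acts through $\beta$ as multiplication by $\eta(\beta(t))$, not by $\<\beta(t)\>$) and so produces the term sitting over the untwisted base point $[\tilde w\cdot 1]$, while the remaining two summands act on the $\ZA[\tilde wT]$ factor and combine, via $1+\eta(\beta(t)) = \<\beta(t)\>$, into the term carrying the twist by $w^{-1}(t)$. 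Second, your key identity $[\beta(t)]\cdot(v) = \<\beta(t)\>(v) + \eta(\beta(t))(v)$ is false: in $\bH = \Z\oplus\KMW_1$ one has $[\beta(t)]\cdot(v) = (v) + \eta(\beta(t))(v) = \<\beta(t)\>(v)$, a single term; your rewriting counts $\eta(\beta(t))(v)$ twice. With the correct product, your step 1 would output only $\<\beta(t)\>(v)\otimes[\tilde w\cdot w^{-1}(t)]$ and the $\eta$-term would never appear.

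The point you flag as ``the main obstacle'' --- deciding which base point the term $\eta(\beta(t))(v)$ sits over --- is precisely where the argument breaks down. The justification you offer (that the difference $[\tilde w\cdot w^{-1}(t)]-[\tilde w\cdot 1]$ acting on it ``is killed after the product'') has no content here: no quotient is being taken, and $\KMW_1\otimes(\text{augmentation ideal of }\ZA[\tilde wT])$ is certainly not zero. In the correct computation there is nothing to decide: the term $\big(\eta(\beta(t))(v)\big)\otimes[\tilde w\cdot 1]$ arises from the summand $(t)\otimes[1]$ of $\Psi((t))$, which by definition acts trivially on the second tensor factor, so it lands over $[\tilde w\cdot 1]$ automatically. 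I would redo the proof along those lines, after the harmless reduction to $T=\G_m$ and a single character $\beta:\G_m\to\G_m$.
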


\begin{proof} One first reduces to the case $T = \G_m$ and $\beta :\G_m\to \G_m$ a group homomorphism. If $\Psi: \ZA[\G_m]\to \ZA[\G_m]\otimes \ZA[\G_m]$ is the 
diagonal, we have
\[
\Psi(t) = (t)_1 + (t)_2 + (t)_1(t)_2,
\]
where we identify $\ZA[\G_m]\otimes \ZA[\G_m]$ with $\Z \oplus \KMW_1 \oplus \KMW_1 \oplus (\KMW_1\otimes \KMW_1)$ and use the conventions of Section \ref{subsection Z[T]}. Clearly, $(t)_1(t)_2 = (-1)(t)$. Moreover, we have $(t)_i (t')_i = (\eta(t)(t'))_i$, for $i=1,2$. Thus, $(t)$ acts through $(v)\mapsto \eta(\beta(t))(v)$ on $\KMW_1$ and using the formula for $\Psi(t)$ and the relation $\<v\>  = \eta(v) + 1$, we get
\[
\begin{split}
(t) \odot \big( (v) \otimes [\tilde{w} \cdot 1 ]\big) & = \big(\eta(\beta(t))(v)\big)\otimes [\tilde{w} \cdot 1] + (v)\otimes (t)[\tilde{w} \cdot 1] + (\eta(\beta(t))(v))\otimes (t)[\tilde{w} \cdot 1] \\
& = \big(\eta(\beta(t))(v)\big)\otimes [\tilde{w} \cdot 1 ] + \<\beta(t)\> (v)\otimes (t)[\tilde{w} \cdot 1] \\
& = \big(\eta(\beta(t))(v)\big) \otimes [\tilde{w} \cdot 1] + \<\beta(t)\>(v) \otimes [\tilde{w} \cdot w^{-1}(t)]. 
\end{split}
\]
\end{proof}

Given any field extension $F$ of $k$ and $u \in F^{\times}$, we define
\begin{equation}
\label{eqn rho_ij}
\rho_{ij} := \beta_{\lambda_j}(s_j(\alpha_i^{\vee}(u)))^{-1} \in F^\times,
\end{equation}
for every $i,j \in \{1, \ldots, r\}$ with $i\neq j$.  Now, putting everything together, it follows from Lemmas \ref{lem:comptheta}, \ref{lem:autthom} and \ref{lem:action} that we have
\begin{equation} 
\label{eqn d_2^j formula}
\partial^{(j)}_2( (u)\otimes (v) \otimes [\tilde{w}_{ij} \cdot 1])  = \<\rho_{ij}\> (v) \otimes \tilde{w}_{j}\cdot s_j(u)_i + \eta (\rho_{ij})(v)\otimes  [\tilde{w}_{j} \cdot 1] 
\end{equation}
in $\KMW_1\otimes \ZA[\tilde{w}_jT]$.  To summarize, we have obtained the following from the above discussion and formulas \eqref{eqn d_2^i formula} and \eqref{eqn d_2^j formula}.

\begin{proposition} 
\label{prop: partial2} 
Let $F$ be a finitely generated field extension of $k$ and let $u,v \in F^{\times}$.  Let $i,j \in \{1,\ldots, r\}$ be such that $i \neq j$ and let the notations be as above.  Then there is a unit $\rho_{ij}$ in $F$ depending only upon $u$ (and given by \eqref{eqn rho_ij}) such that:
\[
\partial_2( (u)\otimes (v) \otimes [\tilde{w}_{ij} \cdot 1 ])  =  \<\rho_{ij}\> (v) \otimes \tilde{w}_{j}\cdot s_j(u)_i + \eta (\rho_{ij})(v)\otimes  [\tilde{w}_{j} \cdot 1] - (u) \otimes \tilde{w}_{i} \cdot (v)_j.
\]
\end{proposition}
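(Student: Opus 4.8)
\textbf{Proof proposal for Proposition \ref{prop: partial2}.}

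The plan is to assemble the formula for $\partial_2$ on the summand $\KMW_1\otimes\KMW_1\otimes\ZA[\tilde w_{ij}T]$ of $\Ctcell_2(G)$ from the two components $\partial_2^{(i)}$ and $\partial_2^{(j)}$, which were set up abstractly in Lemma \ref{lem:diff2}, and whose explicit values were computed in the preceding discussion. Since the chain complex $\Ctcell_*(G)$ is a complex of left and right $\ZA[T]$-modules and since, over a field extension $F$ of $k$, the sheaf $\KMW_1\otimes\KMW_1\otimes\ZA[\tilde w_{ij}T]$ is generated as a right $\ZA[T]$-module by the symbols $(u)\otimes(v)\otimes[\tilde w_{ij}\cdot 1]$ with $u,v\in F^\times$, it suffices to evaluate $\partial_2$ on these generators; the assertion that it is enough to check everything on finitely generated separable field extensions of $k$ is recorded in the conventions of the introduction. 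So the real content is the two formulas \eqref{eqn d_2^i formula} and \eqref{eqn d_2^j formula}, and the proof simply adds them: $\partial_2=\partial_2^{(i)}+\partial_2^{(j)}$ by the observation (made just before Lemma \ref{lem:Pic}) that the image of the summand corresponding to $Z_{ij}$ lands in the direct sum of the two summands corresponding to $Y_i$ and $Y_j$.

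First I would treat $\partial_2^{(i)}$. Here one is in the situation $\lambda_i>\lambda_j$, so $\lambda_j=\mu_j$ by Lemma \ref{lm:lambdaij}, and the restriction of the chosen orientation $\theta'_i$ of $\overline\nu_i$ to $\nu_i$ is exactly $\theta_i$ — this is why this component is ``clean''. By Lemma \ref{lem:diff2}, $\partial_2^{(i)}$ is $(u)\otimes(v)\otimes x\mapsto -(u)\otimes \HA_1(\partial_{ij/i})((v)\otimes x)$, and by Lemma \ref{lem:gammaij'} the orientation $\theta'_j$ restricts on $Y_i$ to ${}^t\tilde\gamma_{ij/i}$, which differs from the ``easy'' orientation $\tilde\gamma_{ij/i}$ by the character $\beta_{\lambda_j}$ via Remark \ref{rem triv comparison}(1). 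Applying the rank-$1$ formula of Lemma \ref{lem: partial1} together with the automorphism-of-Thom-space computation of Lemma \ref{lem:autthom} gives \eqref{eqn d_2^i'}, hence \eqref{eqn d_2^i formula}: $\partial_2^{(i)}((u)\otimes(v)\otimes[\tilde w_{ij}\cdot 1])=-(u)\otimes((v)_j\cdot[\tilde w_i\cdot 1])=-(u)\otimes \tilde w_i\cdot(v)_j$, using $(\dot w'_{ij})^{-1}(\beta^\vee_{\lambda_j})=\alpha^\vee_j$ from Lemma \ref{lm:red2}.

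Next I would treat $\partial_2^{(j)}$, which is the delicate one: the restriction of $\theta'_j$ to $\nu_j$ is $\beta_{\lambda_j}^{-1}\cdot\theta_j$ rather than $\theta_j$, so a twist is unavoidable, and moreover the relevant root turns out to be $s_j(\alpha^\vee_i)$ rather than $\alpha^\vee_i$. The key combinatorial input is the identity $(\dot w'_{ij})^{-1}\circ(\dot w''_{ij})^{-1}(\beta^\vee_{\lambda_i})=s_j(\alpha^\vee_i)$, obtained by rewriting $\dot w''_{ij}\dot w'_{ij}=\dot\sigma_{\lambda_i}\cdots\dot\sigma_1\dot s_i\dot s_j$ and applying Lemma \ref{lm:red2} to the reflection $s_js_is_j$ (legitimate since $s_j(\alpha_i)\in\Phi^+$, per Remark \ref{rem:red2}(1)). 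This yields \eqref{eqn d_2^j}. Then, starting from the value $(v)\otimes(\tilde w_j\cdot s_j\cdot(u)_i)$ given by Lemma \ref{lem:diff2} with respect to $\theta_j$, I would push it through the change of orientation from $\theta_j$ to ${}^t\theta_j$, which by Lemma \ref{lem:comptheta} is multiplication by the character associated to $-\beta_{\lambda_j}$; applying Lemma \ref{lem:autthom} and the $\ZA[T]$-action formula of Lemma \ref{lem:action} with $\beta=-\beta_{\lambda_j}$ produces the unit $\rho_{ij}=\beta_{\lambda_j}(s_j(\alpha^\vee_i(u)))^{-1}$ of \eqref{eqn rho_ij} and the formula \eqref{eqn d_2^j formula}. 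Adding \eqref{eqn d_2^i formula} and \eqref{eqn d_2^j formula} gives the stated expression. The main obstacle, as the paper itself flags, is keeping the bookkeeping of orientations and twists consistent — specifically tracking which local parameter ($\tilde\gamma$ versus ${}^t\tilde\gamma$, and $\tilde\gamma_{ij/i}$ versus its transpose) is being used at each stage, so that the twisting characters $\beta_{\lambda_j}$ and the Weyl-group reflection $s_j$ enter with the correct exponents and in the correct module; everything else is a direct assembly of the rank-$1$ computation via the fiber-product decomposition of $\tilde X_{ij}$ from Lemma \ref{lem:fiberproduct}.
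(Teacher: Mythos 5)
Your proposal is correct and follows essentially the same route as the paper: the paper's own argument is precisely the assembly of $\partial_2^{(i)}$ and $\partial_2^{(j)}$ from Lemma \ref{lem:diff2}, with $\partial_2^{(i)}$ obtained via Lemma \ref{lem:gammaij'}, Remark \ref{rem triv comparison} and Lemmas \ref{lem: partial1} and \ref{lem:autthom}, and $\partial_2^{(j)}$ obtained via the identity $(\dot{w}'_{ij})^{-1}\circ(\dot{w}''_{ij})^{-1}(\beta^\vee_{\lambda_i})=s_j(\alpha_i^\vee)$ together with the orientation change $\theta_j\rightsquigarrow{}^t\theta_j$ handled by Lemmas \ref{lem:comptheta}, \ref{lem:autthom} and \ref{lem:action}, which is exactly where $\rho_{ij}$ arises. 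You have correctly identified both the reduction to generators over field extensions and the orientation bookkeeping as the only substantive points.
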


We now determine the image of 
$$\partial_2: \Ctcell_2(G) = \underset{[i,j]}{\oplus} ~ \KMW_2 \otimes \ZA[\tilde{w}_{[i,j]}T] \to Z_1(G).$$
We will use the notation consistent with Lemma \ref{lem:Z1modT}.  For the sake of brevity, we will simply write $w_{ij}$ for $w_{[i,j]}$.  Let $F$ be a finitely generated field extension of $k$ and let $u,v \in F^{\times}$.  For each $i$, we denote by 
\[
\phi_i: \KMW_2 \otimes \ZA[\tilde{w}_{ij}T] \to \underset{i=1}{\overset{r}{\oplus}} ~ \KMW_1 \otimes \ZA[\~w_iT]
\]
the morphism defined by
\begin{equation}
\label{eqn formula phi_i}
\begin{split}
(u,v) \otimes [\tilde{w}_{ij} \cdot 1] \mapsto \phi_{i}(u,v): = & ~ (u) \otimes \tilde{w}_i\cdot (\alpha^\vee_i(v))  - \eta(u)(v)\otimes [\tilde{w}_i \cdot 1]  \\
= & ~ (u) \otimes \tilde{w}_i\cdot (v)_i - \eta(u)(v)\otimes [\tilde{w}_i \cdot 1] 
\end{split}
\end{equation}
and for each $(i,j)$ with $i \neq j$, we denote by 
\[
\delta_{ij}: \KMW_2 \otimes \ZA[\tilde{w}_{ij}T] \to \underset{i=1}{\overset{r}{\oplus}} ~ \KMW_1 \otimes \ZA[\~w_iT] 
\]
the morphism defined by
\begin{equation}
\label{eqn formula delta_ij}
\begin{split}
(u, v) \mapsto \delta_{ij}(u,v):= & ~ (u) \otimes \tilde{w}_i \cdot (\alpha^\vee_j(v)) - (v)\otimes \tilde{w}_j \cdot (\alpha^\vee_i(u)) \\  
= & ~ (u) \otimes \tilde{w}_i \cdot (v)_j - (v)\otimes \tilde{w}_j \cdot (u)_i.
\end{split}
\end{equation}
Note that the image of $\phi_i$ lies in the summand $\KMW_1 \otimes \ZA[\tilde{w}_iT]$ of $\Ctcell_1(G)$, whereas the image of $\delta_{ij}$ lies in $\left(\KMW_1 \otimes \ZA[\tilde{w}_iT]\right) \oplus \left(\KMW_1 \otimes \ZA[\tilde{w}_jT]\right)$.

For $i \neq j$, the roots $\alpha_i$ and $\alpha_j$ are related in the group of cocharacters of $T$ by 
\[
s_j(\alpha_i^\vee) = \alpha_i^\vee + n_{ji} \alpha^\vee_j, 
\]
where $n_{ji} = -\<\alpha_j,\alpha_i^\vee\>$ is the corresponding coefficient of the Cartan matrix of $G$ \cite[Chapter VI, \S 1, page 144]{Bourbaki}, which is known to belong to the set $\{0, 1, 2, 3\}$. 
One has $n_{ji}=0$ if and only if $\alpha_i$ and $\alpha_j$ are not adjacent in the Dynkin diagram of $G$.  It follows that in $\ZA[T]$, with our conventions, we have
\begin{equation}
\label{eqn sj(ui)}
s_j(u)_i = (\alpha^\vee_i(u) \alpha^\vee_j(u^{n_{ji}})) =(u)_i+ (u^{n_{ji}})_j + (u)_i(u^{n_{ji}})_j.
\end{equation}

\begin{theorem}
\label{thm: partial_2}
Let $G$ be a split, semisimple, almost simple, simply connected algebraic group of rank $r$ over $k$ and let $F$ be a finitely generated field extension of $k$.  Let $i,j \in \{1,\ldots, r\}$ be such that $i \neq j$.  For $u \in F^{\times}$, let $\rho_{ij}$ be defined by \eqref{eqn rho_ij}.
\begin{enumerate}[label=$(\alph*)$]
\item Suppose that $\alpha_i$ and $\alpha_j$ are not adjacent in the Dynkin diagram of $G$.  Then $\<\rho_{ij}\> = 1$ and for any $u, v\in F^{\times}$, we have
\[
\partial_2((u)\otimes (v) \otimes [\tilde{w}_{[i,j]} \cdot 1]) = \delta_{ji}(v,u).
\]
\item Suppose that $\alpha_i$ and $\alpha_j$ are adjacent in the Dynkin diagram of $G$ with $\lambda_i>\lambda_j$ and $n_{ji}$ even; that is, $n_{ji}=2$. Then $\<\rho_{ij}\> = 1$ and for any $u, v \in F^{\times}$, we have
\[
\partial_2((u)\otimes (v) \otimes [\tilde{w}_{ij} \cdot 1]) = \delta_{ji}(v,u) +  \phi_{j}(v,u^2)(1+(-1)_i).
\]

\item Suppose that $\alpha_i$ and $\alpha_j$ are adjacent in the Dynkin diagram of $G$ with $\lambda_i>\lambda_j$ and $n_{ji}$ odd. Then $\<\rho_{ij}\> = \<u\>$ and we have
\[
\partial_2((u)\otimes (v) \otimes [\tilde{w}_{ij} \cdot 1]) = \delta_{ji}(v,u) + \phi_j(\<u\>(v)(u^n))(1 + (u)_i),
\]
where $n = n_{ij}$.
\end{enumerate}
\end{theorem}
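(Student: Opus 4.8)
All of the geometric work is already contained in Proposition \ref{prop: partial2}: in the case $\lambda_i>\lambda_j$ it gives
\[
\partial_2\big((u)\otimes (v)\otimes[\tilde{w}_{ij}\cdot 1]\big)=\<\rho_{ij}\>\,(v)\otimes\tilde{w}_j\cdot s_j(u)_i+\eta(\rho_{ij})\,(v)\otimes[\tilde{w}_j\cdot 1]-(u)\otimes\tilde{w}_i\cdot(v)_j,
\]
with $\rho_{ij}=\beta_{\lambda_j}(s_j(\alpha_i^\vee(u)))^{-1}$ as in \eqref{eqn rho_ij}. So the plan is to (i) evaluate $\<\rho_{ij}\>\in\GW(F)$ and the symbol $(\rho_{ij})\in\KMW_1(F)$ in each of the three cases, and then (ii) substitute these values into the displayed formula and rewrite the right hand side using \eqref{eqn sj(ui)} together with the definitions \eqref{eqn formula phi_i}, \eqref{eqn formula delta_ij} of $\phi_j$ and $\delta_{ji}$, so as to reach the stated form. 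No further homotopy-theoretic input should be needed.

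For step (i): by \eqref{eqn rho_ij} and the relation $s_j(\alpha_i^\vee)=\alpha_i^\vee+n_{ji}\alpha_j^\vee$ one has $\rho_{ij}=u^{-N}$ with $N=\langle\beta_{\lambda_j},s_j(\alpha_i^\vee)\rangle=\langle\beta_{\lambda_j},\alpha_i^\vee\rangle+n_{ji}\langle\beta_{\lambda_j},\alpha_j^\vee\rangle$. I would then use that $\beta_{\lambda_j}$ is the simple root $w'_j(\alpha_j)$ (Lemma \ref{lm:red2}, exactly as in the proof of Lemma \ref{lem:gammai}) to show that, whatever reduced expression of $w_0$ was fixed, the integer $N$ has the same parity as $n_{ji}$ — note that $\langle\beta_{\lambda_j},\alpha_i^\vee\rangle$ and $\langle\beta_{\lambda_j},\alpha_j^\vee\rangle$ individually depend on the chosen expression, but only their combination $N$ has invariant parity. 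Hence $N$ is even in cases $(a)$ and $(b)$ ($n_{ji}\in\{0,2\}$) and odd in case $(c)$ ($n_{ji}$ odd). Combining this with the elementary Milnor--Witt identities $\<u^{2m}\>=1$, $\<u^{2m+1}\>=\<u\>$, and with $(u^{n})=n_\epsilon\,(u)$ from Lemma \ref{lem n.eta} together with $\eta h=0$ (so that $(u^{2m})=mh(u)$ has $\eta(u^{2m})=0$), one gets $\<\rho_{ij}\>=1$, $\eta(\rho_{ij})=0$ in cases $(a)$ and $(b)$, and $\<\rho_{ij}\>=\<u\>$ with $(\rho_{ij})$ congruent to $(u)$ in the appropriate sense in case $(c)$. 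This proves the assertions about $\<\rho_{ij}\>$.

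For step (ii): expand $\tilde{w}_j\cdot s_j(u)_i=\tilde{w}_j\cdot\big((u)_i+(u^{n_{ji}})_j+(u)_i(u^{n_{ji}})_j\big)$ via \eqref{eqn sj(ui)}. The summand $(v)\otimes\tilde{w}_j\cdot(u)_i$ together with $-(u)\otimes\tilde{w}_i\cdot(v)_j$ is precisely $\delta_{ji}(v,u)$. In case $(a)$, $n_{ji}=0$ annihilates the two remaining summands and $\eta(\rho_{ij})=0$, giving $\partial_2=\delta_{ji}(v,u)$. In cases $(b)$ and $(c)$ one is left with $\<\rho_{ij}\>\,(v)\otimes\tilde{w}_j\cdot\big((u^{n_{ji}})_j+(u)_i(u^{n_{ji}})_j\big)+\eta(\rho_{ij})\,(v)\otimes[\tilde{w}_j\cdot 1]$, and the remaining task is purely $\KMW$-theoretic: using $(u^{n})_j=n_\epsilon(u)_j$, $\eta h=0$, $(u)(u)=(-1)(u)$, the $\epsilon$-graded commutativity, the Steinberg relation, and $\<u\>=1+\eta(u)$, I would rewrite this as $\phi_j(v,u^2)(1+(-1)_i)$ when $n_{ji}=2$ (case $(b)$) and as $\phi_j(\<u\>(v)(u^n))(1+(u)_i)$ with $n=n_{ij}$ when $n_{ji}$ is odd (case $(c)$), treating the sub-cases $n_{ji}=1$ and $n_{ji}=3$ uniformly through $n_\epsilon$.

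The main obstacle is exactly this bookkeeping. First, establishing the parity statement $N\equiv n_{ji}\pmod 2$ independently of the reduced expression requires a careful combinatorial argument with $w_0=\sigma_\ell\cdots\sigma_1$ and the subwords $w'_j,w''_j$ produced by Notation \ref{notation horizontal codim 1} and Lemma \ref{lm:red1}. Second, matching the leftover Milnor--Witt symbols to the precise factored shape $\phi_j(\cdots)(1+(\cdots)_i)$ is delicate, in particular in case $(c)$ with $n_{ji}=3$ (type $G_2$), where $n_\epsilon=1+h$ and the asymmetry $n_{ij}\neq n_{ji}$ genuinely enters; one has to be scrupulous about which of the two Cartan integers appears, since the final answer records $n_{ij}$ rather than $n_{ji}$.
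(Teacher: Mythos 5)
Your step (ii) — expanding $s_j(u)_i$ via \eqref{eqn sj(ui)} and reassembling the result into $\delta_{ji}$ and $\phi_j$ using the Milnor--Witt identities $(u^n)=n_\epsilon(u)$, $\eta h=0$, $(u)(u)=(-1)(u)$ and $\<u\>=1+\eta(u)$ — is exactly what the paper does, and once the values of $\<\rho_{ij}\>$ and $\eta(\rho_{ij})$ are known this part is routine. The divergence, and the problem, is in step (i). The paper does \emph{not} compute $\rho_{ij}=u^{-N}$ from root combinatorics. It instead exploits the structural identity $\partial_1\circ\partial_2=0$: substituting Proposition \ref{prop: partial2} and Lemma \ref{lem: partial1} into this identity yields an equation in $\ZA[\tilde w_0 T]$, which is then decomposed using the explicit direct-sum structure \eqref{eqn ZT decomposition} of $\ZA[T]$; this forces $\eta(\rho_{ij})(v)=0$ (cases $(a)$, $(b)$), respectively $\eta(\rho_{ij}u)(v)=0$ (case $(c)$), for \emph{all} $v\in F^\times$, and a restriction to $F(t)$ followed by the residue at $t$ then kills the factor $(v)$ and gives $\eta(\rho_{ij})=0$, resp.\ $\<\rho_{ij}u\>=1$. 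This argument is uniform in the root system and requires no knowledge of which simple root $\beta_{\lambda_j}$ actually is.

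The genuine gap in your proposal is the parity lemma $N\equiv n_{ji}\pmod 2$, where $N=\langle\beta_{\lambda_j},s_j(\alpha_i^\vee)\rangle$ and $\beta_{\lambda_j}=w'_j(\alpha_j)$ depends on the chosen reduced expression of $w_0$. You assert it, correctly observe that the individual terms $\langle\beta_{\lambda_j},\alpha_i^\vee\rangle$ and $\langle\beta_{\lambda_j},\alpha_j^\vee\rangle$ are not invariants, and then flag the claim as ``the main obstacle'' without giving an argument. Since $N=\langle s_jw'_j(\alpha_j),\alpha_i^\vee\rangle$ is a Cartan pairing of $\alpha_i^\vee$ against a root that varies with the reduced expression, there is no evident uniform reason for its parity to be controlled; establishing it directly would require a nontrivial combinatorial analysis (quite possibly type by type), and the statement is only obviously true \emph{a posteriori}, as a consequence of the theorem you are trying to prove. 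So as written the proposal does not constitute a proof: the one genuinely new input it relies on is exactly the unproven claim. If you want to salvage your route, the cleanest fix is to replace the parity lemma by the paper's mechanism: impose $\partial_1\circ\partial_2=0$ and read off $\<\rho_{ij}\>$ from the structure of $\ZA[T]$. One further caution on step (ii): in case $(c)$ the exponent appearing in $s_j(u)_i$ is $n_{ji}$, not $n_{ij}$ (for types $B$, $C$, $F_4$ these have different parities), so you should not try to force the computation to produce $n_{ij}$; the letter $n$ in the displayed formula must be the odd Cartan integer $n_{ji}$.
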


\begin{proof}
We first prove $(a)$.  If $\alpha_i$ and $\alpha_j$ are not adjacent in the Dynkin diagram of $G$, then $s_is_j = s_js_i$ and we have $s_j(\alpha_i^\vee(u)) = (\alpha_i^\vee(u))$.  Since $s_is_j = s_js_i$, we have $w_{[i,j]} = w_{[j,i]}$ and we can always assume $\lambda_i>\lambda_j$, after permuting the indices if required.  Since $\partial_1 \circ \partial_2((u)\otimes (v) \otimes [\tilde{w}_{ij} \cdot 1]) = 0$,
applying Proposition \ref{prop: partial2} and Lemma \ref{lem: partial1}, we obtain
\[
\begin{split}
0 &= \partial_1 \circ \partial_2((u)\otimes (v) \otimes [\tilde{w}_{ij} \cdot 1]) \\
&= \partial_1 \left(\<\rho_{ij}\> (v) \otimes \tilde{w}_{j}\cdot s_j(u)_i + \eta (\rho_{ij})(v)\otimes  [ \tilde{w}_{j} \cdot 1 ] - (u) \otimes \tilde{w}_{i} \cdot (v)_j \right) \\
&= \tilde{w}_0 \cdot \<\rho_{ij}\>(v)_j \cdot (u)_i
 + \tilde{w}_0 \cdot (\eta(\rho_{ij})(v))_j
 - \tilde{w}_0 \cdot (u)_i \cdot (v)_j
\end{split}
\]
in $\ZA[\tilde{w}_0T]$.  By the structure of the commutative algebra $\ZA[T]$ (see \eqref{eqn ZT decomposition}), it follows that $\eta(\rho_{ij})(v) = 0$ in $\KMW_1(F)$ and that $\<\rho_{ij}\>(v)_j \cdot (u)_i - (u)_i \cdot (v)_j =0$.  Since $\eta(\rho_{ij})(v) = 0$ for every $v \in F^{\times}$, it follows from a standard argument using restriction to the purely transcendental field extension $F(t)$ and applying the residue homomorphism at the place $t$ that $\eta(\rho_{ij}) = 0$.  Consequently, $\<\rho_{ij}\>=1$ in $\GW(F)$.  
Thus, we have
\[
\partial_2((u)\otimes (v) \otimes [\tilde{w}_{ij} \cdot 1]) =   (v) \otimes \tilde{w}_{j}\cdot (u)_i  - (u) \otimes \tilde{w}_{i} \cdot (v)_j = - \delta_{ij}(u, v) = \delta_{ji}(v,u),
\]
proving $(a)$.  We now prove $(b)$.  Since $n_{ji}=2$, we have 
\[
s_j(u)_i = (u)_i+ (u^{2})_j + (u)_i(u^{2})_j.
\]
As in the proof of part $(a)$, the relation $\partial_1 \circ \partial_2((u) \otimes (v) \otimes [\tilde{w}_{ij} \cdot 1]) = 0$ after applying applying Proposition \ref{prop: partial2} and Lemma \ref{lem: partial1} yields
\[
0 = \tilde{w}_0 \cdot \<\rho_{ij}\>(v)_j \cdot s_j(u)_i
 + \tilde{w}_0 \cdot (\eta(\rho_{ij})(v))_j
 - \tilde{w}_0 \cdot (u)_i \cdot (v)_j \in \ZA[\~w_0T].
\]
Thus, after cancelling $\tilde{w}_0$, we have the following equality in $\ZA[T]$:
\[
\begin{split}
0 & = (\<\rho_{ij}\>(v))_j \cdot s_j(u)_i + (\eta[\rho_{ij}](v))_j - (u)_i (v)_j\\
& =  (\<\rho_{ij}\>(v))_j(u)_i+ (\<\rho_{ij}\>(v))_j(u^{2})_j + (\<\rho_{ij}\>(v))_j(u)_i(u^{2})_j + (\eta(\rho_{ij})(v))_j - (u)_i (v)_j.
\end{split}
\]
As in the proof of $(a)$, it follows from the structure of $\ZA[T]$ that $\eta(\rho_{ij}) = 0$, whence $\<\rho_{ij}\>=1$ in $\GW(F)$.  Now, by Proposition \ref{prop: partial2}, we have
\[
\begin{split}
\partial_2((u)\otimes (v) \otimes [\tilde{w}_{ij} \cdot 1])
= & ~ (v) \otimes \tilde{w}_{j}\cdot s_j(u)_i  - (u) \otimes \tilde{w}_{i} \cdot (v)_j \\
= & ~ (v) \otimes \tilde{w}_{j}\cdot (u)_i + (v) \otimes \tilde{w}_{j}\cdot (u^{2})_j  + (v) \otimes \tilde{w}_{j}\cdot (u)_i(u^{2})_j\\ 
 & ~ - (u) \otimes \tilde{w}_{i} \cdot (v)_j \\
= & ~ \delta_{ji}(v,u) + (v) \otimes \tilde{w}_{j}\cdot (u^{2})_j + (v) \otimes \tilde{w}_{j}\cdot (u)_i(u^{2})_j.
\end{split}
\]
From the relations $(u)(u) = (-1)(u)$, $(u^2) = h(u)$ and $\eta h =0$ (where $h = 2_{\epsilon} = 1 + \<-1\>$) in Milnor-Witt $K$-theory, it follows that 
$$(u)_i(u^{2})_j = (u)_i(u)_jh = (-1)_i(u)_jh = (-1)_i(u^{2})_j$$
and that $\phi_j(v,u^2) = (v) \otimes \tilde{w}_{j}\cdot (u^{2})_j$.  Therefore, we obtain
\[
\partial_2((u)\otimes (v) \otimes [\tilde{w}_{ij} \cdot 1]) = \delta_{ji}(v,u) +  \phi_{j}(v,u^2)(1+(-1)_i),
\]
as desired.

In order to prove $(c)$, we proceed exactly as above to expand and simplify the relation $\partial_1 \circ \partial_2((u)\otimes (v) \otimes [\tilde{w}_{ij} \cdot 1]) = 0$ using Proposition \ref{prop: partial2}, Lemma \ref{lem: partial1} and \eqref{eqn sj(ui)} to obtain
\[
\begin{split}
0 & = (\<\rho_{ij}\>(v))_j \cdot s_j(u)_i + (\eta(\rho_{ij})(v))_j - (u)_i (v)_j\\
& =  (\<\rho_{ij}\>(v))_j(u)_i+ (\<\rho_{ij}\>(v))_j(u^{n})_j + (\<\rho_{ij}\>(v))_j(u)_i(u^{n})_j + (\eta(\rho_{ij})(v))_j - (u)_i (v)_j\\
& =  (\<\rho_{ij}\>(v))_j(u)_i+ (\eta\<\rho_{ij}\>(v)(u^{n}))_j + (u)_i(\eta\<\rho_{ij}\>(v)(u^{n}))_j + (\eta(\rho_{ij})(v))_j - (u)_i (v)_j
\end{split} 
\]
in $\ZA[T]$.  By the structure of $\ZA[T]$ \eqref{eqn ZT decomposition}, we obtain
\[
\eta\<\rho_{ij}\>(v)(u^{n}) + \eta(\rho_{ij})(v) = 0
\]
and
\[
(\<\rho_{ij}\>(v))_j(u)_i + (u)_i(\eta\<\rho_{ij}\>(v)(u^{n}))_j - (u)_i (v)_j = 0,
\]
for any $u,v \in F^{\times}$.  Since $n$ is odd, we have $\eta(u^{n}) = n_{\epsilon}\cdot\eta(u) = \eta(u)$.  Using the relation $(\rho_{ij}u) = \<\rho_{ij}\>(u)+(\rho_{ij})$ (see \cite[Lemma 3.5]{Morel-book}), we obtain
\[
0 = \eta\<\rho_{ij}\>(v)(u^{n}) + \eta(\rho_{ij})(v) = \eta (\rho_{ij}u) \cdot (v),
\]
for every $v \in F^{\times}$.  Again, using the restriction to $F(t)$ followed by the residue at the place $t$, it follows that 
\[
\eta (\rho_{ij}u) = \<\rho_{ij}u\> - 1 = 0 \in \GW(F). 
\]
Hence, $\<\rho_{ij}\> = \<u\>$ and also $\eta (\rho_{ij}) = \eta (u)$.  Since 
\[
0 = \eta (\rho_{ij}u) = \eta (\rho_{ij}) + \eta \<\rho_{ij}\>(u) = \eta (u) + \eta \<u\>(u), 
\]
we also have $-\eta \<u\>(u) = \eta (u)$.  Therefore, 
\[
\begin{split}
\partial_2((u)\otimes (v) \otimes [\tilde{w}_{ij} \cdot 1]) 
= & ~ \<\rho_{ij}\>(v) \otimes \tilde{w}_{j}\cdot s_j(u)_i + \eta (\rho_{ij})(v)\otimes  [ \tilde{w}_{j} \cdot 1 ] - (u) \otimes \tilde{w}_{i} \cdot (v)_j \\
= & ~ \<u\>(v) \otimes \tilde{w}_{j}\cdot (u)_i + \<u\>(v) \otimes \tilde{w}_{j}\cdot (u^{n})_j + \<u\>(v) \otimes \tilde{w}_{j}\cdot (u)_i(u^{n})_j \\
& ~ + \eta (u)(v)\otimes  [ \tilde{w}_{j} \cdot 1 ] - (u) \otimes \tilde{w}_{i} \cdot (v)_j \\
= & ~ \delta_{ji}(v,u) + \eta(u)(v) \otimes \tilde{w}_{j}\cdot (u)_i + \<u\>(v) \otimes \tilde{w}_{j}\cdot (u^{n})_j \\
& ~ + \<u\>(v) \otimes \tilde{w}_{j}\cdot (u)_i(u^{n})_j + \eta (u)(v)\otimes  [ \tilde{w}_{j} \cdot 1 ] \\
= & ~ \delta_{ji}(v,u) + \left( \<u\>(v) \otimes \tilde{w}_{j}\cdot (u^{n})_j + \eta (u)(v)\otimes  [ \tilde{w}_{j} \cdot 1 ] \right)\left( 1+ (u)_i \right)\\
= & ~ \delta_{ji}(v,u) + \phi_j(\<u\>(v)(u^n))(1 + (u)_i).
\end{split}
\]
\end{proof}

In particular, considering these computations modulo $T$ (that is, after tensoring with $\Z$ over $\ZA[T]$) and using Lemma \ref{lem:Z1modT} we can conclude:

\begin{theorem}
\label{thm:diff2} 
Let $G$ and $F$ be as in Theorem \ref{thm: partial_2}.  Let $i$ and $j$ be distinct indices in $\{1,\dots,r\}$ and assume $\lambda_i>\lambda_j$ for our chosen reduced expression of $w_0$.  For any $u, v \in F^{\times}$, we denote by $\overline{\partial}_{ij} \big( (u)(v) \big)$ the class of
\[
\partial_2((u)\otimes (v)\otimes [\tilde{w}_{ij}\cdot 1])\in Z_1(G)
\]
in $Z_1(G)\otimes_{\ZA[T]} \Z$. Then for any $u, v \in F^{\times}$, we have (with obvious notations):
\begin{enumerate}[label=$(\alph*)$]
\item Suppose that $\alpha_i$ and $\alpha_j$ are not adjacent in the Dynkin diagram of $G$.  Then
\[
\overline{\partial}_{ij} \big( (u)(v) \big) = \overline{\delta}_{ji}(v,u)\in (\KM_2)_{ji} \subset Z_1(G)\otimes_{\ZA[T]} \Z.
\]
\item Suppose that $\alpha_i$ and $\alpha_j$ are adjacent in the Dynkin diagram of $G$ with $n_{ji}$ even; that is, $n_{ji}=2$. Then
\[
\overline{\partial}_{ij} \big( (u)(v) \big) = \overline{\delta}_{ji}(v,u) + h\cdot\overline{\phi}_j(v,u) \in(\KM_2)_{ji} \oplus (\KMW_2)_j \subset Z_1(G)\otimes_{\ZA[T]} \Z.
\]
\item Suppose that $\alpha_i$ and $\alpha_j$ are adjacent in the Dynkin diagram of $G$ with $n_{ji}$ odd.  Then
\[
\overline{\partial}_{ij} \big( (u)(v) \big) = \overline{\delta}_{ji}(v,u) + (n_{ji})_\epsilon \cdot \overline{\phi}_j(v,u) \in (\KM_2)_{ji} \oplus (\KMW_2)_j \subset Z_1(G)\otimes_{\ZA[T]} \Z.
\]
\end{enumerate}
\end{theorem}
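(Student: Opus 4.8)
The plan is to deduce Theorem \ref{thm:diff2} directly from Theorem \ref{thm: partial_2} by passing to the quotient $Z_1(G)\otimes_{\ZA[T]}\Z$, using the explicit description of that quotient furnished by Lemma \ref{lem:Z1modT}. First I would recall the setup: $Z_1(G)$ is the kernel of $\partial_1$ inside $\Ctcell_1(G) = \underset{i}{\oplus}\KMW_1\otimes\ZA[\tilde{w}_iT]$, and tensoring with $\Z$ over $\ZA[T]$ kills the left $\ZA[T]$-action. By Lemma \ref{lem:Z1modT}, the sheaf $Z_1(G)\otimes_{\ZA[T]}\Z$ is canonically identified with $\left(\underset{i}{\oplus}\KMW_2\right)\oplus\left(\underset{i<j}{\oplus}\KM_2\right)$, where the $i$-th $\KMW_2$ maps in via $\phi_i$ of \eqref{eq:phi_i} and the $(i,j)$-summand via $\delta_{ij}$. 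So the statement amounts to rewriting each of the three formulas of Theorem \ref{thm: partial_2} in terms of the generators $\overline{\phi}_i(u,v)$ and $\overline{\delta}_{ij}(u,v)$ of this fixed target group, keeping careful track of which summand each term lands in and discarding everything that dies modulo $T$.

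The main computational content is therefore bookkeeping of the images of the terms in Proposition \ref{prop: partial2} / Theorem \ref{thm: partial_2} under the projection $Z_1(G)\to Z_1(G)\otimes_{\ZA[T]}\Z$. In case $(a)$: Theorem \ref{thm: partial_2}$(a)$ gives $\partial_2((u)\otimes(v)\otimes[\tilde{w}_{[i,j]}\cdot 1]) = \delta_{ji}(v,u)$ on the nose, so its class is $\overline{\delta}_{ji}(v,u)$; I must note that when $\alpha_i,\alpha_j$ are non-adjacent one has $w_{[i,j]}=w_{[j,i]}$, so the convention $\lambda_i>\lambda_j$ can be imposed after relabelling, and the answer naturally sits in the $(\KM_2)_{ji}$ summand. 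In case $(b)$ (with $n_{ji}=2$): starting from $\partial_2 = \delta_{ji}(v,u) + \phi_j(v,u^2)(1+(-1)_i)$, I would observe that modulo $T$ the right $\ZA[T]$-factor $(1+(-1)_i)$ acts through the augmentation, which sends $1+(-1)_i$ to $1+0=1$ since $(-1)_i=[\alpha_i^\vee(-1)]-[1]$ lies in the augmentation ideal — wait, more carefully: the augmentation of $[\alpha_i^\vee(-1)]$ is $1$, so $(-1)_i\mapsto 0$, hence $1+(-1)_i\mapsto 1$; but the claim is $h\cdot\overline{\phi}_j(v,u)$, so the factor $h$ must come not from the augmentation but from re-expressing $\phi_j(v,u^2)$ via $(u^2)=h(u)$. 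Thus $\overline{\phi}_j(v,u^2) = h\cdot\overline{\phi}_j(v,u)$ in the sheaf $\KMW_2$ (using that $\phi_j$ is additive in its $\KMW_1$-variable in the appropriate sense and $(u^2)=h(u)$), while the $(-1)_i$ correction dies modulo $T$ — this needs the small verification that $\phi_j(v,u^2)\cdot(-1)_i$ maps to $0$, which follows because $(-1)_i$ is in the augmentation ideal and $\eta h=0$ kills the leftover. In case $(c)$ (with $n=n_{ij}$ odd): from $\partial_2 = \delta_{ji}(v,u) + \phi_j(\<u\>(v)(u^n))(1+(u)_i)$, the factor $1+(u)_i$ again augments to $1$, so the class is $\overline{\phi}_j(\<u\>(v)(u^n))$; then I rewrite $\<u\>(v)(u^n) = \<u\>(v)\cdot n_\epsilon(u) = n_\epsilon\<u\>(v)(u)$ and, using $\<u\>(v)(u) = (v)(u) + \eta(u)(v)(u) = \dots$ together with $\phi_j$'s formula, reduce to $(n_{ji})_\epsilon\cdot\overline{\phi}_j(v,u)$; here I should double-check the identity $\<u\>(v)(u)\equiv (v)(u) \bmod \eta$-torsion that makes the $\<u\>$ harmless inside $\phi_j$, or rather track it honestly since $\phi_j$ has values in $\KMW_2$, not $\KM_2$ — this is where the symbol $n_\epsilon = \sum_{i=1}^{n}\langle(-1)^{i-1}\rangle$ appears, exactly as in Lemma \ref{lem n.eta}.

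I expect the main obstacle to be pinning down the precise $\GW(F)$-coefficients ($h$ versus $2$, $n_\epsilon$ versus $n$, the role of $\<u\>$) in cases $(b)$ and $(c)$, i.e. making sure that the quadratic-form bookkeeping in Milnor--Witt $K$-theory is correct after the projection modulo $T$ — in particular verifying that the "correction factors" $(1+(-1)_i)$ and $(1+(u)_i)$ contribute only through their augmentation $1$ to the $\KMW_2$-summands (so that no extra term lands in some $(\KM_2)_{ki}$ or $(\KMW_2)_k$ with $k\neq i,j$), and that the relations $(u^2)=h(u)$, $\eta h=0$, $(u)(u)=(-1)(u)$, and $\eta(u^n)=n_\epsilon\eta(u)$ are deployed in the right order. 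Everything else — the identification of the target via Lemma \ref{lem:Z1modT}, the reduction to sections over finitely generated field extensions, and the observation that $\lambda_i>\lambda_j$ can be assumed in the non-adjacent case — is routine and follows immediately from the cited results. The conclusion in each case is then read off by matching terms: $\overline{\partial}_{ij}((u)(v))$ equals $\overline{\delta}_{ji}(v,u)$ plus, in the adjacent cases, the stated multiple of $\overline{\phi}_j(v,u)$ sitting in the $(\KMW_2)_j$ summand.
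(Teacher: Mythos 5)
Your overall strategy is exactly the paper's: Theorem \ref{thm:diff2} is presented there as an immediate consequence of Theorem \ref{thm: partial_2} and Lemma \ref{lem:Z1modT}, obtained by applying $-\otimes_{\ZA[T]}\Z$ and rewriting the result in terms of the generators $\overline{\phi}_i$ and $\overline{\delta}_{ij}$. Your handling of cases $(a)$ and $(b)$ is correct: in $(b)$ the correction factor $1+(-1)_i=[\alpha_i^\vee(-1)]$ augments to $1$ (so it disappears; the extra appeal to $\eta h=0$ there is superfluous — the augmentation-ideal argument alone kills the term $\phi_j(v,u^2)\cdot(-1)_i$), and the factor $h$ comes from $(v)(u^2)=h\,(v)(u)$ in $\KMW_2$, as you say.

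The genuine gap is the step you yourself flag in case $(c)$: you need the identity $\<u\>(v)(u^n)=n_\epsilon\,(v)(u)$ in $\KMW_2$ (since $\overline{\phi}_j$ identifies the $j$-th summand with $\KMW_2$, not $\KM_2$, the unit $\<u\>$ is \emph{not} automatically harmless), and you do not carry it out. If you track it honestly using the relations of the paper, you get $\<u\>(v)(u^n)=(v)\cdot\<u\>(u^n)=(v)\cdot n_\epsilon\<u\>(u)=n_\epsilon\<-1\>(v)(u)$, because $\<u\>(u)=(u)+\eta(u)(u)=(u)+\eta(-1)(u)=\<-1\>(u)$ and $\GW$ is central. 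This differs from the asserted $n_\epsilon\,(v)(u)$ by the unit $\<-1\>$ (equivalently by the additive term $\eta(-1)(v)(u)$), which is not zero in $\KMW_2$ in general. So you must either produce an argument reconciling this with the stated formula, or observe explicitly that the discrepancy is multiplication by a unit of $\GW(k)$ on the $(\KMW_2)_j$-summand — exactly of the same nature as the $\<\tau_j\>$ appearing in Theorem \ref{thm:diff2'} — and hence immaterial for every subsequent use (it acts trivially on the $\eta$-torsion classes $y_{ij}$ and is absorbed into the non-canonical isomorphism in the symplectic case of Theorem \ref{thm: main}). As written, the assertion ``reduce to $(n_{ji})_\epsilon\cdot\overline{\phi}_j(v,u)$'' is not justified.
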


\subsection{Image of the degree \texorpdfstring{$2$}{2} differential: the case \texorpdfstring{$\lambda_i<\lambda_j$}{lambda i < lambda j}} \hfill
\label{subsection i<j}


Let $[i,j]$ be a pair of indices such that $\lambda_i<\lambda_j$ in the sense of Notation \ref{notation horizontal codim 1}.  Recall that $\lambda_i$ and $\lambda_j$ depend upon the fixed reduced expression \eqref{eq:redw0} of $w_0$.  We will show that there always exists a reduced expression 
\[
w_0 =  \sigma'_\ell\cdots\sigma'_1   
\]
as a product of simple reflections, possibly different from \ref{eq:redw0}, such that $\lambda'_i>\lambda'_j$, with obvious notation: $\lambda'_i$ denotes the unique index such that    
\[
\sigma'_\ell\cdots\widehat{\sigma'}_{\lambda'_i}\dots\sigma'_1
\]
is a reduced expression of $w_i = w_0s_i$, for every $i$.  

\begin{notation}
\label{notation breve}
We write $\breve{w}_0 = \dot{\sigma'}_\ell\cdots\dot{\sigma'}_1 \in N_G(T)$, and $\breve{w}_i$ for the lift of $w_i$ obtained by removing $\dot{\sigma'}_{\lambda'_i}$ from it.  These choices define another orientation $\breve{\theta}_i$ of $\nu_i$ and another identification
\[
\ZA(Y_i)) \cong \Z[\breve{w}_iT].
\]
\end{notation}

We clearly have an isomorphism of sheaves 
\begin{equation}
\label{eqn Xi_i}
\Upsilon_i: \KMW_1\otimes \Z[\tilde{w}_iT] \xrightarrow{\simeq} \KMW_1\otimes \Z[\breve{w}_iT]. 
\end{equation}
We denote for every $i\in \{ 0,1, \ldots, r\}$
$$\varsigma_i: = (\breve{w}_i)^{-1}\tilde{w}_i,$$
which is an element of $T$.  The element $\varsigma_0$ is related to the elements $\varsigma_i$ for $1 \leq i \leq r$ by the following relation. 

\begin{lemma} 
\label{lem: varsigma}
For any unit $u$ (in a finitely generated field extension of $k$), we have 
\[
[\varsigma_0](u)_i = [\varsigma_i](\<\tau_i\>(u))_i \in \ZA[T].
\]
\end{lemma}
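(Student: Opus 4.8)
The statement to prove is the relation $[\varsigma_0](u)_i = [\varsigma_i](\langle\tau_i\rangle(u))_i$ in $\ZA[T]$, comparing the two choices of reduced expression for $w_0$ (the "tilde" one from \eqref{eq:redw0} and the "breve" one for which $\lambda_i' > \lambda_j'$). Here $\varsigma_i = (\breve{w}_i)^{-1}\tilde{w}_i \in T$ and $\tau_i$ is presumably the unit controlling how the orientation $\breve\theta_i$ differs from $\theta_i$ (I expect $\tau_i$ to be defined just after this lemma, coming out of a computation analogous to Lemma \ref{lem:comptheta} or Remark \ref{rem pin} (1)). The heart of the matter is that both sides express the image of a symbol under the composite isomorphism $\Upsilon_i$ of \eqref{eqn Xi_i}, read off in two different ways: once via the $T$-equivariance structure of $\HA_0(Y_i)$ and once via the identification $\ZA[Y_i] \cong \Z[\breve w_i T]$.

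\textbf{Key steps.} First I would reduce to a computation in $\ZA[T] = \bigotimes_{i=1}^r \ZA[\G_m]$ using the explicit description of this ring from Section \ref{subsection Z[T]}, in particular the decomposition \eqref{eq:ZA[T]} and Notation \ref{notation []_i}. The element $\varsigma_0 = \tilde{w}_0^{-1}\breve{w}_0 \cdot (\text{something})$... more precisely $\varsigma_0 = (\breve w_0)^{-1}\tilde w_0$; since both $\breve w_0$ and $\tilde w_0$ are lifts of the same element $w_0 \in W$, their ratio lies in $T$, and similarly for each $\varsigma_i$. Second, I would track how the function $\~\gamma_i$ (equivalently the orientation of $\nu_i$) and the identification $\ZA[Y_i] \cong \ZA[\tilde w_i T]$ transform when one passes to the breve reduced expression; this is exactly the content of Lemma \ref{lem:gammai} (the equivariance formulas $\~\gamma_i(tx) = \alpha'_i(t)\~\gamma_i(x)$) combined with the fact, recorded repeatedly (Remark \ref{rem pin} (1), the remark after Notation \ref{notation breve}, and in Section \ref{subsection Z[T]}), that multiplication by a square in a $\G_m$-factor of $T$ acts trivially on $\ZA[\G_m]$, so only the class of $\tau_i$ modulo squares — i.e. $\langle\tau_i\rangle$ — enters. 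Third, I would use the left $\ZA[T]$-module structure: since the two orientations differ by a $T$-equivariant trivialization twisted by the unit $\tau_i$, applying $\Upsilon_i$ to the generator $(u)_i$ introduces the factor $\langle\tau_i\rangle$ on the $\KMW_1$ side (by Lemma \ref{lem:autthom}, the automorphism of $\HA_1(Th(\sO))$ induced by multiplication by a unit $\zeta$ is multiplication by $\langle\zeta\rangle$) and simultaneously changes the base point of $T$ from $\tilde w_i$ to $\breve w_i$, accounting for the $[\varsigma_i]$ versus $[\varsigma_0]$ bookkeeping. Finally I would verify that $\tau_i$ is indeed related to $\varsigma_0$ and $\varsigma_i$ by $\varsigma_0 = \varsigma_i \cdot \alpha_i^\vee(\tau_i) \cdot (\text{square})$ or some such identity — this is a pure root-datum computation inside $T$ using $w_0(\alpha_i) = -\alpha'_i$ and Lemma \ref{lm:red2}.

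\textbf{Main obstacle.} I expect the genuinely delicate point to be keeping the left- versus right-$\ZA[T]$-module conventions and the twisting operations of Remark \ref{rem twist} straight while simultaneously changing \emph{both} the orientation of $\nu_i$ (which lives on the "$\KMW_1$" tensor factor and is governed by $w_0 w_i = s_i$ combinatorics, hence by $\alpha'_i$) \emph{and} the horizontal lift $\tilde w_i \leadsto \breve w_i$ (which lives on the "$\ZA[T]$" factor). The formula in the lemma is precisely the statement that these two changes are not independent: the ratio $\varsigma_0/\varsigma_i$ of the two discrepancies is captured by a single unit $\tau_i$, acting through $\alpha_i^\vee$. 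So the proof really amounts to: (i) write $\varsigma_0$ and $\varsigma_i$ explicitly in terms of the $\dot\sigma$'s and $\dot{\sigma'}$'s; (ii) use $w_i = w_0 s_i$ to express the ambiguity in terms of $\dot s_i$ and the chosen lift of $w_0$; (iii) extract the unit $\tau_i$ such that $\varsigma_0 \equiv \varsigma_i\,\alpha_i^\vee(\tau_i) \pmod{T^{\times 2}}$; and (iv) feed this into the symbol calculus of $\ZA[T]$ from Section \ref{subsection Z[T]}, where $[\varsigma_0](u)_i = [\varsigma_0 \alpha_i^\vee(u)] - [\varsigma_0]$ and the relations $(ab)_i = (a)_i + (b)_i + (\eta(a)(b))_i$, together with $\langle\tau_i\rangle(u) = (\tau_i u) - (\tau_i)$, let one rewrite the left side as the right side. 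The reduction to $T = \G_m$, then to a direct verification via \cite[Lemma 3.5]{Morel-book} or \cite[Lemma 3.14]{Morel-book}, should make step (iv) routine once the unit $\tau_i$ has been correctly identified in steps (i)--(iii).
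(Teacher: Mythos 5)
Your overall reading of the objects is right, and your step (iv) --- the symbol calculus $[\varsigma_i][\tau_i]_i(u)_i = [\varsigma_i](\<\tau_i\>(u))_i$ in $\ZA[T]$ --- is exactly the translation the statement requires. But your proof runs in the opposite logical direction from the paper's, and the step you lean on is not established. You propose to first prove $\varsigma_0 \equiv \varsigma_i\,\alpha_i^\vee(\tau_i) \pmod{\text{squares}}$ by ``a pure root-datum computation'' and then deduce the lemma. In the paper that identity is the \emph{corollary} that follows the lemma, deduced by expanding the lemma's relation componentwise in the decomposition \eqref{eq:ZA[T]}; it is not an input. And it is not a pure root-datum computation: a priori $\varsigma_0\varsigma_i^{-1}$ is just some element of $T$, with no reason to lie in $\alpha_i^\vee(\G_m)$ modulo squares, and the unit that appears must moreover be identified with the \emph{orientation-comparison} unit $\tau_i$, which is defined geometrically (via the composite $\mathfrak{u}_{-\beta_{\lambda_i}} \to \mathfrak{u}_{-\alpha'_i} \to \mathfrak{u}_{-\beta_{\lambda'_i}}$ of adjoint maps in part $(a)$ of the proposition following the lemma), not combinatorially. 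Nothing in Lemma \ref{lm:red2} or the relation $w_0(\alpha_i)=-\alpha'_i$ produces that identification; this is precisely the hard content, and your proposal leaves it unexecuted.

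The paper's actual proof avoids all of this with one observation you never invoke: the differential $\partial_1$ of the cellular $\A^1$-chain complex is intrinsic, so the comparison isomorphism $\Upsilon_i$ of \eqref{eqn Xi_i} in degree $1$ and its degree-$0$ analogue $[\tilde{w}_0 t]\mapsto[\breve{w}_0\varsigma_0 t]$ must intertwine the two incarnations of $\partial_1$. Lemma \ref{lem: partial1} gives the formula $(u)\otimes[\tilde{w}_i\cdot 1]\mapsto(\tilde{w}_0\cdot\alpha_i^\vee(u))$ for either choice of reduced expression, and writing out the commutativity of the resulting square yields exactly $([\varsigma_0]-[\varsigma_i][\tau_i]_i)\cdot(u)_i=0$, which is the assertion. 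If you want to salvage your route, you would have to either prove the factorization $\varsigma_0=\varsigma_i\cdot\alpha_i^\vee(\tau_i)$ with the correct $\tau_i$ by an independent argument (essentially redoing the normal-bundle comparison of Lemma \ref{lem:comptub} for two reduced expressions simultaneously), or fall back on the $\partial_1$-compatibility argument --- at which point you are doing the paper's proof.
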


\begin{proof} 
The above equality is equivalent to showing that
\[
([\varsigma_0] - [\varsigma_i][\tau_i]_i) \cdot (u)_i = 0.
\]
This is obtained by applying the formula of the first differential $\partial_1^{(i)}$ from Lemma \ref{lem: partial1}, the comparison isomorphism \eqref{eqn Xi_i} and its analogue in degree $0$.
\end{proof}

\begin{proposition} Let $\tilde{\theta}_i$ denote the orientation of the normal bundle $\nu_i$ of $Y_i$ in $G$ determined by the choice of the reduced expressions in Notation \ref{notation horizontal codim 1}.
\begin{enumerate}[label=$(\alph*)$]
\item Let $\tau_i\in k^\times$ denote the unit corresponding to the comparison of the orientations $\~\theta_i$ and $\breve{\theta}_i$ of $\nu_i$; in other words, the automorphism 
\[
\breve{\theta}^{-1}_i \circ \tilde{\theta}_i : \sO^1_{Y_i}\to \sO^1_{Y_i}.
\]
The composition
\[
k \xrightarrow{{u}_{-\beta_{\lambda_i}}} \mathfrak{u}_{-\beta_{\lambda_i}} \xrightarrow{Ad_{\tilde{w}''_i}} \mathfrak{u}_{-\alpha'_{i}} \xrightarrow{Ad_{(\breve{w}''_i)}^{-1}}  \mathfrak{u}_{-\beta_{\lambda'_i}} \xrightarrow{{u}^{-1}_{-\beta_{\lambda'_i}}} k
\]
through the chosen weak pinning of $G$ corresponds to multiplication by an element of $k^{\times}$, which agrees with $\tau_i$ up to square classes.

\item The isomorphism $\Upsilon_i$ of \eqref{eqn Xi_i} satisfies the following formula, for every units $u$ and $v$ in a finitely generated field extension of $k$ (with obvious notations):
\[
\Upsilon_i(\tilde{\phi}_i(u,v)) = \breve{\phi}_i(\<\tau_i\>(u)(v))\cdot[\varsigma_i] \in \KMW_1\otimes \ZA[\breve{w}_iT].
\]
\end{enumerate}
\end{proposition}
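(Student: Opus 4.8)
The plan is to establish (a) first — pinning down the discrepancy between the two trivialisations of $\nu_i$ as the asserted constant in $k^\times$ — and then to read (b) off from (a) by unwinding the definition of $\Upsilon_i$.

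\emph{Proof of (a).} By construction (compare \eqref{eq:gammai}, \eqref{eq:tub} and the discussion preceding Lemma \ref{lem:comptheta}), $\tilde{\theta}_i$ is the orientation of $\nu_i$ induced by the rational function $\tilde{\gamma}_i$, which is built from the pinning isomorphism $u_{-\beta_{\lambda_i}}\colon\Ga\xrightarrow{\simeq}U_{-\beta_{\lambda_i}}$ together with the factorisation $w_0=w_i''\sigma_{\lambda_i}w_i'$; likewise $\breve{\theta}_i$ comes from $u_{-\beta_{\lambda'_i}}$ and the corresponding factorisation $w_0=\breve{w}_i''\sigma'_{\lambda'_i}\breve{w}_i'$ for the primed reduced expression. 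By Proposition \ref{prop: normal bundle}, $\nu_i$ is the constant line bundle $\mathfrak{u}_{-\alpha'_i}\otimes_k\sO_{Y_i}$ on which the right $B$-action is trivial and the left $B$-action is through the character $\alpha'_i=w_0(-\alpha_i)$; hence any two trivialisations of $\nu_i$ that are right- and left-$T$-equivariant in this common way differ by a $B$-bi-invariant unit on $Y_i\cong U_{w_i^{-1}}\dot{w}_iTU$, so by an element of $k^\times$. Since $\tilde{\gamma}_i$ and $\breve{\gamma}_i$ are right- and left-$T$-equivariant with these same characters (this is Lemma \ref{lem:gammai} for $\tilde{\gamma}_i$, and the identical computation for $\breve{\gamma}_i$), the automorphism $\breve{\theta}_i^{-1}\circ\tilde{\theta}_i$ is multiplication by a constant in $k^\times$, well-defined up to squares because $\sO r(\nu_i)$ only remembers orientations modulo $\sO(Y_i)^{\times 2}$; call it $\tau_i$. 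To identify it, restrict to the closed subscheme $\dot{w}_iT\subset Y_i$: there the identification $\nu_i\cong\mathfrak{u}_{-\alpha'_i}\otimes\sO$ underlying $\tilde{\theta}_i$ is obtained by transporting the pinning vector $u_{-\beta_{\lambda_i}}(1)$ through $Ad_{\tilde{w}_i''}$ — recall $w_i''(\beta_{\lambda_i})=\alpha'_i$ by Lemma \ref{lm:red2}, as already used in the proof of Lemma \ref{lem:gammai} — while the one underlying $\breve{\theta}_i$ is obtained from $u_{-\beta_{\lambda'_i}}(1)$ through $Ad_{\breve{w}_i''}$. Comparing these two bases of the one-dimensional space $\mathfrak{u}_{-\alpha'_i}$ yields exactly the displayed composition $u_{-\beta_{\lambda'_i}}^{-1}\circ Ad_{\breve{w}_i''}^{-1}\circ Ad_{\tilde{w}_i''}\circ u_{-\beta_{\lambda_i}}\colon k\to k$, which is manifestly multiplication by an element of $k^\times$, with the scalars of the maps $Ad(\dot{s}_\alpha)$ made explicit by the pinning and \cite[XXIII, \S 1.2]{SGAIII} (see Remark \ref{rem codim 1 comparison}(1)). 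This proves (a).

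\emph{Proof of (b).} By its definition \eqref{eqn Xi_i}, $\Upsilon_i$ is the identity endomorphism of $\HA_1(Th(\nu_i))=\KMW_1\otimes\ZA[Y_i]$ read off in the two coordinate systems, and therefore factors as the composite of two elementary operations. First, the change of orientation from $\tilde{\theta}_i$ to $\breve{\theta}_i$ affects only the $\KMW_1$-factor; by part (a) this change is multiplication by the constant $\tau_i\in k^\times$, so by Lemma \ref{lem:autthom} the induced automorphism of $\KMW_1\otimes\ZA[Y_i]$ is multiplication by $\<\tau_i\>$ on the $\KMW_1$-factor, uniformly (since $\tau_i$ is constant). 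Second, the change of lift from $\tilde{w}_i$ to $\breve{w}_i$ turns the identification $\ZA[Y_i]\cong\ZA[\tilde{w}_iT]$ into $\ZA[Y_i]\cong\ZA[\breve{w}_iT]$; as $\tilde{w}_i=\breve{w}_i\varsigma_i$, the point with $\tilde{w}_i$-coordinate $t$ has $\breve{w}_i$-coordinate $\varsigma_it$, so this operation is $[\tilde{w}_it]\mapsto[\breve{w}_i\varsigma_it]$, which by commutativity of $T$ is right multiplication by $[\varsigma_i]$ after the naive identification $[\tilde{w}_it]\mapsto[\breve{w}_it]$. Granting these two descriptions, (b) becomes a mechanical verification: writing $\tilde{\phi}_i(u,v)=(u)\otimes\big([\tilde{w}_i\alpha_i^\vee(v)]-[\tilde{w}_i\cdot1]\big)-\big(\eta(u)(v)\big)\otimes[\tilde{w}_i\cdot1]$ as in \eqref{eqn formula phi_i} and applying $\Upsilon_i$ replaces every first factor $a\in\KMW_1$ by $\<\tau_i\>a$ and every $[\tilde{w}_it]$ by $[\breve{w}_i\varsigma_it]$; using commutativity of $T$ (so that $\varsigma_i$ and $\alpha_i^\vee(v)$ commute) and the fact that $\eta$ is central — whence $\phi_i$ commutes with multiplication by $\<\tau_i\>$ — one recognises the outcome as $\breve{\phi}_i\big(\<\tau_i\>(u)(v)\big)\cdot[\varsigma_i]$. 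The relation of Lemma \ref{lem: varsigma} between $\varsigma_0$ and the $\varsigma_i$ is what makes this compatible with the degree-$0$ and degree-$1$ parts of the complex, but plays no role in the identity itself.

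The main obstacle is part (a): one has to keep careful track of which lift of which element of $W$ enters each of $\tilde{\gamma}_i$ and $\breve{\gamma}_i$, and of the precise way the normal bundle $\nu_{Y_i}$ is identified with the constant bundle $\mathfrak{u}_{-\alpha'_i}\otimes\sO_{Y_i}$ in each case — the same delicate bookkeeping that underlies Lemmas \ref{lem:comptub} and \ref{lem:gammai}. Once the comparison has been reduced, by the $B$-bi-equivariance of Proposition \ref{prop: normal bundle}, to a statement about the adjoint action of lifts in $N_G(T)$ on one-dimensional root spaces, everything is completely explicit, and part (b) is then pure formal manipulation in $\KMW_*$ and $\ZA[T]$.
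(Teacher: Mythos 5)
Your proposal is correct and follows essentially the same route as the paper, whose own proof of this proposition is little more than a citation of Lemma \ref{lem:autthom}, Lemma \ref{lem: varsigma} and the ``obvious variants'' of Proposition \ref{prop: normal bundle} and Lemma \ref{lem:comptub} for part $(a)$, and the formula $(u)\otimes[\tilde{w}_i\cdot t]\mapsto \<\tau_i\>(u)\otimes[\breve{w}_i\cdot(\varsigma_i t)]$ for part $(b)$. Your bi-$T$-equivariance argument showing the comparison unit is constant, and your restriction to $\dot{w}_iT$ to identify it with the displayed adjoint composition, are exactly the intended content of those variants, and your verification of $(b)$ is the same mechanical computation (noting that $\tau_i$ versus $\tau_i^{-1}$ is immaterial up to squares).
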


\begin{proof}
Part $(a)$ follows directly from Lemma \ref{lem:autthom}, Lemma \ref{lem: varsigma} and the obvious variants of Proposition \ref{prop: normal bundle} and Lemma \ref{lem:comptub}.  Part $(b)$ follows immediately from the fact that the isomorphism $\KMW_1\otimes \Z[\tilde{w}_iT] \stackrel{\simeq}{\to} \KMW_1\otimes \Z[\breve{w}_iT]$ is induced by
\[
(u)\otimes [\tilde{w}_i \cdot t] \mapsto \<\tau_i\>(u) \otimes [\breve{w}_i \cdot (\varsigma_i t)].
\]
\end{proof}

\begin{corollary} 
For any $j\not=i$, we have 
\[
\varpi_j(\varsigma_0) = \varpi_j(\varsigma_i)
\]
and for $j=i$, we have
\[
\varpi_i(\varsigma_0) \equiv \varpi_i(\varsigma_i)  ~\mathrm{ mod }~k^{\times 2}.
\]
In other words, there exists $\tau_i' \in k^{\times}$ such that 
\[
\varsigma_0 = \varsigma_i \cdot \alpha^\vee_i(\tau'_i) \in T
\]
with $\<\tau_i\> = \<\tau'_i\>$.
\end{corollary}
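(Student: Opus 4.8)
The statement is a direct corollary of the preceding Proposition, so the plan is to unwind the two formulas it provides and extract from them the asserted relations between the components $\varpi_j(\varsigma_0)$ and $\varpi_j(\varsigma_i)$. Recall that the characters $\varpi_j$ are precisely the fundamental weights, which form the basis of $\Hom(T,\G_m)$ dual to $\{\alpha^\vee_i\}_i$; so an element $t\in T$ is determined by the tuple $(\varpi_1(t),\dots,\varpi_r(t))$, and $\varpi_j(\alpha^\vee_i(\tau)) = \tau^{\langle \varpi_j,\alpha^\vee_i\rangle} = \tau$ if $j=i$ and $=1$ if $j\neq i$. Hence the last sentence of the statement---the existence of $\tau'_i\in k^\times$ with $\varsigma_0 = \varsigma_i\cdot\alpha^\vee_i(\tau'_i)$ and $\langle\tau_i\rangle = \langle\tau'_i\rangle$---is equivalent to the two displayed component-wise relations together with the identification $\tau'_i = \varpi_i(\varsigma_0)\varpi_i(\varsigma_i)^{-1}$ read off in the $i$-th slot.

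First I would establish the relations for $j\neq i$. Apply Lemma \ref{lem: varsigma}, which gives $[\varsigma_0](u)_i = [\varsigma_i](\langle\tau_i\rangle(u))_i$ in $\ZA[T]$ for every unit $u$. Expanding via the ring structure of $\ZA[T]$ described in Section \ref{subsection Z[T]} (the isomorphism \eqref{eqn ZT decomposition} and the formula \eqref{eq:ZA[T]}), and comparing the coefficient of $(\varpi_j(\varsigma_0))_j$-type terms---more precisely, projecting onto the summand $\KMW_1\otimes\bH_j$ obtained by pairing the $\G_m$-factors away from the $i$-th slot---one reads off that $\varpi_j(\varsigma_0)$ and $\varpi_j(\varsigma_i)$ must coincide for $j\neq i$, since $[\varsigma_0]$ and $[\varsigma_i]$ differ only by a multiple supported in the $i$-th torus factor. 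Concretely, write $\varsigma_0 = \prod_m \alpha^\vee_m(\varpi_m(\varsigma_0))$ and similarly for $\varsigma_i$; the equation forces $\varpi_m(\varsigma_0)=\varpi_m(\varsigma_i)$ for all $m\neq i$ directly, and in the $i$-th slot it forces $[\varpi_i(\varsigma_0)](u) = [\varpi_i(\varsigma_i)]\cdot[\langle\tau_i\rangle(u)]$ in $\ZA[\G_m] = \Z\oplus\KMW_1$, whence $\varpi_i(\varsigma_0)\equiv\varpi_i(\varsigma_i)\bmod k^{\times 2}$ and the class of the ratio equals $\langle\tau_i\rangle$.

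Having these two relations, the final assertion is immediate: set $\tau'_i := \varpi_i(\varsigma_0)\cdot\varpi_i(\varsigma_i)^{-1}\in k^\times$. Then $\varsigma_0$ and $\varsigma_i\cdot\alpha^\vee_i(\tau'_i)$ have the same image under every $\varpi_j$ (they agree for $j\neq i$ by the first relation, and for $j=i$ by construction), hence are equal in $T$; and $\langle\tau'_i\rangle = \langle\tau_i\rangle$ because the class of $\varpi_i(\varsigma_0)/\varpi_i(\varsigma_i)$ modulo squares is exactly $\langle\tau_i\rangle$, as extracted in the previous step. I expect the only genuine point of care to be the bookkeeping in the ring $\ZA[T]$: one must be attentive to the fact that a symbol $(u)_i(u)_i$ collapses to $(\eta(u)(u))_i$ and more generally to the $\epsilon$-graded relations, so that when projecting the identity of Lemma \ref{lem: varsigma} onto the various Hopf-subalgebra factors $\bH_m$ no cross terms are lost. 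This is entirely routine given the explicit description in Section \ref{subsection Z[T]}, and there is no deeper obstacle---the corollary is purely a matter of translating the Proposition's formulas from the language of orientations and twists into the language of coordinates on $T$.
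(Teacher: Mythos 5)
Your overall strategy coincides with the paper's: expand the identity of Lemma \ref{lem: varsigma} through the decomposition \eqref{eq:ZA[T]} of $\ZA[T]$ and compare components. However, the step you describe as happening ``directly'' is exactly where the content of the proof lies, and as written it is both circular and incomplete. Circular, because your justification in the case $j\neq i$ --- ``since $[\varsigma_0]$ and $[\varsigma_i]$ differ only by a multiple supported in the $i$-th torus factor'' --- is precisely the conclusion of the corollary, not a premise you may invoke. Incomplete, because the component comparison does not output an equality of units: projecting the identity $([\varsigma_0]-[\varsigma_i][\tau_i]_i)\cdot(u)_i=0$ onto the $\KMW_2$-summand of \eqref{eq:ZA[T]} indexed by $\{i,j\}$ (this, rather than ``$\KMW_1\otimes\bH_j$'', is the relevant summand) yields the symbol identity $(\varpi_j(\varsigma_0))(u)=(\varpi_j(\varsigma_i))(u)$ in $\KMW_2(F)$, and projecting onto the $\KMW_1$-summand indexed by $i$ yields $\<\varpi_i(\varsigma_0)\>(u)=\<\varpi_i(\varsigma_i)\tau_i\>(u)$ in $\KMW_1(F)$, each valid for all units $u$ in all finitely generated extensions $F$. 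For a single $u$ neither identity constrains the units at all (take $u=1$).

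The missing ingredient is the passage from these universally quantified symbol identities to statements about the units themselves: one restricts to the purely transcendental extension $F(t)$, takes $u=t$, and applies the residue homomorphism at the place $t$. This recovers $(\varpi_j(\varsigma_0))=(\varpi_j(\varsigma_i))$ in $\KMW_1(F)$, hence $\varpi_j(\varsigma_0)=\varpi_j(\varsigma_i)$ in $\G_m$ because the symbol map splits through $\KM_1=\G_m$; and it recovers $\<\varpi_i(\varsigma_0)\>=\<\varpi_i(\varsigma_i)\tau_i\>$ in $\GW(k)$, which is exactly the congruence modulo squares (and no more --- note the $i$-th slot genuinely only sees the square class, which is why the corollary cannot assert equality on the nose there). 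This is the same residue argument the paper deploys in the proof of Theorem \ref{thm: partial_2}. Once this step is supplied, your final assembly --- setting $\tau'_i:=\varpi_i(\varsigma_0)\varpi_i(\varsigma_i)^{-1}$ and checking all fundamental weights --- is correct.
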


\begin{proof} 
One expends the equation $([\varsigma_0] - [\varsigma_i][\tau_i]_i) \cdot (u)_i = 0$ in $\ZA[T]$ using the decomposition \eqref{eq:ZA[T]}. For $t = (t_1, \ldots, t_r)\in T$, with $t_i$ in the image of the coroot $\alpha^\vee_i$, the component of $[t](u)_i$ in the summand $\KMW_1$ corresponding to the index $j$ in (\ref{eq:ZA[T]}) is, $0$ for each $j\not=i$, and is $(u) + \eta(t_i)(u) = \<t_i\>(u)$ for $j=i$.  The component of $[t](u)_i$ in the summand $\KMW_2$ corresponding to the index $(i,j)$ in (\ref{eq:ZA[T]}) for $i\not=j$ is $(t_j)(u)$ (up to permutation, depending on whether $i<j$ or $j<i$). Thus, for a given $i$, the equation $([\varsigma_0] - [\varsigma_i][\tau_i]_i)\cdot (u)_i = 0$ for every $u \in F^{\times}$ implies that for each $j\not=i$, we have
\[
\varpi_j(\varsigma_0)(u) = \varpi_j(\varsigma_i)(u) \in \KMW_2,
\]
for every $u \in F^{\times}$.  By restricting to a purely transcendental extension of $F$ and taking a suitable residue, it follows that $(\varpi_j(\varsigma_0)) = (\varpi_j(\varsigma_i))$ in $\KMW_1$ and hence, $\varpi_j(\varsigma_0) = \varpi_j(\varsigma_i)$ in $\G_m$.  On the other hand, for $j=i$, we have
\[
\<\varpi_j(\varsigma_0)\>(u) = \<\varpi_j(\varsigma_i)\>\<\tau_i\>(u) \in \KMW_1,
\]
for every $u \in F^{\times}$ and consequently, in the same way as above, we have $\<\varpi_j(\varsigma_0)\>(u) = \<\varpi_j(\varsigma_i)\tau_i\>$ in $\GW(k)$. This proves the claim.
\end{proof}

\begin{remark} 
\label{rem tau_i}
As one sees above, $\tau_i$ is only well-defined up to squares (since it depends upon the weak pinning). So in fact we may always choose $\tau_i \in k^{\times}$ so that we have $\varsigma_0 = \varsigma_i \cdot \alpha^\vee_i(\tau_i)$ as elements of $T$. Thus, we have 
\[
[\varsigma_0] = [\varsigma_i] \cdot [\tau_i]_i
\]
in $\ZA[T]$.  We will fix such a choice of $\tau_i$, for every $i \in \{1, \ldots, r\}$, in what follows.
\end{remark}

Using these observations, we may rewrite the above change of reduced expression isomorphism \eqref{eqn Xi_i} as well as the morphisms $\phi_i$ and $\delta_{ij}$ (see \eqref{eqn formula phi_i} and \eqref{eqn formula delta_ij}) as follows.

\begin{proposition}
\label{prop chage of expression}
With the above notation, we have the following, for all units $u, v$ in a finitely generated field extension of $k$.
\begin{enumerate}[label=$(\alph*)$]
\item For every $i \in \{1, \ldots, r\}$, we have
\[
\Upsilon_i ((u)\otimes [\tilde{w}_i \cdot 1]) =  (u) \otimes [\breve{w}_i \cdot \varsigma_0] -  \breve{\phi}_i(u,\tau_i) \cdot [\varsigma_i].
\]
\item For every $i \in \{1, \ldots, r\}$, we have
\[
\Upsilon_i(\tilde{\phi}_i(u,v)) = \breve{\phi}_i (u,v) \cdot [\varsigma_0]  - \breve{\phi}_i(u,\tau_i)\cdot[\varsigma_i](v)_i + \breve{\phi}_i(\eta(u)(v)(\tau_i))\cdot [\varsigma_i]
\]
in $\KMW_1\otimes \ZA[\breve{w}_iT]$.
\item For $i\not= j$, we have:
\[
\Upsilon_i(\tilde{\delta}_{ij}(u,v)) = \breve{\delta}_{ij}(u,v)\cdot[\varsigma_0] - \breve{\phi}_i(u,\tau_i)\cdot[\varsigma_i](v)_j + \breve{\phi}_j(v,\tau_i)\cdot[\varsigma_j](u)_i.
\]
\end{enumerate}
\end{proposition}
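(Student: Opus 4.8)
\textbf{Proof plan for Proposition \ref{prop chage of expression}.}
The strategy is entirely computational: unwind the definitions of the isomorphism $\Upsilon_i$ and of the morphisms $\tilde{\phi}_i$, $\tilde{\delta}_{ij}$ in the new basis, and reorganize the resulting expressions using the structure of $\ZA[T]$ from Section \ref{subsection Z[T]} together with Remark \ref{rem tau_i}. The only real input beyond bookkeeping is the formula established just before the statement, namely that $\Upsilon_i$ is induced by $(u)\otimes [\tilde{w}_i \cdot t] \mapsto \<\tau_i\>(u) \otimes [\breve{w}_i \cdot (\varsigma_i t)]$, together with the relation $[\varsigma_0] = [\varsigma_i][\tau_i]_i$ in $\ZA[T]$ fixed in Remark \ref{rem tau_i} and the identity $\<\tau_i\>(u) = (\tau_i u) - (\tau_i) = (u) + \eta(u)(\tau_i)$ coming from the defining relation of Milnor--Witt $K$-theory (see \cite[Lemma 3.5]{Morel-book}).

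For part $(a)$: apply $\Upsilon_i$ to $(u)\otimes[\tilde{w}_i\cdot 1]$ to get $\<\tau_i\>(u)\otimes[\breve{w}_i\cdot\varsigma_i]$. Write $\<\tau_i\>(u) = (u) + \eta(u)(\tau_i)$, and separately expand $[\breve{w}_i\cdot\varsigma_i] = [\breve{w}_i\cdot 1]\,[\varsigma_i]$ using the right $\ZA[T]$-module structure, while on the other hand $[\breve{w}_i\cdot\varsigma_0] = [\breve{w}_i\cdot 1]\,[\varsigma_0] = [\breve{w}_i\cdot 1]\,[\varsigma_i]\,[\tau_i]_i$. The definition of $\breve\phi_i$ from \eqref{eqn formula phi_i} is precisely $\breve\phi_i(u,\tau_i) = (u)\otimes\breve{w}_i\cdot(\tau_i)_i - \eta(u)(\tau_i)\otimes[\breve{w}_i\cdot 1]$, so subtracting $\breve\phi_i(u,\tau_i)\cdot[\varsigma_i]$ exactly converts $(u)\otimes[\breve{w}_i\cdot\varsigma_0]$ into $(u)\otimes[\breve{w}_i\cdot\varsigma_i] + \eta(u)(\tau_i)\otimes[\breve{w}_i\cdot\varsigma_i]$, which matches $\Upsilon_i((u)\otimes[\tilde{w}_i\cdot 1])$. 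Part $(b)$ then follows by applying $\Upsilon_i$ termwise to $\tilde\phi_i(u,v) = (u)\otimes\tilde{w}_i\cdot(v)_i - \eta(u)(v)\otimes[\tilde{w}_i\cdot 1]$, using part $(a)$ (with $u$ replaced by the appropriate symbol, and noting $(v)_i$ is carried along by the module structure $\KMW_1$ receives through the character $\alpha_i$), and collecting the correction terms; the term $\breve\phi_i(\eta(u)(v)(\tau_i))\cdot[\varsigma_i]$ arises from the iterated use of $\<\tau_i\>(u)(v)_i = (u)(v)_i + \eta(u)(v)(\tau_i)$, where one uses $(v)_i(\tau_i)_i = (\eta(v)(\tau_i))_i$ from Section \ref{subsection Z[T]}. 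Part $(c)$ is the same mechanism applied to $\tilde\delta_{ij}(u,v) = (u)\otimes\tilde{w}_i\cdot(v)_j - (v)\otimes\tilde{w}_j\cdot(u)_i$, invoking part $(a)$ once for the index $i$ and once for the index $j$; since $i\neq j$, the symbols $(u)_i$ and $(v)_j$ land in different tensor factors of $\ZA[T]$ and commute, so the two correction terms $\breve\phi_i(u,\tau_i)\cdot[\varsigma_i](v)_j$ and $\breve\phi_j(v,\tau_i)\cdot[\varsigma_j](u)_i$ appear with the signs indicated and no further cross terms survive.

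The main obstacle is purely organizational: one must be scrupulous about the left versus right $\ZA[T]$-module structures (the left structure on the $\KMW_1$ factor being via a character $\alpha'_i$ or $\alpha_i$, per Remark \ref{rem important1}) and about the order in which symbols are multiplied, since $\epsilon$-graded commutativity in Milnor--Witt $K$-theory introduces signs $\<-1\>$ that must cancel correctly. I would carry out part $(a)$ in full detail as the template, then present $(b)$ and $(c)$ as direct consequences, relegating the verification that no spurious terms remain to a remark that it follows from the decomposition \eqref{eq:ZA[T]} of $\ZA[T]$, exactly as in the proofs of Theorems \ref{thm: partial_2} and \ref{thm:diff2}. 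No genuinely new idea is needed; the proposition is a compatibility statement packaging the change-of-reduced-expression isomorphism against the already-computed generators of $Z_1(G)$.
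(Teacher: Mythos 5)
Your proposal is correct and follows essentially the same route as the paper: both rest on the explicit formula $\Upsilon_i\big((u)\otimes[\tilde{w}_i\cdot t]\big)=\langle\tau_i\rangle(u)\otimes[\breve{w}_i\cdot(\varsigma_i t)]$, the relation $[\varsigma_0]=[\varsigma_i][\tau_i]_i$ from Remark \ref{rem tau_i}, and the expansion $\langle\tau_i\rangle=1+\eta(\tau_i)$, with the correction terms recognized as $\breve{\phi}_i(u,\tau_i)\cdot[\varsigma_i]$ exactly as in the paper's telescoping computation for part $(a)$, and parts $(b)$ and $(c)$ handled termwise by the same mechanism. The only difference is cosmetic: you verify that the stated right-hand side collapses to $\langle\tau_i\rangle(u)\otimes[\breve{w}_i\cdot\varsigma_i]$ rather than transforming the left-hand side forward, which is the same identity read in the opposite direction.
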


\begin{proof} We have $\Upsilon_i ((u)\otimes [\tilde{w}_i\cdot 1]) = \<\tau_i\>(u) \otimes [\breve{w}_i \cdot \varsigma_i] $. Since $\<\tau_i\> = 1 + \eta(\tau_i)$, we get
\[
\begin{split}
\Upsilon_i ((u)\otimes [\tilde{w}_i \cdot 1]) = & ~ \<\tau_i\>(u) \otimes [\breve{w}_i \cdot \varsigma_i]\\
= & ~ (u) \otimes [\breve{w}_i \cdot \varsigma_i] + (\eta(u)(\tau_i))\otimes [\breve{w}_i \cdot \varsigma_i] \\
= & ~ (u) \otimes [\breve{w}_i \cdot \varsigma_i] + (u) \otimes [\breve{w}_i \cdot \varsigma_i](\tau_i)_i - (u) \otimes [\breve{w}_i \cdot \varsigma_i](\tau_i)_i \\
& ~ + (\eta(u)(\tau_i))\otimes [\breve{w}_i \cdot \varsigma_i]\\
= & ~ (u) \otimes [\breve{w}_i \cdot \varsigma_i] (1 + (\tau_i)_i) - \breve{\phi}_i(u,\tau_i) \cdot  [\varsigma_i]\\
= & ~ (u) \otimes [\breve{w}_i \cdot \varsigma_i] [\tau_i]_i - \breve{\phi}_i( (u,\tau_i) \cdot  [\varsigma_i]\\
= & ~ (u) \otimes [\breve{w}_i  \cdot \varsigma_0]- \breve{\phi}_i(u,\tau_i) \cdot  [\varsigma_i],
\end{split}
\]
by Remark \ref{rem tau_i}.  This proves $(a)$.  Parts $(b)$ and $(c)$ follow similarly by using the relation $\<\tau_i\> \cdot [\varsigma_i] = [\varsigma_0]$, for each $i$ and the formulas for $\breve{\phi}_i$ and $\breve{\delta}_{ij}$ analogous to \eqref{eqn formula phi_i} and \eqref{eqn formula delta_ij}.
\end{proof}

\begin{corollary}
\label{cor:changeexpression} 
The automorphism
\[
\overline{\Upsilon}:\left(\underset{i}{\oplus}~ \KMW_2 \right) \oplus \left( \underset{(i,j); ~i<j}{\oplus} \KM_2 \right) \to \left(\underset{i}{\oplus}~ \KMW_2 \right) \oplus \left( \underset{(i,j); ~i<j}{\oplus} \KM_2 \right)
\]
induced by the change of reduced expression isomorphism on $\Ccell_1(G)$: 
\[
\underset{i}{\oplus}~ \Upsilon_i: \underset{i}{\oplus}~ \KMW_1\otimes \ZA[\tilde{w}_iT] \xrightarrow{\simeq} \underset{i}{\oplus}~ \KMW_1\otimes \ZA[\breve{w}_iT]
\]
through the identification $\left(\underset{i}{\oplus}~ \KMW_2 \right) \oplus \left( \underset{(i,j); ~i<j}{\oplus} \KM_2 \right) \cong Z_1(G)\otimes_{\ZA[T]} \Z$ given by Lemma \ref{lem:Z1modT} satisfies the following, for any finitely generated field extension $F$ of $k$ and $u, v \in F^{\times}$.
\begin{enumerate}[label=$(\alph*)$]
\item For every $i$, we have
\[
\overline{\Upsilon}(\tilde{\phi}_i(u,v)) = \breve{\phi}_i(\<\tau_i\>(u)(v)) \in \KMW_2.
\]
\item For every $i \neq j$, we have
\[
\overline{\Upsilon}(\tilde{\delta}_{ij}(u,v) = \breve{\delta}_{ij}(u,v) \in \KM_2. 
\]
\end{enumerate}
\end{corollary}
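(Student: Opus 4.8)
The plan is to derive Corollary \ref{cor:changeexpression} formally from Proposition \ref{prop chage of expression} by applying $-\otimes_{\ZA[T]}\Z$. First I would recall that $\underset{i}{\oplus}\,\Upsilon_i$ is the degree $1$ component of the coordinate change relating the oriented cellular $\A^1$-chain complex $\Ccell_*(G)$ in its two presentations, the one built from the reduced expressions $\{\tilde w\}$ and the one built from $\{\breve w\}$. Since the underlying complex $\Ccell_*(G)$ is intrinsic, this isomorphism intertwines the two coordinate forms of $\partial_1$ and hence carries the subsheaf $Z_1(G)=\Ker(\partial_1)$ onto $Z_1(G)$; by construction, the automorphism it induces on $Z_1(G)\otimes_{\ZA[T]}\Z$, after transporting the two sides through the corresponding instances of the isomorphism of Lemma \ref{lem:Z1modT}, is exactly $\overline{\Upsilon}$.

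Then I would feed the explicit formulas of Proposition \ref{prop chage of expression}(b)--(c) into $-\otimes_{\ZA[T]}\Z$. Only three facts are needed. $(i)$ The augmentation $\epsilon\colon\ZA[T]\to\Z$ is a ring homomorphism with $\epsilon([t])=1$ for every $t\in T$, so $\epsilon([\varsigma_0])=\epsilon([\varsigma_i])=\epsilon([\varsigma_j])=1$ while $(v)_i,(v)_j,(u)_i$ lie in the augmentation ideal; in particular each correction term of the shape $\breve\phi_i(u,\tau_i)\cdot[\varsigma_i](v)_i$ is annihilated by $-\otimes_{\ZA[T]}\Z$. $(ii)$ $Z_1(G)$ is a $\ZA[T]$-submodule of $\Ctcell_1(G)$ and each $\breve\phi_i(\xi)$ and $\breve\delta_{ij}(u,v)$ already lies in it, so tensoring with $\Z$ is compatible with restriction to $Z_1(G)$. $(iii)$ By Lemma \ref{lem:Z1modT} applied with the $\{\breve w\}$-expressions, the class of $\breve\phi_i(\xi)$ in $Z_1(G)\otimes_{\ZA[T]}\Z$ is $\xi$ in the $i$-th $\KMW_2$-summand and the class of $\breve\delta_{ij}(u,v)$ is $(u)(v)$ in the corresponding $\KM_2$-summand. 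Substituting $(i)$--$(iii)$ into Proposition \ref{prop chage of expression}(c) kills the last two terms and gives part (b) at once. Substituting into Proposition \ref{prop chage of expression}(b) kills the middle term and leaves $\breve\phi_i((u)(v))+\breve\phi_i(\eta(u)(v)(\tau_i))$, which I would rewrite as $\breve\phi_i\bigl((u)(v)+\eta(\tau_i)(u)(v)\bigr)=\breve\phi_i(\<\tau_i\>(u)(v))$, using additivity of $\phi_i$ in its Milnor--Witt $K$-theory variable, the $\epsilon$-graded commutativity identity $(u)(v)(\tau_i)=(\tau_i)(u)(v)$ in $\KMW_3$ (two transpositions of degree-one symbols, each contributing $\epsilon$, with $\epsilon^2=\<1\>=1$), and $\<\tau_i\>=1+\eta(\tau_i)$. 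As elsewhere in this section, it suffices to verify such equalities of morphisms of strictly $\A^1$-invariant sheaves on sections over finitely generated field extensions of $k$, which is how the units $u,v$ enter the statement.

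I do not expect a real obstacle: the content is entirely in Proposition \ref{prop chage of expression}, and the corollary is a bookkeeping exercise on top of it. The only point deserving a little care is keeping track of the right $\ZA[T]$-module structures appearing in that proposition, i.e. confirming that each correction factor such as $[\varsigma_i](v)_i$ is indeed an element of the augmentation ideal (so that it vanishes after $-\otimes_{\ZA[T]}\Z$) and that the twists $[\varsigma_0],[\varsigma_i],[\varsigma_j]$ act as the identity on the quotient. Once that is checked, the corollary follows by the substitution above.
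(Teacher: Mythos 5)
Your proposal is correct and follows essentially the same route as the paper: apply $-\otimes_{\ZA[T]}\Z$ to the formulas of Proposition \ref{prop chage of expression}, observe that the correction terms vanish because $(u)_i$, $(v)_i$, $(v)_j$ lie in the augmentation ideal while $[\varsigma_0]$, $[\varsigma_i]$, $[\varsigma_j]$ act as the identity on the quotient. Your explicit verification that the two surviving terms in part $(a)$ combine to $\breve{\phi}_i(\<\tau_i\>(u)(v))$ via $\epsilon$-graded commutativity and $\<\tau_i\>=1+\eta(\tau_i)$ is a detail the paper leaves implicit, but it is the same argument.
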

\begin{proof}
This is a direct consequence of the formulas in Proposition \ref{prop chage of expression}, using the observation that after going modulo $T$ (that is, applying $-\otimes_{\ZA[T]}\Z$), one has $$\breve{\phi}_i(u,\tau_i)[\varsigma_i](v)_j = 0$$ and $$\breve{\phi}_j(v,\tau_j)[\varsigma_j](u)_i = 0,$$ since $(u)_i = [u]_i - [1]$ and $(v)_j = [v]_j - [1]$ are both in the augmentation ideal; that is, the kernel of the morphism $\ZA[T] \to \Z$. 
\end{proof}

We are now set to reduce to the case $\lambda_i>\lambda_j$ already treated above using the following lemma and the computations on change of reduced expression for $w_0$ carried out above.

\begin{lemma} 
Let $[i,j]$ be a pair of indices such that $\lambda_i<\lambda_j$ in the sense of Notation \ref{notation horizontal codim 1} with respect to the reduced expression $w_0 = \sigma_\ell\cdots\sigma_1$ given by \eqref{eq:redw0}.  There exists another reduced expression $\sigma'_\ell\cdots\sigma'_1$ of $w_0$ for which, with obvious notations, one has $\lambda'_i>\lambda'_j$.
\end{lemma}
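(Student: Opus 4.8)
The claim is a purely combinatorial statement about the Weyl group $W$ and a chosen reduced expression of the longest element $w_0$, so the proof will take place entirely inside $W$ with no reference to the group scheme $G$. The plan is to produce the new reduced expression of $w_0$ by a single ``braid-type'' move applied to the given one $w_0 = \sigma_\ell\cdots\sigma_1$. First I would recall the setup: $\lambda_i$ (resp.\ $\lambda_j$) is the unique position such that deleting $\sigma_{\lambda_i}$ (resp.\ $\sigma_{\lambda_j}$) from the expression yields a reduced expression of $w_i = w_0 s_i$ (resp.\ $w_j = w_0 s_j$), these positions being well defined by Lemma \ref{lm:red1}. Our hypothesis is $\lambda_i < \lambda_j$, and we want a reduced expression $w_0 = \sigma'_\ell\cdots\sigma'_1$ with $\lambda'_i > \lambda'_j$. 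Since $\ell(w_0 w) = 2$ forces $w = w_0 s_i s_j$ with $i \neq j$ (as recalled via \cite[Proposition 1.77]{Abramenko-Brown}), the two codimension-one cells whose closures contain the codimension-two cell indexed by $[i,j]$ are exactly $Y_i$ and $Y_j$; equivalently, $w_0 s_i$ and $w_0 s_j$ are the only elements of length $\ell-1$ lying above $w_0 s_i s_j$ in the Bruhat order. This is the structural fact that makes the swap possible.

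The key step is the following: consider the subword of $w_0 = \sigma_\ell\cdots\sigma_1$ lying strictly between positions $\lambda_j$ and $\lambda_i$, together with $\sigma_{\lambda_i}$ and $\sigma_{\lambda_j}$ themselves. Because removing $\sigma_{\lambda_i}$ gives a reduced word for $w_0 s_i$ and then (by Notation \ref{notation lambda_i mu_j} and Lemma \ref{lm:lambdaij}) removing $\sigma_{\mu_j}$ from \emph{that} word gives a reduced word for $w_0 s_i s_j$, and since $\lambda_i < \lambda_j$ implies (Lemma \ref{lm:lambdaij}, contrapositive) that $\mu_j \neq \lambda_j$, one sees that the factor of $w_0$ occupying positions $\lambda_j$ down to $\lambda_i$ is a reduced word $v$ of some element admitting \emph{two} distinct reduced expressions whose deletions realize $s_i$ on the right after conjugation. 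Concretely, I would show that $\sigma_{\lambda_j}\cdots\sigma_{\lambda_i}$ and the word obtained by the two-fold deletion satisfy a relation of the form $\sigma_{\lambda_j}(\sigma_{\lambda_j-1}\cdots\sigma_{\lambda_i+1}) = (\text{something})\,\sigma_{\lambda_i}$ in $W$, which by Matsumoto/Tits' theorem means the two reduced words are connected by braid moves; performing those braid moves inside $w_0$ (which does not change $w_0$ and preserves reducedness) produces a new reduced expression $\sigma'_\ell\cdots\sigma'_1$ in which the roles of the two ``missing'' positions are exchanged, so that $\lambda'_i > \lambda'_j$. Alternatively, and perhaps more cleanly, I would argue directly: since $w_0 s_i s_j = w_0 s_j (s_j s_i s_j)$ and $s_j(\alpha_i) \in \Phi^+$ (the same fact invoked in Remark \ref{rem:red2} and used in the proof of Theorem \ref{thm: partial_2}(c)), the reflection $s_j s_i s_j$ is a simple-conjugate reflection; one can start from a reduced expression of $w_0 s_j$ (obtained by deleting $\sigma_{\lambda_j}$), note that it extends on the right by a simple reflection to a reduced expression of $w_0 s_j s_i = $ some length-$\ell$ element, but $w_0 s_j s_i$ need not equal $w_0$; instead one uses that $w_0 s_i s_j$ has $w_0 s_j$ above it, deletes the appropriate letter of a reduced word for $w_0 s_j$ to reach $w_0 s_i s_j$, and then re-expands to $w_0$ by adding a letter in a position to the \emph{right} of where $s_i$ was removed. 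Keeping careful track of positions shows $\lambda'_j < \lambda'_i$.

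I would organize the write-up as follows: (1) reduce to the statement that there is a reduced word for $w_0$ in which deleting a letter at some position $p$ gives $w_0 s_j$ and deleting a letter at a position $q < p$ from that gives $w_0 s_i s_j$ — i.e.\ I want $\lambda'_j$ to be the larger index; translate this into the desired inequality $\lambda'_i > \lambda'_j$; (2) exhibit such a word. For (2), take the fixed word $w_0 = \sigma_\ell\cdots\sigma_1$ with $\lambda_i < \lambda_j$; delete $\sigma_{\lambda_j}$ to get a reduced word $\tau$ for $w_0 s_j$; now $w_0 s_i s_j < w_0 s_j$ with length drop one, so by Lemma \ref{lm:red1} there is a unique position in $\tau$ whose deletion gives a reduced word for $w_0 s_i s_j$ — call its original index $\mu_i$ (Remark \ref{rem:mu=lambda}), and since $\lambda_i < \lambda_j$ we have by the analogue of Lemma \ref{lm:lambdaij} that $\mu_i \neq \lambda_i$, in fact $\mu_i$ lies between $\lambda_i$ and $\lambda_j$; (3) now run the argument in the other direction starting from a reduced word for $w_0 s_i s_j$: it has exactly the two upper covers $w_0 s_i$ and $w_0 s_j$, so there are two ways to insert a letter to climb back to length $\ell-1$ and then a unique further insertion to reach $w_0$, and choosing to pass through $w_0 s_i$ \emph{last} (i.e.\ inserting the letter that produces $s_i$ on the right at the outermost appropriate spot) yields a reduced word $\sigma'_\ell\cdots\sigma'_1$ for $w_0$ with $\lambda'_i > \lambda'_j$. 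The main obstacle I anticipate is the bookkeeping in step (3): ensuring that the two successive insertions can be arranged so that the $s_i$-insertion ends up at a \emph{larger} index than the $s_j$-insertion, which requires knowing that the two upper covers are genuinely independent and that the insertion positions can be chosen in either order — this is exactly where the hypothesis ``$G$ almost simple with connected Dynkin diagram'' and the structure of the interval $[w_0 s_i s_j, w_0]$ (a length-two interval, hence a ``diamond'' with exactly two intermediate elements) gets used. Once the diamond structure of $[w_0 s_i s_j,\,w_0]$ is in hand, the reduced-word manipulation is routine via the strong exchange condition, so I would state that interval structure as the one nontrivial input and keep the rest brief.
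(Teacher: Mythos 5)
Your overall strategy --- pure reduced-word combinatorics around the length-two Bruhat interval $[w_0s_is_j,\,w_0]$ --- is in the right ballpark, but the execution has the inequality running the wrong way at exactly the two places where it matters, so the construction you describe would produce a witness for $\lambda'_i<\lambda'_j$, i.e.\ for what you already have. With the convention $w_0=\sigma_\ell\cdots\sigma_1$ (position $1$ rightmost), one has $\lambda'_j=1$ precisely when $\sigma'_1=s_j$, since deleting the rightmost letter then yields $w_0s_j$ and Lemma \ref{lm:red1} gives uniqueness. Your step (3) says to ``pass through $w_0s_i$ last, inserting the letter that produces $s_i$ on the right at the outermost spot'': the only insertion realizing right multiplication by $s_i$ whose correctness is immediate is appending $s_i$ at position $1$, which forces $\lambda'_i=1<\lambda'_j$ --- the opposite of the goal. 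Your step (1) has the same reversal: by Remark \ref{rem:mu=lambda}, asking that the deletion position for $w_0s_is_j$ inside the word for $w_0s_j$ be \emph{smaller} than $\lambda'_j$ is equivalent to $\lambda'_i<\lambda'_j$. Beyond the direction issue, the climb $w_0s_is_j\to w_0s_j$ is right multiplication by the non-simple reflection $s_js_is_j$, so ``inserting a letter'' there is not a single application of the exchange condition; and even granting some insertion, the position whose deletion from the final word recovers $w_0s_j$ need not be the position you inserted at, so $\lambda'_j$ in the new word is never actually identified. The appeal to Matsumoto--Tits is likewise not a construction: braid-connectedness of all reduced words does not tell you which moves to perform nor how $\lambda_i,\lambda_j$ change under them.

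The repair is short. Since $\ell(w_0s_j)=\ell-1$, appending $s_j$ on the right of any reduced expression of $w_0s_j$ gives a reduced expression of $w_0$ with $\sigma'_1=s_j$, hence $\lambda'_j=1$; and $\lambda'_i\neq\lambda'_j$ because deleting those positions yields the distinct elements $w_0s_i\neq w_0s_j$, so $\lambda'_i\geq 2>\lambda'_j$. The paper packages essentially this observation via the parabolic subgroup $W_{\{i,j\}}=\langle s_i,s_j\rangle$: writing $w_0=u_0v_0$ with $v_0$ the longest element of $W_{\{i,j\}}$ and $u_0$ of minimal length in its coset, one concatenates a reduced word of $u_0$ with the reduced word of $v_0$ ending in $s_j$, reducing everything to the rank-two case; this also keeps the new expression compatible with the old one, which is convenient for the change-of-orientation bookkeeping in the surrounding subsection. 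Finally, a small misattribution: the ``diamond'' property of length-two Bruhat intervals holds in any Coxeter group and does not use that $G$ is almost simple with connected Dynkin diagram.
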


\begin{proof} 
Let $W_{\{i,j\}}\subset W$ be the subgroup generated by $s_i$ and $s_j$.  Note that $W_{\{i,j\}}$ is also a Coxeter group, being the Weyl group of the subgroup $G_{ij}$ of $G$ of semi-simple rank $2$ generated by $T$, $S_{\alpha_i}$ and $S_{\alpha_i}$.  Let $v_0\in W_{\{i,j\}}$ be the longest element of $W_{\{i,j\}}$.  Then $u_0 = w_0v_0$ has the property that for any $v\in W_{\{i,j\}}$, $\ell(u_0v) = \ell(u_0) + \ell(v)$.  This follows easily from the properties of the longest word in Coxeter groups (see \cite[Proposition 2.20]{Abramenko-Brown}).

Now, multiplication by $u_0$ on the left induces a map $W_{\{i,j\}}\to W$ preserving length and takes $v_0$ to $w_0$. Given a reduced expression of $u_0$ and a reduced expression of $v_0$ in $W_{\{i,j\}}$ thus gives a reduced expression of $w_0$ in $W$.  In this way, we reduce the statement of the lemma to the rank $2$ case, as there are only two possible reduced expressions for $v_0$ in the rank $2$ case.  One easily verifies that one of these has the required property.
\end{proof}


We thus obtain the following analogue of Theorem \ref{thm:diff2} in the case $\lambda_i<\lambda_j$, using Corollary \ref{cor:changeexpression}.

\begin{theorem}
\label{thm:diff2'} 
Let $G$ and $F$ be as in Theorem \ref{thm: partial_2}.  Let $i$ and $j$ be distinct indices in $\{1,\dots,r\}$ and assume $\lambda_i<\lambda_j$ for our chosen reduced expression \eqref{eq:redw0} of $w_0$.  For any $u, v \in F^{\times}$, we denote by $\overline{\partial}_{ij} \big( (u)(v) \big)$ the class of
\[
\partial_2((u)\otimes (v)\otimes [\tilde{w}_{ij}\cdot 1])\in Z_1(G)
\]
in $Z_1(G)\otimes_{\ZA[T]} \Z$. Then for any $u, v \in F^{\times}$, we have (with obvious notations):
\begin{enumerate}[label=$(\alph*)$]
\item If $\alpha_i$ and $\alpha_j$ are not adjacent in the Dynkin diagram of $G$, then
\[
\overline{\partial}_{ij} \big( (u)(v) \big) = \overline{\delta}_{ji}(v,u)\in(\KM_2)_{ji} \subset Z_1(G)\otimes_{\ZA[T]} \Z.
\]
\item If $\alpha_i$ and $\alpha_j$ are adjacent in the Dynkin diagram of $G$ with $n_{ji}=2$, then
\[
\overline{\partial}_{ij} \big( (u)(v) \big) = \overline{\delta}_{ji}(v,u) + h \cdot \overline{\phi}_j(v,u) \in (\KM_2)_{ji} \oplus (\KMW_2)_j \subset Z_1(G)\otimes_{\ZA[T]} \Z.
\]
\item If $\alpha_i$ and $\alpha_j$ are adjacent in the Dynkin diagram of $G$ with $n_{ji}$ odd, then there exists $\tau_j\in k^\times$ such that:
\[
\overline{\partial}_{ij} \big( (u)(v) \big) = \overline{\delta}_{ji}(v,u) + (n_{ji})_\epsilon \<\tau_j\> \cdot \overline{\phi}_j(v,u) \in (\KM_2)_{ji} \oplus (\KMW_2)_j \subset Z_1(G)\otimes_{\ZA[T]} \Z.
\]
\end{enumerate}
\end{theorem}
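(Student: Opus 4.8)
The plan is to deduce Theorem \ref{thm:diff2'} from Theorem \ref{thm:diff2} by a purely formal change-of-reduced-expression argument, already prepared in the material preceding the statement. The point is that Theorem \ref{thm:diff2} computes $\overline{\partial}_{ij}$ only in the favorable case $\lambda_i > \lambda_j$ for the chosen reduced expression \eqref{eq:redw0} of $w_0$; to handle $\lambda_i < \lambda_j$ I would first invoke the last Lemma of the excerpt to produce an alternative reduced expression $w_0 = \sigma'_\ell \cdots \sigma'_1$ for which the new indices satisfy $\lambda'_i > \lambda'_j$. Everything then comes down to tracking how the isomorphism $Z_1(G) \otimes_{\ZA[T]} \Z \cong \big(\oplus_i \KMW_2\big) \oplus \big(\oplus_{i<j} \KM_2\big)$ of Lemma \ref{lem:Z1modT} transforms under this change of expression, which is exactly the content of Corollary \ref{cor:changeexpression}.

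The key steps, in order, would be: (1) fix the pair $[i,j]$ with $\lambda_i < \lambda_j$ and choose, via the cited Lemma, a reduced expression $w_0 = \sigma'_\ell\cdots\sigma'_1$ with $\lambda'_i > \lambda'_j$; here one uses the reduction to the rank $2$ subgroup $G_{ij}$ generated by $T$, $S_{\alpha_i}$, $S_{\alpha_j}$, since in rank $2$ there are only two reduced expressions of the longest word and one checks directly that one of them works. (2) Apply Theorem \ref{thm:diff2}, parts $(a)$, $(b)$, $(c)$, to the \emph{new} expression: this computes $\overline{\partial}_{ij}\big((u)(v)\big)$ in terms of $\breve{\delta}_{ji}$, $\breve{\phi}_j$ and (in the odd case) an explicit unit, living inside the decomposition of $Z_1(G)\otimes_{\ZA[T]}\Z$ associated with the new reduced expression. (3) Transport this back to the original decomposition using the automorphism $\overline{\Upsilon}$ of Corollary \ref{cor:changeexpression}; part $(a)$ there says $\overline{\Upsilon}(\tilde{\phi}_i(u,v)) = \breve{\phi}_i(\<\tau_i\>(u)(v))$ and part $(b)$ says $\overline{\Upsilon}(\tilde{\delta}_{ij}(u,v)) = \breve{\delta}_{ij}(u,v)$, so the $\KM_2$-component $\overline{\delta}_{ji}(v,u)$ is unaffected (units act trivially on $\KM_2$), while the $\KMW_2$-component picks up exactly the factor $\<\tau_j\>$ coming from the comparison of orientations of $\nu_j$. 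Collecting the constants — the $h$ in the even case is a multiple of $\eta$ in $\KMW_0$, hence unchanged under multiplication by $\<\tau_j\>$ since $\eta\<\tau_j\> = \eta$, so no $\tau_j$ appears in $(b)$; in the odd case $(n_{ji})_\epsilon$ is genuinely sensitive to $\<\tau_j\>$ — yields precisely the three formulas in the statement.

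The main obstacle I anticipate is bookkeeping rather than conceptual: one must be careful that the unit $\tau_j$ produced by the change of expression is the \emph{same} object (up to squares, hence well-defined on $\KMW_2$) that appears in Corollary \ref{cor:changeexpression}$(a)$ and in Remark \ref{rem tau_i}, and that it attaches to the index $j$ (the one whose $\lambda$ is smaller) and not to $i$; this is where the asymmetry between $\delta_{ji}$ and $\phi_j$ in the formulas is pinned down. A secondary subtlety is checking that the $\KM_2$-valued term really is insensitive to all the intervening units: this uses that $\GW(k)$ acts trivially on $\KM_2$ and that, modulo $T$, the ``correction'' terms $\breve{\phi}_i(u,\tau_i)[\varsigma_i](v)_j$ and $\breve{\phi}_j(v,\tau_j)[\varsigma_j](u)_i$ in Proposition \ref{prop chage of expression}$(c)$ vanish because $(u)_i$ and $(v)_j$ lie in the augmentation ideal, exactly as noted in the proof of Corollary \ref{cor:changeexpression}. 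Once these two points are in place, the proof is a two-line citation of Theorem \ref{thm:diff2} composed with $\overline{\Upsilon}$, and I would write it as such.
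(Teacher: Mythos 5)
Your proposal is correct and is essentially the paper's own proof: the paper likewise invokes the change-of-expression lemma to reduce to $\lambda'_i>\lambda'_j$, applies Theorem \ref{thm:diff2} there, and transports back through Corollary \ref{cor:changeexpression}, with the $\KM_2$-component untouched and the $\KMW_2$-component acquiring the factor $\<\tau_j\>$ (which survives only in the odd case). One small correction to your bookkeeping in case $(b)$: the reason no $\<\tau_j\>$ appears there is \emph{not} that ``$h$ is a multiple of $\eta$'' or that ``$\eta\<\tau_j\>=\eta$'' (both statements are false in $\KMW_*$), but rather that $h\cdot\<\tau_j\> = h + h\,\eta(\tau_j) = h$ because $h\eta=0$, which is exactly the identity the paper cites from Morel's book.
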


\begin{proof} 
If $\alpha_i$ and $\alpha_j$ are not adjacent in the Dynkin diagram of $G$, then $s_is_j = s_js_i$.  But then we have $w_{ij} = w_{ji}$ and we can always assume $\lambda_i>\lambda_j$ to obtain $(a)$ from Theorem \ref{thm:diff2}.  For $(b)$, we conclude from Theorem \ref{thm:diff2} and Corollary \ref{cor:changeexpression} that there exists $\tau_j\in k^\times$ such that
\[
\overline{\partial}_{ij} \big( (u)(v) \big) = \overline{\delta}_{ji}(v,u) + h \cdot \<\tau_j\> \cdot \overline{\phi}_j(v,u) \in (\KM_2)_{ji} \oplus (\KMW_2)_j \subset Z_1(G)\otimes_{\ZA[T]} \Z.
\]
But by \cite[Lemma 3.9 (ii)]{Morel-book}, we have $h \cdot \<\tau_j\> = h$ and $(b)$ is proved. Part $(c)$ is proved in exactly the same way and is left to the reader. 
\end{proof}

\section{The main theorem, applications and miscellaneous remarks}
\label{section applications}

\subsection{Proof of the main theorem} \hfill
\label{subsection proof} 

Let $S$ be a split, almost simple, simply connected semi-simple $k$-group of rank $1$. Such a group is (non-canonically) isomorphic to $\SL_2$  and thus, $\piA_1(S)$ is (non-canonically) isomorphic to $\KMW_2$.  The action of the adjoint group $\PGL_2$ on $\SL_2$ induces a nontrivial action on $\piA_1(\SL_2) = \KMW_2$.  However, the action of the adjoint group $\PGL_2$ on the quotient 
\[
\KM_2 = \KMW_2/\eta
\]
is trivial, and in fact, the induced epimorphism $\piA_1(S) \twoheadrightarrow \KM_2$ is canonical.

We are now ready to prove the main Theorem.  We will freely make use of basic results on root systems from \cite[Chapter VI]{Bourbaki}.

\begin{theorem} 
\label{thm: main} 
Let $G$ be a split, semisimple, almost simple, simply connected $k$-group, with a fixed maximal torus $T\subset G$ and a fixed Borel subgroup $B\subset G$ containing $T$. We denote by $\Phi$ the corresponding root system, by $\Phi^\vee$ its dual and by $\Delta\subset \Phi$ the set of simple roots corresponding to $B$.  Let $\alpha\in \Delta$ be a long root (so that $\alpha^\vee$ is a small coroot) and let $S_\alpha\subset G$ be the split, semisimple, almost simple, simply connected $k$-subgroup of rank $1$ generated by $U_\alpha$ and $U_{-\alpha}$.
\begin{enumerate}[label=$(\alph*)$]
\item If $G$ is not of symplectic type, then the inclusion $S_\alpha\subset G$ induces a canonical isomorphism
\[
\KM_2 \xrightarrow{\simeq} \piA_1(G)
\]
induced by the morphism $\piA_1(S_\alpha) \to \piA_1(G)$ through the canonical epimorphism $\piA_1(S_\alpha) \to \KM_2$.

\item If $G$ is of symplectic type, then the inclusion $S_\alpha\subset G$ induces a non-canonical isomorphism
\[
\KMW_2 \xrightarrow{\simeq} \piA_1(G).
\]
More precisely, an isomorphism $\KMW_2 \cong \piA_1(G)$ corresponds to the choice of a weak pinning of $S_\alpha$, or equivalently, an isomorphism $\SL_2\cong S_\alpha$ up to the adjoint action of $\PGL_2$. 
\end{enumerate}
\end{theorem}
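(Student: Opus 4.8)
The plan is to deduce Theorem \ref{thm: main} from the explicit presentation of $\Hcell_1(G) \cong \piA_1(G)$ obtained in Section \ref{section differential}. Since $G$ is $\A^1$-connected (being split, semisimple and simply connected, by Remark \ref{rem pi0}), we have $\piA_1(G) \cong \HA_1(G) \cong \Hcell_1(G)$, and by \eqref{eq: H1cokernel} this last sheaf is the cokernel of the map $\Ctcell_2(G)\otimes_{\ZA[T]}\Z \to Z_1(G)\otimes_{\ZA[T]}\Z$. Using Lemma \ref{lem:Z1modT}, the target is identified with $\left(\bigoplus_i \KMW_2\right) \oplus \left(\bigoplus_{i<j}\KM_2\right)$, where the $i$-th copy of $\KMW_2$ is the image of $\phi_i$ (coming from the rank-one subgroup $S_{\alpha_i}$) and the $\KM_2$ indexed by $(i,j)$ is the image of $\delta_{ij}$ (coming from $S_{\alpha_i,\alpha_j}$ of type $A_1\times A_1$). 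Theorems \ref{thm:diff2} and \ref{thm:diff2'} compute $\overline{\partial}_{ij}((u)(v))$ in all three cases: when $\alpha_i,\alpha_j$ are non-adjacent it kills the $(i,j)$-summand via $\overline{\delta}_{ji}$; when they are adjacent with $n_{ji}=2$ it gives $\overline{\delta}_{ji}(v,u) + h\cdot\overline{\phi}_j(v,u)$; and when adjacent with $n_{ji}$ odd it gives $\overline{\delta}_{ji}(v,u) + (n_{ji})_\epsilon\langle\tau_j\rangle\cdot\overline{\phi}_j(v,u)$ (with $(n_{ji})_\epsilon$ a unit in $\GW(k)$ when $n_{ji}$ is odd).

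First I would analyze the cokernel combinatorially. The relations $\overline{\delta}_{ji}$ for non-adjacent pairs kill every $\KM_2$-summand indexed by a non-adjacent pair. For adjacent pairs, the relation $\overline{\partial}_{ij}$ simultaneously kills the $\KM_2$-summand (via $\overline{\delta}_{ji}$) and identifies the $\KMW_2$-factor indexed by $j$ with a suitable quotient of itself: precisely, the image of $\phi_j$ gets hit by $h\cdot\overline{\phi}_j$ (hence $\KMW_2/h = \KM_2$ contributes, using $\KMW_2/h = \KMW_2/\eta$ via $\eta h = 0$ and the relation $h = 2+\eta(-1)$) in the doubly-laced case, or by the unit multiple $(n_{ji})_\epsilon\langle\tau_j\rangle\cdot\overline{\phi}_j$ (which is an automorphism of $\KMW_2$ when $n_{ji}$ is odd, since $\langle\,\cdot\,\rangle$ acts invertibly) in the odd case. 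Running through the irreducible Dynkin diagrams: if $G$ is not of symplectic type, every vertex $j$ is adjacent to some vertex $i$ with the bond either non-multiple or with $n_{ji}$ odd equal to $1$ or $3$ --- wait, one must be careful: the relevant arrow from $i$ to $j$ is the one in the $A_2$ or $G_2$ sub-configuration, and a short coroot $\alpha_j^\vee$ gets $h$-torsion or unit-identified so that $\KMW_2$ collapses to $\KM_2$; the remaining long-root $\KMW_2$'s all get identified with one another via the $A_2$-relations $\overline{\partial}$ (which carry no $\eta$-term, since $n_{ji}=1$ on an $A_2$-edge and $(1)_\epsilon = 1$), leaving a single copy of $\KM_2 = \piA_1(S_\alpha)/\eta$. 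In the symplectic case ($G$ of type $C_r$ with $r\geq 2$, or the anomalous handling of $B_r$ where the unique long simple root sits at the branch), there is exactly one long simple root $\alpha$ and the relations involving it never introduce an $\eta$ (the edge at $\alpha$ has $n = 1$ from the long side), so the corresponding $\KMW_2 = \piA_1(S_\alpha)$ survives intact, while all short-root $\KMW_2$'s collapse to $\KM_2$'s that are in turn identified away.

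Then I would handle the small-rank base cases separately, since the inductive/combinatorial argument above tacitly uses rank $\geq 3$: rank $1$ is the definition $\KMW_2 = \piA_1(\SL_2)$ together with the canonical quotient $\KMW_2 \twoheadrightarrow \KM_2$ (symplectic vs. non-symplectic being $C_1 = A_1$, so strictly speaking $\SL_2$ is ``of symplectic type'' and the answer is $\KMW_2$); rank $2$ type $A_2$ gives $\KM_2$ directly since $\Ccell_2(\SL_3)$ has the single relation $\overline{\delta}_{21}(v,u) + 1\cdot\overline{\phi}_2(v,u)$ (the $n_{21}=1$ case) --- one checks this collapses $\KMW_2\oplus\KMW_2\oplus\KM_2$ to $\KM_2$; rank $2$ type $C_2$ gives $\KMW_2$ from the long root after the $h$-relation kills the short-root factor; and type $G_2$ ($n=3$) is a direct computation with $(3)_\epsilon\langle\tau\rangle$ being a unit, collapsing the short factor and leaving $\KM_2$ for the long root. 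Finally, for the surjectivity/naturality claims in Theorem \ref{theorem intro precise}: the composite $\piA_1(S_\alpha) \to \piA_1(S_{\alpha,\beta}) \to \piA_1(G)$ is an epimorphism because already $\piA_1(S_{\alpha}) \to \Hcell_1$ hits a generator, and the resulting isomorphism is canonical in the non-symplectic case because the $A_2$-relation has no $\eta$-correction and units act trivially on $\KM_2$, while in the symplectic case the dependence on the weak pinning of $S_\alpha$ enters exactly through the $\langle a\rangle$-action described in Remark \ref{remark intro SL2}. \textbf{The main obstacle} I anticipate is the bookkeeping in the combinatorial collapse: verifying uniformly, diagram by diagram, that the web of $\overline{\delta}$ and $\overline{\phi}$ relations has cokernel exactly $\KM_2$ (resp. $\KMW_2$), in particular checking that no spurious $\eta$-term sneaks in along a chain of identifications in the non-symplectic case (equivalently, that every long-root $\KMW_2$ reaches a short root, hence an $h$- or unit-relation, or is linked to another long root by an $\eta$-free $A_2$-relation) and, conversely, that in the symplectic case the single long-root factor is never linked by such a relation to anything $\eta$-divisible. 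This is where one must invoke the classification of irreducible root systems and the precise shape of the Cartan integers $n_{ji}\in\{0,1,2,3\}$.
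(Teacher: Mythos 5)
Your strategy is the paper's own: identify $\piA_1(G)\cong\Hcell_1(G)$ with the cokernel of $\Ctcell_2(G)\otimes_{\ZA[T]}\Z\to Z_1(G)\otimes_{\ZA[T]}\Z$ via \eqref{eq: H1cokernel} and Lemma \ref{lem:Z1modT}, feed in the relations of Theorems \ref{thm:diff2} and \ref{thm:diff2'}, and run through the irreducible Dynkin diagrams. The only structural difference is that the paper does not manipulate the sheaf-level cokernel directly: it applies $\Hom_{Ab_{\A^1}(k)}(-,M)$ for an arbitrary strictly $\A^1$-invariant $M$, solves the resulting linear system in the abelian groups $M_{-2}(k)$ and ${}_\eta M_{-2}(k)$, and concludes by Yoneda. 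That linearization is precisely what keeps the bookkeeping honest, and it is in the direct sheaf-level manipulation that your write-up goes wrong.

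The genuine error is the assertion that $\KMW_2/h=\KMW_2/\eta=\KM_2$ ``via $\eta h=0$''. This is false: $\KMW_2/\eta=\KM_2$ but $\KMW_2/h\cong\mathbf{I}^2$, and the relation $\eta h=0$ only says the two elements annihilate each other, not that they generate the same ideal. Worse, the mechanism you extract from it --- that a long-root $\KMW_2$ collapses to $\KM_2$ because it ``reaches a short root, hence an $h$-relation'' --- would apply verbatim to the unique long root of $C_r$ and collapse the symplectic case to $\KM_2$ as well, erasing the very dichotomy the theorem asserts. The correct mechanism in the non-symplectic doubly-laced types is the one you list only as an alternative: the long root is adjacent to \emph{another long root} by a single bond, and that relation identifies $\overline{\phi}_j$ (up to a unit) with a $\overline{\delta}$, which factors through a $\KM_2$-summand and therefore kills $\eta$ on the long-root factor; the $h$-relation at the double bond merely expresses the short-root factor and the adjacent $\KM_2$-summand as $h$ times the long-root factor and imposes nothing further on the latter (in $G_2$ the $\eta$-killing comes instead from $\eta\cdot 3_\epsilon=\eta$ together with the constraint that the $\delta$-coordinate is $\eta$-torsion). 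Two secondary slips compound the impression of shaky bookkeeping: $\Ctcell_2(\SL_3)\otimes_{\ZA[T]}\Z$ has two summands (the cells for $w_0s_1s_2$ and $w_0s_2s_1$ are distinct), so the $A_2$ verification needs both relations --- with a single relation the cokernel would retain an entire $\KMW_2$ --- and your parenthetical placing $B_r$ in the symplectic case is wrong, since $B_r$ for $r\geq 3$ has $r-1$ long simple roots, is not of symplectic type, and belongs to case (a).
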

\begin{proof} 
We choose a labelling $(\alpha_1,\dots,\alpha_r)$ of $\Delta$ as in \cite[\S 4]{Bourbaki}.  Let $M$ be a strictly $\A^1$-invariant sheaf over $Sm_k$.   We prove Theorem \ref{thm: main} by explicitly computing the kernel of the morphism
\begin{equation}
\label{eq:diff}
\Hom_{Ab_{\A^1}(k)} (Z_1(G)\otimes_{\ZA[T]} \Z,M) \to \Hom_{Ab_{\A^1}(k)} (\Ccell_2(G)\otimes_{\ZA[T]} \Z,M)
\end{equation}
induced by the morphism $\Ccell_2(G)\otimes_{\ZA[T]} \Z \to Z_1(G)\otimes_{\ZA[T]} \Z$ of the exact sequence \eqref{eq: H1cokernel}.  We will denote this kernel by $\sM$.   Clearly, we have
\[
\sM =  \Hom_{Ab_{\A^1}(k)} (\Coker(\partial_2 :\Ccell_2(G)\to Z_1(G)),M) = \Hom_{Ab_{\A^1}(k)} (\HA_1(G),M); 
\]
from \eqref{eq: H1cokernel}.  We will show that 
\[
\sM = \begin{cases} {}_\eta M_{-2}(k) = \Hom_{Ab_{\A^1}(k)}(\KM_2, M), &\text{ if $G$ is not of symplectic type;} \\ ~M_{-2}(k) = \Hom_{Ab_{\A^1}(k)}(\KMW_2, M), &\text{ if $G$ is of symplectic type.} \end{cases} 
\]
for every $M \in Ab_{\A^1}(k)$; the theorem then follows from the Yoneda lemma and the fact that $\HA_1(G) = \piA_1(G)$ as already mentioned in the introduction.

We now choose a weak pinning of $G$, and a reduced expression of the longest word $w_0$ of $W$.  We can then orient the cellular $\A^1$-chain complex of $G$ as explained in Section \ref{section differential} and describe the morphism \eqref{eq:diff} using Theorem \ref{thm:diff2} and Theorem \ref{thm:diff2'}.  We first deduce from Lemma \ref{lem:Z1modT} that
\[
\Hom_{Ab_{\A^1}(k)} (Z_1(G)\otimes_{\ZA[T]} \Z,M) = M_{-2}(k)^r \times \left({}_\eta M_{-2}(k)^{\frac{r(r-1)}{2}}\right),
\]
since $\Hom_{Ab_{\A^1}(k)}(\KMW_2,M) = M_{-2}(k)$ for every $M \in Ab_{\A^1}(k)$ by \cite[Lemma 2.34, Theorem 3.37]{Morel-book}.  Let $N$ denote the number of codimension $2$ Bruhat cells in $G$; then with Notation \ref{notation codim 2 indices}, we have 
\[
\Hom_{Ab_{\A^1}(k)} (\Ccell_2(G)\otimes_{\ZA[T]} \Z, M) = M_{-2}(k)^{N}.
\]

Theorems \ref{thm:diff2} and \ref{thm:diff2'} along with the fact that $\delta_{ji}(v,u) = - \delta_{ij}(u,v)$ imply that there exist units $\tau_i\in k^\times$ such that the morphism \eqref{eq:diff} above has the form:
\[
M_{-2}(k)^r\times \left({}_\eta M_{-2}(k)\right)^{\frac{r(r-1)}{2}} \to M_{-2}(k)^{N}; \quad \left((x_i)_i;(y_{ij})_{i<j}\right) \mapsto \left(z_{[i,j]}\right)_{ij}
\]
such that for given $(i,j)$, $i\not= j$ the following holds:
\begin{enumerate}
\item if $\alpha_i$ and $\alpha_j$ are not adjacent in the Dynkin diagram of $G$, then
\[
z_{[i,j]} = - y_{ij};
\]
\item if $\alpha_i$ and $\alpha_j$ are adjacent and $n_{ji}=-\<\alpha_j, \alpha_i^\vee\> = 2$, then 
\[
z_{[i,j]} = - y_{ij} + h \cdot x_j;
\]
\item if $\alpha_i$ and $\alpha_j$ are adjacent and $n = n_{ji} = - \<\alpha_j, \alpha_i^\vee\> \in \{1,3\}$, then
\[
z_{[i,j]} = y_{ij} + \<\tau_j\> \cdot n_\epsilon \cdot x_j.
\]
\end{enumerate}
We now describe the kernel of \eqref{eq:diff}; let $\left((x_i)_i;(y_{ij})_{i<j}\right) \in M_{-2}(k)^r\times \left({}_\eta M_{-2}(k)\right)^{\frac{r(r-1)}{2}}$ be such that $\left(z_{[i,j]}\right)_{ij}=0$.  If $\alpha_i$ and $\alpha_j$ are not adjacent, then $z_{[i,j]} = 0$ is equivalent to $y_{ij} = 0$ by (1) above.

Let $\alpha_i$ and $\alpha_j$ be adjacent roots with $i<j$ and $n_{ji}=1$.  Then $z_{[i,j]} = 0 = z_{[j,i]}$ is equivalent to $y_{ij} = \<-\tau_j\>x_j =  \<-\tau_i\>x_i$ by (3).  Equivalently, we have $x_i = \<-\tau_i\>y_{ij}$ and $x_j = \<-\tau_j\>y_{ij}$.  However, since $y_{ij}\in {}_\eta M_{-2}(k)$, on which $\GW(k)$ acts trivially, we see that
\[
x_i = y_{ij} = x_j.
\]
This implies that $x_i$ is $\eta$-torsion for every $i$.  Furthermore, if $G$ is such that $n_{ij}=1$ for every adjacent pair of roots $\alpha_i$ and $\alpha_j$ with $i<j$, then the morphism of abelian groups
\[
\sM \to \ _{\eta}M_{-2}(k); \quad \left( (x_i)_i;(y_{(ij)})_{i < j}) \right) \mapsto x_1
\]
is an isomorphism (note that we may take $x_i$ for any fixed $i$ in the above isomorphism).  Note that this happens precisely when all the simple roots have the same length; that is, the cases when $G$ is of the type $A$, $D$ or $E$ (in other words, $G$ is simply laced).

Now, assume that $G$ is not simply laced.  Then there exists a unique index $i$ such that $n_{i+1,i} \in \{2,3\}$.  Assume that $n_{i+1,i}=3$; in this case, $i=1$ and $G$ is of the type $G_2$ with $\alpha_2$ being the long root.  In this case, we have $n_{12}=1$ and $y_{12} = -\<\tau_{2}\> \cdot 3_\epsilon \cdot x_{2} =  \<-\tau_1\> \cdot x_1$.  Since the $\GW(k)$-action on $y_{12}$ is trivial, we have
\[
y_{12} = - 3_\epsilon \cdot x_{2} = x_1.
\]
Thus, the morphism of abelian groups
\[
\sM \to \ _{\eta}M_{-2}(k); \quad \left( (x_i)_i;(y_{(ij)})_{i < j}) \right) \mapsto x_2
\]
is an isomorphism.  

Finally, we consider the case where $n_{i+1,i}=2$; that is, $\alpha_{i+1}$ is a long root.  We then have $y_{i,i+1} = -\<\tau_{i+1}\> \cdot h \cdot x_{i+1} =  \<-\tau_i\> \cdot x_i$, by (2).  Since the $\GW(k)$-action on the $y_{ij}$'s is trivial, we obtain
\[
y_{i,i+1} = - h \cdot x_{i+1} = x_i.
\]
Thus, $x_{i+1}$ determines $\left( (x_i)_i;(y_{(ij)})_{i < j})\right)$.  Suppose that $G$ is not of symplectic type.  Then $\alpha_{i+1}$ is connected to another long root and it follows that $x_{i+1} \in {}_\eta M_{-2}(k)$ by the above computations.  Hence, the morphism of abelian groups
\[
\sM \to {}_\eta M_{-2}(k); \quad \left( (x_i)_i;(y_{(ij)})_{i < j}) \right) \mapsto x_{i+1}
\]
is an isomorphism.

It only remains to consider the case where $G$ is of symplectic type.  In this case, $i = r$ and it follows proceeding as above that 
\[
y_{r-1,r} = - h \cdot x_r = x_{r-1}
\]
and the morphism of abelian groups
\[
\sM \to M_{-2}(k); \quad \left( (x_i)_i;(y_{(ij)})_{i < j}) \right) \mapsto x_r
\]
is an isomorphism.  This completes the proof of the theorem.
\end{proof}

\begin{remark} 
\label{rem main precise}
The proof of Theorem \ref{thm: main} above gives a much more precise statement of part $(a)$.  For instance, if $\beta$ is another long root, then the inclusion $S_\beta\subset G$ induces an isomorphism
\[
 \KM_2 \cong \pi_1^{\A^1}(G),
\] 
which is $(-1)^{d(\alpha,\beta)}$ times the isomorphism given in the statement of Theorem \ref{thm: main}, where $d(\alpha,\beta)\in \N$ is the distance between the roots $\alpha$ and $\beta$ in the Dynkin diagram of $G$.  The proof also enables one to describe the morphism 
$\piA_1(S_\alpha) \to \piA_1(G)$ for any root $\alpha$; we leave the details to the reader.
\end{remark}

\subsection{Central extensions of split, semisimple, simply connected groups}
\label{subsection central extensions}
\hfill

In this section, we determine the set of isomorphism classes of central extensions of a split, semisimple, simply connected algebraic group over a field $k$ by strictly $\A^1$-invariant sheaves as a consequence of our main theorem.

Recall that we denote by $Shv_{\rm Nis}(Sm_k) = Shv(Sm_k)$ the category of sheaves of sets on $(Sm_k)_{\rm Nis}$ and similarly by $Ab(k)$ the (abelian) category of sheaves of abelian groups on $(Sm_k)_{\rm Nis}$.  Denote by $\text{$G$-$Ab$}(k)$ the (abelian) category of $G$-sheaves of abelian groups on $(Sm_k)_{\rm Nis}$, that is, sheaves of abelian groups endowed with an action of $G$ by group automorphisms.  We start with a recollection of the following well-known fact, which holds in any topos although we state it only for the big Nisnevich site $(Sm_k)_{\rm Nis}$ of smooth schemes over $k$. 

\begin{lemma} 
\label{lem CExt1}
Let $G$ be a sheaf of groups on $(Sm_k)_{\rm Nis}$.  Let $M\in Ab(k)$ be a sheaf of abelian groups on $(Sm_k)_{\rm Nis}$. Then for any $n\geq 0$, there is a natural isomorphism of abelian groups
\[
H^n_{\rm Nis}(BG;M) \cong \Ext^n_{\text{$G$-$Ab$}(k)} (\Z,M),
\]
where the sheaf $\Z$ is endowed with the trivial action of $G$.
\end{lemma}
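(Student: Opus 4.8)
The statement is the standard identification of cohomology of a classifying space with Ext-groups in the category of equivariant sheaves, so the plan is to reduce it to a well-understood homological-algebra fact via an appropriate simplicial resolution. First I would recall that $BG$ is the simplicial sheaf $[n]\mapsto G^n$ with the usual face and degeneracy maps, so that its associated normalized chain complex $C_*(BG)$ is, in each degree $n$, the free sheaf of abelian groups $\Z[G^n]$. The key observation is that this complex is precisely the bar resolution computing $\Ext_{\text{$G$-$Ab$}(Sm_k)}^*(\Z,-)$: in the category of $G$-sheaves of abelian groups, the object $\Z[G]$ (with $G$ acting by left translation) is a ``free'' object in the sense that $\Hom_{\text{$G$-$Ab$}(Sm_k)}(\Z[G],M)\cong \Hom_{Ab(Sm_k)}(\Z,M)=M(k')$-type data, and more generally $\Z[G^{n+1}]$ with the diagonal $G$-action is a complex of such free objects forming a resolution of $\Z$. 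Thus the normalized bar complex $\cdots\to \Z[G\times G]\to \Z[G]\to\Z\to 0$, with the diagonal action, is a resolution of the trivial $G$-sheaf $\Z$ by objects adapted to computing $\Ext$ against $G$-equivariant $M$.

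The second step is to match the two sides degreewise. For a $G$-sheaf $M$, applying $\Hom_{\text{$G$-$Ab$}(Sm_k)}(-,M)$ to the diagonal bar resolution yields in degree $n$ the group $\Hom_{Ab(Sm_k)}(\Z[G^n],M)$ with a differential; one checks this is canonically the complex of Nisnevich cochains computing $H^*_{\rm Nis}(BG;M)$ via the hypercohomology/simplicial cohomology description of the excerpt (using that $H^n_{\rm Nis}(BG;M)$ is computed by the $E_1$-term coming from the simplicial filtration of $BG$, exactly as in the discussion preceding Proposition~\ref{prop coh cell}). Here one needs to be slightly careful: $H^*_{\rm Nis}(BG;M)$ is a priori a hypercohomology spectral sequence with $E_1$-terms $H^*_{\rm Nis}(G^n;M)$, so to identify it with the naive cochain complex $[n]\mapsto \Hom(\Z[G^n],M)=M(G^n)$ one is implicitly using that this $E_1$-page is concentrated in cohomological degree $0$ along each simplicial level — which is \emph{not} true in general. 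The correct formulation is that $H^n_{\rm Nis}(BG;M)$ is $\Ext^n$ in the derived sense, i.e. one should work in the derived category: $R\Gamma_{\rm Nis}(BG,M)$ is computed by the totalization of the cosimplicial object $[n]\mapsto R\Gamma_{\rm Nis}(G^n,M)$, and on the other side $R\Hom_{\text{$G$-$Ab$}(Sm_k)}(\Z,M)$ is computed by $\Hom(\text{diagonal bar resolution},\ I^\bullet)$ for an injective resolution $I^\bullet$ of $M$. Comparing these two double complexes — the simplicial direction being the bar construction in both cases, the other direction being an injective resolution — gives the isomorphism on total cohomology.

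The cleanest route, and the one I would actually write, is therefore: (i) take an injective resolution $M\to I^\bullet$ in $\text{$G$-$Ab$}(Sm_k)$; (ii) observe that each $I^q$, being injective as a $G$-equivariant sheaf, is in particular a flasque (hence cohomologically trivial for the Nisnevich topology on each $G^n$) sheaf of abelian groups after forgetting the $G$-action — this requires knowing that the forgetful functor $\text{$G$-$Ab$}(Sm_k)\to Ab(Sm_k)$ preserves injectives, which holds because it has an exact left adjoint $M\mapsto \Z[G]\otimes M$; (iii) then both $R\Gamma_{\rm Nis}(BG,M)$ and $R\Hom_{\text{$G$-$Ab$}}(\Z,M)$ are computed by the single double complex $\Hom_{Ab(Sm_k)}(\Z[G^\bullet],I^\bullet) = I^\bullet(G^\bullet)$ with its two differentials (one from the bar construction, one from the injective resolution), and taking cohomology of the totalization gives both sides, hence the natural isomorphism. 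Naturality in $M$ is automatic from the construction, and naturality is all that is asserted.

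\textbf{Main obstacle.} The one genuinely non-formal point is step (ii): verifying that injective $G$-sheaves of abelian groups are acyclic for Nisnevich cohomology of the smooth schemes $G^n$ after forgetting the $G$-action. This is where the argument would be spelled out carefully — via the exact left adjoint $\Z[G]\otimes(-)$ to the forgetful functor, injectives go to injectives, and injective abelian Nisnevich sheaves are in particular flasque and hence cohomologically trivial on any object of the site. Everything else is a bookkeeping exercise with the bar resolution and a double-complex spectral sequence comparison, which I would only sketch.
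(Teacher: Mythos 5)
Your argument is correct, and the key computational input is the same as in the paper's proof --- namely that $\Z[G^{n+1}]$ with the diagonal action is an induced $G$-module, i.e.\ isomorphic to $\Z[G]\otimes\Z[G^n]$ with $G$ acting only through translation on the first factor, so that equivariant Homs out of it reduce to non-equivariant Homs out of $\Z[G^n]$. Where you diverge is in how this is exploited. The paper works entirely on the source side: it introduces the left derived functor $LQuot_G$ of the coinvariants functor, which is left adjoint to the trivial-action inclusion $D(Ab(Sm_k))\to D(\text{$G$-$Ab$}(Sm_k))$, notes that $LQuot_G(\Z[G\times X])\cong\Z[X]$, applies this termwise to $C_*(EG;\Z)$ to identify $C_*(BG;\Z)$ with $LQuot_G(C_*(EG;\Z))$, and then concludes in one line by the derived adjunction plus the quasi-isomorphism $C_*(EG;\Z)\to\Z$. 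No injective resolutions, double complexes, or acyclicity arguments are needed. Your route instead resolves $M$ injectively in $\text{$G$-$Ab$}(Sm_k)$ and compares the two edge computations of the double complex $I^\bullet(G^\bullet)$; this forces you to prove the auxiliary fact that the forgetful functor preserves injectives (via its exact left adjoint $\Z[G]\otimes(-)$, which is exact because $\Z[G]$ is a free abelian sheaf stalkwise), and to invoke hypercohomology descent for the simplicial object $BG$ to identify the totalization with $H^*_{\rm Nis}(BG;M)$. Both proofs are complete; the paper's is shorter and stays at the level of derived categories, while yours is the classical Cartan--Eilenberg-style argument and makes the identification of the cochain complex more explicit. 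One cosmetic caveat: for a general site ``injective implies flasque'' is not quite the right slogan --- what you actually need and what is true is that injectives are $\Gamma(X,-)$-acyclic for every object $X$ of the site, which holds by definition of derived functor cohomology.
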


\begin{proof} 
Let $D(\text{$G$-$Ab$}(k))$ denote the derived category of the abelian category $\text{$G$-$Ab$}(k)$. The inclusion functor $Ab(k)\to \text{$G$-$Ab$}(k)$ taking a sheaf of abelian groups to itself endowed with the trivial $G$-action is exact and the induced functor $\Theta: D(Ab(k))\to D(\text{$G$-$Ab$}(k))$ admits as left adjoint the functor $LQuot_G: D(\text{$G$-$Ab$}(k)) \to D(Ab(k))$.  Note that $LQuot_G$ is the left derived functor of the quotient functor $\text{$G$-$Ab$}(k) \to Ab(k)$ given by $N\mapsto N/G$.  If $Y = G\times X$ for a sheaf of sets $X$, then clearly
\begin{equation}
\label{eqn LQuot}
 LQuot_G(\Z[Y]) \cong \Z[X].
\end{equation}
Thus, in order to compute $LQuot_G$, one may replace for instance any chain complex of $G$-abelian sheaf by a quasi-isomorphic chain complex, each term of which is a direct sum of sheaves of the form $\Z[G\times X]$ as above, and then applying termwise the above formula \eqref{eqn LQuot}. 

Let $EG$ be the simplicial sheaf of sets constructed in the classical way by setting $EG_n = G^{n+1}$ with $G$ acting diagonally termwise.  Note that $EG$ is simplicially contractible.  The quotient simplicial sheaf of sets $EG/G$ is a model for the classifying space $BG$ of $G$.  Now, let $C_*(BG;\Z)$ be the chain complex (of sheaves of abelian groups) obtained by applying the free sheaf of abelian group functor termwise, and taking the associated chain complex. By definition, we have
\[
H^n(BG;M) = \Hom_{D(Ab(k))}(C_*(BG;\Z),M[n]).
\]
Now we observe that $C_*(BG;\Z) \cong C_*(EG;\Z)/G$.  Since the $n$-th term $E_nG$ of $EG$ is isomorphic as a smooth $k$-scheme with a $G$-action to $G\times G^n$ (with $G^n$ endowed with the trivial $G$-action) for every $n$, it follows that 
\[
C_*(BG;\Z) \cong LQuot_G (C_*(EG;\Z)). 
\]
By the above adjunction, one gets a natural isomorphism
\[
\Hom_{D(Ab(Sm_k))}(LQuot_G (C_*(EG;\Z)),M[n]) \cong \Hom_{D(\text{$G$-$Ab$}(k))}(C_*(EG;\Z),\Theta(M)[n]).
\]
But the group on the right hand side is precisely isomorphic to $\Ext^n_{\text{$G$-$Ab$}(Sm_k)} (\Z,M)$ because $C_*(EG;\Z) \to \Z$ is a quasi-isomorphism of chain complexes of $G$-sheaves of abelian groups.  Thus, we get a natural isomorphism
\[
H^n(BG;M)\cong \Ext^n_{\text{$G$-$Ab$}(k)} (\Z,M),
\]
as claimed.
\end{proof}

Now, let $\CExt(G,M)$ denote the set of isomorphism classes of central extensions of $G$ by $M$ as Nisnevich sheaves of groups on $Sm_k$, which are exact sequences of sheaves of the form
\[
1 \to M \to E \to G \to 1, 
\]
where $M$ is a subsheaf of the center of $E$.  This set forms a group under the operation of Baer sum. The following lemma is also classical:

\begin{lemma} 
\label{lem CExt2}
Let $G$ be a sheaf of groups on $(Sm_k)_{\rm Nis}$.  Let $M\in Ab(k)$ be a sheaf of abelian groups on $(Sm_k)_{\rm Nis}$. Then there is a natural isomorphism of abelian groups
\[
\Ext^2_{\text{$G$-$Ab$}(k)} (\Z,M)\cong \CExt(G,M). 
\]
\end{lemma}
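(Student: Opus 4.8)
The statement is the classical identification of second-degree $G$-equivariant $\Ext$ with central extensions, and I would prove it by exhibiting mutually inverse maps, exactly as one does for group cohomology in the discrete setting, but internalized in the topos $\mathrm{Shv}_{\mathrm{Nis}}(Sm_k)$. Given a central extension $1 \to M \to E \to G \to 1$ of sheaves of groups with $M$ contained in the center of $E$, the conjugation action of $E$ on $M$ factors through $G$, so $M$ is naturally a $G$-sheaf of abelian groups; and the extension determines a class in $\Ext^2_{\text{$G$-$Ab$}(Sm_k)}(\Z, M)$ via the standard ``bar'' construction, which I would phrase using the simplicial sheaf $BG$ (equivalently $EG/G$) as in the proof of Lemma \ref{lem CExt1}. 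The key point is that $\CExt(G,M)$ is computed by $2$-cocycles on $BG$ with values in $M$ modulo coboundaries, and the argument of Lemma \ref{lem CExt1} already identifies $H^2_{\mathrm{Nis}}(BG;M)$ with $\Ext^2_{\text{$G$-$Ab$}(Sm_k)}(\Z,M)$. So the essential content is the (well-known) natural bijection
\[
\CExt(G,M) \cong H^2_{\mathrm{Nis}}(BG;M),
\]
after which composing with Lemma \ref{lem CExt1} for $n=2$ yields the claim.

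\textbf{Construction of the maps.} First I would construct the map $\CExt(G,M) \to H^2_{\mathrm{Nis}}(BG;M)$. Given a central extension $E$, the quotient map $E \to G$ is, locally for the Nisnevich topology, a split epimorphism of sheaves of sets (because $H^1_{\mathrm{Nis}}$ with coefficients in $M$ need not vanish, but a set-theoretic section exists stalkwise by surjectivity of $E \to G$ on stalks), so after passing to a sheaf surjection one obtains a sheaf-theoretic section $s\colon G \to E$; the failure of $s$ to be a homomorphism is measured by a $2$-cochain $c(g,h) = s(g)s(h)s(gh)^{-1} \in M$, which is a normalized $2$-cocycle on $BG$, and its class in $H^2_{\mathrm{Nis}}(BG;M)$ is independent of the choice of section and of the representative in the isomorphism class of $E$. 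Conversely, a normalized $2$-cocycle $c$ on $BG$ defines a central extension with underlying sheaf $M \times G$ and multiplication twisted by $c$; these assignments are readily checked to be inverse to each other and to be compatible with Baer sum on the left and addition of cocycles on the right. The only subtlety relative to the discrete case is that all of this must be carried out at the level of sheaves (or their stalks), using that a sheaf epimorphism admits sections after refinement.

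\textbf{Main obstacle.} The genuine technical work — and the step I expect to be the main obstacle — is justifying that every central extension of sheaves does arise from a global $2$-cocycle on $BG$, i.e.\ that the obstruction to splitting $E \to G$ set-theoretically as sheaves is the right thing and lands in $H^2_{\mathrm{Nis}}(BG;M)$ rather than some higher piece. Concretely, one has the extension of sheaves of \emph{pointed sets} $M \to E \to G$, and one must check that the associated $2$-cocycle is well-defined in the Čech-type complex computing $H^*_{\mathrm{Nis}}(BG;M)$; this is where one uses that $M$ is abelian (so that the coboundary of the cocycle vanishes) and that the simplicial sheaf $EG$ is simplicially contractible with $E_nG \cong G \times G^n$, matching the input of Lemma \ref{lem CExt1}. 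An alternative, cleaner route that avoids explicit cocycle juggling is to identify both sides with $\mathrm{Hom}$ in the derived category: one has $\CExt(G,M) \cong \mathrm{Hom}_{D(\text{$G$-$Ab$})}(C_*(EG;\Z), \Theta(M)[2])$ by interpreting a central extension as a $2$-extension of the trivial $G$-module $\Z$ by $M$ in the category $\text{$G$-$Ab$}(Sm_k)$ (Yoneda $\mathrm{Ext}$), which is literally the definition of $\Ext^2_{\text{$G$-$Ab$}(Sm_k)}(\Z,M)$. I would present the proof along these lines: define the natural map $\CExt(G,M) \to \Ext^2_{\text{$G$-$Ab$}(Sm_k)}(\Z,M)$ sending an extension to its Yoneda class, and verify bijectivity by constructing the inverse via the bar resolution $C_*(EG;\Z) \to \Z$ and the formula \eqref{eqn LQuot}, reducing the whole statement to the standard equivalence between Yoneda $n$-extensions and $\mathrm{Ext}^n$ in an abelian category with enough projectives/injectives.
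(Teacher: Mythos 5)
There is a genuine gap, and it sits exactly at the point you flag as the ``main obstacle'' but then do not resolve. An epimorphism of Nisnevich sheaves $E \to G$ does not admit a global section as a map of sheaves of sets; it only admits sections locally. Your parenthetical remark even concedes that $H^1_{\rm Nis}(G,M)$ need not vanish, yet the very next clause asserts that ``one obtains a sheaf-theoretic section $s\colon G \to E$'', and the whole factor-set construction $c(g,h)=s(g)s(h)s(gh)^{-1}\in M(G\times G)$ hangs on that nonexistent global $s$. This is not a removable technicality in the present paper: by Theorem \ref{thm CExt} the composite $\CExt(G,M)\to H^1_{\rm Nis}(G,M)$ sending an extension to its underlying $M$-torsor is an isomorphism for the groups of interest (e.g.\ $\CExt(G,\KM_2)\cong\Z$), so the interesting central extensions have \emph{non-split} underlying torsor and are literally invisible to global factor sets. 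Concretely, $H^2_{\rm Nis}(BG;M)$ is the hypercohomology of the simplicial sheaf $BG$ and receives contributions from $H^1_{\rm Nis}(G;M)$ and $H^2_{\rm Nis}(G^{\times n};M)$, not only from the complex of global cochains $M(G^{\times n})$; equivalently, the bar resolution terms $\Z[G^{n+1}]$ are not projective in $\text{$G$-$Ab$}(Sm_k)$, so $\Ext^2_{\text{$G$-$Ab$}}(\Z,M)$ is \emph{not} computed by naive cocycles on the bar resolution. For the same reason your proposed inverse ``via the bar resolution and the formula \eqref{eqn LQuot}'' does not produce all classes: a class in $\Ext^2$ need not be represented by a morphism $\Z[G^3]\to M$, i.e.\ by a global $2$-cocycle.

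The Yoneda route you name at the end is the correct repair and is what ``adapting Mac Lane'' actually amounts to here, but as written it is only named, not carried out: you must exhibit, from a central extension $1\to M\to E\to G\to 1$, an actual $2$-extension of the trivial $G$-sheaf $\Z$ by $M$ in $\text{$G$-$Ab$}(Sm_k)$ --- e.g.\ the splice of $0\to I_G\to\Z[G]\to\Z\to 0$ with $0\to M\to \Z[G]\otimes_{\Z[E]}I_E\to I_G\to 0$, where $I_E\subset\Z[E]$ is the augmentation ideal; exactness of the latter is checked on stalks, where set-theoretic sections do exist. The inverse is then obtained by the standard pullback/pushout manipulations on Yoneda $2$-extensions (or, equivalently, by the local-section/multiplicative-torsor description of $H^2_{\rm Nis}(BG;M)$ as in Remark 4.8 of the paper), with no global section of $E\to G$ ever invoked. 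The paper itself only cites Mac Lane and leaves the details to the reader, so you are attempting more than it records; but the one construction you actually write down is the one that fails in the sheaf-theoretic setting.
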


The proof clearly follows by adapting the usual proof in classical group cohomology, see for instance \cite[Theorem 4.1, page 112]{MacLane}. We leave the details to the reader.

For $G$ and $M$ as above, we thus obtain isomorphisms
\begin{equation}
\label{eqn CExt BG}
H^2(BG;M) \cong \Ext^2_{\text{$G$-$Ab$}(k)} (\Z,M)\cong \CExt(G,M)  
\end{equation}
by Lemma \ref{lem CExt1} and Lemma \ref{lem CExt2}.  Consider the \emph{skeletal filtration} $F_{\bullet}G$ on the simplicial classifying space $BG$ of $G$:
\[
\ast = F_0BG \subset F_1BG = \Sigma G \subset \cdots F_{n-1}BG \subset F_nBG \subset \cdots \subset BG.
\]
It is well-known that we get cofiber sequences of the form
\[
F_{n-1}BG \to F_nBG \to \Sigma^n G^{\wedge n},
\]
for every $n$, where we set $\Sigma^i \sX := S^i \wedge \sX$ for any pointed space $\sX$ and any $i \in \N$.  For obvious reasons of connectivity, the restriction morphism induces an isomorphism
\[
H^2_{\rm Nis}(BG;M) = \Hom_{\sH(k)}(BG, K(M,2)) \cong \Hom_{\sH(k)}(F_3BG, K(M,2)).
\]
Moreover as $F_1BG = \Sigma G$ and $H^2(\Sigma G;M) \cong H^1_{\rm Nis} (G;M)$, the filtration $\Sigma G \subset F_2BG \subset F_3BG$ allows one to explicitly understand the canonical morphism
\[
\CExt(G,M) \cong H^2_{\rm Nis}(BG;M) \to H^1_{\rm Nis} (G;M)
\]
taking a central extension of $G$ by $M$ to its underlying $M$-torsor.

The long exact sequence of simplicial homotopy classes of maps into $K(M,2)$ corresponding to the cofiber sequence $F_{2}BG \to F_3BG \to \Sigma^3 G^{\wedge 3}$ has the form
\[
\begin{split}
\Hom_{\sH_s(k)}(\Sigma^3 G^{\wedge 3}, K(M,2)) &\to \Hom_{\sH_s(k)}(F_3BG, K(M,2)) \\ & \to \Hom_{\sH_s(k)}(F_2BG, K(M,2)) \to \Hom_{\sH_s(k)}(\Sigma^2 G^{\wedge 3}, K(M,2)).
\end{split}
\]
But $\Hom_{\sH_s(k)}(\Sigma^3 G^{\wedge 3}, K(M,2))=0$ as the simplicial triple loop space on $K(M,2)$ is contractible. Thus we obtain the exact sequence
\begin{equation}
\label{eqn CExt Mtilde}
0 \to \CExt(G,M) \to \Hom_{\sH_s(k)}(F_2BG, K(M,2)) \to \Hom_{\sH_s(k)}(\Sigma^2 G^{\wedge 3}, K(M,2)) = \tilde{M}(G^{\wedge 3}),
\end{equation}
where for a pointed sheaf of sets $X$, we denote by $\tilde{M}(X)$ the reduced part of $M(X)$ (in other words, the summand of $M(X)$ satisfying $M(X) = M(\ast)\oplus \tilde{M}(X)$, with $\ast$ being the basepoint of $X$).

Now, the long exact sequence corresponding to the cofiber sequence $$\Sigma G = F_{1}BG \to F_2BG \to \Sigma^2 G^{\wedge 2}$$ takes the form
\[
\begin{split}
\tilde{M}(G^{\wedge 2}) \to \Hom_{\sH_s(k)}(F_2BG, K(M,2)) &\to \Hom_{\sH_s(k)}(F_1BG, K(M,2)) = H^1_{\rm Nis}(G, M)\\
& \to \Hom_{\sH_s(k)}(\Sigma G^{\wedge 2}, K(M,2)) = H^1_{\rm Nis}(G \wedge G, M),
\end{split}
\]
where the last morphism is the one induced by the morphism $\pr_1^* + \pr_2^* - \mu^* $ (as this follows from the explicit formula defining $BG$).  

It follows from the above discussion that if $\tilde{M}(G^{\wedge 3}) = \tilde{M}(G^{\wedge 2}) =  0$, then we have an exact sequence
\[
0 \to \CExt(G,M)  \to H^1_{\rm Nis}(G, M) \to  H^1_{\rm Nis}(G \wedge G, M).
\]
If moreover $H^1_{\rm Nis}(G \wedge G, M) = 0$, then the morphism $\CExt(G,M)  \cong H^1_{\rm Nis}(G, M) $ is in fact an isomorphism.

We now assume that $M$ is strictly $\A^1$-invariant.  From the $\A^1$-invariance of $M$, we obtain
\[
M(X) \cong \Hom_{Shv_{\rm Nis}(Sm_k)}(\piA_0(X),M),
\]
for any space $X$.  If $X$ is a pointed space, then $\tilde{M}(X)$ is the group of morphisms of pointed sheaves (of sets) $\Hom_{Shv_{*}(Sm_k)}(\piA_0(X),M)$. If $G$ is a split, semisimple, simply connected algebraic $k$-group, then $G$ is $\A^1$-connected.  Consequently, $G^{\wedge i}$ is $\A^1$-$(i-1)$-connected for any $i \in \N$. But then it follows that 
\[
H^1_{\rm Nis}(G \wedge G, M) = \Hom_{Ab_{\A^1}(k)}(\piA_1(G \wedge G), M) = 0, 
\]
as $G\wedge G$ is $\A^1$-$1$-connected.  We conclude that in that case
\[
 \CExt(G,M)  \to H^1_{\rm Nis}(G, M) 
\]
is an isomorphism.

We summarize the above discussion, which along with our main theorem reproves and generalizes \cite[Theorem 4.7]{Brylinski-Deligne} as follows.

\begin{theorem}
\label{thm CExt}
Let $G$ be a split, semisimple, simply connected algebraic group over $k$ and let $M$ be a strictly $\A^1$-invariant sheaf. Then the canonical morphism 
\[
\CExt(G,M) \xrightarrow{\simeq} H^1_{\rm Nis}(G, M)
\]
is an isomorphism.
\end{theorem}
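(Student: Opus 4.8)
\textbf{Proof proposal for Theorem \ref{thm CExt}.}

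The plan is to deduce the isomorphism from the skeletal filtration of $BG$ together with the vanishing of reduced $M$-cohomology on highly $\A^1$-connected smash powers of $G$. First I would recall the chain of identifications $\CExt(G,M) \cong \Ext^2_{\text{$G$-$Ab$}(Sm_k)}(\Z,M) \cong H^2_{\rm Nis}(BG;M)$ coming from Lemma \ref{lem CExt1} and Lemma \ref{lem CExt2}; this reduces the problem to studying $H^2_{\rm Nis}(BG;M)$ and its restriction to $H^1_{\rm Nis}(G;M)$ via the inclusion $\Sigma G = F_1BG \hookrightarrow BG$. Then I would run through the skeletal filtration $\ast = F_0BG \subset F_1BG = \Sigma G \subset F_2BG \subset F_3BG \subset \cdots \subset BG$, using the cofiber sequences $F_{n-1}BG \to F_nBG \to \Sigma^n G^{\wedge n}$, and observe that for connectivity reasons (the simplicial triple loop space on $K(M,2)$ is contractible) the restriction $H^2_{\rm Nis}(BG;M) \xrightarrow{\sim} \Hom_{\sH(k)}(F_3BG,K(M,2))$ is an isomorphism, so only the first three stages matter.

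Next I would extract the two relevant long exact sequences of simplicial homotopy classes of maps into $K(M,2)$: the one for $F_2BG \to F_3BG \to \Sigma^3 G^{\wedge 3}$, giving the injection $\CExt(G,M) \hookrightarrow \Hom_{\sH_s(k)}(F_2BG,K(M,2))$ with obstruction landing in $\tilde M(G^{\wedge 3})$, and the one for $\Sigma G = F_1BG \to F_2BG \to \Sigma^2 G^{\wedge 2}$, relating $\Hom_{\sH_s(k)}(F_2BG,K(M,2))$ to $H^1_{\rm Nis}(G;M)$ with surrounding terms $\tilde M(G^{\wedge 2})$ and $H^1_{\rm Nis}(G\wedge G;M)$. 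The crucial input is then that $G$ is $\A^1$-connected (since it is split, semisimple and simply connected), whence $G^{\wedge i}$ is $\A^1$-$(i-1)$-connected; combined with strict $\A^1$-invariance of $M$ and the identification $\tilde M(X) = \Hom_{Shv_*(Sm_k)}(\piA_0(X),M)$, this forces $\tilde M(G^{\wedge 2}) = \tilde M(G^{\wedge 3}) = 0$ and, using $H^1_{\rm Nis}(G\wedge G;M) = \Hom_{Ab_{\A^1}(k)}(\piA_1(G\wedge G),M) = 0$ because $G\wedge G$ is $\A^1$-$1$-connected, that $H^1_{\rm Nis}(G\wedge G;M)$ also vanishes. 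Feeding these vanishings into the two exact sequences collapses everything to a single isomorphism $\CExt(G,M) \xrightarrow{\simeq} H^1_{\rm Nis}(G;M)$, which is exactly the canonical map sending a central extension to its underlying $M$-torsor.

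The main obstacle, and really the only nontrivial point, is establishing the connectivity estimates for the smash powers $G^{\wedge i}$ and correctly translating them through the adjunctions into vanishing of the reduced cohomology groups $\tilde M(G^{\wedge 2})$, $\tilde M(G^{\wedge 3})$ and $H^1_{\rm Nis}(G\wedge G;M)$; this requires the $\A^1$-connectivity theorem and the strong $\A^1$-Hurewicz theorem from \cite{Morel-book}, applied to the fact that the smash product of $\A^1$-connected pointed spaces gains connectivity. Everything else is formal manipulation of the skeletal filtration and long exact sequences, already spelled out in the discussion preceding the statement. Note in particular that the theorem as stated does not even invoke the main Theorem \ref{thm: main} directly — it only uses $\A^1$-connectedness of $G$ — so the proof is essentially the verification that the displayed exact sequences degenerate as claimed.
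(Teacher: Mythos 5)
Your proposal reproduces the paper's own argument: the identifications $\CExt(G,M)\cong \Ext^2_{\text{$G$-$Ab$}(Sm_k)}(\Z,M)\cong H^2_{\rm Nis}(BG;M)$, the skeletal filtration of $BG$ with its two long exact sequences, and the vanishing of $\tilde M(G^{\wedge 2})$, $\tilde M(G^{\wedge 3})$ and $H^1_{\rm Nis}(G\wedge G,M)$ coming from $\A^1$-connectedness of $G$ are exactly the steps carried out in the discussion preceding the statement. The approach and all key inputs coincide with the paper's proof (your connectivity count for $G^{\wedge i}$ is in fact the correct one), so there is nothing to add.
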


Furthermore, the group $H^1_{\rm Nis}(G, M) = H^1_{\rm Zar}(G, M)$ can be explicitly computed as in Corollary \ref{corintro:1} in the introduction.

We end this subsection with a comparison of the above approach with Grothendieck's description \cite[VII, no. 1]{SGAVII} of cental extensions of $G$ by $M$ in terms of $M$-torsors over $G$ with an additional property.

\begin{remark} 
Let $G$ be a sheaf of groups and $M$ be a sheaf of abelian groups on $(Sm_k)_{\rm Nis}$.  
\begin{enumerate}
\item Recall from \cite[Section 1]{Brylinski-Deligne} and \cite[VII, no. 1]{SGAVII} that a pair $(E, \phi)$ is said to be a \emph{multiplicative} $M$-torsor on $G$ if $E$ is an $M$-torsor (in the Nisnevich or Zariski) topology, and $\phi$ is an isomorphism 
\[
\phi : \pr_1^* E + \pr_2^*E \xrightarrow{\simeq} \mu^* E 
\]
of $M$-torsors on $G \times G$, where $\pr_i: G \times G \to G$ denotes the $i$th projection, $+$ denotes the sum operation on $M$-torsors and $\mu: G\times G \to G$ denotes the multiplication of $G$, satisfying a compatibility condition involving associativity of $G$; see \cite[Diagram (1.2.2)]{Brylinski-Deligne}.  This compatibility condition can be thought of as a \emph{descent formula} and states that a certain diagram involving $\phi$ and its pullbacks through the various morphisms from $G^3$ to $G^2$ (involving projections and multiplication) is commutative.  Observe that a central extension of $G$ by $M$ defines a multiplicative $M$-torsor on $G$ in an obvious way.  The abelian group of isomorphism classes of multiplicative $M$-torsors on $G$ can be then shown to be isomorphic to $\CExt(G,M)$. 

\item Note that the morphism $\CExt(G,M) \to H^1_{\rm Nis}(G;M)$ taking a central extension (or a multiplicative $M$-torsor) to its underlying $M$-torsor factors through the abelian group 
\[
H^2_{\rm Nis}(F_2BG;M) = \Hom_{\sH_s(k)}(F_2BG, K(M,2)). 
\]
It can be shown that this group can be identified with the abelian group of (conveniently defined) isomorphism classes of pairs $(E, \phi)$ as above which do not necessarily satisfy the descent condition, that is, commutativity of \cite[Diagram (1.2.2)]{Brylinski-Deligne}.  Let us call elements of $\Hom_{\sH_s(k)}(F_2BG, K(M,2))$ \emph{pre-multiplicative $M$-torsors}.  This point of view explains the long exact sequence 
\[
\tilde{M}(G^{\wedge 2}) \to H^2_{\rm Nis}(F_2BG;M) \to H^1_{\rm Nis}(G, M) \to H^1_{\rm Nis}(G \wedge G, M)
\]
appearing in the discussion preceding the statement of Theorem \ref{thm CExt} as follows: the kernel of the morphism $H^1_{\rm Nis}(G, M) \to H^1_{\rm Nis}(G \wedge G, M)$ consists of $M$-torsors for which there exists at least one isomorphism $\phi$ in the definition of multiplicative $M$-torsors on $G$.  On the other hand, the image of $\tilde{M}(G^{\wedge 2})$ in $\Hom_{\sH_s(k)}(F_2BG, K(M,2))$ parametrizes the various $\phi$'s defining a multiplicative structure on a given $M$-torsor. 

\item In the same way, the long exact sequence \eqref{eqn CExt Mtilde}
\[
0 \to \CExt(G,M) = H^2_{\rm Nis}(BG;M) \to H^2_{\rm Nis}(F_2BG;M) \to \tilde{M}(G^{\wedge 3})
\]
explains that a pre-multiplicative $M$-torsor (that is, an element of $H^2_{\rm Nis}(F_2BG;M)$) given by a pair $(E, \phi)$ is a multiplicative torsor if its image in $\tilde{M}(G^{\wedge 3})$ is trivial, which can be shown to be equivalent to the commutatitivity of \cite[Diagram (1.2.2)]{Brylinski-Deligne}.
\end{enumerate}
\end{remark}

Although Theorem \ref{thm CExt} is stated in the context of split, semisimple, simply connected algebraic groups, most of what we discussed in this section holds much more generally and can be formally stated as follows:

\begin{proposition} 
Let $G$ be an $\A^1$-connected sheaf of groups over $k$ and let $M$ be a strictly $\A^1$-invariant sheaf. Then the canonical morphisms 
\[
\CExt(G,M) \to H^2_{\rm Nis}(BG;M)\to H^1_{\rm Nis}(G, M)
\]
are isomorphisms.
\end{proposition}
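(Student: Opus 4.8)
The plan is to run the same argument that produced Theorem \ref{thm CExt}, but without invoking any feature of the group beyond $\A^1$-connectedness. First I would record the chain of identifications
\[
\CExt(G,M) \cong \Ext^2_{\text{$G$-$Ab$}(Sm_k)}(\Z,M) \cong H^2_{\rm Nis}(BG;M),
\]
which is exactly Lemma \ref{lem CExt2} composed with Lemma \ref{lem CExt1}; neither of these lemmas uses anything about $G$ other than that it is a sheaf of groups on $(Sm_k)_{\rm Nis}$ and that $M$ is a sheaf of abelian groups, so the left-hand isomorphism of the proposition is immediate. It remains only to prove that the edge morphism $H^2_{\rm Nis}(BG;M)\to H^1_{\rm Nis}(G,M)$ arising from the skeletal filtration is an isomorphism.

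Next I would reproduce verbatim the skeletal-filtration analysis in the discussion preceding Theorem \ref{thm CExt}. From the cofiber sequences $F_{n-1}BG \to F_nBG \to \Sigma^n G^{\wedge n}$ and the connectivity of $K(M,2)$ as a simplicial sheaf, the restriction $H^2_{\rm Nis}(BG;M)\xrightarrow{\simeq} \Hom_{\sH(k)}(F_3BG,K(M,2))$ is an isomorphism. Then the two long exact sequences obtained by mapping the cofiber sequences $F_2BG\to F_3BG\to \Sigma^3G^{\wedge 3}$ and $\Sigma G = F_1BG \to F_2BG \to \Sigma^2 G^{\wedge 2}$ into $K(M,2)$ yield, after using that triple/double simplicial loop spaces on $K(M,2)$ are contractible and hence the relevant $\Hom$-groups vanish, the exactness of
\[
0 \to \CExt(G,M) \to H^2_{\rm Nis}(F_2BG;M) \to \tilde M(G^{\wedge 3})
\]
and of
\[
\tilde M(G^{\wedge 2}) \to H^2_{\rm Nis}(F_2BG;M) \to H^1_{\rm Nis}(G,M) \to H^1_{\rm Nis}(G\wedge G, M).
\]
This part is word-for-word the argument already in the text, so I would only cite it.

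The one place where $\A^1$-connectedness enters — and the step I expect to be the only real content — is showing that the three ``error terms'' $\tilde M(G^{\wedge 3})$, $\tilde M(G^{\wedge 2})$ and $H^1_{\rm Nis}(G\wedge G, M)$ all vanish. Since $M$ is strictly $\A^1$-invariant it is in particular $\A^1$-invariant, so for any pointed space $X$ one has $\tilde M(X) = \Hom_{Shv_*(Sm_k)}(\piA_0(X),M)$; when $X$ is $\A^1$-connected, $\piA_0(X) = \ast$ and this group is trivial. Now $G$ being an $\A^1$-connected sheaf of groups, each smash power $G^{\wedge i}$ is $\A^1$-$(i-1)$-connected (in particular $\A^1$-connected for $i\geq 1$ and $\A^1$-$1$-connected for $i\geq 2$), so $\tilde M(G^{\wedge 2}) = \tilde M(G^{\wedge 3}) = 0$, and by the universal coefficient identification $H^1_{\rm Nis}(G\wedge G, M) = \Hom_{Ab_{\A^1}(k)}(\piA_1(G\wedge G), M) = 0$ because $G\wedge G$ is $\A^1$-$1$-connected. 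Feeding these vanishings into the two exact sequences above collapses them to isomorphisms $\CExt(G,M)\xrightarrow{\simeq} H^2_{\rm Nis}(F_2BG;M)$ and $H^2_{\rm Nis}(F_2BG;M)\xrightarrow{\simeq} H^1_{\rm Nis}(G,M)$, and composing with the earlier isomorphism $\CExt(G,M)\cong H^2_{\rm Nis}(BG;M)$ identifies all three groups compatibly with the canonical morphisms, completing the proof. The main obstacle is really just bookkeeping: making sure the connectivity estimate $G^{\wedge i}$ is $\A^1$-$(i-1)$-connected is applied in the correct range for each of the three terms, and that the maps in the statement are the canonical (restriction / underlying-torsor) ones rather than merely abstract isomorphisms.
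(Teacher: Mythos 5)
Your argument is correct, but it is not the route the paper takes for this particular proposition. You rerun the skeletal-filtration analysis used for Theorem \ref{thm CExt} (the exact sequences coming from $F_1BG \subset F_2BG \subset F_3BG \subset BG$) and observe that the only inputs beyond formal nonsense are the vanishing of $\tilde{M}(G^{\wedge 2})$, $\tilde{M}(G^{\wedge 3})$ and $H^1_{\rm Nis}(G\wedge G,M)$, which hold for any $\A^1$-connected sheaf of groups because $G^{\wedge i}$ is $\A^1$-$(i-1)$-connected; your connectivity bookkeeping here is right (and in fact corrects the off-by-one in the paper's phrase ``$G^{\wedge i}$ is $\A^1$-$i$-connected''). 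The paper's own proof of the proposition bypasses the filtration entirely: since $G$ is $\A^1$-connected, $BG$ is $\A^1$-$1$-connected with $\piA_2(BG)\cong\piA_1(G)$, so evaluation on $\piA_2$ gives $H^2_{\rm Nis}(BG;M)\cong\Hom_{Ab_{\A^1}(k)}(\piA_2(BG),M)\cong\Hom_{Ab_{\A^1}(k)}(\piA_1(G),M)\cong H^1_{\rm Nis}(G,M)$, while $\CExt(G,M)\cong H^2_{\rm Nis}(BG;M)$ is Lemmas \ref{lem CExt1} and \ref{lem CExt2}, exactly as in your first step. Your route is more hands-on (it needs only degree-one universal coefficients for $G$ and $G\wedge G$ plus connectivity of smash powers, and it exhibits the intermediate group $H^2_{\rm Nis}(F_2BG;M)$ of pre-multiplicative torsors), whereas the paper's proof leans on the stronger $\A^1$-Hurewicz-type statements for $BG$ from \cite{Morel-book} and is correspondingly shorter. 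Both establish that the maps in question are the canonical restriction/underlying-torsor morphisms, since in your version every arrow is restriction along a skeletal inclusion.
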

\begin{proof}
Since $G$ is $\A^1$-connected, we have $H^1_{\rm Nis}(G, M) \cong \Hom_{Ab_{\A^1}(k)}(\piA_1(G), M)$.  It follows from the results of \cite{Morel-book} that $BG$ is $\A^1$-$1$-connected and that $\piA_2(BG) \cong \piA_1(G)$. Moreover, the evaluation on $\piA_2$ induces an isomorphism
\[
H^2_{\rm Nis}(BG;M) = \Hom_{\sH(k)}(BG, K(M,2)) \cong \Hom_{Ab_{\A^1}(k)}(\piA_2(BG),M).
\]
These isomorphisms altogether imply the desired isomorphisms, finishing the proof.
\end{proof}

\subsection{Cellular \texorpdfstring{$\A^1$}{A1}-homology of the flag variety \texorpdfstring{$G/B$}{G/B} in low degrees} \hfill 
\label{subsection G/B}

Let $G$ be a split, semisimple, almost simple, simply connected algebraic group over $k$ of rank $r$.  Fix a maximal $k$-split torus $T$ of $G$ and a Borel subgroup $B$ containing $T$.  In this subsection, we describe the cellular $\A^1$-chain complex for the full flag variety $G/B$, which is much simpler compared to the one for $G$ described above, because of the fact that $G/B$ is cellular in the sense of Definition \ref{definition strict cellular}.  Also note that the canonical quotient morphism $G/T \to G/B$ is an $\A^1$-weak equivalence.

The Bruhat decomposition of $G$ induces a decomposition $G/B = \underset{w \in W}{\cup} B\dot{w}B/B$, which induces a stratification by open subsets
\[
\emptyset = \-\Omega_{-1} \subsetneq \-\Omega_0 \subsetneq \-\Omega_1 \subsetneq \cdots \subsetneq \-\Omega_{\ell} = G, 
\]
where $\ell$ is the length of the longest element $w_0$ in $W$ and for every $i$, one has
\[
\-\Omega_i - \-\Omega_{i-1}= \underset{\ell(w) = \ell-i}{\coprod} B \dot{w}B/B \simeq \underset{\ell(w) = \ell-i}{\coprod} \A^{\ell-i}.  
\]
The cellular $\A^1$-chain complex of $G/B$ is oriented by the choice of a weak pinning of $G$ (Definition \ref{definition weak pinning}) and the choice of a normal reduced expression (Remark \ref{remark normal horizontal}) as a product of simple reflections for every element of the Weyl group $W$ of $G$.  The cellular $\A^1$-chain complex of $G/B$ associated with the Bruhat decomposition has the form (see Remark \ref{rem twist}) 
\[
\Ctcell_i(G/B) = \underset{\ell(w)= \ell - i}{\oplus} \ZA(\G_m^{\wedge i}) \otimes \Z[w] = \underset{\ell(w)= \ell - i}{\oplus} \KMW_i \otimes \Z[w].
\]
Theorems \ref{thm:diff2} and \ref{thm:diff2'} enable us to explicitly describe differentials $\-\partial_i$ in $\Ctcell_*(G/B)$ in degrees $i\leq 2$.  We first observe that for every $i \geq 1$ we have
\[
\Hom_{Ab_{\A^1}(k)}(\KMW_{i+1}, \KMW_{i}) = W(k) \text{ and } \Hom_{Ab_{\A^1}(k)}(\KMW_{1}, \Z) = 0.
\]
This shows that $\-\partial_1 = 0$.  The words $w \in W$ with length $\ell - 1$ are all of the form $w_0s_i$, for $1 \leq i \leq n$ and those with length $\ell - 2$ are all of the form $w_0s_is_j$, for $1 \leq i,j \leq n$.  We further have the relation $w_0s_is_j=w_0s_js_i$ if and only if the roots $\alpha_i$ and $\alpha_j$ are not adjacent in the Dynkin diagram of $G$.  We describe the differential $\-\partial_2$ in the case of $G$ of rank $2$ as a straightforward consequence of Theorems \ref{thm:diff2} and \ref{thm:diff2'}.  

\begin{theorem}
\label{thm:G/B}
Let $G$ be a split, semisimple, almost simple, simply connected algebraic group over $k$ of rank $r$ with a fixed maximal $k$-split torus $T$ and a Borel subgroup $B$ containing $T$.  We have isomorphisms of complexes
\[
\Ccell_*(G/B) \cong \Ccell_*(G/T) \cong \Ccell_*(G)\otimes_{\ZA[T]} \Z.
\]
and the following holds.
\begin{enumerate}[label=$(\alph*)$]
\item For $i,j$ with $i \neq j$, the component of $\-\partial_2$ from $\KMW_2 \otimes \Z[w_0s_is_j]$ to $\KMW_1\otimes \Z[w_0s_i]$ is always $0$.

\item If moreover $\alpha_i$ and $\alpha_j$ are adjacent in the Dynkin diagram of $G$, then the component of $\-\partial_2$ from $\KMW_2\otimes \Z[w_0s_is_j]$ to $\KMW_1\otimes \Z[w_0s_j]$ is given by multiplication by $n_{\epsilon} \cdot \eta \in W(k)$, where $n = n_{ji} = \alpha_j^{\vee}(\alpha_i)$. 
\end{enumerate}
\end{theorem}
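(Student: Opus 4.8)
\textbf{Proof plan for Theorem \ref{thm:G/B}.}

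The plan is to deduce everything from the already-established computation of the oriented cellular $\A^1$-chain complex of $G$ itself (Theorem \ref{thm:diff2} and Theorem \ref{thm:diff2'}) by passing to the quotient by the $\ZA[T]$-action. First I would record the chain of isomorphisms $\Ccell_*(G/B)\cong \Ccell_*(G/T)\cong \Ccell_*(G)\otimes_{\ZA[T]}\Z$. The first isomorphism comes from the fact that the canonical morphism $G/T\to G/B$ is an $\A^1$-weak equivalence (the fibers are $\A^1$-contractible unipotent groups), combined with the functoriality of cellular $\A^1$-chain complexes (Lemma \ref{lemma cellular functoriality}) and the compatibility of the Bruhat cellular structures; the second is exactly Remark \ref{rem important1}(2), where the $B$-right-invariance of the oriented cellular structure on $G$ yields $\Ctcell_*(G)\otimes_{\ZA[T]}\Z\cong\Ctcell_*(G/T)$. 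Under this identification, the summand of $\Ctcell_i(G)$ indexed by $w$ with $\ell(w)=\ell-i$, namely $\KMW_i\otimes\ZA[\tilde w T]$, becomes $\KMW_i\otimes\ZA[\tilde wT]\otimes_{\ZA[T]}\Z\cong \KMW_i\otimes\Z[w]$, recovering the stated shape of $\Ctcell_*(G/B)$.

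Next I would pin down $\bar\partial_1$ and $\bar\partial_2$. For $\bar\partial_1$: by Lemma \ref{lem: partial1} the differential $\partial_1$ on $G$ sends $(u)\otimes[\tilde w_i\cdot 1]$ to $(\tilde w_0\cdot\alpha_i^\vee(u))\in\ZA(\tilde w_0 T)$, which lies in the augmentation ideal; hence after $-\otimes_{\ZA[T]}\Z$ it maps into the augmentation ideal of $\Z$, which is $0$. (Alternatively, invoke $\Hom_{Ab_{\A^1}(k)}(\KMW_1,\Z)=0$ to see $\bar\partial_1=0$ directly.) For $\bar\partial_2$: the key point is that $\partial_2\otimes_{\ZA[T]}\Z$ is precisely the morphism computed in Theorems \ref{thm:diff2} and \ref{thm:diff2'}. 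Those theorems express $\bar\partial_{ij}\bigl((u)(v)\bigr)$ as a sum of a term $\bar\delta_{ji}(v,u)$ landing in the $\KM_2$-summand and a term (a multiple of $\bar\phi_j(v,u)$) landing in a $\KMW_2$-summand. But in $\Ctcell_*(G/B)$ we have tensored down the whole complex, so $Z_1(G)\otimes_{\ZA[T]}\Z$ gets replaced by the degree-$1$ term $\Ctcell_1(G/B)=\bigoplus_i\KMW_1\otimes\Z[w_0s_i]$; the $\bar\delta_{ji}$ component, which (tracing Lemma \ref{lem:Z1modT}) is built from the difference $(u)\otimes(\tilde w_i\cdot\alpha_j^\vee(v))-(v)\otimes(\tilde w_j\cdot\alpha_i^\vee(u))$, becomes $0$ upon $-\otimes_{\ZA[T]}\Z$ because both $(v)_j=[v]_j-[1]$ and $(u)_i=[u]_i-[1]$ lie in the augmentation ideal — exactly the mechanism used in the proof of Corollary \ref{cor:changeexpression}. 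This gives part $(a)$: the component of $\bar\partial_2$ from $\KMW_2\otimes\Z[w_0s_is_j]$ to $\KMW_1\otimes\Z[w_0s_i]$ vanishes.

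Finally, for part $(b)$, with $\alpha_i$ and $\alpha_j$ adjacent I would read off the surviving term. After killing the $\bar\delta$-part, Theorems \ref{thm:diff2}/\ref{thm:diff2'} leave the contribution $n_\epsilon\cdot\bar\phi_j(v,u)$ (with $h=2_\epsilon$ in the $n_{ji}=2$ case and $(n_{ji})_\epsilon\cdot\langle\tau_j\rangle$ in the odd case, the unit $\langle\tau_j\rangle$ acting trivially after reduction since $\bar\phi_j$ lands in a $\KMW_2$ on which the relevant identifications collapse the square-class ambiguity). Tensoring $\bar\phi_j$ down by $-\otimes_{\ZA[T]}\Z$: the formula $\phi_j(u,v)=(u)\otimes\tilde w_j\cdot(v)_j-\eta(u)(v)\otimes[\tilde w_j\cdot 1]$ reduces, since $(v)_j$ is in the augmentation ideal, to $-\eta(u)(v)\otimes[w_0s_j]\in\KMW_1\otimes\Z[w_0s_j]$; so up to the chosen sign/normalization the component of $\bar\partial_2$ into $\KMW_1\otimes\Z[w_0s_j]$ is multiplication by $n_\epsilon\cdot\eta\in W(k)$, with $n=n_{ji}=\alpha_j^\vee(\alpha_i)$, as claimed. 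I would also note at the outset that $\Hom_{Ab_{\A^1}(k)}(\KMW_{i+1},\KMW_i)=W(k)$ so that such a ``multiplication by $n_\epsilon\cdot\eta$'' statement makes sense. The main obstacle I anticipate is bookkeeping: carefully matching the indexing conventions (normal vs.\ horizontal reduced expressions, the $\lambda_i>\lambda_j$ vs.\ $\lambda_i<\lambda_j$ dichotomy, and the precise signs coming from the permutation of simplicial circles) so that the reduction $-\otimes_{\ZA[T]}\Z$ of Theorems \ref{thm:diff2} and \ref{thm:diff2'} is transcribed correctly; the conceptual content is entirely contained in those theorems plus the augmentation-ideal observation.
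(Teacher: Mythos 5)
Your proposal is correct and follows essentially the same route as the paper, which gives no proof of Theorem \ref{thm:G/B} beyond asserting it as a straightforward consequence of Theorems \ref{thm:diff2} and \ref{thm:diff2'} together with the identification $\Ctcell_*(G)\otimes_{\ZA[T]}\Z\cong\Ctcell_*(G/T)$ from Remark \ref{rem important1} and the vanishing $\Hom_{Ab_{\A^1}(k)}(\KMW_1,\Z)=0$; your augmentation-ideal computation of the images of $\phi_i$ and $\delta_{ij}$ under $-\otimes_{\ZA[T]}\Z$ is exactly the intended mechanism. The only loose thread is your dismissal of the unit $\langle\tau_j\rangle$ appearing in Theorem \ref{thm:diff2'}(c) --- it does not literally ``collapse,'' since it acts on $\Hom_{Ab_{\A^1}(k)}(\KMW_2,\KMW_1)=W(k)$ as a generally nontrivial unit --- but the paper's own statement of Theorem \ref{thm:G/B}(b) drops it as well, so your treatment is no less precise than the source.
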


\begin{corollary}
With the notation of Theorem \ref{thm:G/B}, we have $\Hcell_0(G/B) = \Hcell_0(G/T) = \Z$ and 
\[
\Hcell_1(G/B) = \Hcell_1(G/T) =
\begin{cases}
\quad  \quad T , &\text{ if $G$ is not of symplectic type;} \\ 
\left(\underset{i=1}{\overset{r-1}{\prod}}~\G_m \right) \times \KMW_1, &\text{ if $G$ is of symplectic type.}
\end{cases}
\]
\end{corollary}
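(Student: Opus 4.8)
The strategy is to read off the homology directly from the truncated cellular $\A^1$-chain complex of $G/B$ in degrees $\leq 2$, which is completely described by Theorem~\ref{thm:G/B}. Since the differential $\-\partial_1\colon \Ctcell_1(G/B)\to \Ctcell_0(G/B)$ is zero (as noted before Theorem~\ref{thm:G/B}, because $\Hom_{Ab_{\A^1}(k)}(\KMW_1,\Z)=0$), we have immediately $\Hcell_0(G/B)=\Z$, and
\[
\Hcell_1(G/B) = \Coker\!\left(\-\partial_2\colon \Ctcell_2(G/B)\to \Ctcell_1(G/B)\right),
\]
where $\Ctcell_1(G/B) = \underset{i=1}{\overset{r}{\oplus}}\KMW_1\otimes\Z[w_0s_i] \cong (\KMW_1)^{\oplus r}$ and $\Ctcell_2(G/B) = \underset{[i,j]}{\oplus}\KMW_2\otimes\Z[w_0s_is_j] \cong (\KMW_2)^{\oplus N}$, $N$ being the number of codimension-$2$ Bruhat cells. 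The $\A^1$-weak equivalences $G/T\to G/B$ and the identification $\Ccell_*(G)\otimes_{\ZA[T]}\Z\cong\Ccell_*(G/T)$ from Remark~\ref{rem important1}(2) give the stated isomorphisms of complexes, so the computation for $G/T$ is the same.

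\textbf{Key steps.} First I would observe, using the fact that $\KMW_1 = \ZA(\G_m)$ is the free strictly $\A^1$-invariant sheaf on $\G_m$ (pointed), that $\underset{i=1}{\overset{r}{\oplus}}\KMW_1 \cong \ZA(\G_m^{\wedge 1}\vee\cdots\vee\G_m^{\wedge 1})$, and more usefully that the $r$-fold direct sum of copies of $\KMW_1$ is the reduced part of $\ZA[\G_m^{\times r}] = \ZA[T]$ in the ``weight one'' piece — so a cokernel of a map into $(\KMW_1)^{\oplus r}$ whose image kills certain $\eta$-multiples will be expressible as $\ZA$ of a product of tori with possibly one $\KMW_1$-factor surviving, matching the right-hand sides $T$ and $(\prod\G_m)\times\KMW_1$. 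Next I would apply Theorem~\ref{thm:G/B}(a): every component of $\-\partial_2$ landing in the ``$w_0s_i$'' summand (the first index of the pair) vanishes. By Theorem~\ref{thm:G/B}(b), the only nonzero components go from $\KMW_2\otimes\Z[w_0s_is_j]$ to $\KMW_1\otimes\Z[w_0s_j]$, and equal multiplication by $n_{\epsilon}\cdot\eta$, where $n=n_{ji}$ is the relevant Cartan integer, nonzero precisely when $\alpha_i$ and $\alpha_j$ are adjacent. So the image of $\-\partial_2$ in the $j$-th summand $\KMW_1$ is the subgroup generated by $n_{ji,\epsilon}\cdot\eta\cdot\KMW_2$ over all $i$ adjacent to $j$. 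Then I would do the combinatorial bookkeeping vertex by vertex on the Dynkin diagram: for a vertex $\alpha_j$ that has at least one adjacent vertex $\alpha_i$ with $n_{ji}$ odd (which happens whenever $\alpha_j$ is a long root adjacent to another long root, and in fact for every vertex of a simply-laced diagram), the image contains $\eta\cdot\KMW_2 = \eta\KMW_2 \subset \KMW_1$, whose cokernel is $\KMW_1/\eta = \KM_1 = \G_m$ (using that $\KM_n$ is the quotient of $\KMW_n$ by $\eta$, as recalled in the conventions); for the unique short-root vertex $\alpha_j$ in the symplectic case $C_r$, the only adjacent contribution has $n_{ji}$ even, giving $n_{ji,\epsilon}\cdot\eta = h\cdot\eta = 0$ (by $\eta h = 0$), so the $j$-th $\KMW_1$-summand survives untouched.

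\textbf{Assembling the answer.} Combining: in the non-symplectic case every one of the $r$ copies of $\KMW_1$ gets quotiented by $\eta$, yielding $(\G_m)^{\times r} = \Hom(\G_m,T)\otimes\G_m$... more precisely $\Hcell_1(G/B)$ becomes $\ZA$ of the cokernel sheaf, and identifying the ``decomposition of $T$ into coroots'' isomorphism $\prod_{i=1}^r\alpha_i^\vee\colon\prod_{i=1}^r\G_m\xrightarrow{\simeq}T$ from Section~\ref{subsection Z[T]}, the surviving sheaf is exactly $T$; in the symplectic case the $r-1$ long-root copies become $\G_m$ each while the short-root copy stays $\KMW_1$, giving $\left(\prod_{i=1}^{r-1}\G_m\right)\times\KMW_1$. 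The main obstacle I anticipate is making the identification of the cokernel with the sheaf $T$ (resp.\ $(\prod\G_m)\times\KMW_1$) genuinely canonical and checking that no cross-terms between distinct simple-root summands are introduced — i.e.\ that the ``$A$, $D$, $E$ versus $C$'' dichotomy is clean and that the $G_2$ and $B_r$, $F_4$ cases (where there is a short root adjacent to a long root via an odd edge, so $n_{\epsilon}\eta\neq 0$) still collapse every summand — this requires a careful case analysis exactly parallel to the one carried out in the proof of Theorem~\ref{thm: main}, and I would lean on that proof's combinatorics rather than redo it from scratch.
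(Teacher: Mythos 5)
Your overall strategy is exactly the right one and is essentially the paper's (the corollary is read off from Theorem \ref{thm:G/B}): since $\Ctcell_0(G/B)=\Z$ and $\overline{\partial}_1=0$, everything reduces to computing $\Coker(\overline{\partial}_2)$ summand by summand, using $\KMW_1/\eta\KMW_2=\KM_1=\G_m$, $n_\epsilon\cdot\eta=\eta$ for $n$ odd, $2_\epsilon\cdot\eta=h\eta=0$, and the coroot identification $\prod_{i=1}^r\G_m\cong T$.

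There is, however, a concrete slip in the combinatorial step that decides which summand survives. The $j$-th copy of $\KMW_1$ survives intact precisely when $n_{ji}$ is even for \emph{every} $i$ adjacent to $j$, i.e.\ when $\alpha_j$ is a \emph{long} root all of whose neighbours are short with $n_{ji}=2$. In type $C_r$ this singles out the unique \emph{long} root $\alpha_r$, not ``the unique short-root vertex'' as you write (for $r\geq 3$ there is no unique short root in $C_r$, and the adjacent contribution into a short vertex from a long neighbour has $n_{ji}=1$, which is odd). Taken literally, your rule would also leave a surviving $\KMW_1$ at the short end-vertex of $B_r$, contradicting the statement; you avoid this only by asserting, without verification, that $B_r$, $F_4$ and $G_2$ ``still collapse every summand.'' That verification is precisely the vertex-by-vertex parity analysis of the Cartan integers already carried out (in dual form) in the proof of Theorem \ref{thm: main}, so the gap is easily filled — but as written the identification of the surviving summand is wrong, and the non-simply-laced, non-symplectic cases are deferred rather than proved.
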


\begin{remark} \label{rmk alternate proof 1}
The cellular $\A^1$-chain complex of the full flag variety $G/B$ of $G$ as in Theorem \ref{thm:G/B} is considerably simpler to handle compared to $\Ccell_*(G)$.  Using an appropriate generalization of the methods of Section \ref{section Bruhat decomposition} and Section \ref{section differential}, one may compute the entire cellular $\A^1$-homology of $G/B$.  The details will appear in a sequel to this paper.
\end{remark}

\begin{remark} \label{rmk alternate proof 2}
Let $G$ be a split, semisimple, simply connected algebraic group over $k$.  There exists an $\A^1$-fibration sequence 
\begin{equation}
\label{eqn fibration}
G \to G/T \xrightarrow{\phi} BT. 
\end{equation}
Recall that $BT$ can be given a cellular structure by choosing a basis of the character group $Y = \Hom(T, \G_m)$ of $T$, which gives an isomorphism $T \cong \underset{i=1}{\overset{r}{\prod}}~\G_m$ and thereby gives $(B\G_m)^r = (\P^{\infty})^r$ as a model of $BT$.  Using the results of Section \ref{subsection homology of Pn} and the K\"unneth formula, one can compute the complete cellular $\A^1$-homology of $BT$.  The morphism $\phi:G/T \to BT$ induces an epimorphism $\Hcell_1(\phi): \Hcell_1(G/T) \to \Hcell_1(BT) = T$, since $G$ is $\A^1$-connected.  It can be easily verified that $\Hcell_1(\phi)$ is an isomorphism if and only if $G$ is not of symplectic type.
\end{remark}

\begin{remark} \label{rmk alternate proof 3}
Let $G$ be a split, semisimple, simply connected algebraic group over $k$.  There exists an exact sequence of the form
\begin{equation}
\label{eqn key exact sequence}
\Hcell_2(G/T)  \xrightarrow{\Hcell_2(\phi)} \Hcell_2(BT) \to \Hcell_1(G) \to \Hcell_1(G/T)\xrightarrow{\Hcell_1(\phi)} T \to 0.  
\end{equation}
A part of the exact sequence \eqref{eqn key exact sequence} can be concluded in an elementary way: the long exact sequence of $\A^1$-homotopy sheaves associated with the $\A^1$-fibration $G \to G/T \stackrel{\phi}{\to} BT$ gives the exact sequence
\[
\piA_1(G) \to \piA_1(G/T)\to T \to 0,
\]
which on abelianization gives the exact sequence of last three terms in \eqref{eqn key exact sequence}.  However, this information is not sufficient for the calculation of $\piA_1(G)\simeq \HA_1(G)$.  The exact sequence \eqref{eqn key exact sequence} can be obtained as the five-term exact sequence associated with the Serre spectral sequence or the motivic Rothenberg-Steenrod spectral sequence corresponding to the $\A^1$-fibration \eqref{eqn fibration}, computing the cellular $\A^1$-homology of $BT$.
\end{remark}

\begin{remark} \label{rmk alternate proof 4}
Let $G$ be a split, semisimple, almost simple, simply connected algebraic group over $k$.  The morphism $\Hcell_2(BT) \to \Hcell_1(G)$ factors through $\Hcell_2(BT)_W$, the group of coinvariants for the action of the Weyl group $W$ of $G$ on $\Hcell_2(BT)$.  It can be shown that there are isomorphisms 
\[                                                                                                                                                        
\Hcell_1(G) \simeq \Hcell_2(BT)_W \simeq \KM_2,
\]
when $G$ is not of symplectic type.  However, when $G$ is of symplectic type, one can show that
\[
\Hcell_2(BT)_W \simeq \KM_2/2 \oplus {}_\eta \KMW_2. 
\]
These computations are achieved by determining the cokernel of the morphism $\Hcell_2(\phi)$ in \eqref{eqn key exact sequence} using, for example, comparison with Suslin homology and results of Demazure \cite{Demazure} on the multiplicative structure of the Chow ring of $G/B$.  A detailed analysis of coinvariants of the Weyl group action on $\Hcell_*(BT)$ and its relationship with $\Hcell_*(BG)$ will be taken up in the sequel to this paper.
\end{remark}

\subsection{Formalization of the method and relation to a theorem of E. Cartan}  
\label{sec:method+Cartan}\hfill 

We begin this section by formalizing the method of our proof of the main theorem (Theorem \ref{thm: main}), so that the method can be applied under suitable realization functors to obtain results pertaining to other homology theories (such as Suslin homology).

We consider a triangulated $\otimes$-category $\sD$ endowed with a $t$-structure compatible with the $\otimes$-structure. As we use the homological point of view, we will denote by $\sD_{\geq 0}$ (respectively, $\sD_{\leq 0}$) the full sub-category of non-negative object (respectively, of non-positive objects) for the $t$-structure. If $\Sigma:\sD\xrightarrow{\simeq}\sD$ is the suspension functor for the triangulated structure, we denote, for any $n\in \Z$,  by $\sD_{\geq n}$ the full subcategory of objects $D$ such that $\Sigma^{\circ -n}(D)$ is non-negative.  We write $H_i^t(D)$ for the homology in degree $i$ of an object $D \in \sD$.  Then for any non-negative object $D\in \sD_{\geq 0}$, there is a exact triangle of the form
\[
 D_{\geq 1} \to D \to H^t_0(D) \to D_{\geq 1}[1]
\]
with $D_{\geq 1}\in \sD_{\geq 1}$ and $H^t_0(D)\in \sA$, the heart of the $t$-structure, which is an abelian category. We refer the reader to \cite{BBD} for the precise definition and basic properties of t-structures on triangulated categories.

\begin{definition}
\label{defn reasonable functor}
We call a functor
\[
 C_*: \sH(k) \to \sD
\]
a {\em reasonable homological chain functor} for smooth $k$-varieties if it satisfies the following properties:
\begin{enumerate}
\item $C_*$ is a symmetric $\otimes$-functor;
\item $C_*(\A^1)$ is the zero object of $\sD$; and
\item if $U$, $V$ are open subschemes of a smooth $k$-scheme $X$ covering $X$, the following obvious Mayer-Vietoris diagram in $\sD$:
\[
C_*(U\cap V) \to C_*(U) \oplus C_*(V) \to C_*(X) \to \Sigma(C_*(U\cap V))
\]
is an exact triangle.
\end{enumerate}
\end{definition}

If $C_*: \sH(k) \to \sD$ is a reasonable homological chain functor, then it is easy to see that $C_*(\P^1) \cong \Sigma C_*(\G_m)$ and $C_*(\A^2-\{0\})\cong \Sigma C_*(\G_m^{\wedge 2})$ are both objects of $\sD_{\geq 1}$.  For $M$ and $N$ in $\sA$, we set $M\otimes_0 N := H^t_0(M\otimes N)$.  We set
\[
\Z^t(1):= H^t_1(\P^1) = \tilde{H}^t_0(\G_m)
\]
and
\[
\Z^t(2) := H^t_1(\A^2-\{0\}) = \tilde{H}^t_0((\G_m)^{\wedge 2}) \cong \Z^t(1)\otimes_0 \Z^t(1).
\]
Furthermore we denote by $\eta^t: \Z^t(2) \to \Z^t(1)$ the $H^t_1$ applied to the Hopf map $\A^2-\{0\}\to \P^1$.

Observe that any unit $u$ of $k$ defines an element $(u) \in \Hom_{\sD}(\Z,\Z^t(1))$, and also defines an automorphism $\<u\>$ of $\Z^t(1)$.  We then define $$\Z^t(n) := \Z^t(1)^{\otimes_0 n},$$ for $n\geq 1$.  Then $\eta^t\otimes \Z^t(1)$ induces a morphism $\Z^t(3) \to \Z^t(2)$. For any pair of units $(u,v)$ in $k$ we have a corresponding symbol $(u)(v)\in \Hom_{\sD}(\Z,\Z^t(2))$ and for any units $\alpha$, $u$ and $v$ in $k$, the induced action of $\<\alpha\>$ on the symbol $(u)(v)$ is given by (see Remark \ref{remark intro K2})
\[
\<\alpha\> \cdot (u)(v) = (u)(v) + \eta  (\alpha) (u)(v).
\]

\begin{theorem}
\label{thm: formalization}
Let $C_*: \sH(k) \to \sD$ be a reasonable homological chain functor for smooth $k$-varieties as above. Then for any split simply connected semi-simple group $G$, $C_*(G)\in \sD_{\geq 1}$.  Moreover, if $G$ is almost simple, then we have the following:
\begin{enumerate}[label=$(\alph*)$]
\item If $G$ is not of symplectic type, then the inclusion $S_\alpha\subset G$ induces a canonical isomorphism
\[
\Z^t(2)/\eta^t \xrightarrow{\simeq} H_1^t(C_*(G))
\]
induced by the morphism $H_1^t(C_*(S_\alpha)) \to H_1^t(C_*(G))$ through the canonical epimorphism $H_1^t(C_*(S_\alpha))\cong \Z^t(2) \to \Z^t(2)/\eta^t$.

\item If $G$ is of symplectic type, then the inclusion $S_\alpha\subset G$ induces a non-canonical isomorphism
\[
\Z^t(2) \xrightarrow{\simeq} H_1^t(C_*(G)).
\]
\end{enumerate}
\end{theorem}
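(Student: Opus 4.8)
The plan is to transport the entire computation of $\Ctcell_*(G)$ in degrees $\leq 2$ carried out in Sections \ref{section Bruhat decomposition} and \ref{section differential} through the reasonable homological chain functor $C_*$. The point is that the construction of the cellular complex used only three inputs — the homotopy purity theorem, the Mayer–Vietoris/cofiber sequences coming from the Bruhat stratification, and the $\A^1$-invariance property — and that all three are encoded in Definition \ref{defn reasonable functor}. First I would check that $C_*(G)\in\sD_{\geq 1}$: since $G$ is $\A^1$-connected and admits the Bruhat cellular structure, one runs the same induction on the length of the open filtration as in the proof that $C_*^{\A^1}(X)$ is strictly $\A^1$-solvable, using that $C_*(\A^1)=0$ and that for a cohomologically trivial affine $Y$ the Thom space $Th(\nu)\simeq_{\A^1}\A^i/(\A^i-\{0\})\wedge(Y_+)$, so $C_*$ of it lies in $\sD_{\geq i}$ (it is a wedge of copies of $C_*(\A^i/(\A^i-\{0\}))=\Sigma^iC_*(\G_m^{\wedge i})$, each in $\sD_{\geq i}$ since $\G_m^{\wedge i}$ contributes $\Z^t(i)$ in degree $i$). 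The big cell $\Omega_0$ contributes in codimension $0$ a copy of $\Z[\tilde w_0 T]$-type data, but after killing $\A^1$-factors and passing to $H^t_0$ this is $\Z^t(0)=\Z$ placed in degree $0$; all higher cells contribute in degree $\geq 1$, so $H^t_0(C_*(G))=\Z$ and $C_*(G)\in\sD_{\geq 1}$.

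Next I would reproduce the low-degree part of the oriented cellular complex. Applying $C_*$ cell-by-cell and using the symmetric monoidal structure together with the Künneth-type isomorphisms $C_*(X\times Y)\cong C_*(X)\otimes C_*(Y)$ (which follows from property (1) combined with (3) applied to product cellular structures, exactly as in Lemma \ref{lemma Kunneth formula}), one gets a complex in $\sD$ whose degree-$\leq 2$ truncation, after applying $H^t_*$, is literally the complex obtained from $\Ctcell_*(G)$ in degrees $\leq 2$ by the substitutions $\KMW_n\rightsquigarrow \Z^t(n)$, $\eta\rightsquigarrow\eta^t$, $\ZA[T]\rightsquigarrow H^t_0(C_*(T))=: \bH^t_T$, and $\KM_2\rightsquigarrow\Z^t(2)/\eta^t$. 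The crucial observation is that every formula in Sections \ref{subsection differential in degree 1} and \ref{subsection differential in degree 2} — Lemma \ref{lem: partial1}, Proposition \ref{prop: partial2}, the change-of-reduced-expression formulas of Section \ref{subsection i<j}, and ultimately Theorems \ref{thm:diff2} and \ref{thm:diff2'} — is written entirely in terms of symbols $(u)$, the maps $\alpha_i^\vee$, the Cartan integers $n_{ji}$, the action of $\<\alpha\>$ on symbols, and the single relation $\<\alpha\>\cdot(u)(v)=(u)(v)+\eta(\alpha)(u)(v)$ (see Remark \ref{remark intro K2}). None of these uses anything about $\KMW_2$ beyond the structure axioms that $\Z^t(2)$ satisfies by construction, namely that it is generated by symbols, carries a $\GW(k)$-action via the $\<u\>$, and that $\Z^t(2)/\eta^t$ is the quotient on which $\GW(k)$ acts trivially. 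Hence the diagram chase in the proof of Theorem \ref{thm: main} — extracting $\sM=\mathrm{coker}$ of the degree-$2$ differential modulo $T$, the case analysis by Dynkin type, the collapse of the $\GW(k)$-action on the $\KM_2$-summands — goes through verbatim with $M$ replaced by an arbitrary object of $\sA$ and $\Hom_{Ab_{\A^1}(k)}(-,M)$ replaced by $\Hom_\sA(-,N)$, yielding $H^t_1(C_*(G))\cong\Z^t(2)/\eta^t$ in the non-symplectic case and $\cong\Z^t(2)$ in the symplectic case, with the stated (non-)canonicity inherited from the $S_\alpha$-computation.

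One technical point I would handle carefully is the reduction ``modulo $T$'': in the $\A^1$-setting one uses that $\piA_0(G)=\ast$ forces the left/right $\ZA[T]$-action on $\HA_*(G)$ to be trivial, which gives the right-exact sequence \eqref{eq: H1cokernel}. In the abstract setting I would argue identically: $H^t_0(C_*(G))=\Z$ with trivial $\bH^t_T$-module structure, so tensoring the exact sequence $C^{\mathrm{cell}}_2\to Z_1\to H^t_1(C_*(G))\to 0$ over $\bH^t_T$ with $\Z$ preserves exactness on the right, and the target is unchanged because the $\bH^t_T$-action on it is already trivial. The computation of $Z_1\otimes_{\bH^t_T}\Z$ is the formal analogue of Lemma \ref{lem:Z1modT} and requires the analogue of the contractible Koszul-type complex $(E_i)_*$ from Section \ref{subsection Z[T]}; this is available because each $(E_i)_*$ is built from $\Z^t(1)$, $\Z^t(2)$, $\eta^t$ and $\bH^t_i$ alone. \textbf{The main obstacle} I anticipate is bookkeeping rather than conceptual: one must verify that the orientation choices (weak pinning, normal vs.\ horizontal reduced expressions, the lifts $\dot w,\tilde w,\breve w$) interact with $C_*$ in the same way, i.e.\ that Lemma \ref{lem:Thomiso} and Lemma \ref{lem:autthom} — the statements that changing a trivialization within its orientation class, or by a unit $\zeta$, acts by $\<\zeta\>$ on the relevant Thom-space homology — have exact analogues for $C_*$. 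This follows from property (1) (the $\SL_r$-nullhomotopy argument in Lemma \ref{lem:Thomiso} is purely $\A^1$-homotopical and survives any functor out of $\sH(k)$) and from the fact that multiplication by a square on $\G_m$ induces the identity on $\Z^t(1)$, which is forced by the relation $\<\alpha^2\>=1+\eta^t(\alpha^2)=1$ using $\eta^t(\alpha^2)=h^t\eta^t(\alpha)$ and $\eta^t h^t=0$; but spelling this out so that all the twists $\Z[\sigma]\otimes-$ of Remark \ref{rem twist} are correctly accounted for is where the real work lies. I would therefore organize the proof as: (i) $C_*(G)\in\sD_{\geq 1}$; (ii) abstract analogues of Lemmas \ref{lem:Thomiso}, \ref{lem:autthom}, \ref{lem: varsigma}; (iii) abstract $Z_1\otimes\Z$ via the abstract $(E_i)_*$; (iv) abstract $\partial_2$ via abstract Proposition \ref{prop: partial2}; (v) the verbatim diagram chase of Theorem \ref{thm: main}.
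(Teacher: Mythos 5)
Your proposal is correct and follows essentially the same route as the paper: the paper's own proof simply states that the argument for Theorem \ref{thm: main} adapts word-by-word, noting precisely the two points you isolate — that every use of homotopy purity is realized by an explicit trivial tubular neighborhood and hence survives any functor out of $\sH(k)$, and that the symbol relations are established in $\sH(k)$ and are therefore preserved by $C_*$. Your more detailed organization into steps (i)–(v) is a faithful expansion of that same argument.
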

\begin{proof} 
The proof of Theorem \ref{thm: main} can clearly be adapted word-by-word to the more general case of a triangulated $\otimes$-category admitting reasonable homological chain functor $C_*$ for smooth $k$-varieties.  Observe, for instance, that the use of the $\A^1$-homotopy purity theorem used above can always be obtained by an explicit open tubular neighborhood which is isomorphic to a trivial vector bundle (of rank one or two) over the closed subscheme (a Bruhat cell) under consideration (see Lemma \ref{lem:fiberproduct}). The relations on symbols that we use are proven in $\sH(k)$.  Thus, these relations continue to hold on applying the functor $C_*: \sH(k) \to \sD$, so that we get exactly the same relations in the symbols computed in $\sD$. We leave the details to the reader.
\end{proof}

Theorem \ref{thm: formalization} can be used to uniformly explain and reprove some classical results from topology and known results about motives under suitable realization functors, as can be seen below.

\begin{example} \hspace{1cm}
\label{ex formalization}
\begin{enumerate}
\item The main example of a reasonable homological chain functor for smooth $k$-varieties is the $\A^1$-chain functor $C_*^{\A^1}: \sH(k)\to D_{\A^1}(k)$. In that case Theorem \ref{thm: formalization} is nothing but our main theorem (Theorem \ref{thm: main}).

\item If $\sD = DM(k)$ and $C_*: \sH(k)\to DM(k)$ is the functor constructed by Voevodsky associating with every space its motive, we see that $\Z^t(n) = \KM_n$ and $H^t_1 = \HS_1$, the first Suslin homology sheaf. Thus, in this case, Theorem \ref{thm: formalization} precisely becomes \cite[Theorem A]{Gille-Suslin-homology}, first obtained by S. Gille (Corollary \ref{corintro:2}).

\item If $\sD = D(\sA b)$ is the derived category of abelian groups, $k = \R$ and $C_*: \sH(k) \to D(\sA b)$ is the functor $X\mapsto C^{\rm sing}_*(X(\R);\Z)$, induced by the singular chain complex of the topological space of real points with integral coefficients, then $\Z^t(1) = H_1^{\rm sing}(\P^1(\R);\Z) \cong \Z$ and $\eta$ is the multiplication by $2: \Z \to \Z$. Theorem \ref{thm: formalization} in this case proves the following: for any split, semisimple, simply connected algebraic group $G$ over $\R$, the topological space $G(\R)$ is path-connected; if moreover $G$ is almost simple and not of symplectic type, there is a canonical isomorphism
\[
\pi_1(G) = \Z/2\Z;  
\]
and if $G$ is of symplectic type, there is a non canonical isomorphism
\[
\pi_1(G) = \Z.
\]

\item If $\sD = D(\sA b)$ is the derived category of abelian groups, $k = \C$ and $C_*: \sH(k) \to D(\sA b)$ is the functor $X\mapsto C^{\rm sing}_*(X(\C);\Z)$, induced by the singular chain complex of the topological space of complex points with integral coefficients, then $\Z^t(1) = H_1^{\rm sing}(\P^1(\C);\Z) =0$ and $\eta = 0$. Theorem \ref{thm: formalization} in this case proves that for any split, semisimple, simply connected algebraic group $G$ over $\C$, the topological space $G(\C)$ is path-connected and simply connected.
\end{enumerate}
\end{example}

We now explain how our methods used in the proofs of the main results of the paper (especially, cellular structures)   can further be used to give a ``motivic'' proof of the following theorem of E. Cartan \cite[page 496]{Dieudonne} (see \cite{Cartan} and \cite[Theorems B and B']{Bott} for the classical proofs using different methods).

\begin{theorem} 
\label{thm Cartan pi3}
For any split, semisimple, almost simple, simply connected algebraic group $G$ over $\C$, the topological space $G(\C)$ is $2$-connected and there is a canonical isomorphism:
\[
\pi_3(G(\C)) \cong \Z.
\]
\end{theorem}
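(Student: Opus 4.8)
The plan is to deduce Theorem \ref{thm Cartan pi3} from the cellular $\A^1$-chain complex technology developed above, applied through the complex realization functor, combined with the classical Hurewicz theorem in ordinary topology. First I would set up the realization: the functor $X \mapsto C^{\rm sing}_*(X(\C);\Z)$ from $\sH(\C)$ to $D(\sA b)$ is a reasonable homological chain functor in the sense of Definition \ref{defn reasonable functor}, with $\Z^t(1) = H_1^{\rm sing}(\P^1(\C);\Z) = 0$ (as already noted in Example \ref{ex formalization}(4)). Hence Theorem \ref{thm: formalization}, or rather the vanishing input $\Z^t(1)=0$ feeding into the cellular chain complex of $G$, shows that $C_*(G(\C);\Z)$ has trivial reduced homology in degrees $0$ and $1$; this recovers that $G(\C)$ is path-connected and, since $G(\C)$ is a topological group (hence simple in the sense of homotopy theory) and $\pi_1$ of a topological group is abelian, also that $\pi_1(G(\C)) = H_1^{\rm sing}(G(\C);\Z) = 0$. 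The content of Cartan's theorem beyond this is the computation in degrees $2$ and $3$.

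The key step is to push the cellular $\A^1$-chain complex computation one degree further than was needed for the main theorem. Over $\C$, with $\Z^t(1) = 0$, the realized cellular chain complex of $G$ — built exactly as $\Ccell_*(G)$ but with each $\KMW_i$ replaced by its realization — has its first few graded pieces governed by $\Z^t(2) = H_1^{\rm sing}(\A^2-\{0\};\Z) \cong H_3^{\rm sing}(S^3;\Z) \cong \Z$ and $\Z^t(3) = H_1^{\rm sing}(\A^3-\{0\};\Z) \cong \Z$, with $\eta^t$ realizing to a map $\Z \to 0$, hence zero. So I would: (1) observe that the $i=1$ term of the realized complex vanishes (it is built from $\Z^t(1)=0$); (2) the $i=2$ term realizes the $\KMW_2 \otimes \ZA[\dot{w}_{[i,j]}T]$ summands, and since $T(\C) \simeq (\C^\times)^r$ has $\Z^t(1)$-trivial reduced cellular chains in degree $1$ but the relevant contribution in the total degree-$2$ piece of $C_*(G(\C))$ comes only from the $\KMW_2$ factors (all the $\ZA[T]$-torsor twisting contributes nothing new because $\Z^t(1)=0$ kills the augmentation ideal of $\ZA[T]$); (3) analyze the degree-$2$ differential using Theorems \ref{thm:diff2} and \ref{thm:diff2'}, realized: there $\overline{\delta}_{ji}$ lands in a copy of $\KM_2$ which realizes to $\Z^t(2)/\eta^t \cong \Z$, and the $h \cdot \overline{\phi}_j$ or $n_\epsilon \cdot \overline{\phi}_j$ terms realize to integer multiples of the generator. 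Carrying out the same kernel/cokernel bookkeeping as in the proof of Theorem \ref{thm: main}, but now keeping track of the degree-$3$ term as well, should show $H_2^t(C_*(G(\C))) = 0$ and $H_3^t(C_*(G(\C))) \cong \Z$. Then $G(\C)$ is $2$-connected by Hurewicz (using $\pi_1 = 0$ and $H_2 = 0 \Rightarrow \pi_2 = 0$, then $H_2 = 0$ again), and $\pi_3(G(\C)) \cong H_3^{\rm sing}(G(\C);\Z) \cong \Z$ by Hurewicz once more.

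The canonicity of the isomorphism $\pi_3(G(\C)) \cong \Z$ would be extracted exactly as in Theorem \ref{thm: main}: for $\alpha$ a long simple root, the inclusion $S_\alpha \subset G$ induces $S_\alpha(\C) = SL_2(\C) \simeq S^3$ (up to homotopy, via $SU(2)$) into $G(\C)$, and the induced map on $\pi_3$ (equivalently $H_3^t$) is the canonical generator. When $G$ is not of symplectic type the differential analysis forces this generator to be canonical (independent of the choice of long root up to sign, via the distance $d(\alpha,\beta)$ in the Dynkin diagram, as in Remark \ref{rem main precise}); when $G$ is of symplectic type the long root is unique so there is no ambiguity and the isomorphism is again canonical — this is why, unlike Theorem \ref{thm: main}, the $\pi_3$ statement has no dichotomy.

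The main obstacle I anticipate is item (2)–(3) above: verifying carefully that, upon realization over $\C$, \emph{all} the contributions to $H_2^t$ and $H_3^t$ of $C_*(G(\C))$ come from the $\KMW_\ast$-factors of the cellular complex and that the $\ZA[T]$-module bookkeeping genuinely collapses (the augmentation ideal of $\ZA[T]$ realizes to the reduced chains of $(\C^\times)^r$, which are $\Z$ in degree $1$ — \emph{not} zero — so one must be careful that these degree-$1$ classes, tensored with $\KMW_1$-type factors that \emph{do} vanish, really do not sneak extra homology into total degree $2$ or $3$). Concretely, this means re-deriving the analogue of Lemma \ref{lem:Z1modT} and the degree-$2$ differential computation while retaining one more homological degree, and checking that $H_2^t(BT(\C))$ and $H_3^t(BT(\C))$ interact with the Weyl-group coinvariants exactly as in Remarks \ref{rmk alternate proof 3}–\ref{rmk alternate proof 4} so that no unexpected torsion or free summand appears. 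Once this degree-shifted version of the main computation is in place, the topological conclusions follow formally from Hurewicz, as sketched above. A cleaner alternative, which I would also pursue, is to run the five-term (Serre spectral sequence) argument of Remark \ref{rmk alternate proof 3} one step longer for the fibration $G \to G/T \to BT$ realized over $\C$, since $H_*^{\rm sing}(BT(\C);\Z)$ and its Weyl-coinvariants are completely classical, reducing Theorem \ref{thm Cartan pi3} to the vanishing of $H_2$ and the rank-one computation $H_3^t(SL_2(\C)) \cong \Z$.
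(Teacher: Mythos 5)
Your overall strategy (Bruhat cells, complex realization, Hurewicz) is the right one, and your ``cleaner alternative'' at the end is essentially what the paper does --- except that the paper uses the fibration $T(\C) \Rightarrow \Omega_2(\C) \to \overline{\Omega}_2(\C)$ (i.e.\ $T \to G \to G/B$ restricted to the union of Bruhat cells of codimension $\leq 2$, which suffices since the complement has real codimension $\geq 6$), rather than $G \to G/T \to BT$. However, your primary plan has a genuine gap at exactly the point you flag as ``the main obstacle,'' and your proposed resolution of it is based on a misidentification. The realized cellular object is \emph{not} a chain complex with one term per codimension: since $T(\C)\simeq (S^1)^r$ is not acyclic, the graded piece $\tilde{C}_*^{\rm sing}(Th(\nu_i)(\C))\simeq \tilde{C}_*^{\rm sing}(S^{2i}\wedge (T(\C)_+))$ has homology $\Lambda^{*}Y$ spread over degrees $2i,\dots,2i+r$, so one must run the full spectral sequence of the filtration. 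Your claim that the ``$\KMW_1$-type factors do vanish'' confuses $\Z^t(1)=\tilde{H}_0^{\rm sing}(\C^\times)=0$ with the full realization of $\KMW_1=\ZA(\G_m)$, which is $\tilde{C}_*(\C^\times)\simeq \Z$ placed in degree $1$, not zero; similarly your identification $\Z^t(2)=H_1^{\rm sing}(\A^2-\{0\})\cong H_3(S^3)\cong\Z$ is a degree error ($H_1(S^3)=0$). The $\Z$ that becomes $\pi_3(G(\C))$ is $H_3(S^3)$, sitting two homological degrees above the range controlled by Theorem \ref{thm: formalization} and Lemma \ref{lem:Z1modT}; in particular, the vanishing of $H_1(G(\C))$ and $H_2(G(\C))$ is not ``because the low-degree terms of the complex vanish'' but because the classes in $\Lambda^*Y=H_*(T(\C))$ are killed by the $d^2$-differential $\Lambda^*Y\otimes_\Z Y\to \Lambda^{*+1}Y$ coming from the codimension-one cells --- this surjectivity is the actual content of the paper's argument in degrees $\leq 2$.

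The second, independent gap is the identification of the answer as exactly $\Z$. After the spectral-sequence analysis one is left with computing the middle homology of
\[
Y^{(2)} \to Y\otimes_\Z Y \twoheadrightarrow \Lambda^2 Y, \qquad 1_{[i,j]}\mapsto s_j(\alpha_i^\vee)\otimes\alpha_j^\vee-\alpha_j^\vee\otimes\alpha_i^\vee,
\]
and showing it is free of rank one. Your proposal does not address this; the map $\pi_3(S_\alpha(\C))\to\pi_3(G(\C))$ induced by a long root only gives a map from $\Z$ onto (at best) a cyclic subquotient and cannot by itself rule out torsion or establish that the group is free of rank one. The paper closes this by a duality trick: the dual of the displayed three-term complex computes $H^1(G;\KM_2)$ in the analogous filtration spectral sequence for $\KM_2$-cohomology, which is already known to be $\Z$ (canonically), and since all groups involved are finitely generated free this forces the middle homology of the original complex to be $\Z$. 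Some such input --- equivalently, that the Weyl-invariant integral quadratic forms on $Y$ form a rank-one group for an almost simple $G$ --- is indispensable and is the step your sketch leaves open.
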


\begin{proof} 
We will use the stratification of $G(\C)$ induced by the Bruhat decomposition \eqref{eqn bruhat dec} of $G$.  We observe that the inclusion $\Omega_2(\C) \subset G(\C)$ is $4$-connected; this follows from topological purity by observing that the Bruhat cells in the complement of $\Omega_2(\C)$ have complex codimension $\geq 3$ and consequently, real codimension $\geq 6$.  Hence, $H^{\rm sing}_i( \Omega_2(\C);\Z) \cong H^{\rm sing}_i( G(\C);\Z)$ for $i\in\{0,\dots 4\}$.  By Example \ref{ex formalization}(4), $G(\C)$ is path-connected and simply connected.  By the Hurewicz theorem, it suffices to prove that $H^{\rm sing}_2( \Omega_2(\C);\Z)  = 0$ and $H^{\rm sing}_3( \Omega_2(\C);\Z) \cong \Z$.

The principal $B(\C)$-bundle $G(\C) \to G/B(\C)$ along with the fact that the natural map $T(\C)\to B(\C)$ is a homotopy equivalence gives the fibration
\[
T(\C) \Rightarrow G(\C) \to G/B(\C). 
\]
Consider the restriction of this fibration
\[
T(\C) \Rightarrow \Omega_2(\C) \to \overline{\Omega}_2(\C),
\]
where $\overline{\Omega}_2(\C) : = \Omega_2(\C)/B(\C)$.  We use the homological Serre spectral sequence with integral coefficients for the above fibration.

Now, $\overline{\Omega}_2(\C)$ is a cellular space with $2$-cells parameterized by the simple roots or the words of length $\ell-1$ and $4$-cells by the words of length $\ell-2$.  Let $W^{(i)}\subset W$ denote the subset of the Weyl group $W$ of $G$ consisting of the words of length $\ell - i$, for each $i$ (recall that $\ell$ is the length of the longest element of $W$).  For simplicity, we write $H_n$ for singular homology with coefficients in $\Z$.  Then $\overline{\Omega}_2(\C)$ is simply connected and its integral singular homology is given by 
\[
H_0(\overline{\Omega}_2(\C)) = \Z; \quad H_2(\overline{\Omega}_2(\C)) = \underset{w\in W^{(1)}}{\oplus} \Z; \quad H_4(\overline{\Omega}_2(\C)) = \underset{w\in W^{(2)}}{\oplus} \Z;
\]
and $H_n(\overline{\Omega}_2(\C)) = 0$, if $n \notin \{0,2,4\}$.
%
The integral singular homology of $T(\C) = \prod_{i =1}^r \G_m(\C) \cong \prod_{i =1}^r S^1$ is the exterior algebra $\Lambda^* Y$, where $Y$ is the cocharacter group $\Hom( \G_m, T)$, whose elements are in degree $1$. The $E^2_{p,q}$-term of the Serre spectral sequence with integral coefficients is thus:
\[
E^2_{p,q} = H_p(\overline{\Omega}_2(\C);H_q(T(\C))) = H_p(\overline{\Omega}_2(\C))\otimes_\Z H_q(T(\C)) = H_p(\overline{\Omega}_2(\C))\otimes_\Z \Lambda^q Y
\]
and the Serre spectral sequence has the form
\begin{equation}
\label{eq serre spectral seq}
E^2_{p,q} = H_p(\overline{\Omega}_2(\C))\otimes_\Z \Lambda^q Y \Rightarrow H_{p+q}(\Omega_2(\C)).
\end{equation}
It follows that $E^2_{0,q} = \Lambda^q Y$, $E^2_{2,q} = Y \otimes_\Z \Lambda^q Y$ and $E^2_{4,q} = Y^{(2)} \otimes_\Z \Lambda^q Y$ where we set $Y^{(2)}: = \underset{W^{(2)}}{\oplus} \Z$.  We wish to understand the differential $d^2: E^2_{*,*} \to E^2_{*-2,*+1}$ in \eqref{eq serre spectral seq}.

Consider the cofibrations
\[
\Omega_0(\C) \subset \Omega_1(\C) \to Th(\nu_1)(\C)
\]
and 
\[
\Omega_1(\C) \subset \Omega_2(\C) \to Th(\nu_2)(\C)
\]
obtained by taking the complex points of the corresponding $\A^1$-cofibration sequences $\Omega_0 \subset \Omega_1 \to Th(\nu_1)$ and $\Omega_1 \subset \Omega_2 \to Th(\nu_2)$.  The filtration $\Omega_0(\C) \subset \Omega_1(C) \subset \Omega_2(\C)$ is exactly the inverse image of the skeletal filtration of $\overline{\Omega}_2(\C)$ under the ``fibration'' $\Omega_2(\C) \to \overline{\Omega}_2(\C)$ up to reindexing (as there are only even-dimensional cells).  Now $\Omega_0(\C)\cong T(\C)$ and consequently, its homology is $\Lambda_* Y$.  Then $Th(\nu_1) \cong \underset{i=1}{\overset{r}{\vee}} (\A^1/(\A^1-\{0\}))\wedge (T_+)$ so that $Th(\nu_1)(\C) \cong \underset{i=1}{\overset{r}{\vee}} S^2\wedge (T(\C)_+)$ and its reduced singular homology is given by 
\[
H_*^{\rm sing}(Th(\nu_1)(\C)) [-2] = \underset{i=1}{\overset{r}{\oplus}} \Lambda^* Y = \Lambda^* Y \otimes_\Z Y.
\]
This is precisely the column $p=2$ of the Serre spectral sequence \eqref{eq serre spectral seq}.  In the same way, we obtain $Th(\nu_2)(\C) \cong \underset{W^{(2)}}{\vee} S^4\wedge (T(\C)_+)$ so that its reduced singular homology is 
\[
H_*^{\rm sing}(Th(\nu_1)(\C)) [-4] = \underset{W^{(2)}}{\oplus} \Lambda_*(Y) = \Lambda_*(Y)\otimes_\Z Y^{(2)},
\]
which is precisely the column $p=4$ of the Serre spectral sequence \eqref{eq serre spectral seq}.

The differential $d^2: E^2_{*,*} \to E^2_{*-2,*+1}$ is then analyzed as follows. In the cofibration $\Omega_0 \subset \Omega_1 \to Th(\nu_1)$, each Bruhat cell $Bw_iB=Y_i$ of codimension $1$ admits (using our conventions from Section \ref{subsection differential in degree 1}) an explicit tubular neighborhood in $\Omega_1$ isomorphic to $\A^1\times Y_i$, so that we get a homotopy cocartesian square of spaces of the form:
\[
\begin{xymatrix}{
\underset{i=1}{\overset{r}{\coprod}} (\A^1-\{0\})\times Y_i \ar[r] \ar[d] & \underset{i=1}{\overset{r}{\coprod}} \A^1\times Y_i \ar[d] \\
\Omega_0 \ar[r] & \Omega_1 .} 
\end{xymatrix}
\]
Taking complex points and using the homotopy equivalence between $Y_i(\C)$ and $T(\C)$, the restriction of the connecting homomorphism $$H_{p+2}(Th(\nu_1)(\C)) \to H_{p+1}(\Omega_0(\C))$$ to the $i$th direct summand in the decomposition $H_{p+2}(Th(\nu_1)(\C)) \cong \underset{i=1}{\overset{r}{\oplus}} H_{p+2}(S^2\wedge (T(\C)_+))$ is given by the composition
\[
\Lambda^p Y = H_p(Y_i(\C)) \subset H_{p+1}((\C^1-\{0\})\times Y_i(\C)) \to H_{p+1}(\Omega_0(\C)) = H_{p+1}(T(\C)) = \Lambda^{p+1} Y.
\]
Moreover, since the above morphism and and the homotopy equivalence $Y_i(\C) \to T(\C)$ are both left $T(\C)$-invariant, evaluating on the homology turns it into a morphism of $H_*(T(\C)) = \Lambda^* Y$-modules. 

The differential $d^2: E^2_{2,0} \to E^2_{0,1}$ is the identity map of $Y$; this follows from the fact that for each $i$, the morphism $\A^1-\{0\}\to (\A^1-\{0\})\times (Y_i) \to \Omega_0 \cong T$ is given by the $i$th coroot $\alpha_i^\vee$.  More generally, it follows that $d^2: E^2_{2,*} \to E^2_{0,*+1}$ is given by the product (in $\Lambda^* Y$) map
\[
\Lambda^* Y \otimes_\Z Y \to \Lambda^* Y.
\]
Since this is a surjective morphism, $H_p(\Omega_2(\C)) = 0$ for $p\leq 2$. 
Analyzing the spectral sequence further, we see that $\pi_3(G(\C)) = H_3(G(\C)) = H_3(\Omega_2(\C))$ is the homology of the complex
\begin{equation}
\label{eqn dual diagram BD}
Y^{(2)} \to Y\otimes_\Z Y \twoheadrightarrow \Lambda_2(Y)
\end{equation}
at $Y\otimes_\Z Y$, where the left morphism is given by the differential
\[
d^2: Y^{(2)} = E^2_{4,0} \to E^2_{2,1} = Y\otimes_\Z Y 
\]
of the Serre spectral sequence \eqref{eq serre spectral seq}.  We claim that the restriction of the differential $d^2$ to the factor $\Z$ corresponding to $w_{ij} = w_0s_is_j$ in $Y^{(2)} = \underset{W^{(2)}}{\oplus} \Z$ is given by 
\begin{equation}
\label{eq: explicit diff serre}
\Z \to Y\otimes_\Z Y; \quad 1\mapsto s_j(\alpha_i^\vee)\otimes \alpha^\vee_j - \alpha_j^\vee \otimes \alpha^\vee_i \in \underset{W^{(1)}}{\oplus} Y = Y \otimes_\Z (\underset{W^{(1)}}{\oplus} \Z ) = Y\otimes_\Z Y.
\end{equation}
This follows from the proofs of Theorem \ref{thm:diff2} and  Theorem \ref{thm:diff2'}.  Recall that in the proof of Theorem \ref{thm:diff2}, we used for each $w_{ij}$ an explicit open neighborhood of $Z_{ij}$ in $\tilde{X}_{ij}$ given by the total space of a trivial rank $2$-vector bundle embedded as an open subset in $\tilde{X}_{ij}$ and consequently, an explicit model $(\A^2-\{0\})\times Z_{ij} \to \Omega_1$ for the connecting morphism appearing in the differential of the cellular $\A^1$-chain complex. Proceeding as above and using our computations, we may identify for each $w_{ij}$ the morphism given by the composite
\[
\begin{split}
\Z & = H_3((\C^2-\{0\})\times Z_{ij}(\C))  \\ 
& \cong H_3(S^3\times T(\C)) \to H_3(\Omega_1(\C)) \to H_3(\underset{i=1}{\overset{r}{\vee}} S^2 \wedge (T(\C)_+)) \cong H_3(\Omega_1/\Omega_0(\C)) = Y\otimes_\Z Y 
\end{split}
\]
with the morphism \eqref{eq: explicit diff serre} above. Observe that because of the use of complex points and integral singular homology, the issue of change of orientations showing up in the proofs of Theorem \ref{thm:diff2} and  Theorem \ref{thm:diff2'} disappears, as any complex vector bundle is canonically oriented.

Now to conclude, we just observe that the dual of the diagram \eqref{eqn dual diagram BD} appears in the spectral sequence of the filtration computing $H^*(G;\KM_2)$ in degrees $\leq 2$, that is, computing $H^*(\Omega_2;\KM_2)$ with respect to the filtration $\Omega_0 \subset \Omega_1 \subset \Omega_2$.  Since $H^2(G;\KM_2) = H^2(\Omega_2;\KM_2) = \Z$ canonically, we conclude that the complex dual to \eqref{eqn dual diagram BD} has homology in the middle equal to $\Z$ (as it is a diagram of finite type free $\Z$-modules).  This gives us the isomorphism
\[
\pi_3(G(\C)) = H_3(G(\C)) = H_3(\Omega_2(\C)) =\Z.  
\]
\end{proof}

\begin{remark}
The \eqref{eqn dual diagram BD} appearing in the proof of Theorem \ref{thm Cartan pi3} is precisely dual to \cite[(4.4.3), page 38]{Brylinski-Deligne}.  In fact the two situations are closely related for the following reason. There is a canonical natural transformation of functors on smooth $k$-schemes
\[
H^p(X;\Z(q)) \to H^{p}(X(\C);\Z),
\]
where $H^p(X;\Z(q))$ are the integral motivic cohomology groups of Voevodsky \cite{Voevodsky-book}.  This natural transformation induces a morphism of the cohomological spectral sequences in integral motivic cohomology for $\Omega_2$ (or $G$) induced by the above flag to the cohomological spectral sequence in integral cohomology for $\Omega_2(\C)$ (or $G(\C)$) induced by the complex points of the same flag. The latter is as we observe above the Serre spectral sequence (after conveniently reindexing). Now, for a smooth $k$-scheme $X$, one has
\[
 H^p(X;\Z(q)) \cong H^{p-q}(X;\KM_q),
\]
for $p\geq 2q-2$ \cite[Corollary 2.4]{Voevodsky-Milnor}. Hence, the cohomological spectral sequence in integral motivic cohomology for $\Omega_2$ coincides with the same for $H^{p}(X;\KM_q)$ in the area we are concerned with (after a suitable reindexing). The above comparison morphism from integral motivic cohomology to integral singular cohomology of the complex points thus identifies our sequence \eqref{eqn dual diagram BD} to the dual of \cite[(4.4.3), page 38]{Brylinski-Deligne}.  It follows that in fact we have proved the following motivic variant of Cartan's theorem on $\pi_3$:

\begin{corollary} 
For any split, semisimple, almost simple, simply connected algebraic group $G$ over $\C$, $G(\C)$ is $2$-connected and the canonical morphism
\[
 \Z = H^1(G;\KM_2) \cong H^3(G;\Z(2)) \to H^{3}(G(\C);\Z)
\]
is an isomorphism.
\end{corollary}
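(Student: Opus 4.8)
Combining Theorem \ref{thm Cartan pi3} with the comparison sketched in the paragraph above yields the corollary, and the plan is to make that comparison precise. To begin, $2$-connectedness of $G(\C)$ is the first assertion of Theorem \ref{thm Cartan pi3}, so by the Hurewicz theorem $H_3(G(\C);\Z) \cong \pi_3(G(\C)) \cong \Z$, and since $H_2(G(\C);\Z) = 0$ the universal coefficient theorem gives $H^3(G(\C);\Z) \cong \Z$. On the motivic side, $H^3(G;\Z(2)) \cong H^1(G;\KM_2)$ by \cite[Corollary 2.4]{Voevodsky-Milnor} (as $3 \geq 2\cdot 2 - 2$), and $H^1(G;\KM_2) \cong \Z$ by Corollary \ref{corintro:1} applied to $\mathbf M = \KM_2$, which produces $\Z$ in both the symplectic and the non-symplectic cases since $\eta$ acts as zero on $\KM_*$. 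Thus both source and target of the canonical morphism are infinite cyclic, and the only remaining point is that this morphism is an isomorphism rather than multiplication by some integer $\neq \pm 1$.

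To establish this I would compare the two spectral sequences attached to the filtration $\Omega_0 \subset \Omega_1 \subset \Omega_2 \subset G$ used in the proof of Theorem \ref{thm Cartan pi3}. The natural transformation $H^p(-;\Z(q)) \to H^p((-)(\C);\Z)$ from integral motivic cohomology to integral singular cohomology of complex points is compatible with open immersions, hence induces a morphism from the cohomological spectral sequence of this filtration in integral motivic cohomology to the one in integral singular cohomology of $\Omega_2(\C)$; the proof of Theorem \ref{thm Cartan pi3} identifies the latter, after the evident reindexing, with the Serre spectral sequence of $T(\C) \to \Omega_2(\C) \to \overline{\Omega}_2(\C)$. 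Invoking \cite[Corollary 2.4]{Voevodsky-Milnor} once more, in the range of total degree $\leq 3$ the motivic spectral sequence coincides with the spectral sequence of the same filtration computing $H^*(G;\KM_2)$, whose relevant $E_2$-entries are the $\KM_*$-cohomology of Thom spaces of trivial line and plane bundles over split tori, and whose differential at the crucial place is the $\KM_2$-cohomology dual of the map \eqref{eq: explicit diff serre} — the same combinatorial formula, coming from the same explicit open tubular neighborhoods, as in the Serre spectral sequence.

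The key step is then to check that the realization map is the tautological isomorphism on those finitely many $E_2$-entries: on the topological side they are finitely generated free $\Z$-modules built from $H_*^{\mathrm{sing}}(T(\C)) = \Lambda^* Y$ and indexed by the cells $W^{(i)}$ of $\overline{\Omega}_2$, on the motivic side they are the cohomology in the appropriate bidegree of $\KMW_i \otimes \ZA[T]$ over the strata, likewise indexed by $W^{(i)}$, and the complex realization carries the coroot-labelled generators of one to those of the other. Here the orientation bookkeeping that burdened the computations of Section \ref{section differential} simply disappears, since every complex vector bundle is canonically oriented, so the two tubular-neighborhood models for the connecting maps realize to one another with no twist by units. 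Hence the realization is a morphism of (reindexed) complexes of finitely generated free $\Z$-modules that is a levelwise isomorphism in the relevant range, therefore an isomorphism on homology; restricting to the spot computing $H^3$ gives exactly the asserted isomorphism $\Z = H^1(G;\KM_2) \cong H^3(G;\Z(2)) \to H^3(G(\C);\Z)$. The main obstacle is the purely organizational one of aligning the two filtrations and their spectral sequences with the correct reindexing and verifying that the handful of relevant $E_2$-terms and differentials really do correspond under realization; once that alignment is fixed, compatibility of differentials is automatic because both are computed from the same open tubular neighborhoods, and the $\C$-points functor, being a symmetric monoidal exact functor, preserves everything in sight.
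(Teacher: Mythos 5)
Your proposal is correct and follows essentially the same route as the paper: both establish $2$-connectedness and $H^3(G(\C);\Z)\cong\Z$ from Theorem \ref{thm Cartan pi3}, identify $H^3(G;\Z(2))$ with $H^1(G;\KM_2)$ via \cite[Corollary 2.4]{Voevodsky-Milnor}, and then deduce the isomorphism by comparing, under the complex realization, the spectral sequence of the filtration $\Omega_0\subset\Omega_1\subset\Omega_2$ in motivic ($=\KM_2$-) cohomology with the (reindexed) Serre spectral sequence, using the same tubular neighborhoods and the fact that orientation ambiguities vanish over $\C$. The only cosmetic difference is that you first pin down both sides as $\Z$ before checking the comparison map is an isomorphism, whereas the paper reads the isomorphism off directly from the identification of the complex \eqref{eqn dual diagram BD} with the dual of the cellular cochain complex computing $H^1(G;\KM_2)$.
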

\end{remark}

\begin{remark} 
We can also easily conclude that the morphism $Y^{(2)} = \oplus_{W^{(2)}} \Z \to Y\otimes Y$ in \eqref{eqn dual diagram BD} is a monomorphism.  It is proven in \cite[Proof of Proposition 4.6]{Brylinski-Deligne} that the morphism dual to the above morphism is surjective. In our proof of Theorem \ref{thm Cartan pi3} above, we showed that the homology in the middle of \eqref{eqn dual diagram BD} is free of rank one. As each group in \eqref{eqn dual diagram BD} is free of finite type, the injectivity of $Y^{(2)} \to Y\otimes Y$ follows by observing that the rank of $Y^{(2)}$ is exactly one less than the rank of the kernel of $Y\otimes_\Z Y \twoheadrightarrow \Lambda_2(Y)$, which is the rank of the second symmetric power $S^2(X)$ on $X = Y^*$. This is easy to verify and it implies that $H^2(G;\KM_2) = 0$. From Corollary \ref{corollary cellular cohomology vanishing} we know that $H^i(G;\KM_2) = 0$ for $i> 2$. This gives another proof of \cite[Proposition 4.6]{Brylinski-Deligne}, which we restate here for the convenience of the reader.

\begin{corollary}
\label{cor cohomology}
For any split, semisimple, almost simple, simply connected algebraic group $G$ over $k$, $H^0(G;\KM_2) = K^M_2(k)$, $H^1(G;\KM_2) = \Z$ and $H^i(G;\KM_2) = 0$ for $i\geq 2$.
\end{corollary}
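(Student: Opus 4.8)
\textbf{Proof proposal for Corollary \ref{cor cohomology}.}
The plan is to read off the three claimed statements from the cellular $\A^1$-cochain complex of $G$ together with the results already established. First, $H^0_{\rm Nis}(G;\KM_2) = \KM_2(k)$ is immediate: $G$ is $\A^1$-connected, so $\HA_0(G) = \Z$, and $H^0_{\rm Nis}(G;\KM_2) = \Hom_{Ab_{\A^1}(k)}(\HA_0(G),\KM_2) = \KM_2(k)$ by the universal coefficient formula (alternatively, $H^0$ of any connected scheme with values in a sheaf is the sections over the base point). Second, the vanishing $H^i_{\rm Nis}(G;\KM_2) = 0$ for $i \geq 3$ is Corollary \ref{corollary cellular cohomology vanishing}: indeed $(\KM_2)_{-3} = 0$ (since $(\KMW_2)_{-3} = 0$ and $\KM_2 = \KMW_2/\eta$, or directly because $\KM_2$ is generated in weight $\leq 2$), so the cohomological cellular cochain complex $C^*_{\rm cell}(G;\KM_2)$ vanishes in degrees $\geq 3$.

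The substantive content is therefore $H^1_{\rm Nis}(G;\KM_2) \cong \Z$ and $H^2_{\rm Nis}(G;\KM_2) = 0$. For $H^1$, since $G$ is $\A^1$-connected, the universal coefficient formula gives
\[
H^1_{\rm Nis}(G;\KM_2) = \Hom_{Ab_{\A^1}(k)}(\piA_1(G),\KM_2),
\]
and by Theorem \ref{thm: main} we have $\piA_1(G) \cong \KM_2$ (if $G$ is not of symplectic type) or $\piA_1(G) \cong \KMW_2$ (if $G$ is of symplectic type). In either case $\Hom_{Ab_{\A^1}(k)}(\piA_1(G),\KM_2) \cong \Z$: for $\KM_2$ this is $\Hom_{Ab_{\A^1}(k)}(\KM_2,\KM_2) = (\KM_2)_{-2}(k) = \KM_0(k) = \Z$, and for $\KMW_2$ it is $\Hom_{Ab_{\A^1}(k)}(\KMW_2,\KM_2) = (\KM_2)_{-2}(k) = \Z$ as well, using that $\KM_2$ is $\eta$-torsion so every map from $\KMW_2$ factors through $\KMW_2/\eta = \KM_2$. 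This also shows the identification is canonical (generated by the class pulled back from $S_\alpha$ for a long root $\alpha$, via Theorem \ref{theorem intro precise}).

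For $H^2_{\rm Nis}(G;\KM_2) = 0$, I would argue using the explicit description of the low-degree part of the oriented cellular $\A^1$-cochain complex coming from Theorem \ref{thm:diff2} and Theorem \ref{thm:diff2'}. Applying $\Hom_{Ab_{\A^1}(k)}(-,\KM_2)$ to the computation in the proof of Theorem \ref{thm: main} with $M = \KM_2$: one has $\Hom_{Ab_{\A^1}(k)}(\KMW_2,\KM_2) = \Z$ and $\Hom_{Ab_{\A^1}(k)}(\KM_2,\KM_2) = \Z$, and $\KM_2$ satisfies ${}_\eta (\KM_2)_{-2}(k) = (\KM_2)_{-2}(k) = \Z$ since $\eta$ acts as zero on $\KM_2$. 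Thus the complex computing $H^{\leq 2}(G;\KM_2)$ takes the form
\[
\Z \longrightarrow \Z^{r} \times \Z^{r(r-1)/2} \longrightarrow \Z^{N},
\]
and $H^2$ is the cokernel of the second map. Concretely, in the notation of the proof of Theorem \ref{thm: main}, $H^2_{\rm Nis}(G;\KM_2)$ is identified with the cokernel of the transpose of the map $\Ccell_2(G)\otimes_{\ZA[T]}\Z \to Z_1(G)\otimes_{\ZA[T]}\Z$ after applying $\Hom(-,\KM_2)$; equivalently, it is the middle cohomology of the complex dual to \eqref{eqn dual diagram BD} (after extending scalars appropriately and using the comparison $H^p(X;\Z(q)) \cong H^{p-q}(X;\KM_q)$ for $p \geq 2q-2$, as in the remark following the proof of Theorem \ref{thm Cartan pi3}). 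The surjectivity of $Y\otimes_\Z Y \twoheadrightarrow \Lambda_2 Y$ and the rank count — the rank of $Y^{(2)} = \bigoplus_{W^{(2)}}\Z$ being exactly one less than the rank of $\ker(Y\otimes_\Z Y \twoheadrightarrow \Lambda_2 Y) = S^2(X)$, so that after accounting for the one-dimensional homology in the middle established in the proof of Theorem \ref{thm Cartan pi3}, the dual complex has trivial cohomology in the relevant spot — force $H^2_{\rm Nis}(G;\KM_2) = 0$.

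\textbf{Main obstacle.} The delicate point is not $H^0$ or the vanishing in degrees $\geq 3$, which are formal; it is pinning down $H^2 = 0$ rigorously by bookkeeping the ranks and the integrality. One must be careful that the differential $\overline{\partial}_{ij}$ described in Theorems \ref{thm:diff2} and \ref{thm:diff2'} has components landing both in the $\KM_2$-summands (indexed by pairs $(i,j)$) and the $\KMW_2$-summands (indexed by $i$), and that after applying $\Hom(-,\KM_2)$ the $\eta$-terms collapse; the resulting integral matrix is (the transpose of) the one governing \eqref{eqn dual diagram BD}, and verifying the rank statement — that $\mathrm{rk}\, Y^{(2)} = \mathrm{rk}\, S^2(X) - 1$ for an almost simple root system, equivalently that the number of Weyl-group elements of length $\ell - 2$ equals $\binom{r+1}{2} - 1$ — requires either a direct combinatorial check (the number of reduced words obtained by deleting two simple reflections from a reduced word for $w_0$, using Proposition 1.77 of \cite{Abramenko-Brown}) or, more efficiently, invoking the already-proven fact that the middle homology of \eqref{eqn dual diagram BD} is $\Z$ together with surjectivity of the dual, as in \cite[Proof of Proposition 4.6]{Brylinski-Deligne}. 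I would take the latter route, so that Corollary \ref{cor cohomology} becomes a clean corollary of the work already done rather than a fresh computation.
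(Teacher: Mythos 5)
Your proposal is correct and follows essentially the same route as the paper: $H^0$ and the vanishing in degrees $\geq 3$ are formal (the latter via Corollary \ref{corollary cellular cohomology vanishing} since $(\KM_2)_{-3}=0$), $H^1=\Z$ comes from Theorem \ref{thm: main} together with the universal coefficient identification, and $H^2=0$ is read off from the integral complex \eqref{eqn dual diagram BD}, whose middle homology is free of rank one by the proof of Theorem \ref{thm Cartan pi3}, combined with the surjectivity of the dual of $Y^{(2)}\to Y\otimes_\Z Y$ and the rank count as in \cite[Proof of Proposition 4.6]{Brylinski-Deligne} --- exactly the argument in the remark preceding the corollary. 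The one step you should make explicit is the identification of $H^2(G;\KM_2)$ with the cokernel of the transpose of $\Ccell_2(G)\otimes_{\ZA[T]}\Z\to Z_1(G)\otimes_{\ZA[T]}\Z$: a priori $H^2$ is the cokernel of $\Hom(\Ccell_1(G),\KM_2)\to\Hom(\Ccell_2(G),\KM_2)$, and one must check (using the weight decomposition of $\ZA[T]$, the fact that the $\KM_1$- and $\eta$-components of the differential die after applying $\Hom(-,\KM_2)$, and freeness of $\Lambda_2 Y$) that the two images in $\Z^N$ coincide.
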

\end{remark}

\end{document}